\documentclass[11pt,twoside]{article}
\usepackage{amsmath}
\usepackage{amssymb}
\usepackage{amsthm}
\usepackage{mathrsfs}
\usepackage{simplewick}
\usepackage{times}
\usepackage{color}
\usepackage{hep}
\usepackage{leftidx}
\usepackage{graphicx}
\usepackage{float}
\usepackage{enumerate}
\usepackage{titletoc}
\usepackage{epstopdf}
\usepackage{ulem}
\usepackage{appendix}
\usepackage{hyperref}

\pagestyle{myheadings}
\textwidth=16truecm
\textheight=23truecm
\oddsidemargin=0mm
\evensidemargin=0mm
\headheight=10mm
\headsep=3mm
\footskip=4mm
\topmargin=0mm
\allowdisplaybreaks

\DeclareSymbolFont{EulerExtension}{U}{euex}{m}{n}
\DeclareMathSymbol{\euintop}{\mathop}{EulerExtension}{"52}
\DeclareMathSymbol{\euointop}{\mathop}{EulerExtension}{"48}

\def\al{\alpha}
\def\be{\beta}
\def\de{\delta}
\def\De{\Delta}
\def\ep{\epsilon}
\def\et{\eta}
\def\ga{\gamma}
\def\Ga{\Gamma}
\def\ka{\kappa}
\def\la{\lambda}
\def\La{\Lambda}
\def\na{\nabla}
\def\om{\omega}
\def\Om{\Omega}
\def\si{\sigma}
\def\Si{\Sigma}
\def\th{\theta}
\def\ze{\zeta}
\def\ve{\varepsilon}
\def\ve0{\varepsilon_{0}}
\def\vp{\varphi}
\def\vt{\vartheta}

\def\chib{\underline{\chi}}

\def\chic{\check{\chi}}

\def\d{\textrm{d}}

\def\divg{\textrm{div}_g}
\def\divgs{{\textrm{div}}_{\slashed{g}}}

\def\ds{\slashed{\textrm{d}}}
\def\dse{\displaystyle}
\def\D{\mathscr{D}}
\def\Des{\slashed{\Delta}}

\def\Et{\tilde{E}}

\def\f{\frac}

\def\Ft{\tilde{F}}

\def\gs{\slashed{g}}

\def\H{\mathcal{H}}
\def\Hc{\check{H}}
\def\J{\mathcal{J}}

\def\K{\mathcal{K}}
\def\Kt{\tilde{K}}
\def\les{\lesssim}
\def\lot{\text{l.o.t.}}
\def\Lb{\underline{L}}

\def\Lc{\check{L}}
\def\Lie{\mathcal{L}}

\def\Lies{\slashed{\mathcal{L}}}

\def\Lr{\mathring{L}}

\def\M{\mathscr{M}}
\def\Mc{\check{\mathscr{M}}}
\def\nas{\slashed{\nabla}}

\def\N{\mathscr{N}}

\def\p{\partial}
\def\pis{\slashed{\pi}}

\def\R{\mathscr{R}}
\def\Rc{\check{\mathscr{R}}}
\def\t{\mathfrak{t}}
\def\tu{\bar{\mathfrak{t}}}

\def\trg{{\textrm{tr}}_{g}}
\def\trgs{{\textrm{tr}}_{\slashed{g}}}

\def\Tc{\check{T}}

\def\Tr{\mathring{T}}

\def\Zu{\bar{Z}}

\begin{document}
\footskip=0pt
\footnotesep=2pt
\let\oldsection\section
\renewcommand\section{\setcounter{equation}{0}\oldsection}
\renewcommand\thesection{\arabic{section}}
\renewcommand\theequation{\thesection.\arabic{equation}}
\newtheorem{claim}{\noindent Claim}[section]
\newtheorem{theorem}{\noindent Theorem}[section]
\newtheorem{lemma}{\noindent Lemma}[section]
\newtheorem{proposition}{\noindent Proposition}[section]
\newtheorem{definition}{\noindent Definition}[section]
\newtheorem{remark}{\noindent Remark}[section]
\newtheorem{corollary}{\noindent Corollary}[section]
\newtheorem{example}{\noindent Example}[section]

\title{On the critical exponent $p_c$ of the 3D quasilinear wave equation $-\big(1+(\p_t\phi)^p\big)\p_t^2\phi+\De\phi=0$ with short pulse initial data. II, shock formation}
\author{Lu Yu$^{1,*}$, \quad Yin Huicheng$^{1,}$
\footnote{Lu Yu (15850531017@163.com) and Yin Huicheng (huicheng@nju.edu.cn, 05407@njnu.edu.cn)
are supported by the NSFC (No.11731007, No.12126338).}\vspace{0.5cm}\\
\small School of Mathematical Sciences and Mathematical Institute, Nanjing Normal University,\\
\small Nanjing, 210023, China.}
\date{}
\maketitle

\centerline {\bf Abstract} \vskip 0.3 true cm
In the previous paper [Ding Bingbing, Lu Yu, Yin Huicheng, On the critical exponent $p_c$ of the 3D quasilinear wave equation $-\big(1+(\partial_t\phi)^p\big)\partial_t^2\phi+\Delta\phi=0$ with short pulse initial data. I, global existence, Preprint, 2022],
for the 3D quasilinear wave equation $-\big(1+(\p_t\phi)^p\big)\p_t^2\phi+\De\phi=0$ with short pulse initial data $(\phi,\p_t\phi)(1,x)=\big(\de^{2-\ve0}\phi_0(\f{r-1}{\de},\om),\de^{1-\ve0}\phi_1(\f{r-1}{\de},\om)\big)$, where $p\in\mathbb{N}$, $0<\ve0<1$,
under the outgoing constraint condition
$(\p_t+\p_r)^k\phi(1,x)=O(\de^{2-\ve0-k\max\{0,1-(1-\ve0)p\}})$ for $k=1,2$, the authors establish the global existence of smooth large
solution $\phi$ when $p>p_c$ with $p_c=\f{1}{1-\ve0}$. In
the present paper, under the  same outgoing constraint condition, when $1\leq p\leq p_c$, we will show that the
smooth solution $\phi$ blows up and further the outgoing shock
is formed in finite time.

\vskip 0.2 true cm {\bf Keywords:} Short pulse initial data, critical exponent, inverse foliation density, shock formation
\vskip 0.2 true cm {\bf Mathematical Subject Classification:} 35L05, 35L72
\vskip 0.4 true cm

\tableofcontents

\section{Introduction}\label{Section 1}

As a continuation of \cite{DLY}, motivated by \cite{M-Y}, we continue to study the following 3D quasilinear wave equation
\begin{equation}\label{main equation}
	-\big(1+(\p_t\phi)^p\big)\p_t^2\phi+\De\phi=0
\end{equation}
with the short pulse initial data
\begin{equation}\label{initial data}
	(\phi,\p_t\phi)(1,x)=\big(\de^{2-\ve0}\phi_0(\f{r-1}{\de},\om),\de^{1-\ve0}\phi_1(\f{r-1}{\de},\om)\big),
\end{equation}
where $p\in\mathbb{N}$, $0<\ve0<1$, $\de>0$, $x=(x^1,x^2,x^3)\in\mathbb{R}^3$, $t\geq 1$, $r=|x|=\sqrt{(x^1)^2+(x^2)^2+(x^3)^2}$, $\om=(\om_1,\om_2,\om_3)=\f{x}{r}\in\mathbb{S}^2$, and $(\phi_0,\phi_1)(s,\om)\in C_0^{\infty}\big((-1,0)\times\mathbb{S}^2\big)$.

In \cite{DLY}, under such an outgoing constraint condition
\begin{equation}\label{outgoing constraint condition}
	(\p_t+\p_r)^k\phi(1,x)=O(\de^{2-\ve0-k\max\{0,1-(1-\ve0)p\}}),\ k=1,2,
\end{equation}
the authors have established the global existence of smooth solution $\phi$ to \eqref{main equation}-\eqref{initial data}
when $p>p_c=\f{1}{1-\ve0}$. As explained in \cite{DLY}, \eqref{outgoing constraint condition}
is very necessary in order to keep the local (or global) smallness of $\p_t\phi$ and the strict hyperbolicity of \eqref{main equation}.
On the other hand, when \eqref{initial data} is given, $(\p_t+\p_r)^k\phi(1,x)=O(\de^{2-\ve0-k})$
generally holds from \eqref{main equation}.
This will lead to the over-determination of \eqref{outgoing constraint condition} for arbitrary $(\phi_0,\phi_1)$. Thus the choice of $(\phi_0,\phi_1)$ in \eqref{initial data} together with \eqref{outgoing constraint condition}
will be somewhat restricted (see Appendix \ref{Section A} below).

For $p=2$ and $\ve0=\f12$ in \eqref{main equation}-\eqref{initial data}, when the following incoming constraint condition is posed
\begin{equation}\label{incoming constraint condition}
	(\p_t-\p_r)^k\phi(1,x)=O(\de^{\f32}),\ k=1,2,
\end{equation}
then under the suitable assumption of $(\phi_0,\phi_1)$, it is shown in \cite{M-Y} that the incoming shock will be
formed before the time $t=2$.
The short pulse initial data \eqref{initial data} with $\ve0=\f12$ are firstly introduced by D. Christodoulou in \cite{Ch2}.
For such short pulse data and by the short pulse method, the authors in monumental papers \cite{Ch2} and \cite{K-R} showed
the formation of black holes in vacuum spacetime for the 3D Einstein general relativity equations.

Our main result in the paper is
\begin{theorem}\label{main theorem}
Assume the condition \eqref{outgoing constraint condition} holds.
When $1\leq p\leq p_c=\f{1}{1-\ve0}$, for small $\de>0$, the smooth solution $\phi$ of \eqref{main equation} with \eqref{initial data} will blow up and further form the outgoing shock before the time $t^*=1+\de^{1-(1-\ve0)p}$ under the following assumption of $(\phi_0,\phi_1)$: there exists a point $(s_0,\om_0)\in(-1,0)\times\mathbb{S}^2$ such that
\begin{equation}\label{shock-formation assumption of data}
	\begin{split}
		&\phi_1^{p-1}(s_0,\om_0)\p_s\phi_1(s_0,\om_0)>\f{2}{p}\qquad \text{for $1\le p<p_c$},\\
        &\phi_1^{p-1}(s_0,\om_0)\p_s\phi_1(s_0,\om_0)>\f{(p-1)2^p}{(2^{p-1}-1)p}\qquad \text{for $p=p_c$}.
    \end{split}
\end{equation}
\end{theorem}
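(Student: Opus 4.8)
The plan is to recast \eqref{main equation} in the acoustical (optical) framework developed in \cite{DLY}, in the spirit of \cite{Ch2,K-R,M-Y}, and to prove that the inverse foliation density $\mu$ of the outgoing characteristic hypersurfaces vanishes strictly before $t^*=1+\de^{1-(1-\ve0)p}$. Writing \eqref{main equation} as a covariant wave equation $\Box_g\phi=\lot$ (the lower-order terms being quadratic in $\p\phi$), with inverse acoustical metric $g^{\al\be}(\p_t\phi)=\mathrm{diag}(-1,c^2,c^2,c^2)$ and $c^2=(1+(\p_t\phi)^p)^{-1}$, I would introduce the eikonal function $u$ solving $g^{\al\be}\p_\al u\,\p_\be u=0$ with $Lu=0$, normalized on $\{t=1\}$ so that the short pulse data \eqref{initial data} occupy a $u$-interval of length $\sim\de$; set $\mu=-1/(g^{\al\be}\p_\al t\,\p_\be u)$; build the null frame $\{L,\Lb,e_A\}$ adapted to the spheres $\{u=\mathrm{const}\}\cap\{t=\mathrm{const}\}$; and pass to acoustical coordinates $(t,u,\om)$, in which one expects $\phi$ to remain smooth up to and including the first singular time while the transversal (``bad'') derivative $\breve X\phi$ behaves like $\mu^{-1}$ times a smooth quantity. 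The whole problem then reduces to tracking $\mu$: its vanishing forces the Cartesian first derivatives of $\phi$ to blow up while $\phi$, $L\phi$ and $e_A\phi$ stay bounded, which is precisely the formation of an outgoing shock.

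The second ingredient is the transport equation for $\mu$ along the generators of the outgoing cones, which upon differentiating the eikonal equation and using \eqref{main equation} takes the schematic form $L\mu=\mathfrak m+\lot$, with leading coefficient $\mathfrak m$ equal on $\{t=1\}$ to a fixed multiple of the transversal derivative of $(\p_t\phi)^p$, i.e.\ of $p(\p_t\phi)^{p-1}\p_r\p_t\phi$ up to harmless factors. After the short pulse rescaling $s=(r-1)/\de$ and the $\de$-weights $\p_t\phi\sim\de^{1-\ve0}\phi_1$, $\p_r\p_t\phi\sim\de^{-\ve0}\p_s\phi_1$, this has size $\tfrac p2\,\de^{(1-\ve0)p-1}\phi_1^{p-1}\p_s\phi_1$, so that along the generator issuing from $r=1+\de s_0$, $\om=\om_0$ (where the solution profile is frozen to leading order) one obtains
\[
\frac{d\mu}{dt}=-\frac p2\,\frac{\phi_1^{p-1}(s_0,\om_0)\,\p_s\phi_1(s_0,\om_0)}{t^{p}}\,\de^{(1-\ve0)p-1}+\text{error},
\]
the power $t^{-p}$ reflecting the $1/r$ decay of solutions of the $3$D wave equation, and $\de^{(1-\ve0)p-1}=1$ when $p=p_c$. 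Since $\mu|_{t=1}$ is $1+o(1)$, integrating over $[1,t^*)$ drives $\mu$ to $0$ before $t^*$ exactly when the accumulated compression exceeds $\mu|_{t=1}$. For $1\le p<p_c$ the lifespan $\de^{1-(1-\ve0)p}$ is small and $t^{-p}\approx 1$ along it, giving the threshold $\phi_1^{p-1}\p_s\phi_1>2/p$; for $p=p_c$ one has $t^*=2$, the interval $[1,2]$ is $O(1)$, and $\int_1^2 t^{-p}\,dt=\tfrac1{p-1}(1-2^{1-p})$, whose reciprocal times $2/p$ yields the sharpened threshold $\tfrac{(p-1)2^p}{(2^{p-1}-1)p}$ in \eqref{shock-formation assumption of data}.

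To make this rigorous every error term must be controlled, and this is where the bulk of the work lies. I would run a continuity/bootstrap argument on the spacetime region trapped between $\{t=1\}$, an outgoing cone and $\{t'=t\}$ for $t<t^*$: assume $\mu$-weighted energy bounds for $\p_t\phi$ and its commuted versions (with the $\de$-powers dictated by the short pulse hierarchy of \cite{DLY}), together with pointwise bounds on the null-frame coefficients, on the induced metric $\gs$ and on $\mu$, $L\mu$, $\breve X\mu$, and recover all of them with improved constants. The delicate points are: (i) the energy estimates degenerate as $\mu\to 0$ and must be carried out with the Christodoulou-type coercivity near the shock so that no energy actually blows up before $\mu$ does; (ii) the $\de$-weight bookkeeping in every commutator and error term must close uniformly for all $p\in[1,p_c]$, the borderline $p=p_c$ being the worst case because the lifespan is $O(1)$ rather than $o(1)$; (iii) one must confirm that along the way $\phi$, $L\phi$, $e_A\phi$ remain bounded and only $\breve X\phi\sim\mu^{-1}$ degenerates, so that the breakdown is a genuine outgoing shock rather than a more severe singularity.

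The main obstacle is precisely (i)--(ii): re-deriving on the degenerating region the a priori estimates that \cite{DLY} establishes for $p>p_c$, and doing so sharply enough that the precise constants in \eqref{shock-formation assumption of data} emerge from the leading ODE for $\mu$ rather than being swamped by errors. The remaining steps --- local well-posedness in the acoustical coordinates, the continuation criterion showing the smooth solution persists as long as $\mu>0$ and the bootstrap bounds hold, and the final identification of $\mu\to 0$ with the blow-up of $\p\phi$ and the crossing of the outgoing characteristics --- are by now standard and follow \cite{M-Y}.
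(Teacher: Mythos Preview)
Your proposal is correct and follows essentially the same route as the paper: the acoustical/optical framework with the inverse foliation density $\mu$, the transport equation $L\mu=-\tfrac{p}{2}t^{-p}\de^{(1-\ve0)p-1}\phi_1^{p-1}\p_s\phi_1+\text{error}$, the bootstrap argument closing the $\de$-weighted energies up to $t^*$, and the integration of the $\mu$-ODE yielding the thresholds in \eqref{shock-formation assumption of data} (your computation of $\int_1^2 t^{-p}\,dt$ for $p=p_c$ and $t^{-p}\approx 1$ for $p<p_c$ matches the paper exactly). The only cosmetic discrepancy is that the paper takes $g^{\al\be}=\mathrm{diag}(-c^{-2},1,1,1)$ rather than your conformally equivalent $\mathrm{diag}(-1,c^2,c^2,c^2)$, and works directly with this metric rather than passing through a conformal rescaling as in \cite{M-Y}.
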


\begin{remark}
The assumption \eqref{shock-formation assumption of data} can be fulfilled
due to the arbitrariness of $\phi_0$ and the suitable choice of $\phi_1$ (see \eqref{phi_1} below).
\end{remark}

\begin{remark}
When equation \eqref{main equation} admits the small initial data
\begin{align}\label{Y-1}
\phi(1,x)=\de \vp_0(x), \quad \p_t\phi(1,x)=\de \vp_1(x),
\end{align}
where $\big(\vp_0(x),\vp_1(x)\big)\in C_0^{\infty}(\mathbb R^3)$, $\de>0$ is sufficiently small,
\eqref{main equation} with \eqref{Y-1} has a global smooth solution $\phi$ for $p\ge 2$
(see \cite{K-P} or Chapter 6 of
\cite{H}).
If $p=1$ holds in \eqref{main equation}, then the smooth solution $\phi$
of \eqref{main equation} with \eqref{Y-1} will blow up in finite time as long as $\big(\vp_0(x),\vp_1(x)\big)\not\equiv 0$
(see \cite{Ali1}, \cite{Ali2} and \cite{Ding1}).
\end{remark}

\begin{remark}\label{Remark equivalent outgoing constraint condition}
Due to the special forms of \eqref{main equation} and \eqref{initial data}, then \eqref{outgoing constraint condition} is
actually equivalent to
\begin{equation}\label{equivalent outgoing constraint condition}
(\p_t+c\p_r)^k\Om^{\al}\p^{\be}\phi(1,x)=O(\de^{2-\ve0-k\max\{0,1-(1-\ve0)p\}-|\be|}),\ k=1,2,\ \al\in\mathbb{N}_0^3,\ \be\in\mathbb{N}_0^4,
\end{equation}
where $\Om\in\{x^i\p_j-x^j\p_i:1\leq i<j\leq 3\}$ stands for the derivatives on $\mathbb{S}^2$, $\p\in\{\p_t,\p_{x^1},\p_{x^2},\p_{x^3}\}=\{\p_t,\p_1,\p_2,\p_3\}$, and $c=\big(1+(\p_t\phi)^p\big)^{-\f12}$ is the wave speed with $c \sim 1$ (see \eqref{L^infty estimates of c}).
\end{remark}

\begin{remark}\label{Short pulse initial data}
Consider the semilinear symmetric hyperbolic system
\begin{equation}\label{YHC-5}
\begin{split}
\p_tu+\sum_{j=1}^nA_j(t,x)\p_ju=G(t,x,u)
\end{split}
\end{equation}
with the short pulse initial data
\begin{equation}\label{YHC-6}
\begin{split}
u(0,x)=\de^pu_0(x,\f{\vp(x)}{\de}),
\end{split}
\end{equation}
where $u=(u_1,\dots,u_m)^T$, $A_j(t,x)$ are smooth $m\times m$ symmetric matrix,
$G(t,x,u)$ is a smooth $1\times m$ column vector, $p\in\Bbb N_0$,
$\nabla_{t,x}\vp\not=0$, $u_0\in C^{\infty}$ is a rapidly decreasing function on its arguments.
The authors in \cite{Alt1}-\!\cite{Alt2} establish the local existence of \eqref{YHC-5}
and further give the corresponding correctors by the techniques of asymptotic analysis.
On the other hand, for the quasilinear form of \eqref{YHC-5} with \eqref{YHC-6}, the authors in \cite{Hunter}
and \cite{Majda} have also discussed some  formal asymptotic correctors. We point out that
there are no blow-up results of \eqref{YHC-5} with \eqref{YHC-6} shown in the references above.
\end{remark}

We now comment on the proof of Theorem \ref{main theorem}.
As in \cite{M-Y} or\cite{DLY},
strongly motivated by the geometric methods of D. Christodoulou \cite{Ch1},
we will introduce the inverse foliation density $\mu=-\displaystyle\frac{1}{\big(1+(\p_t\phi)^p\big)\p_tu}$,
where the  optical function $u$ satisfies
$-\big(1+(\p_t\phi)^p\big)(\p_tu)^2+\displaystyle\sum_{i=1}^3(\p_iu)^2=0$ with the initial data $u(1,x)=1-r$ and $\p_tu>0$.
Under the suitably weighted bootstrap assumptions of $\p\phi$ with the precise
smallness powers of $\de$, it follows from involved analysis and computations that
the first order transport equation $L\mu=-\f{p}{2t^p}\de^{-[1-(1-\ve0)p]}\phi_1^{p-1}(s,\omega)\p_s\phi_1(s,\omega)
+O\big(\de^{(1-\ve0)p-[1-(1-\ve0)p]}\big)$
is eventually obtained, where $L$ is a vector field approximating $\p_t+\p_r$.
By the integration from $t=1$ to $t\le t^*=1+\de^{1-(1-\ve0)p}$ along the characteristics of $L$,
one gets $\mu=1-\f{p}{2}\de^{-[1-(1-\ve0)p]}\phi_1^{p-1}(s,\omega)\p_s\phi_1(s,\omega)\f{1}{p-1}\left(1-\f{1}{t^{p-1}}\right)
+O\big(\de^{(1-\ve0)p}\big)$ for $1<p\le p_c$ and
$\mu=1-\f{p}{2}\de^{-[1-(1-\ve0)p]}\phi_1^{p-1}(s,\omega)\p_s\phi_1(s,\omega)\ln t
+O\big(\de^{(1-\ve0)p}\big)$ for $p=1$, which derives  $\mu\to 0$  and further the shock forms before $t^*$
under the assumption \eqref{shock-formation assumption of data}.
To prove the bootstrap assumptions of $\p\phi$,  we will use the energy method as in \cite{Ch2},
\cite{M-Y} and \cite{Sp}. However, there are still some new features
in the present paper as follows.

$\bullet$ Through analyzing  the special structures of short pulse initial data and equation \eqref{main equation},
we can choose a class of $(\phi_0,\phi_1)$ to fulfill \eqref{outgoing constraint condition} (see Appendix \ref{Section A}).
Moreover, for $1\le p<p_c$,  the loss of smallness order $\de^{1-(1-\ve0)p}$ of $\phi(1,x)$ along the directional derivative
$\p_t+\p_r$ is
naturally derived (see \eqref{outgoing constraint condition}). Note that in \cite{M-Y}, the authors
look for $(\phi_0,\phi_1)$ to satisfy \eqref{incoming constraint condition}
by solving the resulting ordinary differential equations (see Lemma 1.1 of \cite{M-Y}).

$\bullet$ We directly treat the linearized equation of \eqref{main equation} under the Lorentzian
spacetime metric $g=(g_{\al\be})=\mathrm{diag}(-c^2,1,1,1)$ with $c=\big(1+(\p_t\phi)^p\big)^{-\f{1}{2}}$,
while the authors in \cite{M-Y} introduce another new metric $\tilde g$ to homogenize the resulting linearized equation
and this leads to more involved computations. On the other hand, due to our direct treatment methods for the
linearized equation of $\sum_{\alpha,\beta=0}^ng^{\al\be}\p_{\al\be}^2\phi=0$, the general quasilinear
$n$-dimensional wave equations with short pulse initial data can be investigated by the analogous manners (one can also see \cite{Ding2}-\!\cite{Ding3}).

$\bullet$ Due to the over-determination of \eqref{outgoing constraint condition} and further the loss of smallness with
respect to the orders of $\de$ along $\p_t+\p_r$  when $1\le p<p_c$
(this is slightly different from the no loss condition of smallness in \eqref{incoming constraint condition} of \cite{M-Y}
because of  the corresponding $p=2$, $\ve0=\f12$ and $\max\{0,1-(1-\ve0)p\}=0$ in \eqref{outgoing constraint condition}), we
require to deliberately  choose the weighted bootstrap assumptions of $\p\phi$
and estimate the optimal powers of $\delta$ for all the related quantities
so that the shock formation of equation \eqref{main equation} can be shown when $1\le p\le p_c$.

$\bullet$ To close the bootstrap assumptions of $\p\phi$ up to the $N$-order derivatives ($N\in\Bbb N$ is a suitably
large integer),   we need to obtain the
uniform positive constant $M$ in bootstrap assumptions (see \eqref{bootstrap assumptions}). For this purpose, our
ingredient is:  at first,
acting the second order operator $L\Lb$ ($\Lb$ is a first order differential operator approximate to $\p_t-\p_r$) on the
corresponding quantities and make full use of the special form of \eqref{main equation} to
obtain the uniform positive constant $C$ (independent of $M$) in the bootstrap assumptions
up to $(N-2)$-order derivatives of $\p\phi$. Secondly, based on this,
the bootstrap assumptions for the $(N-1)$-order and $N$-order derivatives of $\p\phi$ can be closed by the energy estimates
since the related constants only depend on $C$ rather than $M$.
In \cite{M-Y}, the bootstrap assumptions  up to $N$-orders are treated together. If we take the same methods in \cite{M-Y},
it will  cause difficulties for us to close the bootstrap assumptions
because some estimated constants  in the energy estimates will depend on the given a priori constant $M$
and one can not get the uniform constant $C$ (independent of $M$) in the bootstrap assumptions.

Our paper is organized as follows.
In Section \ref{Section 2}, we give  some preliminaries as in \cite{DLY}.
In Section \ref{Section 3}, the bootstrap assumptions are listed. Meanwhile,  the lower order $L^\infty$ estimates of some
quantities and the behavior of $\mu$ near blow-up time are derived.
In Section \ref{Section 4}, under the bootstrap assumptions, we will establish the higher order
derivative $L^{\infty}$ estimates and close the
partial bootstrap assumptions.
In Section \ref{Section 5}, we first take the energy estimates for the linearized covariant
wave equation and define some suitable higher order weighted energies and fluxes as in \cite{Ch2} and \cite{M-Y}-\!\cite{Sp}.
In addition, the commuted covariant wave equations are
derived.
In Section \ref{Section 6}, we first establish the non-top order $L^2$ estimates without derivative loss.
Then the top order $L^2$ estimates for $\p\chi$ and $\p^2\mu$ are investigated, where $\chi$ is the second
fundamental form of metric $g$ with respect to $L$.
In Section \ref{Section 7}, we treat the estimates of the error terms and derive the top order energy estimates
on the resulting covariant wave equations.
In Section \ref{Section 8}, collecting all the estimates in the previous sections, we complete the bootstrap argument.
From this, the mechanism of shock formation is shown and the proof of the main theorem is also completed.

\vskip 0.2 true cm
As in \cite{DLY}, throughout the whole paper, without special mentions, the following conventions are used:
\begin{itemize}
  \item Greek letters $\{\al,\be,\ga,\cdots\}$ corresponding to the spacetime coordinates are chosen in \{0,1,2,3\}; Latin letters $\{i,j,k,\cdots\}$ corresponding to the spatial coordinates are chosen in \{1,2,3\}; Capital letters $\{A,B,C,\cdots\}$ corresponding to the sphere coordinates are chosen in \{1,2\}.

  \item We use the Einstein summation convention to sum over the repeated upper and lower indices.

  \item The convention $f\lesssim h$ means that there exists a generic positive constant $C$ independent of the parameter $\de$ and the variables $(t,x)$ such that $f\leq Ch$.

  \item If $\xi$ is a $(0,2)$-type spacetime tensor, $\La$ is a one-form, $U$ and $V$ are vector fields, then
  the contraction of $\xi$ with respect to $U$ and $V$ is defined as $\xi_{UV}=\xi_{\al\be}U^{\al}V^{\be}$,
  and the contraction of $\La$ with respect to $U$ is defined as $\La_{U}=\La_{\al}U^{\al}$.

  \item The restriction of quantity $\xi$ (including the metric $g$ or any $(m,n)$-type
  spacetime tensor field) on the sphere is represented by $\slashed{\xi}$. But if $\xi$ is already defined on the sphere, it is still represented by $\xi$.

  \item $\Lie_V\xi$ stands for the Lie derivative of $\xi$ with respect to vector field $V$, and $\Lies_V\xi$ is the restriction of $\Lie_V\xi$ on the sphere.

  \item $c=\big(1+(\p_t\phi)^p\big)^{-\frac{1}{2}}$ is the wave speed.

  \item $\vp=(\vp_0,\vp_1,\vp_2,\vp_3)=(\p_t\phi, \p_{x^1}\phi, \p_{x^2}\phi, \p_{x^3}\phi)$.

  \item $t^*=1+\de^{1-(1-\ve0)p}$.

  \item For $p=1$, define $\dse\f{1}{p-1}\big(1-\f{1}{t^{p-1}}\big)$ as $\dse\lim_{p\to 1}\f{1}{p-1}\big(1-\f{1}{t^{p-1}}\big)=\ln t$
  for $t>1$.
\end{itemize}

\section{Some preliminaries}\label{Section 2}

In this section, we give some preliminaries on the Lorentzian geometry as in \cite{DLY}, which have been also
illustrated in \cite{Ch1}, \cite{M-Y} and \cite{Sp}. However, for readers' convenience, we still give the details.

By \eqref{main equation}, it is natural to introduce such an inverse spacetime metric
\begin{equation}\label{g^-1}
	\begin{split}
		g^{-1}=(g^{\al\be})=\mathrm{diag}(-c^{-2},1,1,1),
	\end{split}
\end{equation}
and the corresponding spacetime metric
\begin{equation}\label{g}
	\begin{split}
		g=(g_{\al\be})=\mathrm{diag}(-c^2,1,1,1).
	\end{split}
\end{equation}
In this case, \eqref{main equation} can be rewritten as
\begin{equation}\label{quasilinear wave equation}
	g^{\al\be}\p_{\al\be}^2\phi=0.
\end{equation}
The related Christoffel symbols of $g$ are defined by
\begin{equation}\label{Christoffel symbol}
	\begin{split}
		&\Ga_{\al\be\ga}=\f12(\p_{\al}g_{\be\ga}+\p_{\ga}g_{\al\be}-\p_{\be}g_{\al\ga}),\\
		&\Ga_{\al\be}^{\ga}=g^{\ga\la}\Ga_{\al\la\be},\\
		&\Ga^{\ga}=g^{\al\be}\Ga_{\al\be}^{\ga}.
	\end{split}
\end{equation}

One can introduce the following optical function as in \cite{Ch1}.
\begin{definition}\label{Definition optical function}
A $C^1$ function $u(t,x)$ is called the optical function of \eqref{quasilinear wave equation}, if $u(t,x)$ satisfies the eikonal equation
\begin{equation}\label{eikonal equation}
	g^{\al\be}\p_{\al}u\p_{\be}u=0
\end{equation}
with the initial data $u(1,x)=1-r$ and the condition $\p_tu>0$.
\end{definition}

According to the definition of optical function, define the inverse foliation density $\mu$ as in \cite{Ch1} and \cite{Sp}:
\begin{equation}\label{inverse foliation density}
	\mu=-\f{1}{g^{\al\be}\p_{\al}u\p_{\be}t}\big(=\f{1}{c^{-2}\p_tu}\big).
\end{equation}

In fact, $\mu^{-1}$ measures the foliation density of the outgoing
characteristic surfaces
and indicates that the  shock will be formed when $\mu\rightarrow 0+$ with the development of time $t$.

Note that
\begin{equation}\label{Lr}
	\Lr=-\na u=-g^{\al\be}\p_{\al}u\p_{\be}
\end{equation}
is a tangent vector field of the outgoing light cone $\{u=C\}$ and $\Lr t=\mu^{-1}$ holds. One naturally rescale $\Lr$ as
\begin{equation}\label{L}
	L=\mu\Lr.
\end{equation}
To obtain a tangent vector field $\Lb$ of the incoming light cone, let
\begin{equation}\label{Tr}
	\Tr=c^{-1}(\p_t-L).
\end{equation}
Set
\begin{align}
	T&=c^{-1}\mu\Tr,\label{T}\\
	\Lb&=c^{-2}\mu L+2T.\label{Lb}
\end{align}
Then $L$ and $\Lb$ are two vector fields in the null frame. About the other vector fields $\{X_1,X_2\}$ in the null frame,
we utilize the vector field $L$ to construct them. To this end, one extends the local coordinates $\{\th^1,\th^2\}$ on $\mathbb{S}^2$ as follows
\begin{equation*}
	L\vt^{A}=0,\ \vt^{A}|_{t=1}=\th^{A},
\end{equation*}
here and below $A=1,2$. Subsequently, let
\begin{equation}\label{X}
	X_1=\f{\p}{\p\vt^1},\ X_2=\f{\p}{\p\vt^2}.
\end{equation}

A direct computation yields
\begin{lemma}\label{Lemma null frame}
	$\{L,\Lb,X_1,X_2\}$ constitutes a null frame with respect to the metric $g$ in \eqref{g}, and admits the following identities
	\begin{equation*}
		g(L,L)=g(\Lb,\Lb)=g(L,X_A)=g(\Lb,X_A)=0,\ g(L,\Lb)=-2\mu.
	\end{equation*}
	In addition,
	\begin{equation*}
		g(L,T)=-\mu,\ g(T,T)=c^{-2}\mu^2.
	\end{equation*}
	Moreover,
	\begin{equation*}
		Lt=1,\ Lu=0,\ Tt=0,\ Tu=1,\ \Lb t=c^{-2}\mu,\ \Lb u=2,\ X_At=X_Au=0.
	\end{equation*}
\end{lemma}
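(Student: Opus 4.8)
My plan is to separate the statement into three parts and prove them in the order (i) the action of $L,\Lb,T,X_A$ on the functions $t$ and $u$; (ii) the inner products $g(\cdot,\cdot)$; (iii) the frame property. Part (i) is pure substitution of the definitions, and it feeds directly into (ii) and (iii).

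For (i): from $\Lr=-g^{\al\be}\p_\al u\,\p_\be$ one gets $\Lr t=-g^{00}\p_tu=c^{-2}\p_tu=\mu^{-1}$ (the definition \eqref{inverse foliation density} of $\mu$) and $\Lr u=-g^{\al\be}\p_\al u\,\p_\be u=0$ (the eikonal equation \eqref{eikonal equation}), hence $Lt=\mu\,\Lr t=1$ and $Lu=0$. Then $\Tr=c^{-1}(\p_t-L)$ gives $\Tr t=c^{-1}(1-1)=0$ and $\Tr u=c^{-1}(\p_tu-0)=c^{-1}\p_tu=c\mu^{-1}$, so $Tt=c^{-1}\mu\,\Tr t=0$ and $Tu=c^{-1}\mu\cdot c\mu^{-1}=1$; consequently $\Lb t=c^{-2}\mu\,Lt+2\,Tt=c^{-2}\mu$ and $\Lb u=c^{-2}\mu\,Lu+2\,Tu=2$. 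Finally, the defining conditions $L\vt^A=0$, $\vt^A|_{t=1}=\th^A$, together with $Lt=1$, $Lu=0$, show that $(t,u,\vt^1,\vt^2)$ is a coordinate system in a neighborhood of $\{t=1\}$ in which $L=\p/\p t$ and $X_A=\p/\p\vt^A$ are coordinate vector fields; in particular $X_At=X_Au=0$.

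For (ii) I use $T=c^{-2}\mu(\p_t-L)$ together with $g_{0i}=0$, $g_{00}=-c^2$. The eikonal equation gives $g(L,L)=\mu^2g^{\al\be}\p_\al u\,\p_\be u=0$, and $g(L,\p_t)=g_{00}L^0=-c^2\,Lt=-c^2$. Hence $g(L,T)=c^{-2}\mu\big(g(L,\p_t)-g(L,L)\big)=-\mu$, and $g(T,T)=c^{-2}\mu\big(g(T,\p_t)-g(T,L)\big)=c^{-2}\mu\big(-c^2\,Tt+\mu\big)=c^{-2}\mu^2$; then $g(L,\Lb)=c^{-2}\mu\,g(L,L)+2g(L,T)=-2\mu$ and $g(\Lb,\Lb)=c^{-4}\mu^2g(L,L)+4c^{-2}\mu\,g(L,T)+4g(T,T)=0$. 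The orthogonality relations $g(L,X_A)=g(\Lb,X_A)=0$ are the only ones not immediate from algebra, and they require a propagation argument — this is the sole real point of the lemma. Since $g(\na u,\na u)\equiv0$, the standard identity $(\na_{\na u}\na u)_\be=\tfrac12\p_\be\big(g(\na u,\na u)\big)$ shows $\Lr=-\na u$ is geodesic, so $\na_LL=\mu^{-1}(L\mu)L$; and because $L,X_A$ are coordinate vector fields, $[L,X_A]=0$, whence $\na_LX_A=\na_{X_A}L$ and
\begin{equation*}
L\big(g(L,X_A)\big)=g(\na_LL,X_A)+g(L,\na_{X_A}L)=\mu^{-1}(L\mu)\,g(L,X_A)+\tfrac12 X_A\big(g(L,L)\big)=\mu^{-1}(L\mu)\,g(L,X_A),
\end{equation*}
using $g(L,L)\equiv0$. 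This is a homogeneous linear ODE along the integral curves of $L$, each of which meets $\{t=1\}$; there $\na u|_{t=1}$ has spatial part $-\p_r$ (from $u(1,x)=1-r$, so $\p_iu|_{t=1}=-\om_i$) and time part a multiple of $\p_t$, while $X_A|_{t=1}=\p_{\th^A}$ is tangent to the round spheres hence $g$-orthogonal to both $\p_t$ (as $g_{0i}=0$) and $\p_r$; therefore $g(L,X_A)|_{t=1}=0$, and the ODE forces $g(L,X_A)\equiv0$. Then $g(T,X_A)=c^{-2}\mu\big(g(\p_t,X_A)-g(L,X_A)\big)=c^{-2}\mu\,g_{00}\,X_At=0$, so $g(\Lb,X_A)=c^{-2}\mu\,g(L,X_A)+2g(T,X_A)=0$.

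For (iii): $\mu>0$ since $\mu=(c^{-2}\p_tu)^{-1}$ with $\p_tu>0$, so if $aL+b\Lb+\sum_Ac^AX_A=0$, contracting with $L$ and with $\Lb$ and using $g(L,L)=g(L,X_A)=g(\Lb,\Lb)=g(\Lb,X_A)=0$ and $g(L,\Lb)=-2\mu\neq0$ forces $a=b=0$, and then $c^A=0$ because $X_1,X_2$ are independent coordinate vector fields; four independent vectors form a frame in dimension four, and the inner-product table computed in (ii) exhibits it as a null frame. I expect the transport argument establishing $g(L,X_A)=0$ — in particular the geodesic property of $\Lr$ and the check of the initial value on $\{t=1\}$ — to be the only step with genuine content; everything else is substitution of the definitions.
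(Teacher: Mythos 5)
Your proof is correct, and it is essentially the ``direct computation'' that the paper invokes without writing out: all of part (i), the algebraic inner products in (ii), and the frame property in (iii) match the intended route of substituting the definitions \eqref{eikonal equation}, \eqref{inverse foliation density}, \eqref{Lr}--\eqref{Lb} and \eqref{X}.

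The one place where you diverge is the geodesic/transport argument for $g(L,X_A)=0$, and there you do more work than the statement requires. Having already established in part (i) that $X_A$ is a coordinate vector field of $(\t,u,\vt^1,\vt^2)$, so that $X_Au=X_At=0$, the orthogonality is immediate: for any vector $X$ one has $g(\na u,X)=\p_\al u\,X^\al=Xu$, hence
\begin{equation*}
g(L,X_A)=g(-\mu\na u,X_A)=-\mu\,X_Au=0,
\end{equation*}
and then $g(T,X_A)=c^{-2}\mu\big(g(\p_t,X_A)-g(L,X_A)\big)=0$ and $g(\Lb,X_A)=0$ exactly as you wrote. So your claim that the orthogonality relations ``require a propagation argument'' and constitute the sole real content of the lemma is an overstatement; the ODE argument (geodesy of $\Lr$ from the eikonal equation, $[L,X_A]=0$, and the initial check on $\{t=1\}$ using $u(1,x)=1-r$) is valid and is the argument one would need if $X_A$ were defined by Lie transport rather than as coordinate fields, but given your own part (i) it is redundant. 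Everything else — the values of $Lt,Lu,Tt,Tu,\Lb t,\Lb u$, the table $g(L,L)=g(\Lb,\Lb)=0$, $g(L,T)=-\mu$, $g(T,T)=c^{-2}\mu^2$, $g(L,\Lb)=-2\mu$, and the linear-independence argument using $\mu>0$ — is accurate.
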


\begin{remark}\label{Remark T and L on t=1}
On $t=1$, by $u=1-r$, then $T=-\p_r$. Therefore, in terms of \eqref{Tr}, we have $L=\p_t+c\p_r$
for $t=1$.
\end{remark}

\begin{lemma}\label{Lemma componets of gs}
According to \cite[Lemma 3.47]{Sp}, the components of $\gs$ satisfy
	\begin{equation*}
		\begin{split}
			&\gs^{\al\be}=g^{\al\be}+\f12\mu^{-1}(L^{\al}\Lb^{\be}+\Lb^{\al}L^{\be}),\\
			&\gs^{\al\be}=\gs^{AB}X_A^{\al}X_B^{\be},
		\end{split}
	\end{equation*}
	and for any smooth function $\Psi$,
	\begin{equation*}
		\gs^{\al\be}\p_{\al}\Psi\p_{\be}\Psi=|\ds\Psi|^2:=\gs^{AB}\ds_A\Psi\ds_B\Psi,
	\end{equation*}
	where $\ds$ stands for the restriction of differential operator $\d$ on the sphere and $\ds_Af=X_Af$ for any smooth function $f$.
\end{lemma}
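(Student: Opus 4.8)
\textbf{Proof proposal for Lemma \ref{Lemma componets of gs}.}

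The plan is to use the fact that $\{L,\Lb,X_1,X_2\}$ is a null frame for $g$ (Lemma \ref{Lemma null frame}) and to exploit the nondegeneracy of the inner products $g(L,\Lb)=-2\mu$ and $g(X_A,X_B)=\gs_{AB}$. First I would expand the inverse metric $g^{-1}$ in the dual frame. Since $g(L,\Lb)=-2\mu$, $g(L,L)=g(\Lb,\Lb)=0$, $g(L,X_A)=g(\Lb,X_A)=0$, and $g(X_A,X_B)=\gs_{AB}$, the "Gram matrix" of the frame in the block basis $\{L,\Lb\}\cup\{X_1,X_2\}$ is block-diagonal, with off-diagonal $2\times 2$ block $\begin{pmatrix}0 & -2\mu\\ -2\mu & 0\end{pmatrix}$ whose inverse is $\begin{pmatrix}0 & -\f{1}{2\mu}\\ -\f{1}{2\mu} & 0\end{pmatrix}$, and angular block $\gs_{AB}$ with inverse $\gs^{AB}$. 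Writing the contravariant metric as the sum over the dual-frame expansion, $g^{\al\be}=\sum_{\text{frame pairs}}(\text{Gram}^{-1})^{IJ}E_I^{\al}E_J^{\be}$, gives
\begin{equation*}
g^{\al\be}=-\f{1}{2\mu}\big(L^{\al}\Lb^{\be}+\Lb^{\al}L^{\be}\big)+\gs^{AB}X_A^{\al}X_B^{\be}.
\end{equation*}
Rearranging yields $\gs^{AB}X_A^{\al}X_B^{\be}=g^{\al\be}+\f{1}{2\mu}(L^{\al}\Lb^{\be}+\Lb^{\al}L^{\be})$, which is the first claimed identity once one checks that the left-hand side equals the raised sphere-metric $\gs^{\al\be}$; this second identification $\gs^{\al\be}=\gs^{AB}X_A^{\al}X_B^{\be}$ is essentially the definition of the (contravariant) restriction to the sphere, since $\gs_{AB}\gs^{BC}=\de_A^C$ and $\{X_A\}$ spans the tangent space to the spheres $S_{t,u}=\{t=\text{const}\}\cap\{u=\text{const}\}$.

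For the last assertion, I would argue that $\gs^{\al\be}$ is precisely the orthogonal projection of $g^{-1}$ onto the sphere tangent space. Indeed, $\Pis^{\al}_{\ \ga}:=\de^{\al}_{\ga}+\f{1}{2\mu}(L^{\al}\Lb_{\ga}+\Lb^{\al}L_{\ga})$ is the $g$-orthogonal projection onto $\mathrm{span}\{X_1,X_2\}$ (this follows from Lemma \ref{Lemma null frame}: $\Pis$ annihilates $L$ and $\Lb$ and fixes each $X_A$), so contracting a gradient $\p_{\al}\Psi$ with the above expression for $g^{\al\be}$ and using $\gs^{AB}X_A^{\al}X_B^{\be}\p_{\al}\Psi\p_{\be}\Psi=\gs^{AB}(X_A\Psi)(X_B\Psi)=\gs^{AB}\ds_A\Psi\ds_B\Psi=|\ds\Psi|^2$ gives the stated formula $\gs^{\al\be}\p_{\al}\Psi\p_{\be}\Psi=|\ds\Psi|^2$. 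Equivalently I could simply quote \cite[Lemma 3.47]{Sp} as the reference does, but the self-contained derivation above makes the dependence on the null-frame relations of Lemma \ref{Lemma null frame} transparent.

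The only mildly delicate point — and the one I would write out carefully — is the bookkeeping in inverting the block Gram matrix and matching raised/lowered index conventions, in particular verifying the sign and the factor $\f12$ in $\f{1}{2\mu}(L^{\al}\Lb^{\be}+\Lb^{\al}L^{\be})$ so that it is consistent with $g(L,\Lb)=-2\mu$ and with $\gs_{AB}=g(X_A,X_B)$. There is no genuine analytic obstacle here; the lemma is a pointwise linear-algebra identity about a Lorentzian metric expressed in an adapted null frame, valid wherever $\mu>0$, and the proof is just the standard computation of a contravariant metric from a frame together with identifying the angular block with the induced sphere metric.
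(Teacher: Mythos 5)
Your proposal is correct: the paper itself gives no proof of this lemma, simply quoting \cite[Lemma 3.47]{Sp}, and your null-frame Gram-matrix inversion (using $g(L,\Lb)=-2\mu$, $g(X_A,X_B)=\gs_{AB}$ from Lemma \ref{Lemma null frame}) together with the projection identity is exactly the standard pointwise linear-algebra argument underlying that citation. The signs and the factor $\f{1}{2\mu}$ check out, and treating $\gs^{\al\be}=\gs^{AB}X_A^{\al}X_B^{\be}$ as the definition of the contravariant sphere metric is consistent with how the lemma is used in the paper, so nothing is missing.
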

\vskip 0.1 true cm

We now perform the change of coordinates as in \cite{DLY}:
$(t,x^1,x^2,x^3)\rightarrow(\t,u,\vt^1,\vt^2)$ with
\begin{equation}\label{change of coordinates}
	\left\{
	\begin{aligned}
		&\t=t,\\
		&u=u(t,x),\\
		&\vt^1=\vt^1(t,x),\\
		&\vt^2=\vt^2(t,x).
	\end{aligned}
	\right.
\end{equation}
For notational convenience,  the following domains are introduced
\begin{definition}\label{Definition domains}
	\begin{align*}
		\Si_\t&=\{(\t',u',\vt):\ \t'=\t\},\\
		\Si_\t^u&=\{(\t',u',\vt):\ \t'=\t,0\leq u'\leq u\},\\
		C_u&=\{(\t',u',\vt):\ \t'\geq 1,u'=u\},\\
		C_u^\t&=\{(\t',u',\vt):\ 1\leq\t'\leq\t,u'=u\},\\
		S_{\t,u}&=\Si_\t\cap C_u,\\
		D^{\t,u}&=\{(\t',u',\vt):\ 1\leq\t'<\t,0\leq u'\leq u\}.
	\end{align*}
\end{definition}

Note that $\vt=(\vt^1,\vt^2)$ are the coordinates on sphere $S_{\t,u}$, then under the new coordinate system $(\t,u,\vt^1,\vt^2)$, one has $L=\f{\p}{\p\t}$, $T=\f{\p}{\p u}-\Xi$ with $\Xi=\Xi^AX_A$. In addition, it follows from direct computations that
\begin{lemma}\label{Lemma jacobian}
	In $D^{\t,u}$, the Jacobian determinant of map $(\t,u,\vt^1,\vt^2)\rightarrow(t,x^1,x^2,x^3)$ is
	\begin{equation*}
		\det\f{\p(t,x^1,x^2,x^3)}{\p(\t,u,\vt^1,\vt^2)}=c^{-1}\mu\sqrt{\det\gs}.
	\end{equation*}
\end{lemma}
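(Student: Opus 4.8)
\textbf{Proof strategy for Lemma \ref{Lemma jacobian}.}

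The plan is to compute the Jacobian by decomposing the coordinate transformation into two stages and exploiting the structure of the null frame established in Lemma \ref{Lemma null frame}. First I would observe that the vector field $L=\p/\p\t$ in the new coordinates means that differentiating $t$, which equals $\t$, along the flow of $L$ gives $Lt=1$ (consistent with Lemma \ref{Lemma null frame}), and this controls one row of the Jacobian matrix. The key point is that the pushforward of the coordinate frame $\{\p_\t,\p_u,\p_{\vt^1},\p_{\vt^2}\}$ is precisely $\{L,\ T+\Xi,\ X_1,\ X_2\}$ (using $T=\p_u-\Xi$ and $\Xi=\Xi^AX_A$, so $\p_u=T+\Xi^AX_A$), which is just a fixed linear combination of the null frame vectors. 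Hence
\begin{equation*}
\det\f{\p(t,x^1,x^2,x^3)}{\p(\t,u,\vt^1,\vt^2)}
=\det\big(L,\ \p_u,\ X_1,\ X_2\big),
\end{equation*}
where the right-hand determinant is taken with respect to the Cartesian coordinates $(t,x^1,x^2,x^3)$, i.e. the determinant of the $4\times4$ matrix whose columns are the components of these four vector fields.

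Next I would evaluate that determinant using the relation between $\det(V_0,V_1,V_2,V_3)$ (a coordinate determinant) and the metric Gram determinant. Since the Cartesian metric components satisfy $\det g = -c^2$ (from \eqref{g}), we have for any four vector fields $V_0,\dots,V_3$ that $\big(\det(V_0,V_1,V_2,V_3)\big)^2 = (\det g)^{-1}\det\big(g(V_a,V_b)\big) = -c^{-2}\det\big(g(V_a,V_b)\big)$. I would apply this with the frame $\{L,\p_u,X_1,X_2\}$ and compute the Gram matrix entry by entry from Lemma \ref{Lemma null frame} and Lemma \ref{Lemma componets of gs}: namely $g(L,L)=0$, $g(L,X_A)=0$, $g(X_A,X_B)=\gs_{AB}$, together with $g(L,\p_u)=g(L,T+\Xi^CX_C)=g(L,T)=-\mu$ (using $g(L,X_C)=0$ and $g(L,T)=-\mu$), and $g(\p_u,X_A)=g(T,X_A)+\Xi^Cg(X_C,X_A)=\Xi^C\gs_{CA}$ since $T$ is $g$-orthogonal to the $X_A$ (as $T$ is a combination of $L$ and $\Lb$, both orthogonal to the sphere). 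The Gram matrix is then block lower-triangular in the ordering $(L;\p_u;X_1,X_2)$ up to the off-diagonal $g(\p_u,X_A)$ terms, but expanding along the first row (which has only the entry $-\mu$ in the $\p_u$-slot) and then along the resulting structure yields $\det\big(g(V_a,V_b)\big) = -\mu^2\det\gs$ after the $\Xi$-terms cancel in the cofactor expansion. Hence $\big(\det\tfrac{\p(t,x)}{\p(\t,u,\vt)}\big)^2 = -c^{-2}\cdot(-\mu^2\det\gs) = c^{-2}\mu^2\det\gs$.

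Finally, taking the square root gives $\det\tfrac{\p(t,x)}{\p(\t,u,\vt)} = \pm c^{-1}\mu\sqrt{\det\gs}$, and I would fix the sign by continuity and a check at $\t=1$: on $\{t=1\}$ we have $u=1-r$, $\mu=1$ (from the initial data and \eqref{inverse foliation density}), $\vt^A=\th^A$, and by Remark \ref{Remark T and L on t=1} the frame is the standard polar frame, for which the Jacobian is positive; since $\mu>0$ and $c\sim1$ throughout $D^{\t,u}$ by the bootstrap assumptions (so the determinant never vanishes), the positive branch persists, giving $\det\tfrac{\p(t,x^1,x^2,x^3)}{\p(\t,u,\vt^1,\vt^2)} = c^{-1}\mu\sqrt{\det\gs}$. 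The only mildly delicate point is the bookkeeping in the cofactor expansion showing the $\Xi^A$-contributions drop out of the final determinant; this is a short linear-algebra computation once the Gram matrix entries are in hand, so I do not anticipate a serious obstacle.
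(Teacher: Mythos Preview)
Your argument is correct. The paper itself does not supply a detailed proof of this lemma --- it is stated as following ``from direct computations'' --- so there is no competing argument to compare against; your Gram--determinant approach (computing $\det\big(g(V_a,V_b)\big)$ for the frame $\{L,\p_u,X_1,X_2\}$ and dividing by $\det g=-c^2$) is a clean and standard way to carry out that computation, and your cofactor expansion showing that the $\Xi^A$-contributions and the $g(\p_u,\p_u)$-entry drop out is right.

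One small correction that does not affect the proof: at $\t=1$ one has $\mu=c$, not $\mu=1$ (this follows from \eqref{inverse foliation density} with $\p_tu=c$ on the initial slice, and is used later in the paper, e.g.\ just below \eqref{explicit formula of mu}). Since $c>0$ this changes nothing in your sign check. You might also tighten the wording ``block lower-triangular'': the Gram matrix is not literally block triangular because of the $g(\p_u,X_A)=\Xi^C\gs_{CA}$ entries, but --- as you correctly observe --- expanding along the first row (whose only nonzero entry is $g(L,\p_u)=-\mu$) and then along the first column of the resulting $3\times3$ minor kills those terms and yields $\det\big(g(V_a,V_b)\big)=-\mu^2\det\gs$ directly.
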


\begin{remark}\label{Remark jacobian}
	Since the metric $\gs$ are locally regular, that is, $\det\gs>0$ (see Remark \ref{Remark sqrt det gs is bounded above and below}
below), it is clear that the coordinate transformation will fail to be a diffeomorphism when $\mu\rightarrow 0+$.
\end{remark}

We now give some integrations and $L^2$-norms, which will be utilized repeatedly in subsequent sections.
\begin{definition}\label{Definition norms}
	For any continuous function $f$, set
	\begin{align*}
		&\int_{S_{\t,u}}f=\int_{S_{\t,u}}f(\t,u,\vt)\sqrt{\det\gs(\t,u,\vt)}\d\vt,\ \|f\|_{L^2(S_{\t,u})}^2=\int_{S_{\t,u}}|f|^2,\\
		&\int_{C_u^\t}f=\int_{1}^\t\int_{S_{\t',u}}f(\t',u,\vt)\sqrt{\det\gs(\t',u,\vt)}\d\vt\d\t',\ \|f\|_{L^2(C_u^\t)}^2=\int_{C_u^\t}|f|^2,\\
		&\int_{\Si_\t^u}f=\int_{0}^u\int_{S_{\t,u'}}f(\t,u',\vt)\sqrt{\det\gs(\t,u',\vt)}\d\vt\d u',\ \|f\|_{L^2(\Si_\t^u)}^2=\int_{\Si_\t^u}|f|^2,\\
		&\int_{D^{\t,u}}f=\int_{1}^\t\int_{0}^u\int_{S_{\t',u'}}f(\t',u',\vt)\sqrt{\det\gs(\t',u',\vt)}\d\vt\d u'\d\t',\ \|f\|_{L^2(D^{\t,u})}^2=\int_{D^{\t,u}}|f|^2.
	\end{align*}
\end{definition}

\vskip 0.1 true cm

Let $\D$ and $\nas$ be the Levi-Civita connections of $g$ and $\gs$ respectively.
Under the null frame $\{L,\Lb,X_1,X_2\}$, define the second fundamental forms $\chi$, $\chib$ and $\si$ as
\begin{equation}\label{second fundamental forms}
	\chi_{AB}=g(\D_AL,X_B),\ \chib_{AB}=g(\D_A\Lb,X_B),\ \si_{AB}=g(\D_A\Tr,X_B).
\end{equation}
And the torsion one-forms $\ze$ and $\et$ are denoted by
\begin{equation}\label{torsion one forms}
	\ze_A=g(\D_AL,T),\ \et_A=-g(\D_AT,L).
\end{equation}

As in \cite{M-Y}, direct computations yield
\begin{lemma}\label{Lemma components of second fundamental forms and torsion one forms}
	The components of the second fundamental forms and torsion one-forms satisfy the following relation
	\begin{equation*}
		\begin{split}
			&\chib_{AB}=-c^{-2}\mu\chi_{AB},\ \si_{AB}=-c^{-1}\chi_{AB},\\
			&\ze_A=-c^{-1}\mu\ds_Ac,\ \et_A=c\ds_A(c^{-1}\mu).
		\end{split}
	\end{equation*}
\end{lemma}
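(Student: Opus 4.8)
The plan is to reduce all four identities to Leibniz-rule computations performed after re-expressing $\Lb,T,\Tr$ through $L$ and $\p_t$, using three elementary inputs: (i) the explicit diagonal form $g=\mathrm{diag}(-c^2,1,1,1)$, which makes the few relevant Christoffel symbols computable by hand; (ii) the orthogonality relations of Lemma \ref{Lemma null frame} together with the observation that $X_At=0$, so $X_A$ has vanishing $t$-component, which forces $g(\p_t,X_A)=0$ and hence also $g(\Tr,X_A)=g(T,X_A)=0$; and (iii) the fact that in the coordinates $(\t,u,\vt^1,\vt^2)$ one has $L=\p/\p\t$ and $X_A=\p/\p\vt^A$, so $[L,X_A]=0$. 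The one genuinely computational ingredient I would isolate first is the pair $g(\D_{X_A}\p_t,X_B)=0$ and $g(\D_L\p_t,X_A)=c\,\ds_Ac$; both come from $(\D_V\p_t)^\nu=V^\mu\Ga^\nu_{\mu 0}$ by noting that, since the spatial block of $g$ is Euclidean and $X_A^0=0$, the only Christoffel symbol surviving those contractions is $\Ga_{0k0}=c\,\p_kc$, paired with $L^0=Lt=1$.

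Granting this, $\si$ and $\chib$ follow immediately. Applying $\D_{X_A}$ to $\Tr=c^{-1}(\p_t-L)$ and pairing with $X_B$: the coefficient-derivative term is killed by $g(\p_t,X_B)=g(L,X_B)=0$, the term $g(\D_{X_A}\p_t,X_B)$ vanishes, and $g(\D_{X_A}L,X_B)=\chi_{AB}$, giving $\si_{AB}=-c^{-1}\chi_{AB}$. Then, writing $\Lb=c^{-2}\mu L+2T$ with $T=c^{-1}\mu\Tr$ and applying $\D_{X_A}$, all derivative-of-coefficient contributions drop because $g(L,X_B)=g(\Tr,X_B)=0$, leaving $\chib_{AB}=c^{-2}\mu\,\chi_{AB}+2c^{-1}\mu\,\si_{AB}=-c^{-2}\mu\,\chi_{AB}$.

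The torsion one-forms need a bit more care, and this is the step I expect to be the (mild) main obstacle. For $\ze$, I would use $[L,X_A]=0$ to get $\D_{X_A}L=\D_LX_A$, so that, differentiating $g(X_A,T)=0$ along $L$, $\ze_A=g(\D_LX_A,T)=-g(X_A,\D_LT)$. Expanding $\D_LT$ via $T=c^{-2}\mu(\p_t-L)$: the coefficient-derivative term is orthogonal to $X_A$ since $g(X_A,\p_t)=g(X_A,L)=0$; the term $\D_LL$ is proportional to $L$ — indeed $g(\D_LL,L)=\tfrac12L(g(L,L))=0$ and $g(\D_LL,X_A)=-g(L,\D_LX_A)=-g(L,\D_{X_A}L)=-\tfrac12X_A(g(L,L))=0$ — hence orthogonal to $X_A$; so only $g(X_A,\D_LT)=c^{-2}\mu\,g(X_A,\D_L\p_t)=c^{-1}\mu\,\ds_Ac$ survives, whence $\ze_A=-c^{-1}\mu\,\ds_Ac$. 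For $\et$, I would differentiate $g(L,T)=-\mu$ (Lemma \ref{Lemma null frame}) along $X_A$: $g(\D_{X_A}L,T)+g(L,\D_{X_A}T)=\ze_A-\et_A=-\ds_A\mu$, so $\et_A=\ze_A+\ds_A\mu=-c^{-1}\mu\,\ds_Ac+\ds_A\mu=c\,\ds_A(c^{-1}\mu)$. Throughout, the only real hazards are the sign conventions in the Christoffel symbols and keeping straight which of $\{L,\Lb,T,\Tr,\p_t\}$ is being differentiated; no analytic estimates play a role.
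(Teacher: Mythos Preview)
Your argument is correct: every step checks out, and the key inputs you isolate --- $g(\D_{X_A}\p_t,X_B)=0$, $g(\D_L\p_t,X_A)=c\,\ds_Ac$, $[L,X_A]=0$, and the orthogonality relations from Lemma~\ref{Lemma null frame} --- are exactly what is needed. The paper does not spell out a proof of this lemma; it simply states the result with the remark that direct computations yield it (as in \cite{M-Y}), so your write-up is a faithful and complete realization of that indicated approach.
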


\begin{lemma}\label{Lemma connection coefficients}
	For the connection coefficients of the related frames, it holds that
	\begin{equation*}
		\begin{split}
			&\D_LL=\mu^{-1}L\mu L,\ \D_TL=\et^AX_A-c^{-1}L(c^{-1}\mu)L,\ \D_AL=-\mu^{-1}\ze_AL+\chi_A^BX_B,\\
			&\D_LT=-\ze^AX_A-c^{-1}L(c^{-1}\mu)L,\ \D_AT=\mu^{-1}\et_AT-c^{-2}\mu\chi_A^BX_B,\\
			&\D_TT=c^{-3}\mu[Tc+L(c^{-1}\mu)]L+\{c^{-1}[Tc+L(c^{-1}\mu)]+T\ln(c^{-1}\mu)\}T-c^{-1}\mu\ds ^A(c^{-1}\mu)X_A,\\
			&\D_LX_A=\D_AL,\ \D_TX_A=\D_AT,\ \D_AX_B=\nas_AX_B+\mu^{-1}\chi_{AB}T,\ \D_A\Lb=\mu^{-1}\et_A\Lb-c^{-2}\mu\chi_A^BX_B,\\
			&\D_{\Lb}L=-L(c^{-2}\mu)L+2\et^AX_A,\ \D_L\Lb=-2\ze^AX_A,\ \D_{\Lb}\Lb=[\mu^{-1}\Lb\mu+L(c^{-2}\mu)]\Lb-2\mu\ds^A(c^{-2}\mu)X_A,
		\end{split}
	\end{equation*}
	where $\ze^A=\gs^{AB}\ze_B$, $\et^A=\gs^{AB}\et_B$ and $\chi_A^B=\gs^{BC}\chi_{AC}$.
\end{lemma}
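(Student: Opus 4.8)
The plan is to prove every identity by a direct computation in the null frame, using only three inputs: metric compatibility $U\big(g(V,W)\big)=g(\D_UV,W)+g(V,\D_UW)$ together with the inner products of Lemma~\ref{Lemma null frame} (in particular the \emph{non-constant} ones $g(L,\Lb)=-2\mu$, $g(L,T)=-\mu$, $g(T,T)=c^{-2}\mu^2$); the torsion-free property $\D_UV-\D_VU=[U,V]$ together with the frame commutators, which follow from $L=\p/\p\t$, $X_A=\p/\p\vt^A$, $T=\p/\p u-\Xi$ in the coordinates $(\t,u,\vt)$, namely $[L,X_A]=0$ while $[L,T]$ and $[T,X_A]$ are tangent to the spheres $S_{\t,u}$; and the definitions \eqref{second fundamental forms}--\eqref{torsion one forms} together with Lemma~\ref{Lemma components of second fundamental forms and torsion one forms}, from which one also reads off $\et_A=\ze_A+\ds_A\mu$ and $\ds_A(c^{-2}\mu)=c^{-2}(\ze_A+\et_A)$. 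For each $\D_UV$ I will expand it in the frame and determine the four coefficients by pairing with $L$, $T$ (or $\Lb$) and $X_B$.

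The base case is $\D_LL$: differentiating the eikonal equation \eqref{eikonal equation} shows $\Lr=-\na u$ is a null geodesic field, $\D_{\Lr}\Lr=0$, whence $\D_LL=\D_{\mu\Lr}(\mu\Lr)=(\Lr\mu)L=\mu^{-1}(L\mu)L$. Next, pairing $\D_{X_A}L$ with $L$, $T$, $X_B$ (using $g(\D_AL,L)=\f12X_Ag(L,L)=0$, $g(\D_AL,T)=\ze_A$, $g(\D_AL,X_B)=\chi_{AB}$) gives $\D_AL=-\mu^{-1}\ze_AL+\chi_A^BX_B$, and then $\D_LX_A=\D_AL$ from $[L,X_A]=0$. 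Using $T=c^{-1}\mu\Tr$ and $\si_{AB}=-c^{-1}\chi_{AB}$ one gets $g(\D_AT,X_B)=-c^{-2}\mu\chi_{AB}$; pairing $\D_{X_A}T$ and $\D_{X_A}X_B$ with the frame (using $g(T,T)=c^{-2}\mu^2$, and for the latter the Gauss formula, whose normal part is spanned by $L$ and $T$) yields $\D_AT=\mu^{-1}\et_AT-c^{-2}\mu\chi_A^BX_B$ and $\D_AX_B=\nas_AX_B+\mu^{-1}\chi_{AB}T$ with $\nas$ the Levi-Civita connection of $\gs$; finally $\D_TX_A$ is obtained from $\D_AT$ by torsion-freeness, its $L$- and $T$-components coinciding with those of $\D_AT$ since $[T,X_A]$ is sphere-tangent, as in \cite{M-Y} and \cite{Sp}.

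The remaining ``timelike'' coefficients are handled next. Metric compatibility gives $g(\D_LT,X_A)=-g(T,\D_AL)=-\ze_A$, $g(\D_LT,L)=Lg(T,L)-g(T,\D_LL)=0$, and $g(\D_LT,T)=\f12L\big(g(T,T)\big)=\f12L(c^{-2}\mu^2)$, which after simplification give $\D_LT=-\ze^AX_A-c^{-1}L(c^{-1}\mu)L$. Now $\D_TL=\D_LT+[T,L]$ by torsion-freeness, and since $[T,L]$ is sphere-tangent the $L$-component of $\D_TL$ equals that of $\D_LT$; together with $g(\D_TL,X_A)=-g(L,\D_AT)=\et_A$ (by the definition of $\et$, using that $[T,X_A]$ is sphere-tangent) this yields $\D_TL=\et^AX_A-c^{-1}L(c^{-1}\mu)L$. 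Finally $\D_TT$ is pinned down by pairing with $X_A$ ($g(\D_TT,X_A)=-g(T,\D_AT)=-\f12\ds_A(c^{-2}\mu^2)$) and with $L$ and $T$ (using $\f12Tg(T,T)$, $Tg(L,T)$ and the $\D_TL$ just found); rewriting $\ze,\et$ via Lemma~\ref{Lemma components of second fundamental forms and torsion one forms} and noting $T\ln(c^{-1}\mu)=\mu^{-1}T\mu-c^{-1}Tc$ collapses the result to the stated $\D_TT=c^{-3}\mu[Tc+L(c^{-1}\mu)]L+\{c^{-1}[Tc+L(c^{-1}\mu)]+T\ln(c^{-1}\mu)\}T-c^{-1}\mu\ds^A(c^{-1}\mu)X_A$.

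The five identities involving $\Lb$ follow by substituting $\Lb=c^{-2}\mu L+2T$ and using the Leibniz rule together with the identities already proved; for instance $\D_A\Lb=\ds_A(c^{-2}\mu)L+c^{-2}\mu\D_AL+2\D_AT$ collapses to $\mu^{-1}\et_A\Lb-c^{-2}\mu\chi_A^BX_B$ after invoking $\ds_A(c^{-2}\mu)=c^{-2}(\ze_A+\et_A)$, and likewise for $\D_L\Lb$, $\D_{\Lb}L$ and $\D_{\Lb}\Lb$, where in each case the $L$-components produced by the various terms cancel identically once $L(c^{-2})=-2c^{-3}Lc$ and similar relations are used. The one genuine difficulty is bookkeeping: the derivations of $\D_TT$ and $\D_{\Lb}\Lb$ produce long sums of terms from differentiating the non-constant coefficients $g(L,\Lb)=-2\mu$ and $g(T,T)=c^{-2}\mu^2$, and one must repeatedly apply Lemma~\ref{Lemma components of second fundamental forms and torsion one forms} to rewrite $\ze_A=-c^{-1}\mu\ds_Ac$ and $\et_A=c\ds_A(c^{-1}\mu)$ in order to reach the compact forms displayed; since these manipulations are routine, only the outcome will be recorded.
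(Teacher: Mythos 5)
Your computation is correct and coincides with the paper's (unwritten) proof: the lemma is obtained exactly by the direct null-frame computation you describe — metric compatibility, torsion-freeness with the coordinate commutators $[L,X_A]=0$ and sphere-tangency of $[L,T]$, $[T,X_A]$, the inner products of Lemma \ref{Lemma null frame}, and the relations of Lemma \ref{Lemma components of second fundamental forms and torsion one forms} — and your coefficient bookkeeping (in particular for $\D_AT$, $\D_TT$, and the cancellations in the four $\Lb$-identities via $\ds_A(c^{-2}\mu)=c^{-2}(\ze_A+\et_A)$ and $\et_A=\ze_A+\ds_A\mu$) checks out. One remark: as your own argument shows, torsion-freeness gives $\D_TX_A=\D_AT+[T,X_A]$ with $[T,X_A]=(X_A\Xi^B)X_B$ sphere-tangent and generically nonzero for $\t>1$, so the displayed equality $\D_TX_A=\D_AT$ is literally an equality of the $L$- and $T$-components only and should be read modulo this sphere-tangent commutator — which is precisely what your proof establishes and all that can hold in general.
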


\vskip 0.1 true cm

On $\t=1$, one has $L^0=1$, $L^i=\f{x^i}{r}+O(\de^{(1-\ve0)p})$, $\Tr^i=-\f{x^i}{r}$ and $\chi_{AB}=\f{c}{r}\gs_{AB}$ with $r=1-u$. Therefore, for $\t\geq 1$, we define the ``error vectors" with the components being
	\begin{equation*}
		\begin{split}
			\Lc^0&=0,\\
			\Lc^i&=L^i-\f{x^i}{\rho},\\
			\Tc^i&=\Tr^i+\f{x^i}{\rho},\\
			\chic_{AB}&=\chi_{AB}-\f{1}{\rho}\gs_{AB},
		\end{split}
	\end{equation*}
	here and below $\rho=\t-u$, $\trgs\chi=\gs^{AB}\chi_{AB}$. We point out that
these error vectors will admit good smallness orders of $\de$ in subsequent estimates.
In addition, it follows from direct computation that the error vectors satisfy
	\begin{equation}\label{YHC-1}
		\begin{split}
			\Lc^i&=-c\Tc^i+\rho^{-1}(c-1)x^i,\\
			\trgs\chic&=\trgs\chi-2\rho^{-1},\\
			|\chic|^2&=|\chi|^2-2\rho^{-1}\trgs\chi+2\rho^{-2}.
		\end{split}
	\end{equation}

Let
\begin{equation}\label{v_i}
	v_i=g(\Om_i,\Tr)=\ep_{ijk}x^j\Tc^k,
\end{equation}
then
\begin{equation}\label{R_i}
	R_i=\Om_i-v_i\Tr
\end{equation}
are the rotation vector fields of $S_{\t,u}$, where $\Om_i=\ep_{ij}{}^kx^j\p_k,\ i=1,2,3$, $\ep_{ij}{}^k=\ep_{ijk}=-1$ when $ijk$ is $123$'s odd permutation, and $\ep_{ij}{}^k=\ep_{ijk}=1$ when $ijk$ is $123$'s even permutation.

\vskip 0.1 true cm

The Riemann curvature tensor $\R$ of $g$ is defined as
\begin{equation}\label{Riemann curvature tensor}
	\R_{WXYZ}=-g(\D_W{\D_X}Y-\D_X{\D_W}Y-{\D_{[W,X]}}Y,Z).
\end{equation}
Due to
\begin{equation*}
	\R_{\mu\nu\al\be}=\p_{\nu}\Ga_{\mu\be\al}-\p_{\mu}\Ga_{\nu\be\al}+g^{\ka\la}(\Ga_{\nu\ka\al}\Ga_{\mu\la\be}-\Ga_{\mu\ka\al}\Ga_{\nu\la\be}),
\end{equation*}
the only non-vanishing component of $\R$ with respect to $g$ in \eqref{g} is
\begin{equation}\label{R_0i0j}
	\R_{0i0j}=c\p_{ij}^2c.
\end{equation}
The only non-vanishing component under the frame $\{L,T,X_1,X_2\}$ is
\begin{equation}\label{R_LALB}
	\R_{LALB}=-cTc\mu^{-1}\chi_{AB}+\Rc_{LALB},
\end{equation}
where
\begin{equation}\label{Rc_LALB}
	\begin{split}
		\Rc_{LALB}=c\nas_{AB}^2c=-\f12pc^4\vp_0^{p-1}\nas_{AB}^2\vp_0-\f32pc^3\vp_0^{p-1}\ds_Ac\ds_B\vp_0-\f12p(p-1)c^4\vp_0^{p-2}\ds_A\vp_0\ds_B\vp_0.
	\end{split}	
\end{equation}
Here we point out that $\Rc_{LALB}$ will admit the higher smallness orders of $\de$ than $\R_{LALB}$.

For any smooth function $\Psi$, one can denote its energy-momentum tensor with respect to $g$ by
\begin{equation}\label{energy-momentum tensor}
	Q_{\al\be}=Q_{\al\be}[\Psi]=\p_{\al}\Psi\p_{\be}\Psi-\f12g_{\al\be}g^{\ka\la}\p_{\ka}\Psi\p_{\la}\Psi.
\end{equation}
\begin{lemma}\label{Lemma components of energy-momentum tensor}
	The components of energy-momentum tensor in terms of the null frame $\{L,\Lb,X_1,X_2\}$ can be computed as follows
	\begin{equation}\label{components of energy-momentum tensor}
		\begin{split}
			&Q_{LL}=(L\Psi)^2,\ Q_{\Lb\Lb}=(\Lb\Psi)^2,\ Q_{L\Lb}=\mu|\ds\Psi|^2,\\
			&Q_{LA}=L\Psi\ds_A\Psi,\ Q_{\Lb A}=\Lb\Psi\ds_A\Psi,\ Q_{AB}=\ds_A\Psi\ds_B\Psi-\f12\gs_{AB}(|\ds\Psi|^2-\mu^{-1}L\Psi\Lb\Psi).
		\end{split}
	\end{equation}
\end{lemma}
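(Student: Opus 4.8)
The plan is a direct frame computation: expand each component $Q_{UV}=Q_{\al\be}U^\al V^\be$ for $U,V\in\{L,\Lb,X_1,X_2\}$ using the definition \eqref{energy-momentum tensor}, the contraction convention, the null-frame identities of Lemma \ref{Lemma null frame}, and the decomposition of $\gs$ in Lemma \ref{Lemma componets of gs}. The only nontrivial preliminary step is to re-express the scalar quadratic term $g^{\ka\la}\p_\ka\Psi\p_\la\Psi$ that appears in \eqref{energy-momentum tensor}. From Lemma \ref{Lemma componets of gs} one has $g^{\al\be}=\gs^{\al\be}-\f12\mu^{-1}(L^\al\Lb^\be+\Lb^\al L^\be)$, so contracting with $\p_\al\Psi\p_\be\Psi$ and using $\gs^{\al\be}\p_\al\Psi\p_\be\Psi=|\ds\Psi|^2$ together with $L^\al\p_\al\Psi=L\Psi$, $\Lb^\al\p_\al\Psi=\Lb\Psi$ gives
\begin{equation*}
	g^{\ka\la}\p_\ka\Psi\p_\la\Psi=|\ds\Psi|^2-\mu^{-1}L\Psi\,\Lb\Psi.
\end{equation*}

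With this identity in hand, each of the six formulas follows by inspection. For $Q_{LL}$, $Q_{\Lb\Lb}$, $Q_{LA}$, $Q_{\Lb A}$ the metric term $g_{\al\be}U^\al V^\be$ vanishes because $g(L,L)=g(\Lb,\Lb)=g(L,X_A)=g(\Lb,X_A)=0$ by Lemma \ref{Lemma null frame}, leaving only the product of directional derivatives (and $X_A\Psi=\ds_A\Psi$ by definition of $\ds_A$), which yields $Q_{LL}=(L\Psi)^2$, $Q_{\Lb\Lb}=(\Lb\Psi)^2$, $Q_{LA}=L\Psi\,\ds_A\Psi$, $Q_{\Lb A}=\Lb\Psi\,\ds_A\Psi$. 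For $Q_{L\Lb}$ one uses $g(L,\Lb)=-2\mu$, so $Q_{L\Lb}=L\Psi\,\Lb\Psi+\mu\big(|\ds\Psi|^2-\mu^{-1}L\Psi\,\Lb\Psi\big)=\mu|\ds\Psi|^2$, the cross terms cancelling exactly. For $Q_{AB}$ one uses $g_{\al\be}X_A^\al X_B^\be=\gs_{AB}$ (the restriction of $g$ to $S_{\t,u}$), giving $Q_{AB}=\ds_A\Psi\,\ds_B\Psi-\f12\gs_{AB}\big(|\ds\Psi|^2-\mu^{-1}L\Psi\,\Lb\Psi\big)$, which is the stated formula.

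There is essentially no obstacle here; the computation is purely algebraic. The only point requiring slight care is keeping track of the sign and the factor $\mu$ in $g(L,\Lb)=-2\mu$ when verifying that the $\mu^{-1}$ in the decomposition of $g^{\ka\la}$ combines correctly with this $-2\mu$ so that the "$L\Psi\,\Lb\Psi$" contributions in $Q_{L\Lb}$ cancel and only the $\mu|\ds\Psi|^2$ piece survives; the same cancellation structure, carried through with the coefficient $-\f12\gs_{AB}$, produces the parenthetical expression in $Q_{AB}$.
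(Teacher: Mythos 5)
Your proposal is correct: the paper states this lemma without proof, and the intended argument is exactly the direct frame computation you carry out, namely expanding $Q_{UV}$ from \eqref{energy-momentum tensor}, rewriting $g^{\ka\la}\p_\ka\Psi\p_\la\Psi=|\ds\Psi|^2-\mu^{-1}L\Psi\,\Lb\Psi$ via Lemma \ref{Lemma componets of gs}, and inserting the null-frame inner products of Lemma \ref{Lemma null frame}. All six formulas, including the cancellation in $Q_{L\Lb}$ coming from $g(L,\Lb)=-2\mu$, check out as you have written them.
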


For any vector field $V$, denote its deformation tensor with respect to $g$ by
\begin{equation}\label{deformation tensor}
	{}^{(V)}\pi_{\al\be}=g(\D_{\al}V,\p_{\be})+g(\D_{\be}V,\p_{\al}).
\end{equation}
Moreover, for any two vector fields $X$ and $Y$, one has
\begin{equation*}
	{}^{(V)}\pi_{XY}={}^{(V)}\pi_{\al\be}X^{\al}Y^{\be}=g(\D_XV,Y)+g(\D_YV,X).
\end{equation*}

\begin{lemma}\label{Lemma components of the deformation tensor}
	The components of the deformation tensor under the corresponding frames and the metric $g$ in \eqref{g}
can be derived as follows
	\begin{itemize}
		\item[(1)] when $V=L$,
		\begin{equation}\label{L pi}
			\begin{split}
				&{}^{(L)}\pi_{LL}=0,\ {}^{(L)}\pi_{LT}=-L\mu,\ {}^{(L)}\pi_{TT}=2c^{-1}\mu L(c^{-1}\mu),\\
				&{}^{(L)}\pis_{LA}=0,\ {}^{(L)}\pis_{TA}=c^2\ds_A(c^{-2}\mu),\ {}^{(L)}\pis_{AB}=2\chi_{AB},\\
				&{}^{(L)}\pi_{L\Lb}=-2L\mu,\ {}^{(L)}\pi_{\Lb\Lb}=4\mu L(c^{-2}\mu),\ {}^{(L)}\pis_{\Lb A}=2c^2\ds_A(c^{-2}\mu).
			\end{split}
		\end{equation}
		\item[(2)] when $V=\rho L$,
		\begin{equation}\label{rho L pi}
			\begin{split}
				&{}^{(\rho L)}\pi_{LL}=0,\ {}^{(\rho L)}\pi_{\Lb\Lb}=4\rho\mu L(c^{-2}\mu)-4c^{-2}\mu^2+8\mu,\ {}^{(\rho L)}\pi_{L\Lb}=-2\rho L\mu-2\mu,\\
				&{}^{(\rho L)}\pis_{LA}=0,\ {}^{(\rho L)}\pis_{\Lb A}=2\rho c^2\ds_A(c^{-2}\mu),\ {}^{(\rho L)}\pis_{AB}=2\rho\chi_{AB}.
			\end{split}
		\end{equation}
		\item[(3)] when $V=\Lb$,
		\begin{equation}\label{Lb pi}
			\begin{split}
				&{}^{(\Lb)}\pi_{LL}=0,\ {}^{(\Lb)}\pi_{\Lb\Lb}=0,\ {}^{(\Lb)}\pi_{L\Lb}=-2\Lb\mu-2\mu L(c^{-2}\mu),\\
				&{}^{(\Lb)}\pis_{LA}=-2c^2\ds_A(c^{-2}\mu),\ {}^{(\Lb)}\pis_{\Lb A}=-2\mu\ds_A(c^{-2}\mu),\ {}^{(\Lb)}\pis_{AB}=-2c^{-2}\mu\chi_{AB}.
			\end{split}
		\end{equation}
		\item[(4)] when $V=T$,
		\begin{equation}\label{T pi}
			\begin{split}
				&{}^{(T)}\pi_{LL}=0,\ {}^{(T)}\pi_{LT}=-T\mu,\ {}^{(T)}\pi_{TT}=T(c^{-2}\mu^2),\\
				&{}^{(T)}\pis_{LA}=-c^2\ds_A(c^{-2}\mu),\ {}^{(T)}\pis_{TA}=0,\ {}^{(T)}\pis_{AB}=-2c^{-2}\mu\chi_{AB},\\
				&{}^{(T)}\pi_{L\Lb}=-2T\mu,\ {}^{(T)}\pi_{\Lb\Lb}=4\mu T(c^{-2}\mu),\ {}^{(T)}\pis_{\Lb A}=-\mu\ds_A(c^{-2}\mu).
			\end{split}
		\end{equation}
		\item[(5)] when $V=R_i$,
		\begin{equation}\label{Ri pi}
			\begin{split}
				&{}^{(R_i)}\pi_{LL}=0,\ {}^{(R_i)}\pi_{LT}=-R_i\mu,\ {}^{(R_i)}\pi_{TT}=2c^{-1}\mu R_i(c^{-1}\mu),\\
				&{}^{(R_i)}\pis_{LA}=-R_i^B\check{\chi}_{AB}+\ep_{ijk}\check{L}^j\ds_Ax^k-v_i\ds_Ac,\\
				&{}^{(R_i)}\pis_{TA}=c^{-2}\mu R_i^B\check{\chi}_{AB}-\rho^{-1}c^{-2}(c-1)\mu\gs_{AB}R_i^B+c^{-1}\mu\ep_{ijk}\check{T}^j\ds_Ax^k+v_i\ds_A(c^{-1}\mu),\\
				&{}^{(R_i)}\pis_{AB}=2c^{-1}v_i\chi_{AB},\ {}^{(R_i)}\pi_{L\Lb}=-2R_i\mu,\ {}^{(R_i)}\pi_{\Lb\Lb}=4\mu R_i(c^{-2}\mu),\\
				&{}^{(R_i)}\pis_{\Lb A}=c^{-2}\mu R_i^B\check{\chi}_{AB}-2\rho^{-1}c^{-2}(c-1)\mu\gs_{AB}R_i^B-3c^{-2}\mu v_i\ds_Ac+2c^{-1}v_i\ds_A\mu\\
				&\quad\quad\quad\quad\ +c^{-2}\mu\ep_{ijk}\check{L}^j\ds_Ax^k+2c^{-1}\mu\ep_{ijk}\check{T}^j\ds_Ax^k.
			\end{split}
		\end{equation}
	\end{itemize}
\end{lemma}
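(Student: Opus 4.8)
The plan is to obtain every entry directly from the identity
${}^{(V)}\pi_{XY}=g(\D_XV,Y)+g(\D_YV,X)$, which holds for frame vectors $X,Y$ because the Levi-Civita connection $\D$ is torsion-free and metric-compatible; since ${}^{(V)}\pi=\Lie_Vg$ is a symmetric $(0,2)$-tensor, it is enough to evaluate it on pairs drawn from the null frame $\{L,\Lb,X_1,X_2\}$, equivalently from $\{L,T,X_1,X_2\}$. The two standing inputs are Lemma \ref{Lemma connection coefficients}, which supplies $\D_XW$ for every pair of frame vectors $W,X$, and Lemma \ref{Lemma null frame}, which supplies the inner products needed to contract: $g(L,L)=g(\Lb,\Lb)=g(L,X_A)=g(\Lb,X_A)=0$, $g(L,\Lb)=-2\mu$, $g(L,T)=-\mu$, $g(T,T)=c^{-2}\mu^2$, and, as an immediate consequence of $\Lb=c^{-2}\mu L+2T$ (see \eqref{Lb}), also $g(T,X_A)=0$. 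After contracting, Lemma \ref{Lemma components of second fundamental forms and torsion one forms} (giving $\chib_{AB}=-c^{-2}\mu\chi_{AB}$, $\ze_A=-c^{-1}\mu\ds_Ac$, $\et_A=c\ds_A(c^{-1}\mu)$, together with the elementary identity $\et_A+\ze_A=c^2\ds_A(c^{-2}\mu)$) is used to bring the answers into the displayed form.

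For $V=L$ and $V=\Lb$ one substitutes $\D_XL$ (resp.\ $\D_X\Lb$) from Lemma \ref{Lemma connection coefficients} and contracts: e.g.\ ${}^{(L)}\pi_{LL}=2\mu^{-1}(L\mu)g(L,L)=0$, ${}^{(L)}\pi_{LT}=\mu^{-1}(L\mu)g(L,T)=-L\mu$, ${}^{(L)}\pis_{AB}=\chi_{AB}+\chi_{BA}=2\chi_{AB}$, ${}^{(L)}\pis_{\Lb A}=2(\et_A+\ze_A)=2c^2\ds_A(c^{-2}\mu)$, and the remaining entries of \eqref{L pi} and \eqref{Lb pi} follow the same way. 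For $V=\rho L$, with $\rho=\t-u$, one has $L\rho=1$, $\Lb\rho=c^{-2}\mu-2$ and $X_A\rho=0$ (from $Lt=1,\ Lu=0,\ \Lb t=c^{-2}\mu,\ \Lb u=2,\ X_At=X_Au=0$ in Lemma \ref{Lemma null frame}), hence $\D_X(\rho L)=(X\rho)L+\rho\,\D_XL$ and ${}^{(\rho L)}\pi_{XY}=(X\rho)g(L,Y)+(Y\rho)g(L,X)+\rho\,{}^{(L)}\pi_{XY}$; inserting \eqref{L pi} and the inner products yields \eqref{rho L pi}. For $V=T$ one uses $\D_LT,\D_TT,\D_AT$ from Lemma \ref{Lemma connection coefficients}; the only entry needing care is ${}^{(T)}\pi_{LT}$, where the contributions of $\D_LT$ and $\D_TT$ combine to $-\mu c^{-1}Tc-\mu T\ln(c^{-1}\mu)$, which collapses to $-T\mu$ after writing $T\ln(c^{-1}\mu)=-c^{-1}Tc+\mu^{-1}T\mu$; the rest of \eqref{T pi} is routine.

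The substantive case is $V=R_i=\Om_i-v_i\Tr$. First I record the Christoffel symbols of $g=\mathrm{diag}(-c^2,1,1,1)$ from \eqref{Christoffel symbol}: the only nonzero ones are $\Ga^0_{00}=c^{-1}\p_tc$, $\Ga^0_{0k}=\Ga^0_{k0}=c^{-1}\p_kc$, $\Ga^k_{00}=c\,\p_kc$, so that $\D_{\p_0}\p_k=c^{-1}(\p_kc)\p_0$ and $\D_{\p_j}\p_k=0$. Since $\Om_i=\ep_{ij}{}^kx^j\p_k$ this gives $\D_{\p_0}\Om_i=c^{-1}(\Om_ic)\p_0$ and $\D_{\p_l}\Om_i=\ep_{il}{}^k\p_k$, from which $\D_L\Om_i,\D_T\Om_i,\D_A\Om_i$ are read off by inserting the frame components and using $\ds_Ax^k=X_A^k$ with $X_A^0=0$. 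Next, writing $\Tr=c\mu^{-1}T$ lets me reuse $\D_XT$ from Lemma \ref{Lemma connection coefficients} to obtain $\D_X\Tr$, whence $\D_XR_i=\D_X\Om_i-(Xv_i)\Tr-v_i\,\D_X\Tr$. Contracting ${}^{(R_i)}\pi_{XY}=g(\D_XR_i,Y)+g(\D_YR_i,X)$ with the frame and simplifying with $v_i=g(\Om_i,\Tr)$ (see \eqref{v_i}), $g(R_i,L)=g(R_i,T)=0$ (so $R_i$ is tangent to $S_{\t,u}$ and $R_i=R_i^AX_A$), the definitions of $\Lc^i,\Tc^i,\chic_{AB}$, and the identities \eqref{YHC-1} — in particular $\Lc^i=-c\Tc^i+\rho^{-1}(c-1)x^i$ — produces \eqref{Ri pi}; the longest entry ${}^{(R_i)}\pis_{\Lb A}$ is assembled from the $L$- and $T$-contractions through $\Lb=c^{-2}\mu L+2T$.

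\textbf{Main obstacle.} For $V\in\{L,\rho L,\Lb,T\}$ everything is bookkeeping once Lemmas \ref{Lemma null frame}, \ref{Lemma connection coefficients}, \ref{Lemma components of second fundamental forms and torsion one forms} are in hand. The delicate part is the $R_i$ computation, because $\Om_i$ is not adapted to the null frame: one has to carry carefully the inhomogeneous pieces generated by $\D\Om_i$ (the $\Om_ic$ and $\ds_Ax^k=X_A^k$ terms) and by the scalar weight $v_i$, then repackage them consistently via \eqref{YHC-1} and $g(R_i,T)=0$ so that the result appears in the ``error-adapted'' form of \eqref{Ri pi}; keeping the algebra of the $\ds_A(c^{-1}\mu)$, $\ds_Ac$ and $\rho^{-1}(c-1)$ contributions straight in ${}^{(R_i)}\pis_{TA}$ and ${}^{(R_i)}\pis_{\Lb A}$ is where the real work lies.
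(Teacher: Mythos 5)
Your proposal is correct and follows essentially the same route the paper intends: the lemma is a direct computation from ${}^{(V)}\pi_{XY}=g(\D_XV,Y)+g(\D_YV,X)$ together with the inner products of Lemma \ref{Lemma null frame}, the connection coefficients of Lemma \ref{Lemma connection coefficients}, and the relations of Lemma \ref{Lemma components of second fundamental forms and torsion one forms} (via $\et_A+\ze_A=c^2\ds_A(c^{-2}\mu)$), with the $\rho L$ case reduced to the $L$ case through $L\rho=1$, $\Lb\rho=c^{-2}\mu-2$, $X_A\rho=0$, and the $R_i$ case handled by $\D_XR_i=\D_X\Om_i-(Xv_i)\Tr-v_i\D_X\Tr$ using the Christoffel symbols of $g$ and the error-vector identities \eqref{YHC-1}. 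Spot-checking the nontrivial entries (e.g. ${}^{(R_i)}\pis_{LA}$, ${}^{(R_i)}\pis_{AB}$, ${}^{(R_i)}\pi_{TT}$, ${}^{(T)}\pi_{LT}$) confirms your scheme reproduces the stated formulas, so no gap remains beyond the bookkeeping you already flag.
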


\vskip 0.1 true cm

\begin{lemma}\label{Lemma properties of Lie derivatives}
For any tensor field $\xi$, according to Lemmas 7.5, 8.6 and Corollary 4.13 in \cite{Sp}, then
\begin{itemize}
	\item[(1)] $X[T(Y_1,\cdots,Y_p)]=(\Lie_XT)(Y_1,\cdots,Y_p)+\sum_{i=1}^pT(Y_1,\cdots,\Lie_XY_i,\cdots,Y_p)$.
	
	\item[(2)] $\Lies_Z\gs_{AB}={}^{(Z)}\pis_{AB},\ \Lies_Z\gs^{AB}=-{}^{(Z)}\pis^{AB},\ Z\in\{L,\rho L,T,\Lb,R_1,R_2,R_3\}$.
	
	\item[(3)] $\Lies_Z\ds f=\ds Zf,\ Z\in\{L,\rho L,T,R_1,R_2,R_3\}$, $\Lies_{\Lb}\ds_Af=\ds_A\Lb f+[\Lb,X_A]f=\ds_A\Lb f-\ds_A(c^{-2}\mu)Lf$.
	
	\item[(4)] $[\Lies_X,\Lies_Y]=\Lies_{[X,Y]}$.
\end{itemize}
\end{lemma}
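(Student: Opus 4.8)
The plan is to obtain all four identities from the basic calculus of Lie derivatives, combined with explicit commutator computations in the adapted coordinates $(\t,u,\vt^1,\vt^2)$ of \eqref{change of coordinates}, in which $L=\p/\p\t$, $X_A=\p/\p\vt^A$ and $T=\p/\p u-\Xi^AX_A$, and with the identification ${}^{(Z)}\pi=\Lie_Zg$ coming from \eqref{deformation tensor}; throughout I use that $\Lies_Z\xi$ is the restriction to $S_{\t,u}$ (i.e.\ the $\gs$-projection) of $\Lie_Z\xi$ for $S$-tangent $\xi$.

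First I would dispatch (1) and (4), which are purely algebraic. Identity (1) is the Leibniz rule: writing $T(Y_1,\dots,Y_p)$ as the total contraction of $T\otimes Y_1\otimes\cdots\otimes Y_p$ and invoking that $\Lie_X$ is a derivation on the tensor algebra commuting with all contractions, with $\Lie_Xf=Xf$ on functions and $\Lie_XY=[X,Y]$ on vector fields, the formula drops out at once. Identity (4) for full Lie derivatives is the standard relation $[\Lie_X,\Lie_Y]=\Lie_{[X,Y]}$, which one checks on functions and on vector fields (where it is the Jacobi identity) and then extends by the derivation property; for the projected operators $\Lies$ one must additionally verify that, for $X,Y$ in the admissible set $\{L,\rho L,T,\Lb,R_1,R_2,R_3\}$, the $\gs$-projection is compatible with the double bracket, i.e.\ the transverse components of the iterated commutators cancel — this is precisely the content of \cite[Corollary 4.13]{Sp}.

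For (2) I would first note, using Lemma \ref{Lemma componets of gs} together with $g(L,X_A)=g(\Lb,X_A)=0$ from Lemma \ref{Lemma null frame}, that $g(X_A,X_B)=\gs_{AB}$, so $g$ serves as an ambient extension of $\gs$ agreeing with it on $S$-tangent vectors; applying (1) to $g$ then gives $\Lies_Z\gs_{AB}=(\Lie_Zg)(X_A,X_B)={}^{(Z)}\pi_{AB}={}^{(Z)}\pis_{AB}$, the last step being the definition of $\pis$ as the restriction of $\pi$ to the sphere, and the only point to check is that the transverse ambiguity of the extension does not contribute because it sits in the $(L,\Lb)$-slot, which is killed upon contraction with $X_A,X_B$. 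The dual identity $\Lies_Z\gs^{AB}=-{}^{(Z)}\pis^{AB}$ follows by applying $\Lies_Z$ to $\gs^{AC}\gs_{CB}=\de^A_B$ and using that $\Lies_Z$ is a derivation commuting with contraction on $S$-tangent tensors. For (3), applying (1) to the sphere one-form $\ds f$ gives $(\Lie_Z\ds f)(X_A)=ZX_Af-(\Pis[Z,X_A])f$; when $Z\in\{L,\rho L,T,R_i\}$ a short coordinate computation shows $[Z,X_A]$ is already $S$-tangent — indeed $[L,X_A]=[\rho L,X_A]=0$ since $\t-u$ is $\vt$-independent, $[T,X_A]=(X_A\Xi^B)X_B$, and $[R_i,X_A]$ is tangent to $S_{\t,u}$ because both $R_i$ and $X_A$ are — hence $\Lies_Z\ds_Af=ZX_Af-[Z,X_A]f=X_AZf=\ds_A(Zf)$. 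For $Z=\Lb=c^{-2}\mu L+2T$ the same computation yields $[\Lb,X_A]=-\ds_A(c^{-2}\mu)\,L+(\text{$S$-tangent})$, so the transverse $L$-component $-\ds_A(c^{-2}\mu)L$ survives the projection and produces the correction term $-\ds_A(c^{-2}\mu)Lf$ in $\Lies_{\Lb}\ds_Af$.

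I expect the only genuinely delicate point to be the bookkeeping of the $S$-projection in (3) and (4): precisely identifying which commutators $[Z,X_A]$ — and, for (4), which iterated commutators — fail to be tangent to $S_{\t,u}$, computing their transverse ($L$- and $\Lb$-) components, and checking the attendant cancellations. Once these are pinned down, everything reduces to the derivation properties of $\Lie$ and $\Lies$; the required computations are routine and are carried out in detail in \cite{Ch1} and \cite[Lemmas 7.5, 8.6 and Corollary 4.13]{Sp}, which we follow.
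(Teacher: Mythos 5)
Your proposal is correct, and it is in fact more of a proof than the paper supplies: the paper establishes this lemma purely by citation to \cite[Lemmas 7.5, 8.6, Corollary 4.13]{Sp}, so your direct verification via the Leibniz/derivation properties of $\Lie$ and the explicit commutators $[Z,X_A]$ in the $(\t,u,\vt)$ coordinates is a legitimate alternative route, and your deferral of the projection-compatibility in (4) to the same reference matches the paper's own level of detail. Two small points are worth recording if you write this out. First, in (3) your tangency claim for $[R_i,X_A]$ needs the (true) fact that $R_i$ is tangent to every $S_{\t,u}$, i.e.\ $R_it=0$ and $R_iu=0$; the latter follows from $\Tr^k=c\mu^{-1}^{-1}\!$\ --- more precisely from $\p_ku=c\mu^{-1}\Tr^k$, which gives $\Om_iu=c\mu^{-1}v_i=v_i\Tr u$, so $R_iu=\Om_iu-v_i\Tr u=0$. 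Second, your computation for $Z=\Lb$ actually sharpens the lemma's middle expression: since $[\Lb,X_A]=-\ds_A(c^{-2}\mu)L+2(X_A\Xi^B)X_B$, only the transverse part survives the $S_{\t,u}$-projection, so the correction term is exactly $-\ds_A(c^{-2}\mu)Lf$ (the $S$-tangent part of the commutator is absorbed by the projected Lie derivative), which is the final formula stated in the lemma; this is the one genuinely delicate bookkeeping step and you have it right.
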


\begin{lemma}\label{Lemma commutators}
According to Lemmas 8.9, 8.11 in \cite{Sp}, then

(1) For any vector field $Z\in\{L,\rho L,T,R_1,R_2,R_3\}$ and symmetric $(0,2)$-type tensor field $\xi$ on $S_{\t,u}$,
\begin{equation}\label{[nas,Lies]}
	([\nas_A,\Lies_Z]\xi)_{BC}=(\check{\nas}_A{}^{(Z)}\pis_B^D)\xi_{CD}+(\check{\nas}_A{}^{(Z)}\pis_C^D)\xi_{BD},
\end{equation}
where
\begin{equation*}
	\check{\nas}_A{}^{(Z)}\pis_{BC}=\frac{1}{2}(\nas_A{}^{(Z)}\pis_{BC}+\nas_B{}^{(Z)}\pis_{AC}-\nas_C{}^{(Z)}\pis_{AB}).
\end{equation*}

(2) For any vector field $Z\in\{L,\rho L,T,R_1,R_2,R_3\}$ and smooth function $f$,
\begin{equation}\label{[nas^2,Lies]}
	\big([\nas^2,\Lies_Z]f\big)_{AB}=(\check{\nas}_A{}^{(Z)}\pis_B^C)\ds cf
\end{equation}
and
\begin{equation}\label{[Des,Z]}
	[\Des,Z]f={}^{(Z)}\pis^{AB}\nas_{AB}^2f+\check\nas_A{}^{(Z)}\pis^{AB}\ds_Bf,
\end{equation}
in particular,
\begin{equation}\label{particular [Des,Z]}
	\begin{split}
		[L,\Des]f&=-2\chic^{AB}\nas_{AB}^2f-2\rho^{-1}\Des f-2\check{\nas}_A\chic^{AB}\ds_Bf,\\
		[T,\Des]f&=-2c^{-2}\mu\chic^{AB}\nas_{AB}^2f-2\rho^{-1}c^{-2}\mu\Des f-2\check{\nas}_A(c^{-2}\mu\chi^{AB})\ds_Bf.
	\end{split}
\end{equation}
	
(3) The following commutator relations hold from Lemma \ref{Lemma connection coefficients} that
\begin{equation}\label{[X,Y]}
    \begin{split}
		&[L,R_i]={}^{(R_i)}\pis_L,\\
		&[L,T]={}^{(T)}\pis_L,\\
		&[T,R_i]={}^{(R_i)}\pis_T,\\
		&[L,\Lb]=L(c^{-2}\mu)L-2(\ze^A+\et^A)X_A.
	\end{split}
\end{equation}
\end{lemma}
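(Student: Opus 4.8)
Here is how I would establish \eqref{[nas,Lies]}, \eqref{[nas^2,Lies]}--\eqref{particular [Des,Z]} and \eqref{[X,Y]}; all four are direct computations from the structure already recorded in this section, following Lemmas 7.5, 8.9 and 8.11 of \cite{Sp}. The common algebraic input for parts (1) and (2) is the formula for the Lie derivative along $Z$ of the intrinsic Christoffel symbols $\Gas_{AB}^C$ of $\gs$: since $\Gas$ and the connection obtained by transporting $\nas$ along the flow of $Z$ are both torsion-free, their infinitesimal difference is an $S_{\t,u}$-tensor, so the standard variation-of-Christoffel-symbols computation together with $\Lies_Z\gs_{AB}={}^{(Z)}\pis_{AB}$ (Lemma \ref{Lemma properties of Lie derivatives}(2)) gives
\begin{equation*}
	\Lies_Z\Gas_{AB}^C=\f12\gs^{CD}\big(\nas_A{}^{(Z)}\pis_{BD}+\nas_B{}^{(Z)}\pis_{AD}-\nas_D{}^{(Z)}\pis_{AB}\big)=\check{\nas}_A{}^{(Z)}\pis_B^C.
\end{equation*}

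For (1), I would write $\nas_A\xi_{BC}=X_A\xi_{BC}-\Gas_{AB}^D\xi_{DC}-\Gas_{AC}^D\xi_{BD}$, apply $\Lies_Z$ using the Lie-derivative product rule of Lemma \ref{Lemma properties of Lie derivatives}(1), and subtract $\nas_A(\Lies_Z\xi)_{BC}$. All terms that do not differentiate a Christoffel symbol cancel against their counterparts, so that $[\nas_A,\Lies_Z]\xi$ reduces to the contraction of $\Lies_Z\Gas$ against the two lower indices of $\xi$; substituting the displayed expression for $\Lies_Z\Gas$ produces exactly the right-hand side of \eqref{[nas,Lies]}.

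For (2), running the same argument with the one-form $\ds f$ in place of $\xi$ and using $\Lies_Z\ds f=\ds Zf$ for $Z\in\{L,\rho L,T,R_1,R_2,R_3\}$ (Lemma \ref{Lemma properties of Lie derivatives}(3)) shows that $[\nas^2,\Lies_Z]f$ equals $[\nas_A,\Lies_Z]$ applied to $\ds f$, which is \eqref{[nas^2,Lies]}. Taking the $\gs$-trace of \eqref{[nas^2,Lies]} — using that $\Des f$ is a scalar so $\Lies_Z(\Des f)=Z(\Des f)$, and using $\Lies_Z\gs^{AB}=-{}^{(Z)}\pis^{AB}$ (Lemma \ref{Lemma properties of Lie derivatives}(2)) to handle the term $\gs^{AB}\Lies_Z(\nas_{AB}^2f)$ — yields \eqref{[Des,Z]}. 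The two particular cases then follow by inserting ${}^{(L)}\pis_{AB}=2\chi_{AB}$, resp.\ ${}^{(T)}\pis_{AB}=-2c^{-2}\mu\chi_{AB}$, from Lemma \ref{Lemma components of the deformation tensor}, decomposing $\chi_{AB}=\rho^{-1}\gs_{AB}+\chic_{AB}$, and using $\nas\gs=0$ together with $\nas_A\rho^{-1}=-\rho^{-2}X_A\rho=0$ (since $\rho=\t-u$, $X_A=\p/\p\vt^A$): the round piece $\rho^{-1}\gs_{AB}$ collapses the contraction with $\nas_{AB}^2f$ into a multiple of $\Des f$, while the remaining pieces assemble into the $\chic$-terms displayed in \eqref{particular [Des,Z]}.

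For (3), I would use that in the coordinates $(\t,u,\vt^1,\vt^2)$ one has $L=\p/\p\t$, $T=\p/\p u-\Xi$ with $\Xi=\Xi^AX_A$ tangent to $S_{\t,u}$, and $X_A=\p/\p\vt^A$; since $R_i$ is $S_{\t,u}$-tangent too, each of the brackets $[L,R_i]$, $[L,T]$, $[T,R_i]$, $[T,X_A]$, $[R_i,X_A]$ is again $S_{\t,u}$-tangent. As ${}^{(V)}\pi=\Lie_Vg$ for every $V$, for $(Z,W)$ equal to $(L,R_i)$, $(L,T)$ or $(T,R_i)$ the expansion ${}^{(W)}\pi_{ZA}=(\Lie_Wg)(Z,X_A)=W\big(g(Z,X_A)\big)-g([W,Z],X_A)-g(Z,[W,X_A])$, together with $g(L,X_A)=g(T,X_A)=0$ (immediate from Lemma \ref{Lemma null frame}) and $g(Z,[W,X_A])=0$ (because $[W,X_A]$ is $S_{\t,u}$-tangent and $g(L,X_B)=g(T,X_B)=0$), gives $g([Z,W],X_A)={}^{(W)}\pis_{ZA}$; combined with the $S_{\t,u}$-tangency of $[Z,W]$ this is the statement $[Z,W]={}^{(W)}\pis_Z$, i.e.\ the first three lines of \eqref{[X,Y]}. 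Finally $[L,\Lb]=\D_L\Lb-\D_{\Lb}L$ by torsion-freeness of $\D$, and substituting $\D_L\Lb=-2\ze^AX_A$ and $\D_{\Lb}L=-L(c^{-2}\mu)L+2\et^AX_A$ from Lemma \ref{Lemma connection coefficients} gives the last line. The only delicate point, and the step I expect to require the most care, is the bookkeeping: tracking index placements and the precise symmetrization in $\check{\nas}$ through the cancellations in (1)--(2), and checking in the $T$-case of \eqref{particular [Des,Z]} that carrying the factor $c^{-2}\mu$ through the $\gs$-trace collects the lowest-order contribution as $\check{\nas}_A(c^{-2}\mu\chi^{AB})\ds_Bf$ rather than $\check{\nas}_A(c^{-2}\mu\chic^{AB})\ds_Bf$; there is no analytic difficulty anywhere.
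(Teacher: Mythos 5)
Your computation is correct in substance, and it is precisely the argument the paper omits: for this lemma the paper offers no proof beyond citing Lemmas 8.9 and 8.11 of \cite{Sp} for parts (1)--(2) and reading part (3) off the deformation tensors and Lemma \ref{Lemma connection coefficients}. Your route --- the first-variation identity $\Lies_Z\Gas_{AB}^C=\check{\nas}_A{}^{(Z)}\pis_B^C$ obtained from $\Lies_Z\gs_{AB}={}^{(Z)}\pis_{AB}$, the resulting commutator formulas for one-forms and symmetric $(0,2)$-tensors, the $\gs$-trace with $\Lies_Z\gs^{AB}=-{}^{(Z)}\pis^{AB}$ to get \eqref{[Des,Z]}, and the tangency argument giving $g([Z,W],X_A)={}^{(W)}\pis_{ZA}$ together with $[L,\Lb]=\D_L\Lb-\D_{\Lb}L$ --- is exactly the standard proof underlying the cited results, so there is no divergence of method to speak of, only the fact that you supply the computation while the paper cites it.

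Two remarks on the point you single out as delicate. First, the distinction you worry about between $\check{\nas}_A(c^{-2}\mu\chi^{AB})\ds_Bf$ and $\check{\nas}_A(c^{-2}\mu\chic^{AB})\ds_Bf$ is immaterial: for any scalar $h$ one has $\gs^{AC}\gs^{BD}\check{\nas}_A(h\gs_{CD})=\f12\big(\ds^Bh+\ds^Bh-2\ds^Bh\big)=0$, so the pure-trace part of ${}^{(Z)}\pis$ never contributes to the check-divergence term and the two expressions coincide (the same remark explains why the $L$-line may be written with $\chic$). Second, if you carry your scheme through faithfully for $Z=T$ you will not reproduce the second line of \eqref{particular [Des,Z]} as printed but its negative: \eqref{[Des,Z]} with ${}^{(T)}\pis_{AB}=-2c^{-2}\mu\chi_{AB}$ gives $[T,\Des]f=2c^{-2}\mu\chic^{AB}\nas_{AB}^2f+2\rho^{-1}c^{-2}\mu\Des f+2\check{\nas}_A(c^{-2}\mu\chi^{AB})\ds_Bf$, and the flat model $c\equiv1$, $\mu\equiv1$, $\chic=0$, $\Des f=\rho^{-2}\De_{\mathbb{S}^2}f$ confirms $[T,\Des]f=2\rho^{-1}\Des f$, whereas the $L$-line checks out as printed. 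So the printed $T$-line is actually $[\Des,T]f$ --- a sign typo in the statement, not a defect in your argument --- and you should not force your bookkeeping to match it.
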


\begin{lemma}\label{Lemma relation of equivalence}
As in \cite{M-Y}, for any smooth function $f$, one has
\begin{equation*}
	|R_if| \sim |\ds f| \sim |\nas f|.
\end{equation*}
Moreover, for any $S_{\t,u}$-tangential one-from  $\xi$ (or trace-free symmetric $S_{\t,u}$-tangential $(0,2)$-type tensor field),
we have
\begin{equation*}
	|\Lies_{R_i}\xi| \sim |\xi|+|\nas\xi|.
\end{equation*}
\end{lemma}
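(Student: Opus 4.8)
The plan is to reduce the whole statement to two uniform geometric facts on the region $D^{\t,u}$, valid once $\de$ is small: (A) $|R_i|\les 1$ and $|\nas R_i|\les 1$; and (B) the symmetric tensor $\Upsilon^{AB}:=\sum_{i=1}^3 R_i^AR_i^B$ is uniformly comparable to $\gs^{AB}$ as a quadratic form on $T^*S_{\t,u}$, i.e. $\Upsilon^{AB}\xi_A\xi_B\sim\gs^{AB}\xi_A\xi_B$ for every $S_{\t,u}$-tangential one-form $\xi$. Granting these, the lemma is short. First, $|\ds f|=|\nas f|$ holds by definition, since for a scalar the $\gs$-gradient $\nas f$ is metrically dual to $\ds f$. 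Next, because $R_i$ is $S_{\t,u}$-tangential, $R_if=R_i^A\ds_Af$, so $|R_if|\le|R_i|\,|\ds f|\les|\ds f|$ by (A), while $\sum_i|R_if|^2=\Upsilon^{AB}\ds_Af\,\ds_Bf\sim\gs^{AB}\ds_Af\,\ds_Bf=|\ds f|^2$ by (B); combining gives $|R_if|\sim|\ds f|\sim|\nas f|$. For the tensor statement I would write $\Lies_{R_i}\xi=\nas_{R_i}\xi+\xi\!\cdot\!\nas R_i$ (valid since $\nas$ is torsion-free; the trace-free symmetric $(0,2)$ case only adds one more slot), so by (A) $|\Lies_{R_i}\xi|\les|R_i|\,|\nas\xi|+|\nas R_i|\,|\xi|\les|\nas\xi|+|\xi|$; and conversely, from $\nas_{R_i}\xi=\Lies_{R_i}\xi-\xi\!\cdot\!\nas R_i$ together with $\sum_i|\nas_{R_i}\xi|^2=\Upsilon^{AB}\nas_A\xi\!\cdot\!\nas_B\xi\gtrsim|\nas\xi|^2$ by (B), one gets $|\nas\xi|\les\sum_i|\Lies_{R_i}\xi|+|\xi|$; hence $\sum_i|\Lies_{R_i}\xi|+|\xi|\sim|\nas\xi|+|\xi|$.

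To prove (A) and (B) the key observation is that, on the bounded time range $1\le\t<t^*\le 2$, the surfaces $S_{\t,u}$ stay uniformly close to the Euclidean spheres of radius $\rho=\t-u$. Indeed $L\rho=1$ and $\chi_{AB}=\rho^{-1}\gs_{AB}+\chic_{AB}$, so $L(\rho^{-2}\gs_{AB})=2\rho^{-2}\chic_{AB}$; integrating from $\t=1$, where $\rho^{-2}\gs_{AB}$ equals the round unit metric, and using the smallness of $\chic$ coming from the estimates of Section \ref{Section 3}, shows $\gs^{AB}$ is $\rho^{-2}$ times the round unit metric up to an $O(\de^{\text{positive}})$ error. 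Likewise $R_i=\Om_i-v_i\Tr$ with $v_i=\ep_{ijk}x^j\Tc^k$ small and the error vectors $\Lc^i,\Tc^i$ small, so $R_i$ equals the Euclidean rotation field $\Om_i$ tangentially projected onto $S_{\t,u}$, again up to an $O(\de^{\text{positive}})$ error. For (A): since $1\le\t<2$, $|\Om_i|=|\ep_{ijk}x^j\p_k|\les r\les 1$, $|v_i|$ is small and $|\Tr|\les 1$, giving $|R_i|\les 1$; for $\nas R_i$, split it into its $\gs$-symmetric part $\tfrac12{}^{(R_i)}\pis_{AB}=c^{-1}v_i\chi_{AB}$, which is bounded by Lemma \ref{Lemma components of the deformation tensor}, and its antisymmetric part, which on the $2$-surface $S_{\t,u}$ is the area form times the curl of the one-form $\gs(R_i,\cdot)$; at $\t=1$ this is the curl of $\Om_i$ on the round sphere, which is bounded, and the perturbation keeps it bounded. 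For (B): at $\t=1$ one has $R_i=\Om_i$, $\gs$ the round-sphere metric, and the classical identity $\sum_i\Om_i^a\Om_i^b=r^2(\de^{ab}-\om^a\om^b)$ — the right-hand side being $r^2$ times the Euclidean orthogonal projection onto $TS_r^2$ — yields $\Upsilon^{AB}=\gs^{AB}$ exactly; the $O(\de^{\text{positive}})$ perturbation then preserves $\Upsilon^{AB}\sim\gs^{AB}$ for small $\de$.

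I expect the main obstacle to be fact (B): quantifying, uniformly over all of $D^{\t,u}$, the precise sense in which $R_1,R_2,R_3$ span $TS_{\t,u}$. This is exactly where one needs the closeness of $(R_i,\gs)$ to the Euclidean round-sphere data, hence the (bootstrap) smallness of $\Lc,\Tc,\chic,v_i$ and the fact that $\t$ ranges only over the bounded interval $[1,t^*)$ with $t^*\le 2$ (so accumulated errors along $L$ stay $O(\de^{\text{positive}})$). The purely algebraic identity $\sum_i\Om_i\otimes\Om_i=r^2\,\Pi_{\om^\perp}$ is what makes (B) hold at leading order; everything else, including the bound on $\nas R_i$, is a perturbation argument off this Euclidean model.
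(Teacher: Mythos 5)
The paper itself gives no proof of this lemma (it is quoted from \cite{M-Y}), and your proposal is in essence the standard argument behind that citation: reduce everything to (A) $|R_i|+|\nas R_i|\les 1$ and (B) the spanning property $\sum_i R_i^AR_i^B\sim\gs^{AB}$, which rests on the Euclidean identity $\sum_i\Om_i^a\Om_i^b=r^2(\de^{ab}-\om^a\om^b)$ together with the fact that $R_i=\Om_i-v_i\Tr$ is exactly the orthogonal projection of $\Om_i$ onto $TS_{\t,u}$. Your algebraic reductions ($R_if=R_i^A\ds_Af$, $\Lies_{R_i}\xi=\nas_{R_i}\xi+\xi\cdot\nas R_i$, and the two-sided bounds via (A)/(B)) are correct, provided the statement is read with a sum over $i=1,2,3$, as intended; for a single fixed $i$ the lower bounds fail at the zeros of $R_i$ on $S_{\t,u}$.

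Three points should be tightened. First, the only real gap: you prove boundedness of the symmetric part of $\nas R_i$ via ${}^{(R_i)}\pis_{AB}=2c^{-1}v_i\chi_{AB}$, but for the antisymmetric part you merely assert that ``the perturbation keeps it bounded''. No propagation-in-$\t$ argument is needed here: since $g(\Tr,X_B)=0$ and $\si_{AB}=-c^{-1}\chi_{AB}$ (Lemma \ref{Lemma components of second fundamental forms and torsion one forms}), one computes directly $\nas_AR_{iB}=g(\D_A\Om_i,X_B)-v_ig(\D_A\Tr,X_B)=\ep_{ijk}\ds_Ax^j\ds_Bx^k+O(|\p c|)+c^{-1}v_i\chi_{AB}$, every term being bounded because $|\ds x^j|\les1$ and the Christoffel symbols of $g$ are $O(|\p c|)$; this replaces your curl discussion. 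Second, the spanning fact (B) does not require closeness of $\gs$ to the round metric at all: projecting the rotation identity gives the exact formula $\sum_iR_i^AR_i^B\xi_A\xi_B=r^2|\xi|^2-\rho^2\big(\gs^{AB}g(\Tc^j\p_j,X_A)\xi_B\big)^2$, so only a rough bound such as $|\Tc|\le\f12$ and $r\sim\rho\sim1$ is needed (your statement ``$\Upsilon^{AB}=\gs^{AB}$ exactly'' at $\t=1$ also misses the factor $r^2$, which is harmless). Third, a logical caveat: the smallness of $\chic,\Lc,\Tc,v_i$ that you invoke is established in Proposition \ref{Proposition lower order L^infty estimates}, whose proof already uses this very lemma to pass from the $R_i$-derivatives in \eqref{bootstrap assumptions} and \eqref{L^infty estimates of c} to $\ds c$ and $\nas^2\vp_0$ in \eqref{Rc_LALB}. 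So the equivalence must be proved with constants depending only on rough bounds carried inside the bootstrap/continuity argument (which is how \cite{M-Y} arrange it); as written, importing the full Section \ref{Section 3} smallness into the proof of the lemma is circular, though easily repaired along the lines just indicated.
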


\begin{remark}\label{Remark sqrt det gs is bounded above and below}
As in \cite{M-Y}, the formula
	\begin{equation*}
		L\big(\ln\sqrt{\det\gs}\big)=\trgs\chi	
	\end{equation*}
illustrates that $\sqrt{\det\gs}$ is bounded and never vanishes along each null generator when
the bounded estimate of $\chi$ is established.
\end{remark}

\vskip 0.1 true cm

We now look for the equation of $\vp=(\vp_0,\vp_1,\vp_2,\vp_3)=(\p_t\phi,\p_1\phi,\p_2\phi,\p_3\phi)$ under
the action of the covariant wave operator $\Box_g=g^{\al\be}\D_{\al\be}^2$ with the help of metrics and Christoffel symbols.
Taking the derivative on two sides of \eqref{quasilinear wave equation} with respect to the variable $x^{\ga}$ derives
\begin{equation*}
	-c^{-2}\p_t^2\vp_{\ga}+\De\vp_{\ga}=p\vp_0^{p-1}\p_t\vp_0\p_t\vp_{\ga}.
\end{equation*}
Then it follows from direct computation that
\begin{equation}\label{covariant wave equations}
	\mu\Box_g\vp_\ga=F_\ga,
\end{equation}
where
\begin{equation}\label{F_ga}
	F_\ga=\frac{1}{2}p\mu\vp_0^{p-1}L\vp_0L\vp_{\ga}+\frac{1}{2}pc^2\vp_0^{p-1}L\vp_0T\vp_{\ga}+\frac{1}{2}pc^2\vp_0^{p-1}T\vp_0L\vp_{\ga}-\frac{1}{2}pc^2\mu\vp_0^{p-1}\ds_A\vp_0\ds^A\vp_{\ga}.
\end{equation}
On the other hand, due to
\begin{equation*}
	\mu\Box_g\vp_\ga=-(L+\frac{1}{2}\trgs\chi)\Lb\vp_\ga+2c^{-1}\mu\ds_Ac\ds^A\vp_\ga+\mu\Des\vp_\ga+\frac{1}{2}c^{-2}\mu\trgs\chi L\vp_\ga,
\end{equation*}
this yields
\begin{equation}\label{transport equation of Lb vp_ga}
	(L+\frac{1}{2}\trgs\chi)\Lb\vp_\ga=\mu\Des\vp_\ga+H_\ga-F_\ga,
\end{equation}
where
\begin{equation}\label{H_ga}
	H_\ga=\frac{1}{2}c^{-2}\mu\trgs\chi L\vp_\ga+2c^{-1}\mu\ds_Ac\ds^A\vp_\ga.
\end{equation}
For convenience, define
\begin{equation}\label{Hc_ga}
	\Hc_\ga=2c^{-1}\mu\ds_Ac\ds^A\vp_\ga.
\end{equation}

At the last of this section, we give the structure equations of $\mu$, $\chi$ and $L^i$ as in \cite{DLY} and \cite{M-Y}.
\begin{lemma}\label{Lemma structure equations}
	It holds that
\begin{align}
	L\mu&=c^{-1}Lc\mu-cTc,\label{transport equation of mu}\\
	\Lies_L\chi_{AB}&=c^{-1}Lc\chi_{AB}+\chi_A^C\chi_{BC}-\check{\R}_{LALB},\label{transport equation of chi}\\
	\Lies_T\chi_{AB}&=c^{-1}Tc\chi_{AB}-c^{-2}\mu\chi_A^C\chi_{BC}+c\nas_{AB}^2(c^{-1}\mu),\label{transport equation of chi along T}\\
	(\divgs\chi)_A&=\ds_A\trgs\chi+c^{-1}\ds^Bc\chi_{AB}-c^{-1}\ds_Ac\trgs\chi,\label{elliptic equation of chi}\\
	LL^i&=c^{-1}LcL^i-c\ds_Ac\ds^Ax^i,\label{transport equation of L^i}\\
	TL^i&=c^{-1}TcL^i+c\ds_A(c^{-1}\mu)\ds^Ax^i,\label{transport equation of L^i along T}\\
	\ds_AL^i&=\chi_{AB}\ds^Bx^i.\label{structure equation of L^i}
\end{align}
\end{lemma}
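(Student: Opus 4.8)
All seven identities are Gauss--Codazzi--Ricci type relations for the spheres $S_{\t,u}$ sitting inside $(D^{\t,u},g)$, and the strategy is to exploit three structural features of the present metric and optical function. (i) By \eqref{g} and \eqref{Christoffel symbol} the only non-constant component of $g$ is $g_{00}=-c^2$, so the only Christoffel symbols with a spatial middle index are $\Ga_{0i0}=c\,\p_ic$; hence $\Ga^i_{\al\be}V^\al W^\be=c\,\p_ic\,V^0W^0$ for any vector fields $V,W$. (ii) By \eqref{R_0i0j} the only nonzero curvature coefficient is $\R_{0i0j}=c\,\p^2_{ij}c$; since $X_A^0=\Tr^0=0$ and $L^0=1$ (Lemma \ref{Lemma null frame}), all curvature contractions of the form $\R(L,X_A,X_B,X_C)$, $\R(X_A,X_B,X_C,L)$ and $\R(T,X_A,L,X_B)$ vanish, and only $\R_{LALB}$ survives, as recorded in \eqref{R_LALB}. (iii) Since $\Lr=-\na u$ and $g^{\al\be}\p_\al u\p_\be u\equiv0$ by \eqref{eikonal equation}, one has $\D_{\Lr}\Lr=\f12\na\big(g^{\al\be}\p_\al u\p_\be u\big)=0$, i.e. $\Lr$ is $g$-geodesic, and rescaling gives $\D_LL=\mu^{-1}(L\mu)L$. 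Together with the connection coefficients of Lemma \ref{Lemma connection coefficients} (whose derivation uses only the definitions \eqref{second fundamental forms}--\eqref{torsion one forms}, the null-frame identities and $\D_LL=\mu^{-1}(L\mu)L$, hence is independent of the present lemma), these three facts turn the computation into bookkeeping.

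The first step is \eqref{transport equation of mu}. Differentiating \eqref{eikonal equation} in $x^\ga$ and contracting the second-order term with $g^{\al\be}\p_\be u=-\Lr^\al$ gives the transport law $\Lr(\p_\ga u)=\f12(\p_\ga g^{\al\be})\p_\al u\p_\be u$; taking $\ga=0$, inserting $\p_0g^{00}=2c^{-3}\p_tc$ and $\p_tu=c^2\mu^{-1}$ (from \eqref{inverse foliation density}), and simplifying with $\Lr=\mu^{-1}L$ and $T=c^{-2}\mu(\p_t-L)$ yields $L\mu=c^{-1}(Lc)\mu-cTc$. A purely algebraic corollary, used repeatedly below, is $L(c^{-1}\mu)=-Tc$. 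Next, \eqref{structure equation of L^i}, \eqref{transport equation of L^i along T} and \eqref{transport equation of L^i} are obtained by taking the $i$-th coordinate component of the identities $\D_AL=-\mu^{-1}\ze_AL+\chi_A^BX_B$, $\D_TL=\et^AX_A-c^{-1}L(c^{-1}\mu)L$ and $\D_LL=\mu^{-1}(L\mu)L$ of Lemma \ref{Lemma connection coefficients}, via $(\D_VL)^i=V(L^i)+\Ga^i_{\al\be}V^\al L^\be$: for $V=X_A$ the Christoffel term drops out because $X_A^0=0$, giving \eqref{structure equation of L^i}; for $V=T$ it drops out because $\Tr^0=0$, and then $\et_A=c\ds_A(c^{-1}\mu)$ together with $L(c^{-1}\mu)=-Tc$ yields \eqref{transport equation of L^i along T}; for $V=L$ one finds $LL^i=\mu^{-1}(L\mu)L^i-c\,\p_ic$, and decomposing the spatial gradient as $\p_ic=c^{-2}(L^j\p_jc)L^i+\ds_Ac\,\ds^Ax^i$ (the $\Si_\t$-orthogonal split of $\na c|_{\Si_\t}$ relative to $S_{\t,u}$, whose Euclidean unit normal in $\Si_\t$ is $\Tr=-c^{-1}L|_{\text{spatial}}$) and applying \eqref{transport equation of mu} gives \eqref{transport equation of L^i}.

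For the Raychaudhuri equation \eqref{transport equation of chi}: since $L=\p/\p\t$ and $X_A=\p/\p\vt^A$ commute, $(\Lies_L\chi)_{AB}=L\big(g(\D_{X_A}L,X_B)\big)$; expanding and using $\D_L\D_{X_A}L=\D_{X_A}\D_LL+\R(L,X_A)L$, $\D_LL=\mu^{-1}(L\mu)L$, $\D_LX_B=\D_{X_B}L$, $g(\D_{X_A}L,L)=0$ and $g(\R(L,X_A)L,X_B)=-\R_{LALB}$ one gets $(\Lies_L\chi)_{AB}=\mu^{-1}(L\mu)\chi_{AB}+\chi_A^C\chi_{BC}-\R_{LALB}$; substituting $\R_{LALB}=-cTc\,\mu^{-1}\chi_{AB}+\Rc_{LALB}$ from \eqref{R_LALB} and noting $\mu^{-1}(L\mu)+cTc\,\mu^{-1}=c^{-1}Lc$ by \eqref{transport equation of mu} produces \eqref{transport equation of chi}. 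The Codazzi equation \eqref{elliptic equation of chi} then follows either as the classical Codazzi identity for $\chi$ --- in which the curvature term $\R(X_A,X_B,X_C,L)$ vanishes by (ii) --- or concretely by applying $\nas_C$ to \eqref{structure equation of L^i}, antisymmetrizing in $A,C$ (the left side dies because $[\nas_A,\nas_C]$ kills scalars on the $2$-surface $S_{\t,u}$), contracting with $\gs^{CB}$, and rewriting the torsion contribution via $\ze_A=-c^{-1}\mu\ds_Ac$.

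The remaining and heaviest identity is \eqref{transport equation of chi along T}. Here $[T,X_A]$ is nonzero but $S_{\t,u}$-tangential --- in the coordinates $(\t,u,\vt)$ one has $T=\p/\p u-\Xi^BX_B$, so $[T,X_A]=(X_A\Xi^B)X_B$ --- hence $(\Lies_T\chi)_{AB}=T(\chi_{AB})-\chi([T,X_A],X_B)-\chi(X_A,[T,X_B])$. Expanding $T\big(g(\D_{X_A}L,X_B)\big)$, commuting $\D_T\D_{X_A}L=\D_{X_A}\D_TL+\R(T,X_A)L+\D_{[T,X_A]}L$ with $\R(T,X_A,L,X_B)=0$, and substituting $\D_TL=\et^AX_A-c^{-1}L(c^{-1}\mu)L$, $\D_TX_B=\D_{X_B}T+[T,X_B]$, $\D_{X_A}X_C=\nas_AX_C+\mu^{-1}\chi_{AC}T$ and $\D_{X_A}T=\mu^{-1}\et_AT-c^{-2}\mu\chi_A^BX_B$ (Lemma \ref{Lemma connection coefficients}), one checks that all the $\Xi$-commutator terms cancel and --- using metric compatibility of $\nas$ with $\gs$ --- the surviving first-order terms assemble into $\nas_A\et_B$, leaving $(\Lies_T\chi)_{AB}=\nas_A\et_B-c^{-1}L(c^{-1}\mu)\chi_{AB}+\mu^{-1}\ze_A\et_B-c^{-2}\mu\chi_A^C\chi_{BC}$. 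The final matching uses $\et_A=c\ds_A(c^{-1}\mu)$, so that $\nas_A\et_B=(\ds_Ac)\ds_B(c^{-1}\mu)+c\nas_{AB}^2(c^{-1}\mu)$, together with $\ds_B(c^{-1}\mu)=c^{-1}\et_B$ and $\mu^{-1}\ze_A=-c^{-1}\ds_Ac$ (so that the two terms quadratic in $\et,\ze$ cancel) and once more $L(c^{-1}\mu)=-Tc$; this collapses the identity to \eqref{transport equation of chi along T}. I expect this last computation --- keeping track of the induced connection $\nas$, the non-commuting frame $\{T,X_A\}$, and the handful of first-order relations among $\et,\ze,\ds\mu,\ds c,Tc,L(c^{-1}\mu)$ --- to be the only genuine obstacle; everything else is mechanical once (i)--(iii) are in place.
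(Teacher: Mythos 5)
Most of your computation checks out, and since the paper gives no proof of this lemma (it simply refers to \cite{DLY} and \cite{M-Y}), your route --- geodesy of $\Lr$ from the eikonal equation, the observation that the only surviving curvature contraction is $\R_{LALB}$, and componentwise use of Lemma \ref{Lemma connection coefficients} together with $L(c^{-1}\mu)=-Tc$ --- is the natural one. I verified \eqref{transport equation of mu}, \eqref{transport equation of chi}, \eqref{transport equation of chi along T}, \eqref{transport equation of L^i} and \eqref{transport equation of L^i along T} along your lines, including the cancellation $\nas_A\et_B+\mu^{-1}\ze_A\et_B=c\nas_{AB}^2(c^{-1}\mu)$.

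The gap is at \eqref{structure equation of L^i}, and it propagates into your ``concrete'' derivation of \eqref{elliptic equation of chi}. Taking the $i$-th component of $\D_AL=-\mu^{-1}\ze_AL+\chi_A^BX_B$ gives
\begin{equation*}
\ds_AL^i=-\mu^{-1}\ze_A L^i+\chi_{AB}\ds^Bx^i=c^{-1}(\ds_Ac)L^i+\chi_{AB}\ds^Bx^i,
\end{equation*}
and the first term does not drop: contracting with $L^i$ and using $\sum_i(L^i)^2=c^2$ (from $g(L,L)=0$, $L^0=1$) gives $\sum_i\ds_AL^i\,L^i=c\ds_Ac$, whereas $\sum_i\chi_{AB}\ds^Bx^i L^i=0$ because $g(X_B,L)=0$; so the identity as you assert it would force $\ds_Ac\equiv0$, contradicting $\ze_A=-c^{-1}\mu\ds_Ac$ of Lemma \ref{Lemma components of second fundamental forms and torsion one forms}. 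Moreover this discarded term is the only possible source of the terms $c^{-1}\ds^Bc\,\chi_{AB}-c^{-1}\ds_Ac\,\trgs\chi$ in \eqref{elliptic equation of chi}: differentiating the truncated formula and antisymmetrizing as you propose, the contributions from $\nas_C\ds^Bx^i$ cancel, and one obtains only $\nas_C\chi_{AB}-\nas_A\chi_{CB}=0$, i.e. $(\divgs\chi)_A=\ds_A\trgs\chi$, so the ``torsion contribution via $\ze_A$'' you invoke has nowhere to come from. Keeping the term, the same computation gives $\nas_C\chi_{AB}-\nas_A\chi_{CB}=\mu^{-1}(\ze_A\chi_{CB}-\ze_C\chi_{AB})$, whose $\gs^{CB}$-trace is exactly \eqref{elliptic equation of chi}; likewise your first route (classical Codazzi) works only because that identity carries these torsion terms alongside the vanishing curvature term. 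In short, what your method actually proves is $\ds_AL^i=c^{-1}(\ds_Ac)L^i+\chi_{AB}\ds^Bx^i$; the version without the first term appears to be a slip in the printed lemma (harmless later, since the dropped term is $O(\de^{(1-\ve0)p})$, the same size as $\chic_{AB}\ds^Bx^i$, so \eqref{structure equation of Lc^j} and the ensuing estimates are unaffected), but your writeup must either prove the corrected identity and say so, or justify discarding $-\mu^{-1}\ze_AL^i$ --- silently dropping it is a step that fails.
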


\section{Bootstrap assumptions and  $L^{\infty}$ estimates of lower order derivatives}\label{Section 3}

By virtue of the initial data \eqref{initial data} and Remarks \ref{Remark equivalent outgoing constraint condition}, \ref{Remark T and L on t=1}, we make the following bootstrap assumptions in $D^{\t,u}$
\begin{equation}\label{bootstrap assumptions}
	\de^{l+s[1-(1-\ve0)p]}\|Z^{\al}\vp_{\ga}\|_{L^{\infty}(\Si_\t^u)}\leq M\de^{1-\ve0},\ \ga=0,1,2,3,
\end{equation}
where $1\leq\t\leq t^*$, $|\al|\leq N$, $N$ is a large positive integer, $M$ is some positive number which is suitably chosen (at least double bounds of the corresponding quantities on $\t=1$), $Z\in\{\rho L,T,R_1,R_2,R_3\}$, $l$ (or $s$) is the
number of $T$ (or $\rho L$) included in $Z^{\al}$ with $s\leq 2$.

In terms of the definition of $c$ and the Leibnizian rule, we have from \eqref{bootstrap assumptions} that
\begin{equation}\label{L^infty estimates of c}
	\|c-1\|_{L^\infty(\Si_\t^u)}+\de^{l+s[1-(1-\ve0)p]}\|Z^{\al}c\|_{L^\infty(\Si_\t^u)}\les M^p\de^{(1-\ve0)p},
\end{equation}
where $1\leq|\al|\leq N$ and $1\leq\t\leq t^*$.

Next, we derive $L^{\infty}$ estimates of  some lower order derivatives.
\begin{proposition}\label{Proposition lower order L^infty estimates}
Under the assumptions \eqref{bootstrap assumptions}, for any vector field $Z\in\{\rho L,T,R_1,R_2,R_3\}$, when $\de>0$ is small, it holds
 that for $|\al|\leq 1$ and $1\leq\t\leq t^*$,
\begin{equation}\label{lower order L^infty estimates}
	\begin{split}
		&\de^{l+s[1-(1-\ve0)p]}\|\Lies_{Z}^{\leq\al}\chic\|_{L^\infty(\Si_\t^u)}\les M^p\de^{(1-\ve0)p},\\
		&\de^{l+s[1-(1-\ve0)p]}\|Z^{\leq\al+1}\mu\|_{L^\infty(\Si_\t^u)}\les M^p,\\
		&\de^{l+s[1-(1-\ve0)p]}\left(\|Z^{\leq\al+1}\Lc^j\|_{L^\infty(\Si_\t^u)}+\|Z^{\leq\al+1}\Tc^j\|_{L^\infty(\Si_\t^u)}+\|Z^{\leq\al+1}v_j\|_{L^\infty(\Si_\t^u)}\right)\les M^p\de^{(1-\ve0)p},\\
		&\de^{l+s[1-(1-\ve0)p]}\left(\|\Lies_{Z}^{\leq\al+1}\ds x^j\|_{L^\infty(\Si_\t^u)}+\|\Lies_{Z}^{\leq\al+1}R_j\|_{L^\infty(\Si_\t^u)}\right)\les 1,\\
		&\de^{l+s[1-(1-\ve0)p]}\left(\|\Lies_{Z}^{\leq\al}{}^{(R_j)}\pis\|_{L^\infty(\Si_\t^u)}+\|\Lies_{Z}^{\leq\al}{}^{(R_j)}\pis_L\|_{L^\infty(\Si_\t^u)}+\|\Lies_{Z}^{\leq\al}{}^{(R_j)}\pis_T\|_{L^\infty(\Si_\t^u)}\right)\les M^p\de^{(1-\ve0)p},\\
		&\de^{l+s[1-(1-\ve0)p]}\left(\|\Lies_Z^{\leq\al}{}^{(T)}\pis\|_{L^\infty(\Si_\t^u)}+\|\Lies_Z^{\leq\al}{}^{(T)}\pis_L\|_{L^\infty(\Si_\t^u)}\right)\les M^p,
	\end{split}
\end{equation}
where $l$ (or $s$) is the number of $T$ (or $\rho L$) appearing in the string of $Z$ with $s\leq 2$.
\end{proposition}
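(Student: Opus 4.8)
The plan is to derive all the bounds in \eqref{lower order L^infty estimates} simultaneously by integrating the structure equations of Lemma \ref{Lemma structure equations} (and their once-commuted versions) along the integral curves of $L=\p/\p\t$, with the explicit values of the relevant quantities on $\Si_1$ as the base of integration, closing the mutual dependence among $\mu$, $\chic$, $\Lc^i$, $\Tc^i$, $v_i$ and the deformation tensors by an induction on the order $|\al|\le 1$. On $\Si_1$ one has $u=1-r$ and $\p_tu=c$, hence $\mu|_{\t=1}=c$, $L^i=\f{x^i}{r}+\f{c-1}{r}x^i$, $\Tr^i=-\f{x^i}{r}$ and $\rho=r$; together with \eqref{YHC-1}, \eqref{v_i} and \eqref{L^infty estimates of c} this gives $\Lc^i|_{\t=1}$, $\Tc^i|_{\t=1}$, $v_i|_{\t=1}$, $\chic_{AB}|_{\t=1}=\f{c-1}{r}\gs_{AB}$ all of size $O(M^p\de^{(1-\ve0)p})$, while $\ds x^i$ and $R_i$ are $O(1)$; the $Z$-derivatives of these quantities on $\Si_1$ are controlled by \eqref{bootstrap assumptions}, \eqref{L^infty estimates of c} and the structure of \eqref{initial data}--\eqref{outgoing constraint condition} prepared in Appendix \ref{Section A}, and $M$ is taken large enough to dominate them. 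The single arithmetic fact used repeatedly is that $t^*-1=\de^{1-(1-\ve0)p}$, so a source of size $M^p\de^a$ in $L^\infty$ contributes $\lesssim M^p\de^{a+1-(1-\ve0)p}$ after integration over $\t\in[1,t^*]$; since $1\le p\le p_c$ gives $(1-\ve0)p\le 1$, the resulting exponents always match the right-hand sides of \eqref{lower order L^infty estimates}. In particular, writing $Lf=\rho^{-1}(\rho L)f$ and using \eqref{L^infty estimates of c}, one has $\int_1^\t(|Lc|+|\ds c|)\,\d\t'\lesssim M^p\de^{(1-\ve0)p}$ while $\int_1^\t|Tc|\,\d\t'\lesssim M^p$.

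\textbf{The order $|\al|=0$.} Integrating \eqref{transport equation of mu}, $L\mu=c^{-1}Lc\,\mu-cTc$, along $L$ and using Gr\"onwall's inequality gives $\|\mu\|_{L^\infty(\Si_\t^u)}\lesssim M^p$. From \eqref{transport equation of chi}, $L\rho=1$ and ${}^{(L)}\pis_{AB}=2\chi_{AB}$ (see \eqref{L pi}), substituting $\chi=\chic+\rho^{-1}\gs$ makes the $\rho^{-1}$-terms cancel and leaves the clean transport equation
\[
\Lies_L\chic_{AB}=c^{-1}Lc\,\chi_{AB}+\chic_A{}^C\chic_{BC}-\Rc_{LALB};
\]
here $\Rc_{LALB}$ carries the factor $\vp_0^{p-1}$ (see \eqref{Rc_LALB}) so is $O(M^p\de^{(1-\ve0)p})$, the first term is $O(M^p\de^{2(1-\ve0)p-1})$ pointwise but integrates to $O(M^p\de^{(1-\ve0)p})$, and the quadratic term is of strictly higher order, so a short continuity argument yields $\|\chic\|_{L^\infty(\Si_\t^u)}\lesssim M^p\de^{(1-\ve0)p}$. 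Next, \eqref{transport equation of L^i} together with $L(x^i/\rho)=L^i/\rho-x^i/\rho^2$ gives
\[
L\Lc^i+\rho^{-1}\Lc^i=c^{-1}Lc\,L^i-c\,\ds_Ac\,\ds^Ax^i,
\]
and as the weight $\rho^{-1}$ integrates to an $O(1)$ factor on $[1,t^*]$, one obtains $\|\Lc^i\|_{L^\infty}\lesssim M^p\de^{(1-\ve0)p}$; then \eqref{YHC-1} and \eqref{v_i} give $\Tc^i$ and $v_i$ algebraically with the same bound, while \eqref{structure equation of L^i} and $\Lies_L\ds x^i=\ds L^i$ give $\Lies_L(\ds x^i)_A=\rho^{-1}\ds_Ax^i+\chic_A{}^B\ds_Bx^i$, hence $\|\ds x^i\|_{L^\infty}\lesssim 1$, and \eqref{R_i} gives $\|R_i\|_{L^\infty}\lesssim 1$. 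Finally, inserting these bounds into the explicit expressions \eqref{T pi}, \eqref{Ri pi} for ${}^{(T)}\pis$, ${}^{(R_i)}\pis$, ${}^{(R_i)}\pis_L$, ${}^{(R_i)}\pis_T$, ${}^{(T)}\pis_L$ closes the $|\al|=0$ case; the components containing the unrescaled $\chi_{AB}=\chic_{AB}+\rho^{-1}\gs_{AB}$ or $\ds\mu$ are only $O(M^p)$ rather than $O(M^p\de^{(1-\ve0)p})$, which is precisely why ${}^{(T)}\pis$ and ${}^{(T)}\pis_L$ receive the weaker bound.

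\textbf{The order $|\al|=1$ and the main difficulty.} For the first-order estimates I would commute the preceding transport equations with $Z\in\{\rho L,T,R_i\}$, using $[L,\rho L]=L$ and $[L,T]={}^{(T)}\pis_L$, $[L,R_i]={}^{(R_i)}\pis_L$ from \eqref{[X,Y]}, the commutator \eqref{[nas,Lies]} for $\Lies_L\Lies_Z\chic$, and the identities of Lemma \ref{Lemma properties of Lie derivatives}; each commuted equation keeps the schematic form $L\big(Z(\cdot)\big)+(\text{weight})\,Z(\cdot)=(\text{source})$ with a source built only from quantities of the current or lower order, and the weights $\de^{l+s[1-(1-\ve0)p]}$ in \eqref{bootstrap assumptions} are tuned so that each extra $\rho L$- or $T$-commutation is compensated by the prescribed extra power of $\de$, after which the same length-$\de^{1-(1-\ve0)p}$ integration closes the bounds; Remark \ref{Remark sqrt det gs is bounded above and below} (valid once $\chi$ is bounded) keeps all tensorial norms uniformly comparable throughout. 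The main difficulty is exactly this interlocking: the commutators feed the deformation tensors ${}^{(Z)}\pis$ back into the right-hand sides of the equations for $Z\mu$, $\Lies_Z\chic$ and $Z\Lc^i$, so none of the quantities can be estimated in isolation and the whole family of transport equations must be run at once, ordered by differentiation count; one must check at each step that the only genuinely nonlinear terms (products of the $\chic\cdot\chic$ and ${}^{(Z)}\pis\cdot{}^{(Z)}\pis$ type) carry a strictly larger power of $\de$, and the bookkeeping of these powers against the short blow-up time $t^*-1=\de^{1-(1-\ve0)p}$ --- tightest in the borderline case $p=p_c$ --- is the crux of the argument.
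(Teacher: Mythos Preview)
Your proposal is correct and follows essentially the same strategy as the paper: integrate the structure equations of Lemma \ref{Lemma structure equations} along $L$, use Gr\"onwall together with the time scale $t^*-1=\de^{1-(1-\ve0)p}$, and then commute once for the first-order estimates. One small correction of emphasis: the ``interlocking'' you flag as the main difficulty is not really present---the paper proceeds in a strict order (first $|\ds x^j|\le 1$ from $1=|\Tr^j|^2+|\ds x^j|^2$, then $\chic$, then $\mu$ and $Z\mu$, then $\Lc^j$, each depending only on $c$ and already-controlled quantities), and for $\ds_A\Lc^j$ and $T\Lc^j$ it reads off \eqref{structure equation of L^i} and \eqref{transport equation of L^i along T} directly rather than commuting the $L$-transport equation.
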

\begin{proof}
Taking the $j$-th component on both sides of $\p_j=\Tr^j\Tr+\ds^Ax^jX_A$ yields $1=|\Tr^j|^2+|\ds x^j|^2$,
which immediately gives $|\ds x^j|\les 1$ and hence $|\gs|\les 1$ holds in terms of $\gs_{AB}=g_{ij}\ds_Ax^i\ds_Bx^j$.

\vskip 0.2 true cm

\textbf{Part 1. Estimate of $\chic$}

\vskip 0.1 true cm	

To estimate $\chic$, by substituting $\chi_{AB}=\chic_{AB}+\rho^{-1}\gs_{AB}$ into \eqref{transport equation of chi}
and in view of $\Lies_L\gs_{AB}=2\chi_{AB}$, one has
\begin{equation}\label{transport equation of chic}
	\Lies_L\chic_{AB}=c^{-1}Lc\chic_{AB}+\chic_A^C\chic_{BC}+\rho^{-1}c^{-1}Lc\gs_{AB}-\Rc_{LALB}
\end{equation}
and hence,
\begin{equation}\label{weighted transport equation of chic}
	L(\rho^4|\chic|^2)=2\rho^4\big(c^{-1}Lc|\chic|^2-\chic_A^B\chic_B^C\chic_C^A+\rho^{-1}c^{-1}Lc\trgs\chic-\chic^{AB}\Rc_{LALB}\big).
\end{equation}
Then by \eqref{Rc_LALB} and \eqref{L^infty estimates of c}, we obtain
\begin{equation}\label{weighted transport inequality of chic}
	\big|L(\rho^2|\chic|)\big|\les M^p\de^{(1-\ve0)p-[1-(1-\ve0)p]}\rho^2|\chic|+\rho^2|\chic|^2+M^p\de^{(1-\ve0)p-[1-(1-\ve0)p]}.
\end{equation}
Due to
\begin{equation}\label{chic on t=1}
	\chic_{AB}|_{\t=1}=(c-1)\f1r\gs_{AB}|_{\t=1}\les M^p\de^{(1-\ve0)p},
\end{equation}
then integrating \eqref{weighted transport inequality of chic} along integral curves of $L$ from $1$ to $\t\leq t^*$ yields
\begin{equation*}
	|\chic|\les M^p\de^{(1-\ve0)p}.
\end{equation*}

\vskip 0.2 true cm

\textbf{Part 2. Estimates of $\mu$ and $Z\mu$}

\vskip 0.1 true cm	

To estimate $\mu$, integrating the transport equation \eqref{transport equation of mu} along integral curves of $L$ from $1$ to $\t$ yields
\begin{equation}\label{explicit formula of mu}
	\mu(\t)=\mathrm{e}^{\int_1^\t c^{-1}Lc(\t')\d\t'}\mu(1)-\int_1^\t\mathrm{e}^{\int_{\t'}^\t c^{-1}Lc(\t'')\d\t''}cTc(\t')\d\t'.
\end{equation}
Since $\mu=c$ on $\t=1$, together with \eqref{L^infty estimates of c} and $\t\leq t^*$, we have
\begin{equation*}
	|\mu|\les M^p,
\end{equation*}
which also implies the analogous estimate of $L\mu$ in terms of \eqref{transport equation of mu}.

To estimate $R_i\mu$ and $T\mu$, we will act $\ds$ and $T$ to the transport equation \eqref{transport equation of mu} respectively.
For $R_i\mu$, one has
\begin{equation}\label{transport equation of ds mu}
	\Lies_L\ds\mu=\ds L\mu=c^{-1}Lc\ds\mu+\ds(c^{-1}Lc)\mu-\ds(cTc).
\end{equation}
By \eqref{L^infty estimates of c} and the estimate of $\mu$, we have
\begin{equation}\label{transport inequality of ds mu}
	|\Lies_L\ds\mu|\les M^p\de^{(1-\ve0)p-[1-(1-\ve0)p]}|\ds\mu|+M^p\de^{(1-\ve0)p-1}.
\end{equation}
Then integrating \eqref{transport inequality of ds mu} along integral curves of $L$ from $1$ to $\t$ yields
\begin{equation*}
	|\ds\mu|\les M^p
\end{equation*}
and hence $|R_i\mu|\les M^p$. For $T\mu$, one has from \eqref{[X,Y]} that
\begin{equation}\label{transport equation of T mu}
	LT\mu=TL\mu+[L,T]\mu=c^{-1}LcT\mu+\big[T(c^{-1}Lc)\mu-T(cTc)-c^2\ds^A(c^{-2}\mu)\ds_A\mu\big].
\end{equation}
By \eqref{L^infty estimates of c} and the estimate of $\ds\mu$, we have
\begin{equation}\label{transport inequality of T mu}
	|LT\mu|\les M^p\de^{(1-\ve0)p-[1-(1-\ve0)p]}|T\mu|+M^p\de^{(1-\ve0)p-2}.
\end{equation}
Then integrating \eqref{transport inequality of T mu} along integral curves of $L$ from $1$ to $\t$ yields
\begin{equation*}
	|T\mu|\les M^p\de^{-1}.
\end{equation*}

\vskip 0.2 true cm

\textbf{Part 3. Estimates of $\Lc^j$ and $Z\Lc^j$}

\vskip 0.1 true cm	

To estimate $\Lc^j$, by substituting $L^j=\Lc^j+\rho^{-1}x^j$ into \eqref{transport equation of L^i}, one has
\begin{equation}\label{transport equation of Lc^j}
    L\Lc^j=(c^{-1}Lc-\rho^{-1})\Lc^j+\rho^{-1}c^{-1}Lcx^j-c\ds_Ac\ds^Ax^j
\end{equation}
and hence,
\begin{equation}\label{weighted transport equation of Lc^j}
	L(\rho\Lc^j)=\rho c^{-1}Lc\Lc^j+c^{-1}Lcx^j-\rho c\ds_Ac\ds^Ax^j.
\end{equation}
Together with \eqref{L^infty estimates of c}, this yelds
\begin{equation}\label{weighted transport inequality of Lc^j}
	\big|L(\rho\Lc^j)\big|\les M^p\de^{(1-\ve0)p-[1-(1-\ve0)p]}|\rho\Lc^j|+M^p\de^{(1-\ve0)p-[1-(1-\ve0)p]}.	
\end{equation}
Then integrating \eqref{weighted transport inequality of Lc^j} along integral curves of $L$ from $1$ to $\t$ yields
\begin{equation*}
	|\Lc^j|\les M^p\de^{(1-\ve0)p},
\end{equation*}
which also implies the analogous estimate of $L\Lc^j$ in terms of \eqref{transport equation of Lc^j}.

To estimate $R_i\Lc^j$ and $T\Lc^j$, by substituting $L^j=\Lc^j+\rho^{-1}x^j$ into \eqref{structure equation of L^i}
and \eqref{transport equation of L^i along T}, we arrive at
\begin{align}
	\ds_A\Lc^j&=\chic_{AB}\ds^Bx^j,\label{structure equation of Lc^j}\\
	T\Lc^j&=(c^{-1}Tc+\rho^{-1}c^{-2}\mu)\Lc^j+(\rho^{-1}c^{-1}Tc+\rho^{-2}c^{-2}\mu-\rho^{-2})x^j+c\ds_A(c^{-1}\mu)\ds^Ax^j,\label{transport equation of Lc^j along T}	
\end{align}
which immediately give the estimates of $R_i\Lc^j$ and $T\Lc^j$.

\vskip 0.2 true cm

\textbf{Part 4. Estimates of the other quantities}

\vskip 0.1 true cm	

The estimates of $\Tc^j$, $v_j$, $Z\Tc^j$ and $Zv_j$ follow immediately from the first identity in \eqref{YHC-1} and \eqref{v_i}.
Then the estimates of $\Lies_Z\ds x^j$ can be obtained in view of $\Lies_Z\ds x^j=\ds Z^j$ and \eqref{R_i}.
In addition, $R_j$ and $\Lies_ZR_j$ are bounded due to $R_j^A:=g(R_j,X^A)=\sum_{k=1}^{3}R_j^k\ds^Ax^k$ and \eqref{R_i}.

On the other hand, the first order derivatives of $\chic$ and the second order derivatives of $\mu$, $\Lc^j$ are bounded
by acting the related derivatives on \eqref{transport equation of chic}, \eqref{transport equation of mu} and \eqref{transport equation of Lc^j}
as in Part 1,2,3. Analogously, the second order derivatives of $\Tc^j$, $v_j$, $\ds x^j$ and $R_j$
can be bounded.

Finally, the estimates of the deformation tensors and their first order derivatives are bounded by \eqref{Ri pi} and \eqref{T pi},
since the corresponding quantities have been estimated in the above.
\end{proof}

We now close bootstrap assumptions \eqref{bootstrap assumptions} for $\vp_\ga$ and $Z\vp_\ga$.

\begin{proposition}\label{Proposition close bootstrap assumptions for vp_ga and Z vp_ga}
For sufficiently small $\de>0$, it holds that
\begin{equation}\label{close bootstrap assumptions for vp_ga and Z vp_ga}
	\|\vp_\ga\|_{L^\infty(\Si_\t^u)}+\|R_i\vp_\ga\|_{L^\infty(\Si_\t^u)}
+\de^{1-(1-\ve0)p}\|L\vp_\ga\|_{L^\infty(\Si_\t^u)}+\de\|T\vp_\ga\|_{L^\infty(\Si_\t^u)}\les\de^{1-\ve0}.
\end{equation}
\end{proposition}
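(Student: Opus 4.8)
The plan is to close the bootstrap assumptions \eqref{bootstrap assumptions} at the lowest order ($|\al|\le 1$) for $\vp_\ga$ itself, exploiting the transport equations already derived in Section \ref{Section 2} together with the $L^\infty$ control of the geometric quantities from Proposition \ref{Proposition lower order L^infty estimates}. The four quantities to be estimated are controlled by four different equations: $L\vp_\ga$ is governed by the transport equation \eqref{transport equation of Lb vp_ga} written backwards (i.e.\ $\Lb\vp_\ga$ is the unknown transported along $L$, with $\mu\Des\vp_\ga$ and $H_\ga-F_\ga$ as source); $T\vp_\ga$ and $R_i\vp_\ga$ are controlled by commuting $T$ and $R_i$ through the equation $\mu\Box_g\vp_\ga=F_\ga$ or, more efficiently, by using $T=c^{-1}\mu\Tr$, $\Lb=c^{-2}\mu L+2T$ to pass between $L$, $\Lb$, $T$; and $\vp_\ga$ itself is recovered by integrating $L\vp_\ga=\p_\t\vp_\ga$ from $\t=1$ along the integral curves of $L$, using $L=\f{\p}{\p\t}$ in the new coordinates.

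First I would record the initial bounds on $\t=1$: from \eqref{initial data} and Remark \ref{Remark T and L on t=1}, $\vp_\ga(1,\cdot)=O(\de^{1-\ve0})$, $R_i\vp_\ga(1,\cdot)=O(\de^{1-\ve0})$ (angular derivatives cost nothing), $(\p_t+c\p_r)\vp_\ga(1,\cdot)=L\vp_\ga(1,\cdot)=O(\de^{1-\ve0-\max\{0,1-(1-\ve0)p\}})$ by the equivalent outgoing constraint \eqref{equivalent outgoing constraint condition}, and $T\vp_\ga(1,\cdot)=-\p_r\vp_\ga(1,\cdot)=O(\de^{-\ve0})$, which matches the weights $1$, $1$, $\de^{1-(1-\ve0)p}$, $\de$ in \eqref{close bootstrap assumptions for vp_ga and Z vp_ga}. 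Next, for $\Lb\vp_\ga$ I would integrate \eqref{transport equation of Lb vp_ga} along $L$ from $\t=1$ to $\t\le t^*$: the factor $e^{-\f12\int\trgs\chi}$ is bounded since $\trgs\chi=2\rho^{-1}+\trgs\chic$ with $\chic$ controlled by Proposition \ref{Proposition lower order L^infty estimates}; the source terms $F_\ga$ (given by \eqref{F_ga}) and $H_\ga$ (given by \eqref{H_ga}) are each a product of a $\de^{(1-\ve0)p}$-small coefficient $p\vp_0^{p-1}$ or $c^{-1}Lc$ with first derivatives of $\vp_\ga$, hence controlled via the bootstrap assumptions and \eqref{L^infty estimates of c}; the term $\mu\Des\vp_\ga$ is the genuinely second-order piece and I would handle it by the standard trick of absorbing $\Des\vp_\ga=\gs^{AB}\nas^2_{AB}\vp_\ga$ into the already-established $L^2$/bootstrap control of two angular derivatives together with $|\mu|\lesssim M^p$, noting the integration in $\t$ over an interval of length $\de^{1-(1-\ve0)p}$ supplies the needed smallness. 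Dividing by $c^{-2}\mu$ (which is $\sim M^p$-bounded above and, crucially, bounded below on $1\le\t<t^*$ since $\mu$ has not yet vanished — this is where one uses that we work strictly before the shock time) converts the $\Lb\vp_\ga$ bound into the $L\vp_\ga$ and $T\vp_\ga$ bounds. Finally, $\vp_\ga$ and $R_i\vp_\ga$ follow by integrating $L\vp_\ga$ and the commuted equation $LR_i\vp_\ga=R_iL\vp_\ga+[L,R_i]\vp_\ga$ (with $[L,R_i]={}^{(R_i)}\pis_L$ from \eqref{[X,Y]}, estimated in Proposition \ref{Proposition lower order L^infty estimates}) from $\t=1$, the $\de^{1-(1-\ve0)p}$ length of the $\t$-interval again compensating any borderline power.

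The main obstacle I anticipate is bookkeeping the optimal powers of $\de$ so that every term genuinely closes with room to spare — in particular the interplay between the loss $\de^{1-(1-\ve0)p}$ along $L$ (forced by the over-determined outgoing constraint when $1\le p<p_c$) and the gain $\de^{(1-\ve0)p}$ carried by the coefficient $\vp_0^{p-1}$ in the nonlinearity. One must check that in the worst case these combine to a net non-negative power on $[1,t^*]$, using $t^*-1=\de^{1-(1-\ve0)p}$; the borderline case $p=p_c$, where $1-(1-\ve0)p=0$ and there is no loss but also no interval-length gain, has to be treated by the slightly sharper constants recorded in \eqref{shock-formation assumption of data}. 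A secondary technical point is that the second-order term $\mu\Des\vp_\ga$ in \eqref{transport equation of Lb vp_ga} cannot be closed by this lemma alone in a fully self-contained way; here I would cite the higher-order $L^\infty$ estimates of Section \ref{Section 4} (or, in the logical order of the paper, treat this proposition and the higher-order ones in a single simultaneous induction on $|\al|$), so that $\|\Des\vp_\ga\|_{L^\infty}\lesssim\de^{1-\ve0}$ is available with a constant independent of $M$.
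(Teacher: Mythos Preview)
Your overall strategy for $\Lb\vp_\ga$ (integrate the transport equation \eqref{transport equation of Lb vp_ga} along $L$, using the bootstrap to bound $F_\ga$, $H_\ga$, and $\mu\Des\vp_\ga$) matches the paper and is correct. The gap is in what you do \emph{after} that.

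The step ``dividing by $c^{-2}\mu$ \ldots converts the $\Lb\vp_\ga$ bound into the $L\vp_\ga$ and $T\vp_\ga$ bounds'' does not work. From $\Lb=c^{-2}\mu L+2T$ you have only one relation between $L\vp_\ga$ and $T\vp_\ga$; you can indeed extract $T\vp_\ga$ (since $c^{-2}\mu L\vp_\ga$ is lower order by the bootstrap), but to isolate $L\vp_\ga$ you would need $|L\vp_\ga|\lesssim\mu^{-1}|\Lb\vp_\ga-2T\vp_\ga|$, and $\mu$ is \emph{not} uniformly bounded below on $[1,t^*)$ --- it tends to zero at the shock. The bounds you prove must be uniform in $\t$ for the bootstrap to close, so any appearance of $\mu^{-1}$ is fatal. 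A related problem infects your plan for $R_i\vp_\ga$: integrating $LR_i\vp_\ga=R_iL\vp_\ga+[L,R_i]\vp_\ga$ along $L$ forces you to bound $R_iL\vp_\ga$, which from the bootstrap alone carries a bare factor of $M$ with no extra $\de$-smallness to absorb it.

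The paper avoids both issues by never integrating these quantities along $L$ and never dividing by $\mu$. Instead it exploits that $\vp_\ga$ vanishes on $C_0$ (i.e.\ at $u=0$) and integrates along $T$ over the short interval $u\in[0,\de]$. Concretely: from $|T\vp_\ga|\lesssim\de^{-\ve0}$ one gets $|\vp_\ga|\lesssim\de\cdot\de^{-\ve0}=\de^{1-\ve0}$; for $L\vp_\ga$ one first computes $\Lb L\vp_\ga=L\Lb\vp_\ga-[L,\Lb]\vp_\ga$ using \eqref{[X,Y]}, deduces $|TL\vp_\ga|\lesssim\de^{-\ve0}+M^{p+1}\de^{-\ve0+2(1-\ve0)p-1}$, and then integrates in $u$ to obtain $|L\vp_\ga|\lesssim\de^{1-\ve0-[1-(1-\ve0)p]}$ with no $\mu^{-1}$; for $R_i\vp_\ga$ one commutes $R_i$ through the full transport equation $(L+\tfrac12\trgs\chi)\Lb\vp_\ga=\cdots$ (so every second-order term is multiplied by a $\de^{(1-\ve0)p}$-small coefficient), integrates along $L$ to get $\Lb R_i\vp_\ga$, passes to $TR_i\vp_\ga$, and again integrates in $u$. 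The assumption \eqref{shock-formation assumption of data} plays no role here; it enters only in Section~\ref{Section 8.2} to force $\mu\to 0$.
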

\begin{proof}
\vskip 0.2 true cm

\textbf{Part 1. Estimates of $T\vp_\ga$ and $\vp_\ga$}

\vskip 0.1 true cm	

Recall \eqref{F_ga} and \eqref{H_ga}, it follows from \eqref{bootstrap assumptions} that
\begin{equation*}
	|F_\ga|\les M^{p+1}\de^{-\ve0+(1-\ve0)p-[1-(1-\ve0)p]},\ |H_\ga|\les M^{p+1}\de^{-\ve0+(1-\ve0)p}.
\end{equation*}
Then by \eqref{transport equation of Lb vp_ga}, one has
\begin{equation*}
	|L\Lb\vp_\ga+\f12\trgs\chi\Lb\vp_\ga|\les M^{p+1}\de^{-\ve0+(1-\ve0)p-[1-(1-\ve0)p]}.
\end{equation*}
Therefore, by virtue of \eqref{YHC-1} and the estimate of $\chic$ in Proposition \ref{Proposition lower order L^infty estimates}, we obtain
\begin{equation}\label{transport inequality of rho Lb vp_ga}
	|L(\rho\Lb\vp_\ga)|\les M^{p+1}\de^{-\ve0+(1-\ve0)p-[1-(1-\ve0)p]}.
\end{equation}
Integrating \eqref{transport inequality of rho Lb vp_ga} along integral curves of $L$ from $1$ to $\t$ yields
that for small $\de>0$,
\begin{equation*}
    |\Lb\vp_\ga|\les M^{p+1}\de^{-\ve0+(1-\ve0)p}\les\de^{-\ve0}
\end{equation*}	
and hence,
\begin{equation*}
	|T\vp_\ga|\les\de^{-\ve0}.
\end{equation*}	
Then integrating along integral curves of $T$ from $0$ to $u$ yields
\begin{equation*}
	|\vp_\ga|\les\de^{1-\ve0}.
\end{equation*}

\vskip 0.2 true cm

\textbf{Part 2. Estimate of $L\vp_\ga$}

\vskip 0.1 true cm	

To estimate $L\vp_\ga$, by \eqref{transport equation of Lb vp_ga} and \eqref{[X,Y]}, one has
\begin{equation*}
	\begin{split}
		\Lb L\vp_\ga
		&=L\Lb\vp_\ga-[L,\Lb]\vp_\ga\\
		&=-\f12\trgs\chi\Lb\vp_\ga+\mu\Des\vp_\ga+H_\ga-F_\ga-L(c^{-2}\mu)L\vp_\ga+2c^2\ds^A(c^{-2}\mu)\ds_A\vp_\ga.
	\end{split}
\end{equation*}
Then by Proposition \ref{Proposition lower order L^infty estimates}, we have
\begin{equation*}
	|\Lb L\vp_\ga|\les\de^{-\ve0}+M^{p+1}\de^{-\ve0+(1-\ve0)p-[1-(1-\ve0)p]}
\end{equation*}	
and hence,
\begin{equation*}
	|TL\vp_\ga|\les\de^{-\ve0}+M^{p+1}\de^{-\ve0+(1-\ve0)p-[1-(1-\ve0)p]}.
\end{equation*}	
Integrating along integral curves of $T$ from $0$ to $u$ yields that for small $\de>0$,
\begin{equation*}
	|L\vp_\ga|\les\de^{1-\ve0}+M^{p+1}\de^{1-\ve0-[1-(1-\ve0)p]+(1-\ve0)p}\les\de^{1-\ve0-[1-(1-\ve0)p]}.
\end{equation*}

\vskip 0.2 true cm

\textbf{Part 3. Estimate of $R_i\vp_\ga$}

\vskip 0.1 true cm	

To estimate $R_i\vp_\ga$, by acting $R_i$ on \eqref{transport equation of Lb vp_ga}, one has
\begin{equation*}
	\begin{split}
		L\Lb R_i\vp_\ga
		&=R_iL\Lb\vp_\ga+[L\Lb,R_i]\vp_\ga\\
		&=-\f12\trgs\chi\Lb R_i\vp_\ga+\f12\trgs\chi[\Lb,R_i]\vp_\ga-\f12R_i\trgs\chi\Lb\vp_\ga+R_i(\mu\Des\vp_\ga+H_\ga-F_\ga)+[L\Lb,R_i]\vp_\ga.
	\end{split}
\end{equation*}
Then by \eqref{[X,Y]} and Proposition \ref{Proposition lower order L^infty estimates}, we have
\begin{equation*}
	|L\Lb R_i\vp_\ga|\les|\Lb R_i\vp_\ga|+M^{p+1}\de^{-\ve0+(1-\ve0)p-[1-(1-\ve0)p]}.
\end{equation*}	
Integrating along integral curves of $L$ from $1$ to $\t$ yields
\begin{equation*}
	|\Lb R_i\vp_\ga|\les M^{p+1}\de^{-\ve0+(1-\ve0)p}
\end{equation*}	
and hence,
\begin{equation*}
	|TR_i\vp_\ga|\les M^{p+1}\de^{-\ve0+(1-\ve0)p}\les\de^{-\ve0}.
\end{equation*}	
Then integrating along integral curves of $T$ from $0$ to $u$ yields
\begin{equation*}
	|R_i\vp_\ga|\les\de^{1-\ve0}.
\end{equation*}
\end{proof}

At the end of this section, we give the key estimates of $\mu$ and its derivatives, which
will imply the basic behavior of $\mu$ especially near the blow-up time as in \cite{M-Y}.

\begin{proposition}\label{Proposition behavior of mu}
For sufficiently small $\de>0$, it holds that
\begin{equation}\label{key estimate of mu}
	\mu(\t,u,\vt)=1-\f{1}{p-1}\big(\f{1}{\t^{p-1}}-1\big)L\mu(1,u,\vt)+O\big(M^p\de^{(1-\ve0)p}\big).
\end{equation}
Moreover, when $\mu(\t,u,\vt)<\f{1}{10}$, then
\begin{equation}\label{key estimate of L mu}
	L\mu(\t,u,\vt)\les-\de^{-[1-(1-\ve0)p]}
\end{equation}
and
\begin{equation}\label{key estimate of T mu}
	\big(\mu^{-1}T\mu\big)_+(\t,u,\vt)\les\f{1}{\sqrt{t^*-\t}}M^{\f{p}{2}}\de^{-1+\f12[1-(1-\ve0)p]}.
\end{equation}
\end{proposition}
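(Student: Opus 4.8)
The plan is to exploit the transport equation \eqref{transport equation of mu} for $\mu$ along the integral curves of $L$, treating the ``source'' $-cTc$ as the main driver. First I would compute the leading behaviour of $Tc$ and $Lc$ on the initial slice $\t=1$ and throughout $D^{\t,u}$ using the chain rule $c=(1+\vp_0^p)^{-1/2}$, the bootstrap bounds \eqref{bootstrap assumptions}, and the refined $L^\infty$ bounds of Proposition \ref{Proposition close bootstrap assumptions for vp_ga and Z vp_ga}. The key point is that $Lc = -\tfrac12 p c^3 \vp_0^{p-1}L\vp_0$, and on $\t=1$ the directional derivative $L\vp_0$ carries the distinguished smallness order $\de^{1-\ve0-[1-(1-\ve0)p]}$ dictated by the outgoing constraint \eqref{outgoing constraint condition}; hence $Lc$ and thus $c^{-1}Lc$ are of size $O(M^p\de^{(1-\ve0)p-[1-(1-\ve0)p]})$, which after integrating from $1$ to $\t\le t^*=1+\de^{1-(1-\ve0)p}$ contributes only $O(M^p\de^{(1-\ve0)p})$ to $\mu$. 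The term $cTc$, by contrast, is of size $O(M^p\de^{-(1-\ve0)})$ and integrating it over an interval of length $\de^{1-(1-\ve0)p}$ gives the $O(1)$ contribution that controls the formation of the shock.

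For the main formula \eqref{key estimate of mu}, I would start from the integral representation \eqref{explicit formula of mu}. Since $\mu(1,u,\vt)=c(1,u,\vt)=1+O(M^p\de^{(1-\ve0)p})$ and $\int_1^\t c^{-1}Lc\,\d\t'=O(M^p\de^{(1-\ve0)p})$, the first term on the right of \eqref{explicit formula of mu} equals $1+O(M^p\de^{(1-\ve0)p})$. For the second term, I would use \eqref{transport equation of mu} at $\t'=1$ to write $cTc(1,u,\vt)=c^{-1}Lc\,\mu(1,u,\vt)-L\mu(1,u,\vt) = -L\mu(1,u,\vt)+O(M^p\de^{(1-\ve0)p-[1-(1-\ve0)p]})$; then I need to show $cTc(\t',u,\vt)$ agrees with $cTc(1,u,\vt)$ up to a factor $(\t')^{-p}$ and an error that integrates to $O(M^p\de^{(1-\ve0)p})$. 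This requires acting $L$ on $cTc$ (equivalently, using the commuted equation for $LT\mu$ in the proof of Proposition \ref{Proposition lower order L^infty estimates} together with $[L,T]$ from \eqref{[X,Y]}) and integrating; the factor $(\t')^{-p}$ emerges from the principal part of $L(cTc)$, which contains the term $-p\rho^{-1}\cdot$ (something) coming from $\chi_{AB}\approx\rho^{-1}\gs_{AB}$ hidden in $L(Tc)$, yielding, after the $\d\t'$ integration, the factor $\int_1^\t (\t')^{-p}\,\d\t'=\tfrac{1}{p-1}(1-\t^{-(p-1)})$ (with the $p=1$ convention $\ln\t$). Collecting terms gives \eqref{key estimate of mu}.

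For \eqref{key estimate of L mu}, I would observe that when $\mu(\t,u,\vt)<\tfrac1{10}$ the formula \eqref{key estimate of mu} forces $L\mu(1,u,\vt)<0$ with $\tfrac{1}{p-1}(1-\t^{-(p-1)})|L\mu(1,u,\vt)|\gtrsim 1$; since $\tfrac{1}{p-1}(1-\t^{-(p-1)})\lesssim \t-1\le \de^{1-(1-\ve0)p}$ on our time interval, this yields $|L\mu(1,u,\vt)|\gtrsim \de^{-[1-(1-\ve0)p]}$, and $L\mu(1,u,\vt)<0$. Then re-running the transport equation \eqref{transport equation of mu} (whose coefficient $c^{-1}Lc$ is negligible and whose source $-cTc=L\mu(1,\cdot)+\text{(integrable error)}+O((\t')^{-p})\text{-corrections}$ stays comparably large and negative for all $\t'\le t^*$) propagates the bound to give $L\mu(\t,u,\vt)\lesssim -\de^{-[1-(1-\ve0)p]}$. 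For \eqref{key estimate of T mu}, the strategy is the one from \cite{M-Y}: differentiate \eqref{transport equation of mu} by $T$ to get the transport equation for $T\mu$ (as in \eqref{transport equation of T mu}), conclude $\mu^{-1}T\mu$ satisfies an inequality of the form $L(\mu^{-1}T\mu)\lesssim (\mu^{-1}T\mu)^2\cdot(\text{small}) + \text{l.o.t.}$, and combine with the already-established lower bound $|L\mu|\gtrsim\de^{-[1-(1-\ve0)p]}$ to control $\mu$ from below by $c_0(t^*-\t)\de^{-[1-(1-\ve0)p]}$ near the blow-up time; inserting this into the pointwise bound $|T\mu|\lesssim M^p\de^{-1}$ from Proposition \ref{Proposition lower order L^infty estimates} and taking the positive part produces the claimed bound $(\mu^{-1}T\mu)_+\lesssim (t^*-\t)^{-1/2}M^{p/2}\de^{-1+\frac12[1-(1-\ve0)p]}$ after balancing the two estimates.

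The main obstacle I expect is the second step: rigorously extracting the exact $(\t')^{-p}$ weight (hence the precise factor $\tfrac{1}{p-1}(1-\t^{-(p-1)})$) in the evolution of $cTc$ along $L$, while simultaneously showing every remaining term — those involving $c^{-1}Lc$, the angular derivatives $\ds^A(c^{-2}\mu)\ds_A\mu$ from the commutator $[L,T]$, and the error vectors $\chic,\Lc^i,\Tc^i$ — integrates to a genuine $O(M^p\de^{(1-\ve0)p})$ error and does not corrupt the leading constant. This is where the careful bookkeeping of the optimal $\de$-powers, and in particular the distinction between the $\p_t+\p_r$-loss order $\de^{1-(1-\ve0)p}$ and the generic order, is essential.
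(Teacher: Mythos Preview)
Your plan for Parts 1 and 2 is essentially the paper's, though the paper executes Part 1 more cleanly than you describe. Rather than computing $L(cTc)$ and extracting a $-p\rho^{-1}$ coefficient from $\chi$, the paper simply writes
\[
\t^p L\mu(\t,u,\vt)=\tfrac{1}{2}pc^4(\t\vp_0)^{p-1}\,\t T\vp_0(\t,u,\vt)+O\big(M^p\de^{(1-\ve0)p-[1-(1-\ve0)p]}\big),
\]
and then uses the already-proven estimate \eqref{transport inequality of rho Lb vp_ga} (which says $L(\rho\Lb\vp_\ga)$ is small) together with Newton--Leibniz to show that both $\t\vp_0$ and $\t T\vp_0$ are frozen at their initial values up to admissible errors. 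This immediately gives $\t^pL\mu(\t)=L\mu(1)+O(\cdot)$, and integrating $\t'^{-p}$ produces the factor $\tfrac{1}{p-1}(1-\t^{-(p-1)})$. Your concern about ``rigorously extracting the exact $(\t')^{-p}$ weight'' is thus resolved by this factoring trick rather than by tracking commutators. Part 2 in your proposal matches the paper: from \eqref{key estimate of mu} with $\mu<\tfrac1{10}$ one forces $\tfrac{1}{p-1}(1-\t^{-(p-1)})(-L\mu(1))\ge\tfrac12$, hence $L\mu(1)\lesssim-\de^{-[1-(1-\ve0)p]}$, and feeding this back into $\t^pL\mu(\t)=L\mu(1)+O(\cdot)$ yields \eqref{key estimate of L mu}.

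Part 3 of your proposal has a genuine gap. You propose to combine the lower bound $\mu\gtrsim(t^*-\t)\de^{-[1-(1-\ve0)p]}$ with the pointwise bound $|T\mu|\lesssim M^p\de^{-1}$ by direct division, and to ``balance'' against a transport inequality for $\mu^{-1}T\mu$. But direct division gives only $(\mu^{-1}T\mu)_+\lesssim M^p\de^{-2+[1-(1-\ve0)p]}(t^*-\t)^{-1}$, which has a \emph{non-integrable} $(t^*-\t)^{-1}$ singularity; this is too weak for the energy estimate \eqref{integral of Q*pi 2}, where one needs $\int_1^\t(t^*-\t')^{-1/2}\d\t'<\infty$. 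Nothing in your description explains how the square root appears. The paper obtains it by a maximum-point argument in $u$: if $u^*$ maximises $T(\ln\mu)$ on $[0,\de]$, then $T^2(\ln\mu)(u^*)=0$, i.e.\ $\mu^{-1}T^2\mu=(\mu^{-1}T\mu)^2$ at $u^*$, and hence
\[
\big(\mu^{-1}T\mu\big)_+(\t,u,\vt)\le\sqrt{\frac{\|T^2\mu\|_{L^\infty(\Si_\t^u)}}{\inf_u\mu}}.
\]
Inserting $\|T^2\mu\|_{L^\infty}\lesssim M^p\de^{-2}$ from Proposition \ref{Proposition lower order L^infty estimates} and $\inf\mu\gtrsim(t^*-\t)\de^{-[1-(1-\ve0)p]}$ then gives exactly \eqref{key estimate of T mu}. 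The second $T$-derivative and the critical-point argument are the missing ingredients.
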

\begin{proof}
\vskip 0.2 true cm

\textbf{Part 1. Estimate of $\mu$}

\vskip 0.1 true cm	

According to \eqref{transport equation of mu}, \eqref{transport inequality of rho Lb vp_ga} and Newton-Leibniz formula, one has
\begin{equation}\label{close tL mu}
	\begin{split}
		&\quad\t^pL\mu(\t,u,\vt)\\
		&=\f12pc^4(\t\vp_0)^{p-1}\t T\vp_0(\t,u,\vt)+O\big(M^p\de^{(1-\ve0)p-[1-(1-\ve0)p]}\big)\\
		&=\f12p\big[1+O\big(\de^{(1-\ve0)p}\big)\big]\big[\vp_0(1,u,\vt)+O\big(M^{p+1}\de^{1-\ve0+(1-\ve0)p}\big)\big]^{p-1}\big[T\vp_0(1,u,\vt)+O\big(M^{p+1}\de^{-\ve0+(1-\ve0)p}\big)\big]\\
		&\quad+O\big(M^p\de^{(1-\ve0)p-[1-(1-\ve0)p]}\big)\\
		&=L\mu(1,u,\vt)+O\big(M^p\de^{(1-\ve0)p-[1-(1-\ve0)p]}\big).
	\end{split}
\end{equation}
Then integrating along integral curves of $L$ from $1$ to $\t$ yields
\begin{equation*}
	\begin{split}
		&\quad\mu(\t,u,\vt)-\mu(1,u,\vt)\\
		&=\int_1^\t\f{L\mu(1,u,\vt)}{\t'^p}\d\t'+\int_1^\t\f{O\big(M^p\de^{(1-\ve0)p-[1-(1-\ve0)p]}\big)}{\t'^p}\d\t'\\
		&=-\f{1}{p-1}\big(\f{1}{\t^{p-1}}-1\big)L\mu(1,u,\vt)+O\big(M^p\de^{(1-\ve0)p-[1-(1-\ve0)p]}\cdot\de^{1-(1-\ve0)p}\big)\\
		&=-\f{1}{p-1}\big(\f{1}{\t^{p-1}}-1\big)L\mu(1,u,\vt)+O\big(M^p\de^{(1-\ve0)p}\big).
	\end{split}
\end{equation*}

\vskip 0.2 true cm

\textbf{Part 2. Estimate of $L\mu$}

\vskip 0.1 true cm	

If $\mu(\t,u,\vt)<\f{1}{10}$, in view of \eqref{key estimate of mu}, we claim that
\begin{equation*}
	\f{1}{p-1}\big(\f{1}{\t^{p-1}}-1\big)L\mu(1,u,\vt)\geq\f{1}{2}.
\end{equation*}
Otherwise, for sufficiently small $\de>0$, we would have $\mu(\t,u,\vt)>1-\f{1}{2}+O\big(M^p\de^{(1-\ve0)p}\big)>\f{1}{10}$, which is a contradiction.
Therefore,
\begin{equation*}
	L\mu(1,u,\vt)\leq-\f{p-1}{2}\f{\t^{p-1}}{\t^{p-1}-1}.
\end{equation*}
Then by \eqref{close tL mu}, one has
\begin{equation*}
	\begin{split}
		\t^pL\mu(\t,u,\vt)
		&\leq -\f{p-1}{2}\f{\t^{p-1}}{\t^{p-1}-1}+O\big(M^p\de^{(1-\ve0)p-[1-(1-\ve0)p]}\big)\\
		&\leq-\f{p-1}{2}\f{1}{1+2+\cdots+2^{p-2}}\de^{-[1-(1-\ve0)p]}+O\big(M^p\de^{(1-\ve0)p-[1-(1-\ve0)p]}\big)\\
		&\les-\de^{-[1-(1-\ve0)p]}.
	\end{split}
\end{equation*}

\vskip 0.2 true cm

\textbf{Part 3. Estimate of $T\mu$}

\vskip 0.1 true cm	

By the methods in \cite{M-Y}, we start to estimate $T\mu$.
Suppose $u^*$ is a maximum point of $T(\ln\mu)(u)$ on $[0,\de]$, then $T^2(\ln\mu)(u^*)=0$. Therefore, at the point $(\t,u^*,\vt)$, we have $\mu^{-1}T^2\mu-\mu^{-2}(T\mu)^2=0$. Hence,
\begin{equation}\label{preliminary estimate of mu^-1 T mu}
	\big(\mu^{-1}T\mu\big)_+(\t,u,\vt)\leq\sqrt{\f{\|T^2\mu\|_{L^\infty(\Si_\t^u)}}{\dse\inf_{u\in[0,\de]}\mu(u)}}.
\end{equation}
For $T^2\mu$, by Proposition \ref{Proposition lower order L^infty estimates}, one has
\begin{equation*}
	\|T^2\mu\|_{L^\infty(\Si_\t^u)}\les M^p\de^{-2}.
\end{equation*}
For $\inf\mu$, by \eqref{key estimate of L mu}, integrating along integral curves of $L$ from $\t$ to $t^*$ yields
\begin{align*}
	\mu(\t,u,\vt)
	&=\mu(t^*,u,\vt)-\int_{\t}^{t^*}L\mu(\t',u,\vt)\d\t'\\
	&\geq-\int_{\t}^{t^*}L\mu(\t',u,\vt)\d\t'\\
	&\gtrsim(t^*-t)\de^{-[1-(1-\ve0)p]}.
\end{align*}
Together with \eqref{preliminary estimate of mu^-1 T mu}, this completes the proof of Proposition \ref{Proposition behavior of mu}.
\end{proof}

\section{$L^\infty$ estimates of higher order derivatives}\label{Section 4}

Since our aim is to close the bootstrap assumptions \eqref{bootstrap assumptions}, the results obtained in
Section \ref{Section 3} are far from enough. For this purpose, we require to derive the uniform  $L^{\infty}$
estimates of higher order derivatives.

\begin{proposition}\label{Proposition higher order L^infty estimates}
Under the assumptions \eqref{bootstrap assumptions}, for any vector field $Z\in\{\rho L,T,R_1,R_2,R_3\}$, when $\de>0$ is small,
it holds that for $|\al|\leq N-2$,
\begin{equation}\label{higher order L^infty estimates}
	\begin{split}
		&\de^{l+s[1-(1-\ve0)p]}\|\Lies_{Z}^{\leq\al}\chic\|_{L^\infty(\Si_\t^u)}\les M^p\de^{(1-\ve0)p},\\
		&\de^{l+s[1-(1-\ve0)p]}\|Z^{\leq\al+1}\mu\|_{L^\infty(\Si_\t^u)}\les M^p,\\
		&\de^{l+s[1-(1-\ve0)p]}\left(\|Z^{\leq\al+1}\Lc^j\|_{L^\infty(\Si_\t^u)}+\|Z^{\leq\al+1}\Tc^j\|_{L^\infty(\Si_\t^u)}+\|Z^{\leq\al+1}v_j\|_{L^\infty(\Si_\t^u)}\right)\les M^p\de^{(1-\ve0)p},\\
		&\de^{l+s[1-(1-\ve0)p]}\left(\|\Lies_{Z}^{\leq\al+1}\ds x^j\|_{L^\infty(\Si_\t^u)}+\|\Lies_{Z}^{\leq\al+1}R_j\|_{L^\infty(\Si_\t^u)}\right)\les 1,\\
		&\de^{l+s[1-(1-\ve0)p]}\left(\|\Lies_{Z}^{\leq\al}{}^{(R_j)}\pis\|_{L^\infty(\Si_\t^u)}+\|\Lies_{Z}^{\leq\al}{}^{(R_j)}\pis_L\|_{L^\infty(\Si_\t^u)}+\|\Lies_{Z}^{\leq\al}{}^{(R_j)}\pis_T\|_{L^\infty(\Si_\t^u)}\right)\les M^p\de^{(1-\ve0)p},\\
		&\de^{l+s[1-(1-\ve0)p]}\left(\|\Lies_Z^{\leq\al}{}^{(T)}\pis\|_{L^\infty(\Si_\t^u)}+\|\Lies_Z^{\leq\al}{}^{(T)}\pis_L\|_{L^\infty(\Si_\t^u)}\right)\les M^p,
	\end{split}
\end{equation}
where $l$ (or $s$) is the number of $T$ (or $\rho L$) appearing in the string of $Z$ with $s\leq2$.
\end{proposition}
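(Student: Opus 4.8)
The plan is to induct on the order $|\al|$, with base case $|\al|\le 1$ supplied by Proposition \ref{Proposition lower order L^infty estimates}, and to upgrade all six families of bounds in \eqref{higher order L^infty estimates} \emph{simultaneously} from order $\le k-1$ to order $k$ (for $k\le N-2$), the induction being run on the total number of vector fields in the string so that all orderings of the $Z$'s at a given order are handled together. Throughout, the bootstrap assumptions \eqref{bootstrap assumptions} and \eqref{L^infty estimates of c} already give the weighted $L^\infty$ control of $Z^{\be}\vp_\ga$ and $Z^{\be}c$ for every $|\be|\le N$; since $L=\rho^{-1}(\rho L)$ and $\Lb=c^{-2}\mu L+2T$, this also controls all $L$- and $\Lb$-derivatives of $\vp$, $c$ of order $\le N$ and hence $\Rc_{LALB}$ via \eqref{Rc_LALB}. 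So $\vp$, $c$, and the curvature term are never the obstruction, and the genuinely coupled unknowns are $\chic$, $\mu$, $\Lc^j$, $\Tc^j$, $v_j$, $\ds x^j$, $R_j$ and the deformation tensors, linked only through commutators. The ceiling $|\al|\le N-2$ is precisely what keeps the source terms feeding an order-$(\al+1)$ estimate (e.g.\ $\nas^2c$ in $\Rc_{LALB}$, or $cTc$ in $L\mu$) within order $\le N$ of $\vp$.

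First I would estimate $\chic$: commute $\Lies_Z^{\al}$ through \eqref{transport equation of chic}, use Lemma \ref{Lemma properties of Lie derivatives}(4) and Lemma \ref{Lemma commutators} to write $[\Lies_Z^{\al},\Lies_L]$ as a sum in which a deformation tensor ${}^{(Z)}\pis$ of order $\le|\al|$ multiplies a $\Lies$-derivative of $\chic$ of order $\le|\al|$, and expand the curvature and quadratic terms on the right by Leibniz. This yields a weighted transport inequality of exactly the form of \eqref{weighted transport inequality of chic}, i.e.\ $\big|L(\rho^2\Lies_Z^{\al}\chic)\big|\les M^p\de^{(1-\ve0)p-[1-(1-\ve0)p]}\,|\rho^2\Lies_Z^{\al}\chic|+(\text{quantities already controlled at this stage})$; integrating along the integral curves of $L$ from $\t=1$, where $\chic=(c-1)\rho^{-1}\gs$ and so $\Lies_Z^{\le\al}\chic=O(M^p\de^{(1-\ve0)p})$ by \eqref{L^infty estimates of c}, up to $\t\le t^*$, and using that the $L$-interval has length $\le\de^{1-(1-\ve0)p}$ so that $\int_1^\t M^p\de^{(1-\ve0)p-[1-(1-\ve0)p]}\d\t'\les M^p\de^{(1-\ve0)p}\to0$, the Gronwall factor stays $\le 2$ and the claimed bound follows. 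The same template then gives, in turn: $Z^{\le\al+1}\mu$ from \eqref{transport equation of mu} (with $\mu|_{\t=1}=c$, the $\ds$- and $T$-derivatives needed inside the commutators being supplied by \eqref{transport equation of ds mu}, \eqref{transport equation of T mu}); $Z^{\le\al+1}\Lc^j$ from \eqref{transport equation of Lc^j} together with \eqref{structure equation of Lc^j}, \eqref{transport equation of Lc^j along T}; then $\Tc^j$, $v_j$, $\ds x^j$, $R_j$ by the algebraic relations \eqref{YHC-1}, \eqref{v_i}, $\Lies_Z\ds x^j=\ds Zx^j$, \eqref{R_i}; and finally the deformation-tensor bounds from Lemma \ref{Lemma components of the deformation tensor} (formulas \eqref{Ri pi}, \eqref{T pi}).

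The real obstacle is the bookkeeping of the weights $\de^{l+s[1-(1-\ve0)p]}$ — and of the side constraint $s\le2$ — across these commutator and Leibniz expansions. Each time $[\Lies_Z^{\al},\Lies_L]$ is opened, an $R_i$- or $T$-string is partially traded for factors of ${}^{(R_i)}\pis$ or ${}^{(T)}\pis$, and one must check that the count of $T$'s and $\rho L$'s in each resulting product still matches the weight on the left; this matters because $T\mu\sim\de^{-1}$, $T^2\mu\sim\de^{-2}$ are large while $R_i\mu\sim M^p$ is not, so the pieces coming from ${}^{(T)}\pi_{TT}=T(c^{-2}\mu^2)$, ${}^{(T)}\pi_{LT}=-T\mu$, the $\D_TT$ identity of Lemma \ref{Lemma connection coefficients}, and $\nas^2_{AB}c$ inside $\Rc_{LALB}$, must be exactly offset by the $T$-content they carry. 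Two further points of care: (i) the deformation tensors at the current order $|\al|$ also appear at top order inside the commutators, but their order-$|\al|$ parts are themselves expressible through $\chic$, $\mu$, $L^j$, $v_j$ at order $|\al|$ and always come multiplied by a small factor (a power of $\de$, or $v_i$, or $\chic$), so they either get absorbed into the Gronwall coefficient or are handled by treating the whole order-$|\al|$ vector of unknowns in one simultaneous Gronwall; (ii) the staircase between families (e.g.\ $Z^{\le\al+1}\mu$ lies one order above $\Lies_Z^{\le\al}\chic$) is precisely why the induction is run on total string length with all orderings together, so that a term such as $\ds T\mu=R_iT\mu$ produced by a commutator is already available. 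Granting this accounting, each weighted transport inequality closes by Gronwall exactly as in Proposition \ref{Proposition lower order L^infty estimates}, with no elliptic estimate needed at this intermediate order.
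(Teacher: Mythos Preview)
Your proposal is essentially correct and follows the same approach as the paper: induction on $|\al|$ with base case given by Proposition~\ref{Proposition lower order L^infty estimates}, then commute $Z^{\al}$ through the transport equations \eqref{transport equation of chic}, \eqref{transport equation of mu}, \eqref{transport equation of Lc^j}, integrate along $L$ on the short interval $[1,t^*]$, and close by Gronwall, after which the algebraic relations \eqref{YHC-1}, \eqref{v_i}, \eqref{R_i} and Lemma~\ref{Lemma components of the deformation tensor} yield the remaining bounds. The only organizational difference is that the paper first treats pure rotation strings $Z\in\{R_1,R_2,R_3\}$ in isolation and then, rather than commuting $T$ through the $L$-equation as you propose, reads off $\Lies_T\chic$ and $T\Lc^j$ \emph{algebraically} from \eqref{transport equation of chi along T} and \eqref{transport equation of Lc^j along T} (and $\rho L$-derivatives directly from the $L$-transport equations themselves); this sequential ordering avoids the simultaneous Gronwall over all orderings that you sketch in point~(i), since the rotation-only transport inequalities do not couple back to $T$-strings, but the end estimates are identical.
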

\begin{proof}
We will prove this proposition by the induction method with respect to the index $\al$. Note that we have proved \eqref{higher order L^infty estimates} for the case $|\al|\leq 1$ in Section \ref{Section 3}. For any $2\leq|\al|\leq N-2$, assume that \eqref{higher order L^infty estimates} holds up to the order $|\al|-1$, one needs to show that \eqref{higher order L^infty estimates} is also true for the order $|\al|$.
	
We first prove \eqref{higher order L^infty estimates} in the cases of $Z\in\{R_1,R_2,R_3\}$.
	
\vskip 0.2 true cm

\textbf{Part 1. Estimates of $\Lies_{R_i}^{\al}\chic$ and $\Lies_{R_i}^{\al}{}^{(R_j)}\pis_L$}

\vskip 0.1 true cm	
	
By \eqref{[nas,Lies]}, we commute $\Lies_{R_i}^{\al}$ with \eqref{transport equation of chic} to obtain
\begin{equation}\label{commute Lies Ri al with L chic}
	\begin{split}
		\Lies_L\Lies_{R_i}^{\al}\chic_{AB}
		&=\Lies_{R_i}^{\al}(\Lies_L\chic_{AB})+\sum_{\be_1+\be_2=\al-1}\Lies_{R_i}^{\be_1}\Lies_{[L,R_i]}\Lies_{R_i}^{\be_2}\chic_{AB}\\
		&=\Lies_{R_i}^{\al}(c^{-1}Lc\chic_{AB}+\chic_A^C\chic_{BC}+\rho^{-1}c^{-1}Lc\gs_{AB}-\Rc_{LALB})\\
		&\quad+\sum_{\be_1+\be_2=\al-1}\Lies_{R_i}^{\be_1}\big[{}^{(R_i)}\pis_L^C(\nas_C\Lies_{R_i}^{\be_2}\chic_{AB})+\Lies_{R_i}^{\be_2}\chic_{BC}(\nas_A{}^{(R_i)}\pis_L^C)+\Lies_{R_i}^{\be_2}\chic_{AC}(\nas_B{}^{(R_i)}\pis_L^C)\big].
	\end{split}
\end{equation}
Recalling the expression of ${}^{(R_i)}\pis_{LA}$ in \eqref{Ri pi} and utilizing \eqref{L^infty estimates of c}, together with the induction hypothesis, yield
\begin{equation}\label{estimate of commute Lies Ri al with L chic}
	\begin{split}
		|\Lies_L\Lies_{R_i}^{\al}\chic|
		&\les M^p\de^{(1-\ve0)p-[1-(1-\ve0)p]}|\Lies_{R_i}^{\al}\chic|+M^p\de^{(1-\ve0)p-[1-(1-\ve0)p]}.
	\end{split}
\end{equation}
As in \eqref{weighted transport equation of chic},
\begin{equation}\label{weighted transport equation of |Lies Ri al chic|}
	L(\rho^4|\Lies_{R_i}^{\al}\chic|^2)=-4\rho^4\chic_A^B\cdot\Lies_{R_i}^{\al}\chic_B^C\cdot\Lies_{R_i}^{\al}\chic_C^A+2\rho^4\Lies_{R_i}^{\al}\chic^{AB}\cdot\Lies_L\Lies_{R_i}^{\al}\chic_{AB}.
\end{equation}
Combining \eqref{estimate of commute Lies Ri al with L chic} and \eqref{weighted transport equation of |Lies Ri al chic|} derives
\begin{equation*}
	|L(\rho^2|\Lies_{R_i}^{\al}\chic|)|\les M^p\de^{(1-\ve0)p-[1-(1-\ve0)p]}\cdot\rho^2|\Lies_{R_i}^{\al}\chic|+M^p\de^{(1-\ve0)p-[1-(1-\ve0)p]}.
\end{equation*}
It follows from Gronwall inequality that
\begin{equation*}
	|\Lies_{R_i}^{\al}\chic|\les M^p\de^{(1-\ve0)p},
\end{equation*}
which also gives the analogous estimate of $\Lies_{R_i}^{\al}{}^{(R_j)}\pis_L$.

\vskip 0.2 true cm
	
\textbf{Part 2. Estimates of $R_i^{\al+1}\mu$ and $\Lies_{R_i}^{\al}{}^{(R_j)}\pis_T$}
	
\vskip 0.1 true cm	
	
Similarly to the treatment for $\Lies_{R_i}^\al\chic$, we commute $R_i^{\al+1}$ with \eqref{transport equation of mu} to obtain
\begin{equation}\label{commute Ri al+1 with L mu}
	\begin{split}
		LR_i^{\al+1}\mu
		&=R_i^{\al+1}L\mu+\sum_{\be_1+\be_2=\al}R_i^{\be_1}[L,R_i]R_i^{\be_2}\mu\\
		&=R_i^{\al+1}(c^{-1}Lc\mu-cTc)+\sum_{\be_1+\be_2=\al}\Lies_{R_i}^{\be_1}({}^{(R_i)}\pis_L^A\ds_AR_i^{\be_2}\mu).
	\end{split}
\end{equation}
By \eqref{L^infty estimates of c} and the estimate of $\Lies_{R_i}^{\al}{}^{(R_j)}\pis_L$ in Part 1, together with the induction hypothesis, one has
\begin{equation*}
	\begin{split}
		|LR_i^{\al+1}\mu|\les M^p\de^{(1-\ve0)p-[1-(1-\ve0)p]}|R_i^{\al+1}\mu|+M^p\de^{(1-\ve0)p-1}.
	\end{split}
\end{equation*}
Together with Gronwall inequality, this yields
\begin{equation*}
	|R_i^{\al+1}\mu|\les M^p,
\end{equation*}
which also gives the corresponding estimate of $\Lies_{R_i}^{\al}{}^{(R_j)}\pis_T$.

\vskip 0.2 true cm
	
\textbf{Part 3. Estimates of $R_i^{\al+1}\Lc^j$, $R_i^{\al+1}\Tc^j$ and $R_i^{\al+1}v_j$}
	
\vskip 0.1 true cm	
	
By \eqref{structure equation of Lc^j}, one has
\begin{equation*}
	\begin{split}
		R_i^{\al+1}\Lc^j
		&=\Lies_{R_i}^{\al}(R_i^A\ds_A\Lc^j)\\
		&=\Lies_{R_i}^{\al}(R_i^A\chic_{AB}\ds^Bx^j).
	\end{split}
\end{equation*}
Thanks to the estimate of $\Lies_{R_i}^{\al}\chic$ in Part 1, together with the induction hypothesis, we arrive at
\begin{equation*}
	|R_i^{\al+1}\Lc^j|\les M^p\de^{(1-\ve0)p}.
\end{equation*}
Then the estimates of $R_i^{\al+1}\Tc^j$ and $R_i^{\al+1}v_j$ follow immediately from
\eqref{YHC-1}  and \eqref{v_i}.

\vskip 0.2 true cm
	
\textbf{Part 4. Estimates of the other quantities}
	
\vskip 0.1 true cm	
	
The estimates of $\Lies_{R_i}^{\al+1}\ds x^j$ and $\Lies_{R_i}^{\al+1}R_j$ follow directly from \eqref{R_i} and Part 3,
while the estimates of the other deformation tensors follow from \eqref{Ri pi}, \eqref{T pi} and the above procedure.
	
In addition, it follows from \eqref{transport equation of chi along T} that the formula of $\Lies_T\chic$ can be obtained. After taking the Lie derivatives for this formula with respect to the rotation vector, we may
estimate $\Lies_{R_i}^{\al-1}\Lies_T\chic$ directly by applying the estimates above and \eqref{bootstrap assumptions}.
Therefore, the bounds of $\Lies_Z^\al\chic$ are obtained when $Z\in\{T,R_1,R_2,R_3\}$ and there is only one $T$
to appear in $Z^\al$. Similarly, $Z^{\al+1}\Lc^j$ can be estimated by \eqref{transport equation of Lc^j along T}.
By use of \eqref{transport equation of mu} again, the bounds of $Z^{\al+1}\mu$ are also obtained.
On the other hand, the estimates of other left quantities in \eqref{higher order L^infty estimates} follow immediately.
Then by the induction method
with respect to the number of $T$, \eqref{higher order L^infty estimates} can be derived for $Z\in\{T,R_1,R_2,R_3\}$.
	
Finally, when the derivatives in $Z^\al$ involve $\rho L$, the transport equations \eqref{transport equation of mu}, \eqref{transport equation of chic}, \eqref{transport equation of Lc^j} can be utilized to derive \eqref{higher order L^infty estimates}.
\end{proof}

At the end of this section, we close bootstrap assumptions \eqref{bootstrap assumptions} for higher order derivatives.

\begin{proposition}\label{Proposition close bootstrap assumptions for higher order derivatives}
For sufficiently small $\de>0$, it holds  that for $|\al|\leq N-2$,
\begin{equation}\label{close bootstrap assumptions for higher order derivatives}
	\de^{l+s[1-(1-\ve0)p]}\|Z^{\al}\vp_{\ga}\|_{L^{\infty}(\Si_\t^u)}\les\de^{1-\ve0},\ \ga=0,1,2,3,
\end{equation}
where $Z\in\{\rho L,T,R_1,R_2,R_3\}$, $l$ (or $s$) is the number of $T$ (or $\rho L$) included in $Z^{\al}$ with $s\leq 2$.
\end{proposition}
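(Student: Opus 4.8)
The plan is to argue by induction on $|\al|$, the base case $|\al|\le 1$ being precisely Proposition \ref{Proposition close bootstrap assumptions for vp_ga and Z vp_ga}. So fix $2\le|\al|\le N-2$ and assume \eqref{close bootstrap assumptions for higher order derivatives} for all strings shorter than $\al$. The scheme of Proposition \ref{Proposition close bootstrap assumptions for vp_ga and Z vp_ga} is then iterated: one commutes $Z^{\al}$ into the transport equation \eqref{transport equation of Lb vp_ga} for $\Lb\vp_\ga$, integrates along the integral curves of $L$ to control $\Lb Z^{\al}\vp_\ga$, deduces $T Z^{\al}\vp_\ga$, and finally integrates along the integral curves of $T$. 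Using $\trgs\chi=\trgs\chic+2\rho^{-1}$ and $L\rho=1$, the commuted equation takes the Gronwall-ready form
\begin{equation*}
L(\rho Z^{\al}\Lb\vp_\ga)=-\f12\trgs\chic\cdot\rho Z^{\al}\Lb\vp_\ga+\rho\Big(Z^{\al}(\mu\Des\vp_\ga+H_\ga-F_\ga)+[L,Z^{\al}]\Lb\vp_\ga+\mathcal{C}_\ga\Big),
\end{equation*}
where $\mathcal{C}_\ga$ collects the commutators produced when $Z^{\al}$ is pushed through the $\trgs\chi$–factor. For strings $Z^{\al}$ containing $T$ or $\rho L$ one proceeds exactly as in the three cases of Proposition \ref{Proposition close bootstrap assumptions for vp_ga and Z vp_ga}: either commute directly, using $[L,\rho L]=L$, $[L,T]={}^{(T)}\pis_L$, $[L,R_i]={}^{(R_i)}\pis_L$ and \eqref{[X,Y]}, or first substitute $\Lb=c^{-2}\mu L+2T$ from \eqref{Lb} so as to trade a $T$ for $\Lb$ and an $L$ of strictly lower order.

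The bulk of the work is the estimate of the source on the right-hand side, carried out with Proposition \ref{Proposition higher order L^infty estimates}, the induction hypothesis and \eqref{bootstrap assumptions}. The pieces of $H_\ga$ and $F_\ga$ from \eqref{F_ga}–\eqref{H_ga} are products of factors of $c$ (controlled by \eqref{L^infty estimates of c}), of $\mu$ and $\chic$ and of at most $|\al|+1\le N-1$ derivatives of $\vp_\ga$; the commutator terms $[L,Z^{\al}]\Lb\vp_\ga$ and $\mathcal{C}_\ga$ reduce, via \eqref{[X,Y]} and the deformation-tensor bounds of Proposition \ref{Proposition higher order L^infty estimates}, to strings of no more than $|\al|$ derivatives of the geometric quantities applied to at most $N$ derivatives of $\vp_\ga$; and $Z^{\al}(\mu\Des\vp_\ga)$ is handled with \eqref{particular [Des,Z]} and Lemma \ref{Lemma relation of equivalence}, so that $|\nas^2 Z^{\le\al}\vp_\ga|$ is measured by $|R_iR_jZ^{\le\al}\vp_\ga|$ plus lower-order terms, i.e. by at most $N$ derivatives of $\vp_\ga$ — which is exactly the point forcing the restriction $|\al|\le N-2$. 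Crucially, every term retains the smallness factor $\de^{(1-\ve0)p}$ (or an explicit $\mu$, or an extra $\de$ coming from an angular derivative) dictated by the $p$-th power structure of \eqref{main equation}, and each carries precisely the weight $\de^{-(l+s[1-(1-\ve0)p])}$ prescribed by the number of $T$'s and $\rho L$'s inside $Z^{\al}$; hence the source is bounded by the appropriate negative power of $\de$ times a constant of the form $C_0+C_0M^{p+1}\de^{(1-\ve0)p}$, an absolute constant plus an $M$-dependent piece carrying a positive power of $\de$.

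Closing the estimate then proceeds by the two integrations. Gronwall's inequality along the integral curves of $L$ on $1\le\t\le t^*=1+\de^{1-(1-\ve0)p}$ (with the small coefficient $\trgs\chic=O(M^p\de^{(1-\ve0)p})$) bounds $\rho Z^{\al}\Lb\vp_\ga$ by its value at $\t=1$ — a combination of at most $|\al|+1$ derivatives of the data, of size the right power of $\de$ with an $M$-independent constant by \eqref{initial data}, \eqref{outgoing constraint condition} and \eqref{equivalent outgoing constraint condition} — plus the length $\le\de^{1-(1-\ve0)p}$ of the time interval times the source bound; this yields $\Lb Z^{\al}\vp_\ga$, hence $T Z^{\al}\vp_\ga$ via $2T=\Lb-c^{-2}\mu L$ (the $L$-term being of strictly lower order and already controlled). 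A further integration along the integral curves of $T$ from $u=0$, where $Z^{\al}\vp_\ga$ is again data-controlled, over $u\in[0,\de]$ supplies one more power of $\de$, and the accumulated gains dominate $M^{p+1}$, so that for $\de$ small the $M$-dependent contribution is absorbed and one obtains $\de^{l+s[1-(1-\ve0)p]}\|Z^{\al}\vp_\ga\|_{L^{\infty}(\Si_\t^u)}\les\de^{1-\ve0}$ with a constant independent of $M$. The main obstacle is the precise bookkeeping of the powers of $\de$ for strings with up to two $\rho L$'s and several $T$'s, together with keeping the $M$-free and $M$-dependent contributions cleanly separated so that the closing constant is genuinely uniform in $M$; a secondary difficulty is that the term $\mu\Des\vp_\ga$ is what limits the argument to order $N-2$, the remaining two orders being left to the energy estimates of Sections \ref{Section 5}–\ref{Section 8}.
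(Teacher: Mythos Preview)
Your proposal is correct and follows essentially the same strategy as the paper: induction on $|\al|$ with base case Proposition \ref{Proposition close bootstrap assumptions for vp_ga and Z vp_ga}, commutation with the transport equation \eqref{transport equation of Lb vp_ga}, Gronwall along the integral curves of $L$ over the short interval $[1,t^*]$, and finally integration in $u$ over $[0,\de]$, with the $M^{p+1}$ contributions always carrying the extra smallness $\de^{(1-\ve0)p}$ so that they drop out for small $\de$. The only cosmetic differences are that the paper commutes so as to produce $L\Lb Z^{\al}\vp_\ga$ (with $\Lb$ outside $Z^{\al}$) rather than your $L(Z^{\al}\Lb\vp_\ga)$, and it omits the explicit $\rho$-weight since $\rho\sim 1$ on the bounded time interval; these differ only by commutators already absorbed by Proposition \ref{Proposition higher order L^infty estimates}.
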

\begin{proof}
We will prove this proposition by the induction method with respect to the index $\al$.
Note that we have proved \eqref{close bootstrap assumptions for higher order derivatives}
for the case $|\al|\leq 1$ in Section \ref{Section 3}. For any $2\leq|\al|\leq N-2$,
assume that \eqref{close bootstrap assumptions for higher order derivatives} holds up to the order $|\al|-1$,
one requires to show \eqref{close bootstrap assumptions for higher order derivatives} for the order $|\al|$.

We first prove \eqref{close bootstrap assumptions for higher order derivatives}
when $Z^{\alpha}$ only involves the rotation vectors, that is, $Z\in\{R_1,R_2,R_3\}$.
To estimate $R_i^{\al}\vp_\ga$, by
commuting $R_i^{\al}$ with \eqref{transport equation of Lb vp_ga}, one has
\begin{equation*}
	\begin{split}
		L\Lb R_i^{\al}\vp_\ga
		&=R_i^{\al}L\Lb\vp_\ga+[L\Lb,R_i^{\al}]\vp_\ga\\
		&=-\f12\trgs\chi\Lb R_i^{\al}\vp_\ga+\f12\trgs\chi[\Lb,R_i^{\al}]\vp_\ga-\f12R_i^{\al}\trgs\chi\Lb\vp_\ga+R_i^{\al}(\mu\Des\vp_\ga+H_\ga-F_\ga)+[L\Lb,R_i^{\al}]\vp_\ga.
	\end{split}
\end{equation*}
Together with \eqref{[X,Y]}, Proposition \ref{Proposition higher order L^infty estimates} and \eqref{bootstrap assumptions}, this yields
\begin{equation*}
	|L\Lb R_i^{\al}\vp_\ga|\les|\Lb R_i^{\al}\vp_\ga|+M^{p+1}\de^{-\ve0+(1-\ve0)p-[1-(1-\ve0)p]}.
\end{equation*}	
Integrating along integral curves of $L$ from $1$ to $\t$ derives
\begin{equation*}
	|\Lb R_i^{\al}\vp_\ga|\les M^{p+1}\de^{-\ve0+(1-\ve0)p}
\end{equation*}	
and hence,
\begin{equation*}
	|TR_i^{\al}\vp_\ga|\les M^{p+1}\de^{-\ve0+(1-\ve0)p}\les\de^{-\ve0}.
\end{equation*}	
Then we have that by integrating along integral curves of $T$ from $0$ to $u$,
\begin{equation*}
	|R_i^{\al}\vp_\ga|\les\de^{1-\ve0}.
\end{equation*}
 	
When $Z^{\al}$ involves $T$ or $\rho L$, in view of \eqref{[X,Y]} and the induction hypothesis, we only need to
estimate $\Lb Z^{\al-1}\vp_\ga$ or $Z^{\al-1}L\vp_\ga$ by commuting $Z^{\al-1}$ with \eqref{transport equation of Lb vp_ga}. This completes the proof of \eqref{close bootstrap assumptions for higher order derivatives}.
\end{proof}

\begin{remark}\label{Remark close L^infty estimates up to order N-4}
It is pointed out that we have closed the bootstrap assumptions \eqref{bootstrap assumptions} for the orders up to $|\al|\leq N-2$.
In this case, Proposition \ref{Proposition higher order L^infty estimates} holds for the orders up to $|\al|\leq N-4$,
where the constant $M$ has been removed.
\end{remark}

\section{Energy estimates for the covariant wave equation}\label{Section 5}

In order to close the estimates of the $(N-1)^{\text{th}}$ and $N^{\text{th}}$ order derivatives of $\vp_{\ga}$
in \eqref{bootstrap assumptions}, we now focus on the energy estimates for \eqref{covariant wave equations}.
As in \cite{Ch2} and \cite{M-Y}-\!\cite{Sp},
the strategy is to deal with linearized equation first and then to act the vector fields $Z^{\al+1}$
on \eqref{covariant wave equations} so that the higher order energies are derived. This procedure is
divided into the following six steps.

\vskip 0.2 true cm

\textbf{Step 1. Deriving the divergence form}

\vskip 0.1 true cm

For the linearized covariant wave equation of \eqref{covariant wave equations}, that is
\begin{equation}\label{linearized covariant wave equation}
	\mu\Box_g\Psi=\Phi,
\end{equation}
where $\Psi$ and its derivatives vanish on $C_0^\t$. By \eqref{energy-momentum tensor}, one has
\begin{equation*}
	\Box_g\Psi\cdot\p_{\be}\Psi=\D^{\al}Q_{\al\be}.
\end{equation*}
In addition, for any vector field $V$, it follows from \eqref{deformation tensor} that
\begin{equation}\label{key formula}
	\Box_g\Psi\cdot V\Psi=\D_{\al}{}^{(V)}J^{\al}-\frac{1}{2}Q^{\al\be}[\Psi]{}^{(V)}\pi_{\al\be},
\end{equation}
where ${}^{(V)}J^{\al}=Q_{\be}^{\al}V^{\be}$ with $Q_\be^\al=g^{\al\ga}Q_{\be\ga}$ and $Q^{\al\be}=g^{\al\ka}g^{\be\la}Q_{\ka\la}$.

\vskip 0.2 true cm

\textbf{Step 2. Integrating by parts on domain $D^{\t,u}$}

\vskip 0.1 true cm

Under the coordinate $\{\t,u,\vt^1,\vt^2\}$, one has
\begin{equation}\label{J under the optical coordinate}
	{}^{(V)}J={}^{(V)}J^\t\frac{\p}{\p\t}+{}^{(V)}J^u\frac{\p}{\p u}+{}^{(V)}\slashed{J}^A\frac{\p}{\p\vt^A}.
\end{equation}
Let $N=\p_t=L+c^2\mu^{-1}T$ be the normal vector. Then taking the inner product with $N$ on both
sides of \eqref{J under the optical coordinate} yields
\begin{equation*}
	{}^{(V)}J_N={}^{(V)}J^\t g(L,N)+{}^{(V)}J^ug(T,N)+{}^{(V)}\slashed{J}^Ag(X_A,N)=-c^2{}^{(V)}J^\t.
\end{equation*}
Hence, ${}^{(V)}J^\t=-c^{-2}{}^{(V)}J_N$. Similarly, ${}^{(V)}J^u=-\mu^{-1}{}^{(V)}J_L$. Therefore,
it follows from $\sqrt{|\det g|}=\mu\sqrt{\det\gs}$ that
\begin{equation*}
	\begin{split}
		\D_{\al}{}^{(V)}J^{\al}
		&=\frac{1}{\sqrt{|\det g|}}\big[\frac{\p}{\p\t}(\sqrt{|\det g|}{}^{(V)}J^\t)+\frac{\p}{\p u}(\sqrt{|\det g|}{}^{(V)}J^u)+\frac{\p}{\p\vt^A}(\sqrt{|\det g|}{}^{(V)}\slashed{J}^A)\big]\\
		&=\frac{1}{\sqrt{|\det g|}}\big[\frac{\p}{\p\t}(-c^{-2}\mu{}^{(V)}J_N\sqrt{\det\gs})+\frac{\p}{\p u}(-{}^{(V)}J_L\sqrt{\det\gs})+\frac{\p}{\p\vt^A}(\sqrt{|\det g|}{}^{(V)}\slashed{J}^A)\big].
	\end{split}
\end{equation*}
Integrating over $D^{\t,u}$ to obtain
\begin{equation}\label{divergence theorem}
	-\int_{D^{\t,u}}\mu\D_{\al}{}^{(V)}J^{\al}=\int_{\Si_\t^u}c^{-2}\mu{}^{(V)}J_N-\int_{\Si_1^u}c^{-2}\mu{}^{(V)}J_N+\int_{C_u^\t}{}^{(V)}J_L,
\end{equation}
where
\begin{equation*}
	{}^{(V)}J_N={}^{(V)}J^{\al}N_{\al}=Q_{\be}^{\al}V^{\be}N_{\al}=Q_{VN},\ {}^{(V)}J_L=Q_{VL}.
\end{equation*}

\vskip 0.2 true cm

\textbf{Step 3. Establishing energy identity}

\vskip 0.1 true cm

Choosing two vector fields $V_1=L,V_2=\Lb$ as multipliers, by $T=\frac{1}{2}(\Lb-c^{-2}\mu L)$ and \eqref{components of energy-momentum tensor}, then
\begin{equation*}
	\begin{split}
		Q_{V_1N}&=\frac{1}{2}\big[(L\Psi)^2+c^2|\ds\Psi|^2\big],\ Q_{V_1L}=(L\Psi)^2,\\
		Q_{V_2N}&=\frac{1}{2}\big[c^2\mu^{-1}(\Lb\Psi)^2+\mu|\ds\Psi|^2\big],\ Q_{V_2L}=\mu|\ds\Psi|^2.
	\end{split}
\end{equation*}
Therefore, by \eqref{divergence theorem} and \eqref{key formula}, we have the following energy identity
\begin{equation}\label{energy identity}
	E_i[\Psi](\t,u)-E_i[\Psi](1,u)+F_i[\Psi](\t,u)=-\int_{D^{\t,u}}\Phi\cdot V_i\Psi-\int_{D^{\t,u}}\frac{1}{2}\mu Q^{\al\be}[\Psi]^{(V_i)}\pi_{\al\be},\ i=1,2,
\end{equation}
where the energies $E_i[\Psi](t,u)$ and fluxes $F_i[\Psi](t,u)$ are defined as follows
\begin{equation}\label{energy and flux}
	\begin{split}
		E_1[\Psi](\t,u)
		&=\int_{\Si_\t^u}\frac{1}{2}\big[c^{-2}\mu(L\Psi)^2+\mu|\ds\Psi|^2\big],\ F_1[\Psi](\t,u)=\int_{C_u^\t}(L\Psi)^2,\\
		E_2[\Psi](\t,u)
		&=\int_{\Si_\t^u}\frac{1}{2}\big[(\Lb\Psi)^2+c^{-2}\mu^2|\ds\Psi|^2\big],\ F_2[\Psi](\t,u)=\int_{C_u^\t}\mu|\ds\Psi|^2.
	\end{split}
\end{equation}

\vskip 0.2 true cm

\textbf{Step 4. Deriving error estimates and energy inequality}

\vskip 0.1 true cm
We now deal with the second integral in the right hand side of $\eqref{energy identity}$. By Lemma \ref{Lemma componets of gs},
one has
\begin{equation*}
	\begin{split}
		&\quad-\frac{1}{2}\mu Q^{\al\be}[\Psi]{}^{(V)}\pi_{\al\be}\\
		&=-\frac{1}{2}\mu\big[-\frac{1}{2}\mu^{-1}(L^{\al}\Lb^{\ka}+\Lb^{\al}L^{\ka})+\gs^{AB}X_A^{\al}X_B^{\ka}\big]\big[-\frac{1}{2}\mu^{-1}(L^{\be}\Lb^{\la}+\Lb^{\be}L^{\la})+\gs^{CD}X_C^{\be}X_D^{\la}\big]Q_{\ka\la}{}^{(V)}\pi_{\al\be}\\
		&=-\frac{1}{8}\mu^{-1}(Q_{LL}{}^{(V)}\pi_{\Lb\Lb}+Q_{\Lb\Lb}{}^{(V)}\pi_{LL})-\frac{1}{4}\mu^{-1}Q_{L\Lb}{}^{(V)}\pi_{L\Lb}+\frac{1}{2}(Q_L^A{}^{(V)}\pis_{\Lb A}+Q_{\Lb}^A{}^{(V)}\pis_{LA})-\frac{1}{2}\mu Q^{AB}{}^{(V)}\pis_{AB}.
	\end{split}
\end{equation*}
Then by \eqref{components of energy-momentum tensor}, \eqref{L pi} and \eqref{Lb pi}, we obtain
\begin{equation}\label{Q*pi 1}
	\begin{split}
		-\f12\mu Q^{\al\be}[\Psi]{}^{(V_1)}\pi_{\al\be}
		&=-\f12L(c^{-2}\mu)(L\Psi)^2-\f12\trgs\chi L\Psi\Lb\Psi+c^2\ds_A(c^{-2}\mu)L\Psi\ds^A\Psi\\
		&\quad+\f12(L\mu+\mu\trgs\chic)|\ds\Psi|^2-\mu\chic_{AB}\ds^A\Psi\ds^B\Psi
	\end{split}
\end{equation}
and
\begin{equation}\label{Q*pi 2}
	\begin{split}
		-\f12\mu Q^{\al\be}[\Psi]{}^{(V_2)}\pi_{\al\be}
		&=\f12c^{-2}\mu\trgs\chi L\Psi\Lb\Psi-\mu\ds_A(c^{-2}\mu)L\Psi\ds^A\Psi-c^2\ds_A(c^{-2}\mu)\Lb\Psi\ds^A\Psi\\
		&\quad+\f12\big[\Lb\mu+\mu L(c^{-2}\mu)-c^{-2}\mu^2\trgs\chic\big]|\ds\Psi|^2+c^{-2}\mu^2\chic_{AB}\ds^A\Psi\ds^B\Psi.
	\end{split}
\end{equation}
Applying the results in Sections \ref{Section 3} and \ref{Section 4} to estimate all the terms in \eqref{Q*pi 1} and \eqref{Q*pi 2} yields
\begin{equation}\label{integral of Q*pi 1}
 	\begin{split}
 		&\quad\de^{1-(1-\ve0)p}\int_{D^{\t,u}}-\f12c^{-2}\mu Q^{\al\be}[\Psi]{}^{(V_1)}\pi_{\al\be}\\
 		&\les\de^{-[1-(1-\ve0)p]}\int_0^u\de^{1-(1-\ve0)p}F_1(\t,u')\d u'\\
 		&\quad+\de^{-1}\int_0^u\de^{1-(1-\ve0)p}F_1(\t,u')\d u'+\de^{1-(1-\ve0)p}\int_1^\t\de E_2(\t',u)\d\t'\\
 		&\quad+\de^{-1}\int_0^u\de^{1-(1-\ve0)p}F_1(\t,u')\d u'+\de^{1+[1-(1-\ve0)p]}K(\t,u)+\de\int_1^\t\de^{1-(1-\ve0)p}E_1(\t',u)\d\t'\\
 		&\quad-K(\t,u)+\de^{-[1-(1-\ve0)p]}\int_1^\t\de^{1-(1-\ve0)p}E_1(\t',u)\d\t'\\
 		&\quad+\de^{(1-\ve0)p}\int_1^\t\de^{1-(1-\ve0)p}E_1(\t',u)\d\t'
 	\end{split}
\end{equation}
and
\begin{equation}\label{integral of Q*pi 2}
 	\begin{split}
 		&\quad\de\int_{D^{\t,u}}-\f12c^{-2}\mu Q^{\al\be}[\Psi]{}^{(V_2)}\pi_{\al\be}\\
 		&\les\de^{-[1-(1-\ve0)p]}\int_1^\t\de^{1-(1-\ve0)p}E_1(\t',u)\d\t'+\de\int_1^\t\de E_2(\t',u)\d\t'\\
 		&\quad+\de^{1-[1-(1-\ve0)p]}\int_1^\t\de^{1-(1-\ve0)p}E_1(\t',u)\d\t'\\
 		&\quad+\de^{-[1-(1-\ve0)p]}\int_1^\t\de E_2(\t',u)\d\t'+\de^{1+[1-(1-\ve0)p]}K(\t,u)+\de\int_1^\t\de^{1-(1-\ve0)p}E_1(\t',u)\d\t'\\
 		&\quad+\int_1^\t\de^{-\f12[1-(1-\ve0)p]}\f{1}{\sqrt{t^*-\t'}}\cdot\de^{1-(1-\ve0)p}E_1(\t',u)\d\t'+\de^{-[1-(1-\ve0)p]}\int_1^\t\de^{1-(1-\ve0)p}E_1(\t',u)\d\t'\\
 		&\quad+\de^{(1-\ve0)2p}\int_1^\t\de^{1-(1-\ve0)p}E_1(\t',u)\d\t',
 	\end{split}
\end{equation}
where we have used $D^{\t,u}=\big(D^{\t,u}\cap\{\mu\geq\f{1}{10}\}\big)\cup\big(D^{\t,u}\cap\{\mu<\f{1}{10}\}\big)$,
\eqref{key estimate of L mu} and \eqref{key estimate of T mu}. In addition, the term $K(\t,u)=\int_{D^{\t,u}\cap\{\mu<\f{1}{10}\}}|\ds\Psi|^2$
in \eqref{integral of Q*pi 1}  plays a key role in controlling $\ds\Psi$ in the error terms.

Substituting \eqref{integral of Q*pi 1} and \eqref{integral of Q*pi 2} into \eqref{energy identity} and utilizing the Gronwall inequality, we obtain
\begin{equation}\label{energy inequality}
 	\begin{split}
 		&\quad\de^{1-(1-\ve0)p}E_1(\t,u)+\de^{1-(1-\ve0)p}F_1(\t,u)+\de E_2(\t,u)+\de F_2(\t,u)+K(\t,u)\\
 		&\les\de^{1-(1-\ve0)p}E_1(1,u)+\de E_2(1,u)+\de^{1-(1-\ve0)p}\int_{D^{\t,u}}|\Phi|\cdot|L\Psi|+\de\int_{D^{\t,u}}|\Phi|\cdot|\Lb\Psi|.
 	\end{split}
\end{equation}
Motivated by \cite{Ch2} and \cite{M-Y}-\!\cite{Sp}, define the higher order weighted energy and flux as follows
\begin{equation}\label{higher order energy and flux}
	\begin{split}
		\Et_{i,\leq m+1}(\t,u)&=\sum_{\ga=0}^3\sum_{|\al|\leq m}\de^{2l+2s[1-(1-\ve0)p]}E_i[Z^\al\vp_{\ga}](\t,u),\ i=1,2,\\
		\Ft_{i,\leq m+1}(\t,u)&=\sum_{\ga=0}^3\sum_{|\al|\leq m}\de^{2l+2s[1-(1-\ve0)p]}F_i[Z^\al\vp_{\ga}](\t,u),\ i=1,2,\\
		\Kt_{\leq m+1}(\t,u)&=\sum_{\ga=0}^3\sum_{|\al|\leq m}\de^{2l+2s[1-(1-\ve0)p]}K[Z^\al\vp_{\ga}](\t,u),\\
		\Et^b_{i,\leq m+1}(\t,u)&=\sup_{1\leq\t'\leq\t}\left\{\mu_{\min}^{2b_{m+1}}(\t')\Et_{i,\leq m+1}(\t',u)\right\},\ i=1,2,\\
		\Ft^b_{i,\leq m+1}(\t,u)&=\sup_{1\leq\t'\leq\t}\left\{\mu_{\min}^{2b_{m+1}}(\t')\Ft_{i,\leq m+1}(\t',u)\right\},\ i=1,2,\\
		\Kt^b_{\leq m+1}(\t,u)&=\sup_{1\leq\t'\leq\t}\left\{\mu_{\min}^{2b_{m+1}}(\t')\Kt_{\leq m+1}(\t',u)\right\},
	\end{split}
\end{equation}
where $l$ (or $s$) is the number of $T$ (or $\rho L$) included in $Z^\al$ with $s\leq 2$, $\mu_{\min}(\t')
=\dse\min_{(u,\vt)}\mu(\t',u,\vt)$ and the indices $\{b_k\}$ will be determined in Section \ref{Section 8.1} below.

\vskip 0.2 true cm

\textbf{Step 5. Obtaining commuted covariant wave equation}

\vskip 0.1 true cm

Choosing $\Psi=\Psi_{\ga}^{|\al|+1}=Z_{|\al|+1}\cdots Z_1\vp_{\ga}$ and
$\Phi=\Phi_{\ga}^{|\al|+1}=\mu\Box_g\Psi_{\ga}^{|\al|+1}$ in \eqref{energy inequality}.
By commuting vector fields $Z_k$ $(k=1,\cdots,|\al|+1)$ with \eqref{linearized covariant wave equation},
the induction argument gives
\begin{equation}\label{higher order commuted covariant wave equation}
	\begin{split}
		\Phi_{\ga}^{|\al|+1}
		&=\underbrace{\sum_{j=0}^{|\al|-1}\big(Z_{|\al|+1}+\leftidx{^{(Z_{|\al|+1})}}\la\big)\cdots\big(Z_{j+2}+\leftidx{^{(Z_{j+2})}}\la\big)\big(\mu\divg{}^{(Z_{j+1})}C_{\ga}^{j}\big)}_{\text{vanishes when $|\al|=0$}}\\
		&\quad+\mu\divg{}^{(Z_{|\al|+1})}C_{\ga}^{|\al|}+\big(Z_{|\al|+1}+\leftidx{^{(Z_{|\al|+1})}}\la\big)\cdots\big(Z_{1}+\leftidx{^{(Z_1)}}\la\big)\Phi_\ga^0,
	\end{split}
\end{equation}
where
\begin{equation*}
	\begin{split}
		\divg{}^{(Z)}C_{\ga}^{j}&=\D_{\be}\big[\big({}^{(Z)}\pi^{\be\nu}-\frac{1}{2}g^{\be\nu}\trg{}^{(Z)}\pi\big)\p_{\nu}\Psi_{\ga}^{j}\big],\\
		{}^{(Z)}\la&=\frac{1}{2}\trg{}^{(Z)}\pi-\mu^{-1}Z\mu,\\
		\Psi_{\ga}^0&=\vp_{\ga},\ \Phi_{\ga}^0=\mu\Box_g\vp_{\ga}
	\end{split}
\end{equation*}
with $\trg{}^{(Z)}\pi=g^{\al\be}{}^{(Z)}\pi_{\al\be}=-\f12\mu^{-1}{}^{(Z)}\pi_{L\underline L}+\f12\trgs{}^{(Z)}\pis$. Thus,
\begin{equation}\label{lamda}
	{}^{(\rho L)}\la=\rho\trgs\check\chi+3,\ {}^{(T)}\la=-c^{-2}\mu\trgs\chi,\ {}^{(R_i)}\la=c^{-1}v_i\trgs\chi.
\end{equation}
Under the frame $\{L,\Lb,X_1,X_2\}$, $\mu\divg{}^{(Z)}C_{\ga}^{j}$ can be written as
\begin{equation}\label{div C gamma alpha}
	\mu\divg{}^{(Z)}C_{\ga}^{j}=-\frac{1}{2}\big[\Lb+L(c^{-2}\mu)-c^{-2}\mu\trgs\chi\big]{}^{(Z)}C_{\ga,L}^{j}-\frac{1}{2}(L+\trgs\chi){}^{(Z)}C_{\ga,\Lb}^{j}+\nas^A(\mu{}^{(Z)}\slashed{C}_{\ga,A}^{j}),
\end{equation}
where
\begin{equation}\label{C gamma alpha}
\begin{split}
{}^{(Z)}C_{\ga,L}^{j}&=g({}^{(Z)}C_{\ga}^{j},L)=-\frac{1}{2}\trgs{}^{(Z)}\pis L\Psi_{\ga}^{j}+{}^{(Z)}\pis_{LA}\ds^A\Psi_{\ga}^{j},\\
{}^{(Z)}C_{\ga,\Lb}^{j}&=g({}^{(Z)}C_{\ga}^{j},\Lb)=-\frac{1}{2}\mu^{-1}{}^{(Z)}\pi_{\Lb\Lb}L\Psi_{\ga}^{j}
-\frac{1}{2}\trgs{}^{(Z)}\pis\Lb\Psi_{\ga}^{j}+{}^{(Z)}\pis_{\Lb A}\ds^A\Psi_{\ga}^{j},\\
\mu{}^{(Z)}\slashed{C}_{\ga,A}^{j}&= g(\mu{}^{(Z)}C_{\ga}^{j},X_A)=-\frac{1}{2}{}^{(Z)}\pis_{\Lb A}L\Psi_{\ga}^{j}-\frac{1}{2}{}^{(Z)}\pis_{LA}\Lb\Psi_{\ga}^{j}+\frac{1}{2}{}^{(Z)}\pi_{L\Lb}\ds_A\Psi_{\ga}^{j}\\
&\qquad\qquad\qquad\qquad\quad+\mu({}^{(Z)}\pis_{AB}-\frac{1}{2}\trgs{}^{(Z)}\pis\gs_{AB})\ds^B\Psi_{\ga}^{j}.\\
\end{split}
\end{equation}

Note that if one substitutes \eqref{C gamma alpha} into \eqref{div C gamma alpha} directly, then a lengthy and tedious equality for $\mu\divg{}^{(Z)}C_{\ga}^{j}$ is obtained. To treat these terms more convenient, we will
divide $\mu\divg{}^{(Z)}C_{\ga}^{j}$ into the following three parts as in \cite{DLY} and \cite{M-Y}:
\begin{equation*}
	\mu\divg{}^{(Z)}C_{\ga}^{j}={}^{(Z)}\mathscr{N}_1^{j}+{}^{(Z)}\mathscr{N}_2^{j}+{}^{(Z)}\mathscr{N}_3^{j},
\end{equation*}
where
\begin{equation}\label{N1 pi Psi}
	\begin{split}
		{}^{(Z)}\mathscr{N}_1^{j}
		=&\big[\frac{1}{4}L(\mu^{-1}{}^{(Z)}\pi_{\Lb\Lb})+\frac{1}{4}\Lb(\trgs{}^{(Z)}\pis)-\frac{1}{2}\nas^A{}^{(Z)}\pis_{\Lb A}\big]L\Psi_{\ga}^{j}-\big[\frac{1}{2}\Lies_{\Lb}{}^{(Z)}\pis_{LA}\\
		&+\frac{1}{2}\Lies_L{}^{(Z)}\pis_{\Lb A}-\frac{1}{2}\ds_A{}^{(Z)}\pi_{L\Lb}-\nas^B(\mu{}^{(Z)}\slashed\pi_{AB}-\f12\mu\trgs{}^{(Z)}\pis\slashed g_{AB})\big]\ds^A\Psi_{\ga}^{j}\quad\\
		&+\big[\frac{1}{4}L(\trgs{}^{(Z)}\pis)-\frac{1}{2}\nas^A{}^{(Z)}\pis_{LA}\big]\Lb \Psi_{\ga}^{j},
	\end{split}
\end{equation}
\begin{equation}\label{N2 pi Psi}
	\begin{split}
		{}^{(Z)}\mathscr{N}_2^{j}
		=&\frac{1}{2}\trgs{}^{(Z)}\pis(L+\frac{1}{2}\trgs\chi)\Lb\Psi_{\ga}^{j}+\frac{1}{4}\mu^{-1}{}^{(Z)}\pi_{\Lb\Lb}L^2\Psi_{\ga}^{j}-{}^{(Z)}\pis_{\Lb A}\ds^AL\Psi_{\ga}^{j}\\
		&-{}^{(Z)}\pis_{LA}\ds^A\Lb\Psi_{\ga}^{j}+\frac{1}{2}{}^{(Z)}\pi_{L\Lb}\Des\Psi_{\ga}^{j}+\mu({}^{(Z)}\slashed\pi_{AB}-\f12\trgs{}^{(Z)}\pis\slashed g_{AB})\nas_{AB}^2\Psi_{\ga}^{j},
	\end{split}
\end{equation}
\begin{equation}\label{N3 pi Psi}
	\begin{split}
		{}^{(Z)}\mathscr{N}_3^{j}
		=&\big[\frac{1}{4}\mu^{-1}\trgs\chi{}^{(Z)}\pi_{\Lb\Lb}-\frac{1}{4}c^{-2}\mu\trgs\chi\trgs{}^{(Z)}\pis
		+\frac{1}{2}{}^{(Z)}\pis_{LA}\ds^A(c^{-2}\mu)\big]L\Psi_{\ga}^{j}\ \quad\quad\\
		&+\big[\frac{1}{2}c^2\ds_A(c^{-2}\mu)\trgs{}^{(Z)}\pis-\frac{1}{2}L(c^{-2}\mu){}^{(Z)}\pis_{LA}-\frac{1}{2}\trgs\chi{}^{(Z)}\pis_{\Lb A}\\
		&+\frac{1}{2}c^{-2}\mu\trgs\chi{}^{(Z)}\pis_{LA}-c^{-2}\mu{}^{(Z)}\pis_L^B\chi_{AB}+{}^{(Z)}\pis_{\Lb}^B\chi_{AB}\big]\ds^A\Psi_{\ga}^{j}.
	\end{split}
\end{equation}
One sees that ${}^{(Z)}\mathscr{N}_1^{j}$ collects the products of the first order derivatives of
the deformation tensor and the first order derivatives of $\Psi_\ga^{j}$, and ${}^{(Z)}\mathscr{N}_2^{j}$ contains the terms which are the products of the deformation tensor and the second order derivatives of $\Psi_\ga^{j}$ except the first term which has the good smallness with respect to $\de$ due to \eqref{transport equation of Lb vp_ga}. In addition, ${}^{(Z)}\mathscr{N}_3^{j}$ is the collections of the products of the deformation tensor and the first order derivatives of $\Psi_\ga^{j}$ which can be treated much easier.

\vskip 0.1 true cm

\textbf{Step 6. Displaying and analyzing expressions of ${}^{(Z)}\N_k^{j},\ k=1,2,3$}

\vskip 0.1 true cm

By substituting the components of the deformation tensor, we next derive the expression of ${}^{(Z)}\mathscr{N}_1^{j}$.
\begin{itemize}
	\item
	\textbf{The case of $Z=\rho L$}
	
	By \eqref{L pi}, one has
	\begin{equation}\label{N1 pi Psi rho L}
		\begin{split}
			{}^{(\rho L)}\mathscr{N}_1^{j}
			=&\Big\{\rho L^2(c^{-2}\mu)+\underline{\frac{1}{2}\Lb(\rho\trgs\chi)-\rho\nas^A\big[c^2\ds_A(c^{-2}\mu)\big]}\Big\}L\Psi_\ga^j-\Big\{\Lies_L[\rho c^2\ds_A(c^{-2}\mu)]\\
			&+\ds_A(\rho L\mu+\mu)\uwave{-2\rho\nas^B(\mu\check\chi_{AB}-\f12\mu\trgs\check\chi\slashed g_{AB})}\Big\}\ds^A\Psi_\ga^j+\frac{1}{2}L(\rho\trgs\check\chi)\Lb\Psi_\ga^j.
		\end{split}
	\end{equation}
	
	For the terms with underline in \eqref{N1 pi Psi rho L}, we point out that there exists an important cancelation.
	In fact, by \eqref{transport equation of chi along T}, we have
\begin{equation}\label{transport equation of trgs chi along T}
	T\trgs\chi=\Des\mu+\mathcal{I}
\end{equation}
with
\begin{equation}\label{I}
	\mathcal{I}=c^{-2}\mu|\chi|^2-c^{-1}L(c^{-1}\mu)\trgs\chi-\nas^A(c^{-1}\mu\ds_Ac)+c^{-2}\mu|\ds c|^2-c^{-1}\ds_Ac\ds^A\mu.
\end{equation}
It follows from \eqref{transport equation of trgs chi along T} that
	\begin{equation}\label{important cancelation 1}
		\begin{split}
			\frac{1}{2}\Lb(\rho\trgs\chi)-\rho\nas^A\big[c^2\ds_A(c^{-2}\mu)\big]=\rho\mathcal{I}-\trgs\chi+\frac{1}{2}c^{-2}\mu L(\rho\trgs\chi)-\rho\nas^A\big[c^2\mu\ds_A(c^{-2})\big],
		\end{split}
	\end{equation}
	which implies that the troublesome term $\Des\mu$ in the underline part of \eqref{N1 pi Psi rho L} has been eliminated.
	
	For the term with wavy line in \eqref{N1 pi Psi rho L}, it follows from \eqref{elliptic equation of chi} that
	\begin{equation*}
		\begin{split}
			-2\rho\nas^B(\mu\check\chi_{AB}-\f12\mu\trgs\check\chi\slashed g_{AB})=&-\rho\mu\uwave{\ds_A\trgs\chi}+2\rho\ze^B\check\chi_{AB}-2\rho\ze_A\trgs\check\chi-2\zeta_A\\
&-2\rho\ds^B\mu(\check\chi_{AB}-\f12\trgs\check\chi\slashed g_{AB}).\\
		\end{split}
	\end{equation*}
		
	Therefore, we arrive at
	\begin{equation}\label{principal term of N1 rho L}
		{}^{(\rho L)}\mathscr{N}_1^{j}=\rho\mu\ds\trgs\chi\cdot\ds\Psi_\ga^j+\lot,
	\end{equation}
	where and below ``$\lot$" stands for the lower order derivative term.
	\item
	\textbf{The case of $Z=T$}
	
	By \eqref{T pi}, we have
	\begin{equation}\label{N1 pi Psi T}
		\begin{split}
			{}^{(T)}\mathscr{N}_1^{j}
			=&\Big\{L T(c^{-2}\mu)\underbrace{-\frac{1}{2}\Lb(c^{-2}\mu\trgs\chi)+\frac{1}{2}\nas^A\big[\mu\ds_A(c^{-2}\mu)\big]}\Big\}L\Psi_\ga^j+\Big\{\underline{\frac{1}{2}\Lies_{\Lb}\big[c^2\ds_A(c^{-2}\mu)\big]}\\
			&+\frac{1}{2}\Lies_L\big[\mu\ds_A(c^{-2}\mu)\big]\underline{-\ds_AT\mu}\uwave{-2\nas^B\big(c^{-2}\mu^2(\check\chi_{AB}-\f12\trgs\check\chi\slashed g_{AB})\big)}\Big\}\ds^A\Psi_\ga^j\\
			&+\Big\{-\frac{1}{2}L(c^{-2}\mu\trgs\chi)+\underbrace{\frac{1}{2}\nas^A\big[c^2\ds_A(c^{-2}\mu)\big]}\Big\}\Lb\Psi_\ga^j.
		\end{split}
	\end{equation}
	
	For the terms with underline in \eqref{N1 pi Psi T}, one has
	\begin{equation}\label{important cancelation 2}
		\begin{split}
			&\frac{1}{2}\Lies_{\Lb}\big[c^2\ds_A(c^{-2}\mu)\big]-\ds_AT\mu =\frac{1}{2}c^{-2}\mu\ds_AL\mu+\frac{1}{2}\Lies_{\Lb}\big[c^2\mu\ds_A(c^{-2})\big],
		\end{split}
	\end{equation}
	which implies that the troublesome term $\ds_AT\mu$ in the underline part of \eqref{N1 pi Psi T} has been eliminated.
	
	For the terms with braces  in \eqref{N1 pi Psi T}, we have
	\begin{equation*}
		\begin{split}
			&-\frac{1}{2}\Lb(c^{-2}\mu\trgs\chi)+\frac{1}{2}\nas^A\big[\mu\ds_A(c^{-2}\mu)\big]\\
			=&-\frac{1}{2}c^{-2}\mu\underbrace{\Des\mu}-c^{-2}\mu \mathcal{I}-\frac{1}{2}c^{-4}\mu^2L\trgs\chi-\frac{1}{2}\Lb(c^{-2}\mu)\trgs\chi+\frac{1}{2}\ds(c^{-2}\mu)\cdot\ds\mu+\frac{1}{2}\nas^A\big[\mu^2\ds_A(c^{-2})\big]
		\end{split}
	\end{equation*}
	and
	\begin{equation*}
		\frac{1}{2}\nas^A\big[c^2\ds_A(c^{-2}\mu)\big]=\frac{1}{2}\underbrace{\Des\mu}+\frac{1}{2}\nas^A\big[c^2\mu\ds_A(c^{-2})\big].
	\end{equation*}
	
	For the terms with wavy line  in \eqref{N1 pi Psi T},
	\begin{equation*}
		\begin{split}
			&-2\nas^B\big(c^{-2}\mu^2(\check\chi_{AB}-\f12\trgs\check\chi\slashed g_{AB})\big)\\
			=&-c^{-2}\mu^2\uwave{\ds_A\trgs\chi}+2c^{-2}\mu\ze^B\check\chi_{AB}-2c^{-2}\mu\ze_A\trgs\check\chi-2c^{-2}\mu\rho^{-1}\zeta_A-2\ds^B(c^{-2}\mu^2)(\check\chi_{AB}-\f12\trgs\check\chi\slashed g_{AB}).
		\end{split}
	\end{equation*}
	
	Therefore, we arrive at
	\begin{equation}\label{principal term of N1 T}
		{}^{(T)}\mathscr{N}_1^{j}=(\Des\mu)T\Psi_\ga^j-c^{-2}\mu^2\ds\trgs\chi\cdot\ds\Psi_\ga^j+\lot.
	\end{equation}

	\item
	\textbf{The case of $Z=R_i$}
	
	By \eqref{Ri pi}, we have
	\begin{equation}\label{N1 pi Psi Ri}
		\begin{split}
			{}^{(R_i)}\mathscr{N}_1^{j}
			=&\Big\{LR_i(c^{-2}\mu)+\underline{\frac{1}{2}\Lb(c^{-1}v_i\trgs\chi)}\underbrace{-\frac{1}{2}\nas^A(c^{-2}\mu R_i^B\chic_{AB})}\underline{-\nas^A(c^{-1}v_i\ds_A\mu)}\\
			&-\frac{1}{2}\nas^A\big[-2c^{-2}(c-1)\mu\rho^{-1}\gs_{AB}R_i^B-3c^{-2}\mu v_i\ds_Ac+c^{-2}\mu\ep_{ijk}\check{L}^j\ds_Ax^k\\
			&+2c^{-1}\mu\ep_{ijk}\check{T}^j\ds_Ax^k\big]\Big\}L\Psi_\ga^j+\Big\{\underline{\frac{1}{2}\Lies_{\Lb}(R_i^B\chic_{AB}}-\ep_{ijk}\check{L}^j\ds_Ax^k+v_i\ds_Ac)\\
			&-\frac{1}{2}\Lies_L{}^{(R_i)}\pis_{\Lb A}\underline{-\ds_AR_i\mu}+\uwave{\nas^B\big(2c^{-1}\mu v_i(\check\chi_{AB}-\f12\trgs\check\chi\slashed g_{AB})\big)}\Big\}\ds^A\Psi_\ga^j\\
			&+\Big\{\frac{1}{2}L(c^{-1}v_i\trgs\chi)+\underbrace{\frac{1}{2}\nas^A(R_i^B\chic_{AB}}-\ep_{ijk}\check{L}^j\ds_Ax^k+v_i\ds_Ac)\Big\}\Lb\Psi_\ga^j.
		\end{split}
	\end{equation}
	
	For the terms with underline in \eqref{N1 pi Psi Ri}, one has that by \eqref{transport equation of chi along T},
	\begin{equation*}
		\begin{split}
			\frac{1}{2}\Lb(c^{-1}v_i\trgs\chi)-\nas^A(c^{-1}v_i\ds_A\mu)=c^{-1}v_i\mathcal{I}+\frac{1}{2}c^{-3}\mu v_iL\trgs\chi+\frac{1}{2}\Lb(c^{-1}v_i)\trgs\chi-\ds(c^{-1}v_i)\cdot\ds\mu=\lot
		\end{split}
	\end{equation*}
	and
	\begin{equation*}
		\begin{split}
			&\quad\frac{1}{2}\Lies_{\Lb}(R_i^B\chic_{AB})-\ds_AR_i\mu=R_i^B\Lies_T\chic_{AB}-\nabla_A(R_i^B\ds_B\mu)+\lot=\lot,
		\end{split}
	\end{equation*}
	which implies that the terms $\Des\mu$ and $\ds_AR_i\mu$ in the underline part of \eqref{N1 pi Psi Ri} have been
eliminated.
	
	For the terms with braces  in \eqref{N1 pi Psi Ri}, we have
	\begin{equation*}
		\begin{split}
			&-\frac{1}{2}\nas^A(c^{-2}\mu R_i^B\chic_{AB})=-\frac{1}{2}c^{-2}\mu\underbrace{R_i\trgs\chi}+\lot
		\end{split}
	\end{equation*}
	and
	\begin{equation*}
		\frac{1}{2}\nas^A(R_i^B\chic_{AB})=\frac{1}{2}\underbrace{R_i\trgs\chi}+\lot.
	\end{equation*}
	
	For the terms with wavy line  in \eqref{N1 pi Psi Ri},
	\begin{equation*}
		\begin{split}
			-\nas^B\big(2c^{-1}\mu v_i(\check\chi_{AB}-\f12\trgs\check\chi\slashed g_{AB})\big)=-c^{-1}\mu v_i\uwave{\ds_A\trgs\chi}+\lot.
		\end{split}
	\end{equation*}
	
	Therefore, we arrive at
	\begin{equation}\label{principal term of N1 Ri}
		{}^{(R_i)}\mathscr{N}_1^{j}=R_i^A(\ds_A\trgs\chi)T\Psi_\ga^j-c^{-1}\mu v_i\ds\trgs\chi\cdot\ds\Psi_\ga^j+\lot.
	\end{equation}
\end{itemize}

Similarly to the treatment for ${}^{(Z)}\mathscr{N}_1^{j}$, one has
\begin{equation}\label{N2 pi Psi rho L}
	\begin{split}
		{}^{(\rho L)}\mathscr{N}_2^{j}
		=&\rho\trgs\chi(L+\frac{1}{2}\trgs\chi)\Lb\Psi_\ga^j+\big[\rho L(c^{-2}\mu)-c^{-2}\mu+2\big]L^2\Psi_\ga^j-2\rho c^2\ds(c^{-2}\mu)\cdot\ds L\Psi_\ga^j\\
		&-(\rho L\mu+\mu)\Des\Psi_\ga^j+2\rho\mu(\check\chi^{AB}-\f12\trgs\check\chi\slashed g^{AB})\nas^2_{AB}\Psi_\ga^j,
	\end{split}
\end{equation}

\begin{equation}\label{N2 pi Psi T}
	\begin{split}
		{}^{(T)}\mathscr{N}_2^{j}
		=&-c^{-2}\mu\trgs\chi(L+\frac{1}{2}\trgs\chi)\Lb\Psi_\ga^j+T(c^{-2}\mu)L^2\Psi_\ga^j+\mu\ds(c^{-2}\mu)\cdot\ds L\Psi_\ga^j\\
		&+c^2\ds_A(c^{-2}\mu)\ds^A\Lb\Psi_\ga^j-T\mu\Des\Psi_\ga^j-2c^{-2}\mu^2(\check\chi^{AB}-\f12\trgs\check\chi\slashed g^{AB})\nas^2_{AB}\Psi_\ga^j,\qquad\qquad
	\end{split}
\end{equation}

\begin{equation}\label{N2 pi Psi Ri}
	\begin{split}
		{}^{(R_i)}\mathscr{N}_2^{j}
		=&c^{-1}v_i\trgs\chi(L+\frac{1}{2}\trgs\chi)\Lb\Psi_\ga^j-\big(c^{-2}\mu{}^{(R_i)}\pis_{LA}+2{}^{(R_i)}\pis_{TA}\big)\ds^AL\Psi_\ga^j-R_i\mu\Des\Psi_\ga^j\ \\
		&+{}^{(R_i)}\pis_{LA}\ds^A\Lb\Psi_\ga^j+R_i(c^{-2}\mu)L^2\Psi_\ga^j+2c^{-1}\mu v_i(\check\chi^{AB}-\f12\trgs\check\chi\slashed g^{AB})\nas^2_{AB}\Psi_\ga^j
	\end{split}
\end{equation}
and
\begin{equation}\label{N3 pi Psi rho L}
	\begin{split}
		{}^{(\rho L)}\mathscr{N}_3^{j}
		=&\trgs\chi\big\{\rho L(c^{-2}\mu)-2c^{-2}(\mu-1)+2(1-c^{-2})-\frac{1}{2}\rho c^{-2}\mu\trgs\check\chi\big\}L\Psi_\ga^j\qquad\qquad\qquad\\
		&+2\rho c^2\ds^B(c^{-2}\mu)\chi_{AB}\ds^A\Psi_\ga^j,
	\end{split}
\end{equation}

\begin{equation}\label{N3 pi Psi T}
	\begin{split}
		{}^{(T)}\mathscr{N}_3^{j}
		=&\big[T(c^{-2}\mu)\trgs\chi+\frac{1}{2}c^{-4}\mu^2(\trgs\chi)^2-\frac{1}{2}c^2|\ds(c^{-2}\mu)|^2\big]L\Psi_\ga^j+\big[\frac{1}{2}c^2L(c^{-2}\mu)\ds_A(c^{-2}\mu)\\
		&-\mu\trgs\chi\ds_A(c^{-2}\mu)\big]\ds^A\Psi_\ga^j,
	\end{split}
\end{equation}

\begin{equation}\label{N3 pi Psi Ri}
	\begin{split}
		{}^{(R_i)}\mathscr{N}_3^{j}
		=&\big[R_i(c^{-2}\mu)\trgs\chi-\frac{1}{2}c^{-3}\mu v_i(\trgs\chi)^2+\frac{1}{2}{}^{(R_i)}\pis_{LA}\ds^A(c^{-2}\mu)\big]L\Psi_\ga^j\\
		&+\big[cv_i\ds_A(c^{-2}\mu)\trgs\chi-\frac{1}{2}L(c^{-2}\mu){}^{(R_i)}\pis_{LA}+2{}^{(R_i)}\pis_{T}^B\check\chi_{AB}
-{}^{(R_i)}\pis_{TA}\trgs\check\chi\big]\ds^A\Psi_\ga^j.
	\end{split}
\end{equation}

After making the preparations for the $L^2$ estimates of the related quantities
(see Section \ref{Section 6} below), we can handle the error terms $\int_{D^{\t,u}}|\Phi|\cdot|L\Psi|$
and $\int_{D^{\t,u}}|\Phi|\cdot|\Lb\Psi|$ in \eqref{energy inequality} so that all the energy
estimates for $\vp_{\ga}$ and its derivatives in \eqref{bootstrap assumptions} are obtained.

\section{$L^2$ estimates of higher order derivatives}\label{Section 6}

In this section, we shall establish the higher order derivative $L^2$ estimates for some related quantities
so that the last two error terms of \eqref{energy inequality} can be absorbed by the left hand side.

\begin{lemma}\label{Lemma 1 in higher order L^2 estimates}
For any $\Psi\in C^1(D^{\t,u})$ which vanishes on $C_0$ and sufficiently small $\de>0$, one has
\begin{equation*}
	\int_{\Si_\t^u}\Psi^2\les\de^2\big\{E_1[\Psi](\t,u)+E_2[\Psi](\t,u)\big\}.
\end{equation*}
\end{lemma}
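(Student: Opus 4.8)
The plan is to establish a Hardy/Poincar\'e-type inequality that uses the vanishing of $\Psi$ on $C_0=\{u=0\}$, recovering $\Psi$ by integration along the integral curves of $T$. Since $Tt=0$ (Lemma \ref{Lemma null frame}), $T$ is tangent to each slice $\Si_\t$, and since $Tu=1$ the coordinate $u$ is an affine parameter along these curves; they foliate $\Si_\t^u$ and each of them issues from $S_{\t,0}=C_0\cap\Si_\t$, on which $\Psi=0$. Hence, writing a point of $\Si_\t^u$ as $q=(\t,v,\vt)$ with $0\le v\le u$, the fundamental theorem of calculus $\f{\d}{\d u'}\Psi=T\Psi$ along the curve through $q$ gives $\Psi(q)=\int_0^{v}(T\Psi)\,\d u'$, and Cauchy--Schwarz in $u'\in[0,v]\subseteq[0,\de]$ yields $|\Psi(q)|^2\les\de\int_0^{v}|T\Psi|^2\,\d u'$ along that curve.

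First I would integrate this pointwise bound over $\Si_\t^u$. Parametrizing $\Si_\t^u$ by the flow of $T$ emanating from $S_{\t,0}$, the Jacobian relating this parametrization to the coordinates $(u,\vt)$ is comparable to $1$ uniformly in $(\t,u)$: indeed $\Lies_T\gs_{AB}={}^{(T)}\pis_{AB}=-2c^{-2}\mu\chi_{AB}$ (Lemma \ref{Lemma properties of Lie derivatives} and \eqref{T pi}), so the divergence of $T$ as a vector field on $\Si_\t$ --- built from $\mu$, $\trgs\chi$, $\Xi$ and their angular derivatives --- is bounded by the $L^\infty$ estimates of Sections \ref{Section 3}--\ref{Section 4}, while $u\le\de$; here also $\sqrt{\det\gs}$ is bounded above and below (Remark \ref{Remark sqrt det gs is bounded above and below}). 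Fubini together with one further factor $u\le\de$ then gives
\begin{equation*}
\int_{\Si_\t^u}\Psi^2\les\de^2\int_{\Si_\t^u}|T\Psi|^2 .
\end{equation*}

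To finish, I would decompose $T=\f12(\Lb-c^{-2}\mu L)$ from \eqref{Lb}, so that $|T\Psi|^2\les(\Lb\Psi)^2+\mu^2(L\Psi)^2\les(\Lb\Psi)^2+\mu(L\Psi)^2$, where the last inequality uses $\|\mu\|_{L^\infty}\les 1$ and $c\sim1$ (Sections \ref{Section 3}--\ref{Section 4}). Recalling the definitions \eqref{energy and flux},
\begin{equation*}
\int_{\Si_\t^u}\big[(\Lb\Psi)^2+\mu(L\Psi)^2\big]\les E_2[\Psi](\t,u)+E_1[\Psi](\t,u),
\end{equation*}
and combining the last two displays gives the claim.

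The one point requiring care --- and the reason for integrating along the genuine integral curves of $T$ rather than along the coordinate field $\f{\p}{\p u}$ --- is that $\f{\p}{\p u}=T+\Xi$, and the extra angular contribution $\Xi^A\ds_A\Psi$ cannot be absorbed into the energies: $E_1$ and $E_2$ control $\ds\Psi$ only with the weight $\mu$ (respectively $\mu^2$), which degenerates precisely where $\mu\to0+$, i.e. near the shock, whereas $\Xi$ carries no compensating power of $\mu$. Flowing along $T$ eliminates the angular term, at the modest cost of the routine Jacobian comparison described above; everything else is bookkeeping with the $L^\infty$ bounds already in hand.
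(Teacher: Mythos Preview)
Your argument is correct and is precisely the standard Hardy-type proof that the paper defers to (the paper's proof is simply a citation to \cite[Lemma 7.3]{M-Y}). Integrating along the integral curves of $T$ rather than along $\partial_u$ is indeed the right choice, for exactly the reason you give: the angular remainder $\Xi^A\ds_A\Psi$ would only be controlled by $\mu|\ds\Psi|^2$ or $\mu^2|\ds\Psi|^2$, which degenerate near the shock.

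One small sharpening of your Jacobian step: the divergence of $T$ with respect to the measure $\sqrt{\det\gs}\,\d u\,\d\vt$ on $\Si_\t$ can be computed exactly. Writing $T=\partial_u-\Xi$ and using $\Lies_{\partial_u}\gs_{AB}={}^{(T)}\pis_{AB}+\nas_A\Xi_B+\nas_B\Xi_A$, one finds
\[
\textrm{div}_{\,\sqrt{\det\gs}\,\d u\,\d\vt}\,T=\partial_u\ln\sqrt{\det\gs}-\divgs\Xi=\tfrac12\trgs{}^{(T)}\pis=-c^{-2}\mu\trgs\chi,
\]
so the $\Xi$-contributions cancel identically and the Jacobian of the $T$-flow is $\exp\big(-\int_0^v c^{-2}\mu\trgs\chi\,\d s\big)\sim 1$ for $v\le\de$. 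This removes any need to invoke bounds on $\Xi$ or its angular derivatives in that step.
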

\begin{proof}
See the proof in Lemma 7.3 of \cite{M-Y}.
\end{proof}

\begin{lemma}\label{Lemma 2 in higher order L^2 estimates}
For any trace-free symmetric $(0,2)$-type tensor field $\xi$ on $S_{\t,u}$ or function $f\in C^2(D^{\t,u})$, it holds that
\begin{equation}\label{elliptic estimate 1}
	\int_{S_{\t,u}}\mu^2\big(|\nas\xi|^2+2\mathcal{G}|\xi|^2\big)\les\int_{S_{\t,u}}\big(\mu^2|\divgs\xi|^2+|\ds\mu|^2|\xi|^2\big),
\end{equation}
\begin{equation}\label{elliptic estimate 2}
	\int_{S_{\t,u}}\mu^2\big(|\nas^2f|^2+\mathcal{G}|\ds f|^2\big)\les\int_{S_{\t,u}}\big(\mu^2|\Des f|^2+|\ds\mu|^2|\ds f|^2\big),
\end{equation}
where the Gaussian curvature $\mathcal{G}$ of $\gs$ satisfies
\begin{equation}\label{estimate of Gaussian curvature}
	\mathcal{G}=1+O\big(\de^{(1-\ve0)p}\big).
\end{equation}
\end{lemma}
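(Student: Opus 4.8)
The statement has three parts, which I would treat in order: the Gaussian curvature bound \eqref{estimate of Gaussian curvature}, the weighted elliptic (Bochner) estimate \eqref{elliptic estimate 1} for trace-free symmetric $(0,2)$-tensors, and the scalar estimate \eqref{elliptic estimate 2}, which comes out of the same integration-by-parts scheme. Since $S=S_{\t,u}$ is a closed surface, every integration by parts performed on $S$ below produces no boundary term. For the curvature, I would compute $\mathcal G$ from the Gauss equation of the spacelike surface $S\hookrightarrow(\mathbb R^{1+3},g)$ relative to the null pair $\{L,\Lb\}$, which expresses $\mathcal G$ through $\trgs\chi$, $\trgs\chib$, $\chi$, $\chib$ and a component of the Riemann tensor of $g$. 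By \eqref{R_0i0j}--\eqref{Rc_LALB} together with Propositions \ref{Proposition lower order L^infty estimates}--\ref{Proposition higher order L^infty estimates}, the curvature contribution, re-expressed in the null frame, is of lower order $O(M^p\de^{(1-\ve0)p})$; using $\chib_{AB}=-c^{-2}\mu\chi_{AB}$ from Lemma \ref{Lemma components of second fundamental forms and torsion one forms}, the decomposition $\chi_{AB}=\rho^{-1}\gs_{AB}+\chic_{AB}$, and the identities \eqref{YHC-1}, the principal part of $\mathcal G$ is a positive multiple of $\rho^{-2}$, with the corrections from $c-1$, $\mu-1$ and $\chic$ all $O(M^p\de^{(1-\ve0)p})$. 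Since $\rho=\t-u\sim 1$ on $D^{\t,u}$, this yields \eqref{estimate of Gaussian curvature}; in particular $\mathcal G$ is bounded between two positive constants, uniformly in $(\t,u)$. (Equivalently one may integrate the propagation equation for $\mathcal G$ along $L$ from its value $r^{-2}$ on $\t=1$, to the same effect.)

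\textbf{Estimate \eqref{elliptic estimate 1}.} This is a weighted Bochner argument. Starting from $\int_S\mu^2|\nas\xi|^2$ and integrating by parts once gives $\int_S\mu^2|\nas\xi|^2=-\int_S\mu^2\,\xi\!\cdot\!\Des\xi-2\int_S\mu\,\xi\!\cdot\!(\nas\mu\!\cdot\!\nas\xi)$, where $\Des$ is the rough Laplacian on $S$. On a two-dimensional surface a trace-free symmetric $(0,2)$-tensor is, up to Hodge duality, determined by its divergence (its curl being the dual of $\divgs\xi$), so the standard two-dimensional identity relates $\Des\xi$ to $\nas\divgs\xi$ and $\mathcal G\,\xi$; a second integration by parts then converts $-\int_S\mu^2\,\xi\!\cdot\!\Des\xi$ into $2\int_S\mu^2|\divgs\xi|^2-2\int_S\mu^2\mathcal G|\xi|^2$ plus terms in which exactly one derivative lands on $\mu$. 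Every such error term is of schematic type $\mu\,|\ds\mu|\,|\xi|\,|\nas\xi|$ or $|\ds\mu|^2|\xi|^2$; the former is split by Cauchy--Schwarz as $\le\ep\,\mu^2|\nas\xi|^2+\ep^{-1}|\ds\mu|^2|\xi|^2$ and its $\ep\,\mu^2|\nas\xi|^2$ part absorbed on the left. Bringing $2\int_S\mu^2\mathcal G|\xi|^2$ back to the left produces \eqref{elliptic estimate 1}.

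\textbf{Estimate \eqref{elliptic estimate 2}.} Rather than applying \eqref{elliptic estimate 1} to the trace-free Hessian of $f$ (which would cost an extra derivative of $f$ on the right-hand side), I would run the same scheme directly on $\int_S\mu^2|\nas^2f|^2$: integrating by parts twice and using the two-dimensional commutation $\Des\ds_Af=\ds_A\Des f+\mathcal G\,\ds_Af$ produces $\int_S\mu^2(\Des f)^2-\int_S\mu^2\mathcal G|\ds f|^2$, plus $\ds\mu$-error terms handled exactly as above. Since $\mathcal G>0$ is bounded below by the first part, $\int_S\mu^2\mathcal G|\ds f|^2$ is carried to the left with the good sign, giving \eqref{elliptic estimate 2}.

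\textbf{Main obstacle.} The difficulty is not conceptual but the bookkeeping of the $\ds\mu$ error terms after the second integration by parts: one must check that every remaining term carrying a derivative of $\mu$ is genuinely absorbable, i.e. of the form $\ep\,\mu^2|\nas\xi|^2$ (resp. $\ep\,\mu^2|\nas^2f|^2$) or already of the claimed type $|\ds\mu|^2|\xi|^2$ (resp. $|\ds\mu|^2|\ds f|^2$), with no leftover $\mu\,|\ds\mu|\,|\xi|\,|\nas\xi|$ of uncontrolled sign and, crucially, no negative power of $\mu$ ever generated — which works precisely because the weight is $\mu^2$, matching $|\nas\xi|^2$. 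A secondary point is the uniformity of all implicit constants over $(\t,u)$, which rests on the curvature bound of the first part together with the $L^\infty$ control of $\gs$, $\chic$ and the error vectors from Sections \ref{Section 3}--\ref{Section 4}.
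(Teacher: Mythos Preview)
Your proposal is correct, but the paper takes a much shorter route. For the two elliptic estimates \eqref{elliptic estimate 1} and \eqref{elliptic estimate 2}, the paper does not reprove them at all: it simply cites \cite[(18.26), (18.15)]{Sp}, where exactly these weighted Bochner--type inequalities on $S_{\t,u}$ are established. Your integration-by-parts scheme is precisely the argument underlying those cited identities, so you are reproducing what the reference already contains; this buys self-containment at the cost of length. For the Gaussian curvature, the paper again cites a ready-made formula \cite[(2.29)]{M-Y}, namely $\mathcal G=\tfrac12 c^{-2}\big[(\trgs\chi)^2-|\chi|^2\big]$, and then inserts $\chi=\rho^{-1}\gs+\chic$ via \eqref{YHC-1} to read off \eqref{estimate of Gaussian curvature} in one line. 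Your plan via the Gauss equation with the null pair $\{L,\Lb\}$ would arrive at the same formula (using $\chib_{AB}=-c^{-2}\mu\chi_{AB}$ and the vanishing of the purely spatial components of $\R$ noted after \eqref{R_0i0j}), so the two approaches coincide once the cited identity is unpacked.

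One small remark on your curvature discussion: you need not invoke the ambient Riemann component at all, since for the metric \eqref{g} the only nonzero component is $\R_{0i0j}$ and $X_A^0=0$, so $\R_{ABCD}=0$ identically; the Gauss equation then reduces exactly to the quadratic expression in $\chi$ that the paper quotes.
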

\begin{proof}
The elliptic estimates \eqref{elliptic estimate 1} and \eqref{elliptic estimate 2} hold immediately from \cite[(18.26), (18.15)]{Sp}. Moreover, by \cite[(2.29)]{M-Y}, one has
\begin{equation}\label{Gaussian curvature}
	\mathcal{G}=\f12c^{-2}\big[(\trgs\chi)^2-|\chi|^2\big].
\end{equation}
Together with \eqref{YHC-1}, it yields
\begin{equation*}
	\mathcal{G}=\f12c^{-2}\big[2\rho^{-2}+(\trgs\chic)^2+2\rho^{-1}\trgs\chic-|\chic|^2\big]=1+O\big(\de^{(1-\ve0)p}\big).
\end{equation*}
\end{proof}

\begin{lemma}\label{Lemma 3 in higher order L^2 estimates}
Let $\tu=\dse\inf_{\t'}\left\{1\leq\t'\leq\t<t^*:\mu_{\min}(\t')<\f{1}{10}\right\}$. For $a>1$ and sufficiently small $\de>0$, one has
\begin{itemize}
	\item[(1)] Integrability:
		\begin{equation}\label{integrability of mu}
			\int_{\tu}^{\t}\mu_{\min}^{-a}(\t')\d\t'\les\de^{1-(1-\ve0)p}\f{1}{a-1}\mu_{\min}^{-a+1}(\t).
		\end{equation}
	\item[(2)] Monotonicity:
		\begin{equation}\label{monotonicity of mu}
			\mu_{\min}^{-1}(\t')\les\mu_{\min}^{-1}(\t).
		\end{equation}
\end{itemize}			
\end{lemma}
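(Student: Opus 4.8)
The plan is to extract from Proposition~\ref{Proposition behavior of mu} a single differential inequality for the function $\t'\mapsto\mu_{\min}(\t')$ on $[\tu,\t]$ and to deduce both assertions from it. First I would record that $\mu_{\min}$ is Lipschitz on $[1,\t]$: it is the pointwise minimum over the compact set $\{0\le u\le\de\}\times\mathbb S^2$ of the functions $\mu(\cdot,u,\vt)$, whose $\t'$-derivatives $L\mu=c^{-1}Lc\,\mu-cTc$ are uniformly bounded by Proposition~\ref{Proposition lower order L^infty estimates} and \eqref{L^infty estimates of c}, so $\mu_{\min}$ inherits a uniform Lipschitz constant and is in particular differentiable a.e. Next, by continuity of $\mu_{\min}$ together with the definition of $\tu$ and a short continuity argument (using that $L\mu<0$ wherever $\mu<\f1{10}$ by \eqref{key estimate of L mu}, so $\mu_{\min}$ cannot climb back above the threshold once it has reached it), one has $\mu_{\min}(\t')<\f1{10}$ for all $\t'\in[\tu,\t]$. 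Fixing such a $\t'$ and a minimizing point $(u_*,\vt_*)$ with $\mu(\t',u_*,\vt_*)=\mu_{\min}(\t')$, the inequality $\mu_{\min}(\t'+\ep)\le\mu(\t'+\ep,u_*,\vt_*)$ together with $L=\p_\t$ in the coordinates $(\t,u,\vt)$ yields, after dividing by $\ep>0$ and letting $\ep\to0^+$,
\[
	D^+\mu_{\min}(\t')\le L\mu(\t',u_*,\vt_*)\les-\de^{-[1-(1-\ve0)p]},
\]
the final step being \eqref{key estimate of L mu}. Since a continuous function whose upper Dini derivative is everywhere at most $-c_0$ satisfies $f(\t)\le f(\t')-c_0(\t-\t')$ for $\t'\le\t$, this gives
\[
	\mu_{\min}(\t')\ge\mu_{\min}(\t)+c_0\de^{-[1-(1-\ve0)p]}(\t-\t'),\qquad\tu\le\t'\le\t .
\]

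For the integrability claim \eqref{integrability of mu} I would then just insert this lower bound and substitute $s=\t-\t'$:
\[
	\int_{\tu}^{\t}\mu_{\min}^{-a}(\t')\,\d\t'\le\int_0^{\t-\tu}\big(\mu_{\min}(\t)+c_0\de^{-[1-(1-\ve0)p]}s\big)^{-a}\,\d s\le\int_0^{\infty}\big(\mu_{\min}(\t)+c_0\de^{-[1-(1-\ve0)p]}s\big)^{-a}\,\d s,
\]
and the last integral equals $\dfrac{\de^{1-(1-\ve0)p}}{c_0(a-1)}\,\mu_{\min}^{-a+1}(\t)$, which is exactly \eqref{integrability of mu}. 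As a structural cross-check I would note that the explicit representation \eqref{key estimate of mu}, writing $\mu$ as $1-\f{1}{p-1}(\t^{-(p-1)}-1)L\mu(1,u,\vt)$ up to $O\big(M^p\de^{(1-\ve0)p}\big)$, exhibits the same monotone dependence of $\mu_{\min}$ on the increasing function $\t\mapsto\f1{p-1}(1-\t^{-(p-1)})$ and the same size $|L\mu(1,\cdot)|\sim\de^{-[1-(1-\ve0)p]}$ along the minimizing directions.

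For the monotonicity claim \eqref{monotonicity of mu} I would split at $\tu$. On $[\tu,\t]$ the differential inequality above shows $\mu_{\min}$ is nonincreasing, hence $\mu_{\min}^{-1}(\t')\le\mu_{\min}^{-1}(\t)$ for $\tu\le\t'\le\t$. On $[1,\tu]$ one has $\mu_{\min}(\t')\ge\f1{10}$ by the definition of $\tu$, so $\mu_{\min}^{-1}(\t')\le10$, while $\mu_{\min}(\t)\le\mu_{\min}(\tu)=\f1{10}$ by continuity of $\mu_{\min}$ and the nonincreasing property just established; hence $\mu_{\min}^{-1}(\t')\le10\le\mu_{\min}^{-1}(\t)$. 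Patching the two ranges gives $\mu_{\min}^{-1}(\t')\les\mu_{\min}^{-1}(\t)$ for all $1\le\t'\le\t$. The only genuinely delicate point in the whole argument is the intermediate claim that \eqref{key estimate of L mu} may be applied on all of $[\tu,\t]$, i.e.\ that $\mu_{\min}$ stays below $\f1{10}$ there; this is precisely where the one-sided nature of Proposition~\ref{Proposition behavior of mu} (negativity of $L\mu$ below the threshold forbids any recovery) must be used, and it is invoked through a continuity argument rather than a pointwise statement. Everything else --- the Lipschitz regularity, the envelope inequality for the minimum, and the final one-dimensional integral --- is routine.
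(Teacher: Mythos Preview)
Your proof is correct and close in spirit to the paper's, but the execution differs in two places worth noting.

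For part~(1), the paper appeals to the two-sided comparison $\mu_{\min}(\t')\sim\de^{-[1-(1-\ve0)p]}(t^*-\t')$, obtained from the proofs of \eqref{key estimate of L mu}--\eqref{key estimate of T mu}, and then integrates $(t^*-\t')^{-a}$ explicitly before converting back. You bypass $t^*$ entirely: the Dini-derivative bound gives directly $\mu_{\min}(\t')\ge\mu_{\min}(\t)+c_0\de^{-[1-(1-\ve0)p]}(\t-\t')$, and the integral follows by a change of variable. Your route is a bit more self-contained, needing only the one-sided negativity \eqref{key estimate of L mu}. For part~(2), the paper introduces an $a$-dependent threshold $1-\tfrac1a$ and a corresponding time $\tu_a$, and tracks $\mu(\cdot,u_{\t'},\vt_{\t'})$ past $\tu_a$ to obtain $\mu_{\min}^{-a}(\t')\les\mu_{\min}^{-a}(\t)$ with implicit constant $(1-\tfrac1a)^{-a}$, uniformly bounded in $a>1$; you instead use the fixed threshold $\tfrac1{10}$ and obtain \eqref{monotonicity of mu} as stated. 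Both suffice for the statement, though the paper's variant gives the $a$-th power version with an $a$-uniform constant in one stroke.

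One cosmetic slip: at $\t'=\tu$ one has $\mu_{\min}(\tu)=\tfrac1{10}$, not strictly less, so your sentence ``$\mu_{\min}(\t')<\tfrac1{10}$ for all $\t'\in[\tu,\t]$'' should read $\le$. This is harmless, since the proof of \eqref{key estimate of L mu} goes through verbatim with $\mu\le\tfrac1{10}$ (the margin in the argument there is $1-\tfrac1{10}-O(M^p\de^{(1-\ve0)p})>\tfrac12$), and in any case a single point does not affect your integrated lower bound.
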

\begin{proof}
(1) For $\t\geq\tu$, by the proof of \eqref{key estimate of L mu} and \eqref{key estimate of T mu}, $\mu_{\min}(\t)$ behaves exactly as $\de^{-[1-(1-\ve0)p]}(t^*-\t)$, which implies
\begin{equation*}
	\begin{split}
		\int_{\tu}^{\t}\mu_{\min}^{-a}(\t')\d\t'
		&\les(\de^{-[1-(1-\ve0)p]})^{-a}\int_{\tu}^{\t}(t^*-\t')^{-a}\d\t'\\
		&\leq(\de^{-[1-(1-\ve0)p]})^{-a}\f{1}{a-1}(t^*-\t)^{-a+1}\\
		&\les(\de^{-[1-(1-\ve0)p]})^{-a}\f{1}{a-1}(\de^{1-(1-\ve0)p})^{-a+1}\mu_{\min}^{-a+1}(\t)\\
		&=\de^{1-(1-\ve0)p}\f{1}{a-1}\mu_{\min}^{-a+1}(\t).
	\end{split}
\end{equation*}
	
(2) Similarly to the proof of \eqref{key estimate of L mu}, if $\mu(\t,u,\vt)\leq1-\f1a$, one
has $L\mu(\t,u,\vt)\les-\f1a\de^{-[1-(1-\ve0)p]}$, which means that if there is a $\tau\in[1,t^*)$
such that $\mu_{\min}(\tau)\leq1-\f1a$, then for all $\t\geq\tau$, $\mu_{\min}(\t)\leq1-\f1a$.
We now define a time $\tu_a$ such that it is the minimum of all the $\tau$ with $\mu_{\min}(\tau)\leq1-\f1a$.

If $\t'\geq\tu_a$, then $\mu_{\min}(\t')\leq1-\f1a$. Let $\mu_{\min}(\t')=\mu(\t',u_{\t'},\vt_{\t'})$. Since $\mu(\t,u_{\t'},\vt_{\t'})$ is decreasing in $\t$ for $\t\geq\t'\geq\tu_a$ (due to $L\mu(\t,u,\vt)\les-\f1a\de^{-[1-(1-\ve0)p]}<0$), one then has
\begin{equation*}
	\mu_{\min}^{-a}(\t')=\mu^{-a}(\t',u_{\t'},\vt_{\t'})\leq\mu^{-a}(\t,u_{\t'},\vt_{\t'})\leq\mu_{\min}^{-a}(\t).
\end{equation*}

If $\t'\leq\tu_a$, then $\mu_{\min}(\t')\geq 1-\f1a$. Since $\mu_{\min}(\t)\les 1$, we have
\begin{equation*}
	\mu_{\min}^{-a}(\t')\leq(1-\f1a)^{-a}\les1\les\mu_{\min}^{-a}(\t).
\end{equation*}
\end{proof}

\subsection{Non-top order derivative $L^2$ estimates}\label{Section 6.1}

Based on the preparations above, we are ready to derive the non-top order derivative $L^2$ estimates for the related quantities.
\begin{proposition}\label{Proposition non-top order L^2 estimates}
Under the assumptions \eqref{bootstrap assumptions}, when $\de>0$ is small, it holds  that for $|\al|\leq 2N-10$,
\begin{equation}\label{non-top order L^2 estimates}
	\begin{split}
		&\de^{l+s[1-(1-\ve0)p]}\|\Lies_Z^{\al}\chic\|_{L^2(\Si_\t^u)}\les\de^{(1-\ve0)p+\f12}+\Theta_M^1(\t,u),\\
		&\de^{l+s[1-(1-\ve0)p]}\|Z^{\al+1}\mu\|_{L^2(\Si_\t^u)}\les\de^{\f12}+\Theta_M^2(\t,u),\\
		&\de^{l+s[1-(1-\ve0)p]}\left(\|Z^{\al+1}\Lc^j\|_{L^2(\Si_\t^u)}+\|Z^{\al+1}\Tc^j\|_{L^2(\Si_\t^u)}+\|Z^{\al+1}v_j\|_{L^2(\Si_\t^u)}\right)\les\de^{(1-\ve0)p+\f12}+\Theta_M^1(\t,u),\\
		&\de^{l+s[1-(1-\ve0)p]}\left(\|\Lies_Z^{\al}\ds x^j\|_{L^2(\Si_\t^u)}+\|\Lies_Z^{\al}R_j\|_{L^2(\Si_\t^u)}\right)\les\de^{\f12}+\Theta_M^1(\t,u),\\	
		&\de^{l+s[1-(1-\ve0)p]}\left(\|\Lies_Z^{\al}{}^{(R_j)}\pis\|_{L^2(\Si_\t^u)}+\|\Lies_Z^{\al}{}^{(R_j)}\pis_L\|_{L^2(\Si_\t^u)}+\|\Lies_Z^{\al}{}^{(R_j)}\pis_T\|_{L^2(\Si_\t^u)}\right)\les\de^{(1-\ve0)p+\f12}+\Theta_M^1(\t,u),\\	
		&\de^{l+s[1-(1-\ve0)p]}\left(\|\Lies_Z^{\al}{}^{(T)}\pis\|_{L^2(\Si_\t^u)}
+\|\Lies_Z^{\al}{}^{(T)}\pis_L\|_{L^2(\Si_\t^u)}\right)\les\de^{\f12}+\Theta_M^2(\t,u),
	\end{split}
 \end{equation}
where $l$ (or $s$) is the number of $T$ (or $\rho L$) in the string of $Z$ with $s\leq 2$, and
\begin{equation*}
 	\begin{split}
 		&\Theta_M^1(\t,u)=\de^{(1-\ve0)(p-1)}\int_1^\t\left(\mu_{\min}^{-\f12}(\t')\sqrt{\Et_{1,\leq|\al|+2}(\t',u)}+\de^{(1-\ve0)p}\sqrt{\Et_{2,\leq|\al|+2}(\t',u)}\right)\d\t',\\
 		&\Theta_M^2(\t,u)=\de^{(1-\ve0)(p-1)}\int_1^\t\left(\mu_{\min}^{-\f12}(\t')\sqrt{\Et_{1,\leq|\al|+2}(\t',u)}+\sqrt{\Et_{2,\leq|\al|+2}(\t',u)}\right)\d\t'.
 	\end{split}
\end{equation*}
\end{proposition}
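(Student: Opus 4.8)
The plan is to prove Proposition \ref{Proposition non-top order L^2 estimates} by induction on $|\al|$, mirroring the $L^\infty$ induction of Proposition \ref{Proposition higher order L^infty estimates} but now carrying the extra quadratic energy term $\Theta_M^i$ along. The base case $|\al|\le 1$ follows directly from the pointwise bounds already proved in Section \ref{Section 3} together with the fact that $\int_{\Si_\t^u}1 \les \de$ (since $u\le\de$ and $\sqrt{\det\gs}$ is bounded by Remark \ref{Remark sqrt det gs is bounded above and below}), so an $L^\infty$ bound of size $X$ converts to an $L^2$ bound of size $X\de^{1/2}$. For the inductive step, I would commute the relevant transport/structure equations of Lemma \ref{Lemma structure equations} with $Z^\al$ (respectively $\Lies_Z^\al$), exactly as in \eqref{commute Lies Ri al with L chic} and \eqref{commute Ri al+1 with L mu}, and then take the $L^2(\Si_\t^u)$ (or $L^2(S_{\t,u})$) norm of the resulting equation.

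First I would treat $\chic$: commuting $\Lies_{R_i}^\al$ with \eqref{transport equation of chic} and using the weighted formula analogous to \eqref{weighted transport equation of |Lies Ri al chic|}, I obtain an $L$-transport inequality for $\rho^2|\Lies_Z^\al\chic|$ whose right-hand side contains (i) the curvature term $\Lies_Z^\al\Rc_{LALB}$, which by \eqref{Rc_LALB} is a product of lower-order quantities and at most $\al$-order derivatives of $\vp_0$ — the top piece $Z^\al\nas^2\vp_0 \sim Z^{\al+2}\vp_0$ in $L^2$ is exactly what produces, after using Lemma \ref{Lemma 1 in higher order L^2 estimates} and Cauchy–Schwarz in $\t'$, the energy contribution $\de^{(1-\ve0)(p-1)}\int_1^\t(\cdots)\d\t'$ defining $\Theta_M^1$; (ii) commutator terms built from $\Lies_Z^{\le\al}{}^{(R_i)}\pis_L$, which are controlled by the induction hypothesis and the $\chic$-estimate itself; (iii) genuinely lower-order terms bounded via Proposition \ref{Proposition higher order L^infty estimates}. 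Integrating the transport inequality along $L$ from $\t=1$ (where $\chic$ satisfies \eqref{chic on t=1}, contributing the $\de^{(1-\ve0)p+1/2}$ piece) and applying Gronwall closes the first estimate. The bounds for $\mu$, $\Lc^j$, $\Tc^j$, $v_j$, $\ds x^j$, $R_j$ and the deformation tensors then follow in the same cascading order as in Proposition \ref{Proposition higher order L^infty estimates}: $Z^{\al+1}\mu$ from \eqref{transport equation of mu} commuted (the top term being $Z^{\al+1}(cTc)\sim Z^\al T\vp_0$, giving the $\sqrt{\Et_2}$ in $\Theta_M^2$), then $Z^{\al+1}\Lc^j$ from \eqref{structure equation of Lc^j} and \eqref{transport equation of Lc^j}, then $\Tc^j$, $v_j$ from \eqref{YHC-1}, \eqref{v_i}, then $\ds x^j$, $R_j$ from \eqref{R_i}, and finally the deformation tensors from \eqref{Ri pi}, \eqref{T pi}. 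For $Z$ strings containing $T$ or $\rho L$, one uses \eqref{transport equation of chi along T}, \eqref{transport equation of chi along T}'s consequence \eqref{transport equation of trgs chi along T}, \eqref{transport equation of L^i along T} and \eqref{transport equation of Lc^j along T}, together with a secondary induction on the number of $T$'s (as at the end of the proof of Proposition \ref{Proposition higher order L^infty estimates}), noting $s\le 2$.

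Two points require care. The elliptic estimates of Lemma \ref{Lemma 2 in higher order L^2 estimates} must be invoked whenever the natural transport/structure equation only controls $\divgs$ or $\Des$ of the quantity in question rather than its full $\nas$-derivative — in particular to pass from $Z^{\al+1}\mu$ bounds to $\nas^2$-type bounds and from $\Lies_Z^\al\divgs\chic$ to $\Lies_Z^\al\nas\chic$ via \eqref{elliptic equation of chi}; here one pays a factor $\mu_{\min}^{-1}$ from the $|\ds\mu|^2|\xi|^2$ term on the right of \eqref{elliptic estimate 1}–\eqref{elliptic estimate 2}, which is precisely why $\Theta_M^1$ carries the weight $\mu_{\min}^{-1/2}$ in front of $\sqrt{\Et_1}$. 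Integrating this $\mu_{\min}^{-1/2}$ weight in $\t'$ is legitimate by the integrability statement \eqref{integrability of mu} of Lemma \ref{Lemma 3 in higher order L^2 estimates} (with $a<1$ after the $\sqrt{\phantom{x}}$, or more precisely by applying \eqref{integrability of mu} after Cauchy–Schwarz), and the monotonicity \eqref{monotonicity of mu} lets one pull $\mu_{\min}^{-b}$ factors out of the time integrals when later relating $\Theta_M^i$ to the barred energies $\Et^b_{i,\le m+1}$. The main obstacle I anticipate is bookkeeping the exact powers of $\de$: every commutator term must be shown to carry a smallness factor $\de^{(1-\ve0)p}$ or better relative to the main term (so that it can be absorbed by Gronwall rather than degrading the estimate), and the weight exponents $l+s[1-(1-\ve0)p]$ must be tracked through each application of \eqref{bootstrap assumptions}, \eqref{L^infty estimates of c} and the induction hypothesis — exactly the "estimate the optimal powers of $\de$" difficulty flagged in the introduction. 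Once the powers are verified to match the claimed right-hand sides, Gronwall's inequality in $\t$ (absorbing the $\de^{(1-\ve0)p}$-small linear-in-unknown terms) completes the induction.
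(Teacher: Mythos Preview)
Your inductive strategy—commute the transport/structure equations of Lemma \ref{Lemma structure equations} with $Z^\al$, take $L^2(\Si_\t^u)$ norms, integrate along $L$, and close by Gronwall in exactly the cascading order of Proposition \ref{Proposition higher order L^infty estimates}—is precisely what the paper does, and the base case via $\|1\|_{L^2(\Si_\t^u)}\les\de^{1/2}$ is also the paper's.

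However, your two ``points requiring care'' are both misplaced and reveal a confusion about where the weight $\mu_{\min}^{-1/2}$ in $\Theta_M^i$ actually comes from. The elliptic estimates of Lemma \ref{Lemma 2 in higher order L^2 estimates} are \emph{not} used anywhere in the proof of this proposition: the statement only asks for $\|\Lies_Z^\al\chic\|_{L^2}$ and $\|Z^{\al+1}\mu\|_{L^2}$, never for $\|\nas\Lies_Z^\al\chic\|_{L^2}$ or $\|\Lies_Z^\al\nas^2\mu\|_{L^2}$; the latter are the top-order quantities treated separately in Section \ref{Section 6.2}, and that is where \eqref{elliptic estimate 1}–\eqref{elliptic estimate 2} enter. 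Likewise, Lemma \ref{Lemma 3 in higher order L^2 estimates} is not invoked here—the proposition simply records $\Theta_M^i$ as a time integral and does not attempt to bound it further. The factor $\mu_{\min}^{-1/2}$ in front of $\sqrt{\Et_1}$ has a much more mundane origin: the energy $E_1[\Psi]$ in \eqref{energy and flux} carries a weight $\mu$ on $|\ds\Psi|^2$, so converting $\|\ds Z^{\le\al+1}\vp_0\|_{L^2(\Si_\t^u)}$ (which appears when you differentiate $\Rc_{LALB}$ in \eqref{Rc_LALB}) into $\sqrt{\Et_{1,\le|\al|+2}}$ costs exactly one factor of $\mu_{\min}^{-1/2}$. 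Once you drop the spurious elliptic step, the bookkeeping of $\de$-powers you anticipate is the only real work, and it goes through term by term as in the paper's estimates of $I_1,\dots,I_5$ and $I_1',I_2',I_3'$.
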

\begin{proof}
We will prove this proposition by the induction method with respect to the index $\al$. When $\al=0$, in view of
Proposition \ref{Proposition higher order L^infty estimates}, the corresponding $L^2$ estimates can be directly
obtained by the fact $\|1\|_{L^2(\Si_\t^u)}\les\de^{1/2}$ (similarly to the proof of \cite[Corollary 11.30.3]{Sp}).
For example, $\|\chic\|_{L^2(\Si_\t^u)}\les\|\chic\|_{L^{\infty}(\Si_\t^u)}\cdot\|1\|_{L^2(\Si_\t^u)}\les \de^{(1-\ve0)p+\f12}$.
For any $1\leq|\al|\leq 2N-10$, assume that \eqref{non-top order L^2 estimates} holds up to the order $|\al|-1$,
one needs to show \eqref{non-top order L^2 estimates} for the order $|\al|$. To this end,
we take the $L^2$ norms for the factors equipped with the higher order derivatives of
the related quantities, meanwhile applying Proposition \ref{Proposition higher order L^infty estimates}
for the corresponding $L^{\infty}$ coefficients in these terms.

As in the proof of Proposition \ref{Proposition higher order L^infty estimates}, we first prove \eqref{non-top order L^2 estimates}
when $Z$ only involves the rotation vector, that is, $Z\in\{R_1,R_2,R_3\}$.

\vskip 0.2 true cm

\textbf{Part 1. Estimates of $\Lies_{R_i}^{\al}\chic$ and $\Lies_{R_i}^{\al}{}^{(R_j)}\pis_L$}

\vskip 0.1 true cm	

By \eqref{commute Lies Ri al with L chic}, one has
 \begin{equation*}
 \begin{split}
 &\quad\Lies_L\Lies_{R_i}^{\al}\chic_{AB}\\
 &=\sum_{\be_1+\be_2=\al}\Lies_{R_i}^{\be_1}(c^{-1}Lc)\cdot\Lies_{R_i}^{\be_2}\chic_{AB}
 +\sum_{\be_1+\be_2+\be_3=\al}\Lies_{R_i}^{\be_1}\gs^{CD}\cdot\Lies_{R_i}^{\be_2}\chic_{AD}\cdot\Lies_{R_i}^{\be_3}\chic_{BC}
 +\Lies_{R_i}^{\al}(\rho^{-1}c^{-1}Lc\gs_{AB})\\
 &\quad -\Lies_{R_i}^{\al}\Rc_{LALB}+\sum_{\be_1+\be_2=\al-1}\Lies_{R_i}^{\be_1}\big[{}^{(R_i)}\pis_L^C(\nas_C\Lies_{R_i}^{\be_2}\chic_{AB})
 +\Lies_{R_i}^{\be_2}\chic_{BC}(\nas_A{}^{(R_i)}\pis_L^C)+\Lies_{R_i}^{\be_2}\chic_{AC}(\nas_B{}^{(R_i)}\pis_L^C)\big]\\
 &=I_1+I_2+I_3+I_4+I_5,
 \end{split}
 \end{equation*}
where we have neglected the unimportant coefficient constants.

For $I_1$, by Lemma \ref{Lemma 1 in higher order L^2 estimates} and \eqref{[X,Y]}, then
\begin{equation*}
 	\begin{split}
 		\|I_1\|_{L^2(\Si_\t^u)}
 		&\les\de^{(1-\ve0)p-[1-(1-\ve0)p]}\|\Lies_{R_i}^{\leq\al}\chic\|_{L^2(\Si_\t^u)}\\
 		&\quad+\de^{(1-\ve0)2p}\|\Lies_{R_i}^{\leq\al-1}{}^{(R_j)}\pis_L\|_{L^2(\Si_\t^u)}\\
 		&\quad+\de^{(1-\ve0)(2p-1)}\left(\mu_{\min}^{-\f12}(\t)\sqrt{\Et_{1,\leq|\al|+1}(\t,u)}+\de^{(1-\ve0)p}\sqrt{\Et_{2,\leq|\al|+1}(\t,u)}\right).
 	\end{split}
\end{equation*}

For $I_2$, we have
\begin{equation*}
 	\|I_2\|_{L^2(\Si_\t^u)}\les\de^{(1-\ve0)p}\|\Lies_{R_i}^{\leq\al}\chic\|_{L^2(\Si_\t^u)}
 +\de^{(1-\ve0)2p}\|\Lies_{R_i}^{\leq\al-1}{}^{(R_j)}\pis\|_{L^2(\Si_\t^u)}.
\end{equation*}

In addition,
\begin{equation*}
	\begin{split}
		\|I_3\|_{L^2(\Si_\t^u)}
		&\les\de^{(1-\ve0)p-[1-(1-\ve0)p]}\|\Lies_{R_i}^{\leq\al-1}{}^{(R_j)}\pis\|_{L^2(\Si_\t^u)}\\
		&\quad+\|R_i^{\leq\al}(-\f12pc^2\vp_0^{p-1}L\vp_0)\|_{L^2(\Si_\t^u)}\\
		&\les\de^{(1-\ve0)p-[1-(1-\ve0)p]}\|\Lies_{R_i}^{\leq\al-1}{}^{(R_j)}\pis\|_{L^2(\Si_\t^u)}\\
		&\quad+\de^{(1-\ve0)p}\|\Lies_{R_i}^{\leq\al-1}{}^{(R_j)}\pis_L\|_{L^2(\Si_\t^u)}\\
		&\quad+\de^{(1-\ve0)(p-1)}\left(\mu_{\min}^{-\f12}(\t)\sqrt{\Et_{1,\leq|\al|+1}(\t,u)}+\de^{(1-\ve0)p}\sqrt{\Et_{2,\leq|\al|+1}(\t,u)}\right),
	\end{split}
\end{equation*}
\begin{equation*}
 	\|I_4\|_{L^2(\Si_\t^u)}\les\de^{(1-\ve0)(p-1)}\|\ds R_i^{\leq\al+1}\vp_0\|_{L^2(\Si_\t^u)}\les\de^{(1-\ve0)(p-1)}\mu_{\min}^{-\f12}(\t)\sqrt{\Et_{1,\leq|\al|+2}(\t,u)}.
\end{equation*}

For $I_5$, by the expression of ${}^{(R_i)}\pis_{LA}$ in \eqref{Ri pi}, we arrive at
\begin{equation*}
 	\begin{split}
 		\|I_5\|_{L^2(\Si_\t^u)}
 		&\les\de^{(1-\ve0)p}\|\Lies_{R_i}^{\leq\al}\chic\|_{L^2(\Si_\t^u)}\\
 		&\quad+\de^{(1-\ve0)2p}\|\Lies_{R_i}^{\leq\al-1}{}^{(R_j)}\pis\|_{L^2(\Si_\t^u)}\\
 		&\quad+\de^{(1-\ve0)p}\left(\|\Lies_{R_i}^{\leq\al}\chic\|_{L^2(\Si_\t^u)}+\de^{(1-\ve0)p}\|\Lies_{R_i}^{\leq\al}R_j\|_{L^2(\Si_\t^u)}
 +\|R_i^{\leq\al}\Lc^j\|_{L^2(\Si_\t^u)}\right.\\
 		&\quad+\de^{(1-\ve0)p}\|\Lies_{R_i}^{\leq\al}\ds x^j\|_{L^2(\Si_\t^u)}+\de^{(1-\ve0)p}\|R_i^{\leq\al}v_j\|_{L^2(\Si_\t^u)}\\
 		&\left.\quad+\de^{(1-\ve0)(2p-1)}\mu_{\min}^{-\f12}(\t)\sqrt{\Et_{1,\leq|\al|+2}(\t,u)}\right).
 	\end{split}
\end{equation*}

Combining the $L^2$ estimates for all $I_i$ ($1\le i\le 5$), together with the induction hypothesis, we obtain
\begin{equation}\label{L2 estimate of Lies_LLies_Ri^al chic}
 	\begin{split}
 		\|\Lies_L\Lies_{R_i}^{\al}\chic\|_{L^2(\Si_\t^u)}
 		&\les\de^{(1-\ve0)p-[1-(1-\ve0)p]}\|\Lies_{R_i}^{\leq\al}\chic\|_{L^2(\Si_\t^u)}\\
 		&\quad+\de^{(1-\ve0)p-[1-(1-\ve0)p]}\cdot\de^{(1-\ve0)p+\f12}\\
 		&\quad+\de^{(1-\ve0)(p-1)}\left(\mu_{\min}^{-\f12}(\t)\sqrt{\Et_{1,\leq|\al|+2}(\t,u)}+\de^{(1-\ve0)p}\sqrt{\Et_{2,\leq|\al|+2}(\t,u)}\right),
 	\end{split}
\end{equation}
where the second term in the right hand side of \eqref{L2 estimate of Lies_LLies_Ri^al chic} comes from the first term in the estimate of $I_3$.

For any symmetric $(0,2)$-type tensor field $\xi$, one has
\begin{equation*}
  L\big(\rho^4|\xi|^2\big)=2\rho^4\xi^{AB}\Lies_{L}\xi_{AB}-4\rho^4\chic_A^B\xi_B^C\xi_C^A.
\end{equation*}
Substituting $\xi=\Lies_{R_i}^{\al}\chic$ into the above identity yields
\begin{equation*}
  \big|L\big(\rho^2|\Lies_{R_i}^{\al}\chic|\big)\big|\leq \rho^2|\Lies_{L}\Lies_{R_i}^{\al}\chic|
  +2|\chic|\cdot\rho^2|\Lies_{R_i}^{\al}\chic|.
\end{equation*}
According to Newton-Leibniz formula, we have
\begin{equation*}
  \rho^2|\Lies_{R_i}^{\al}\chic|(\t)=(1-u)^2|\Lies_{R_i}^{\al}\chic|(1)+\int_{1}^{\t}L\big[(\t'-u)^2|\Lies_{R_i}^{\al}\chic|\big](\t')\d\t'.
\end{equation*}
Since $\rho\sim 1$, by taking the $L^2(\Si_\t^u)$ norm on both sides of the above identity, we obtain
\begin{equation}\label{inequality of L2 norm of Lies_Ri^al chic}
  \|\Lies_{R_i}^{\al}\chic\|_{L^2(\Si_\t^u)}\les\de^{(1-\ve0)p+\f12}+\int_{1}^{\t}\left\||\chic|\cdot|\Lies_{R_i}^{\al}\chic|
  +|\Lies_{L}\Lies_{R_i}^{\al}\chic|\right\|_{L^2(\Si_{\t'}^u)}\d\t'.
\end{equation}

Substituting \eqref{L2 estimate of Lies_LLies_Ri^al chic} into \eqref{inequality of L2 norm of Lies_Ri^al chic},
 together with Gronwall inequality, yields
\begin{equation*}
  \|\Lies_{R_i}^{\al}\chic\|_{L^2(\Si_\t^u)}\les\de^{(1-\ve0)p+\f12}
  +\de^{(1-\ve0)(p-1)}\int_1^\t\left(\mu_{\min}^{-\f12}(\t')\sqrt{\Et_{1,\leq|\al|+2}(\t',u)}
  +\de^{(1-\ve0)p}\sqrt{\Et_{2,\leq|\al|+2}(\t',u)}\right)\d\t',
\end{equation*}
which also derives the analogous estimate of $L^2$ norm for $\Lies_{R_i}^{\al}{}^{(R_j)}\pis_L$.

\vskip 0.2 true cm

\textbf{Part 2. Estimates of $R_i^{\al+1}\mu$ and ${\Lies_{R_i}^{\al}}{}^{(R_j)}\pis_T$}

\vskip 0.1 true cm	

By \eqref{commute Ri al+1 with L mu}, one has
\begin{equation*}
 	\begin{split}
 		LR_i^{\al+1}\mu
 		&=\sum_{\be_1+\be_2=\al+1}R_i^{\be_1}(c^{-1}Lc)\cdot R_i^{\be_2}\mu-R_i^{\al+1}(cTc)+\sum_{\be_1+\be_2+\be_3=\al}\Lies_{R_i}^{\be_1}\gs^{-1}\cdot\Lies_{R_i}^{\be_2}{}^{(R_j)}\pis_L\cdot\ds R_i^{\be_3}\mu\\
 		&=I'_1+I'_2+I'_3,
 	\end{split}
\end{equation*}
where we have neglected the unimportant coefficient constants.

For $I'_1$, we have
\begin{equation*}
 	\begin{split}
 		\|I'_1\|_{L^2(\Si_\t^u)}
 		&\les\de^{(1-\ve0)p-[1-(1-\ve0)p]}\|R_i^{\leq\al+1}\mu\|_{L^2(\Si_\t^u)}\\
 		&\quad+\de^{(1-\ve0)p}\|\Lies_{R_i}^{\leq\al}{}^{(R_j)}\pis_L\|_{L^2(\Si_\t^u)}\\
 		&\quad+\de^{(1-\ve0)(p-1)}\left(\mu_{\min}^{-\f12}(\t)\sqrt{\Et_{1,\leq|\al|+2}(\t,u)}
 +\de^{(1-\ve0)p}\sqrt{\Et_{2,\leq|\al|+2}(\t,u)}\right).
 	\end{split}
\end{equation*}

For $I'_2$, by Lemma \ref{Lemma 1 in higher order L^2 estimates}, $[T,R_i]={}^{(R_i)}\pis_T$ (due to its expression
in \eqref{Ri pi}) and $T=\f12(\Lb-c^{-2}\mu L)$, we arrive at
\begin{equation*}
 	\begin{split}
 		\|I'_2\|_{L^2(\Si_\t^u)}
 		&\les\|R_i^{\al+1}(\f12pc^4\vp_0^{p-1}T\vp_0)\|_{L^2(\Si_\t^u)}\\
 		&\les\de^{(1-\ve0)p}\left(\de^{(1-\ve0)p}\|R_i^{\leq\al+1}\mu\|_{L^2(\Si_\t^u)}
 +\de^{(1-\ve0)p}\|\Lies_{R_i}^{\leq\al}R_j\|_{L^2(\Si_\t^u)}+\|\Lies_{R_i}^{\leq\al}\chic\|_{L^2(\Si_\t^u)}\right.\\
 		&\quad+\de^{(1-\ve0)p}\|\Lies_{R_i}^{\leq\al-1}{}^{(R_j)}\pis\|_{L^2(\Si_\t^u)}
 +\|R_i^{\leq\al}\Lc^j\|_{L^2(\Si_\t^u)}+\|R_i^{\leq\al}v_j\|_{L^2(\Si_\t^u)}\\
 		&\left.\quad+\de^{(1-\ve0)(p-1)}\mu_{\min}^{-\f12}(\t)\sqrt{\Et_{1,\leq|\al|+1}(\t,u)}\right)\\
 		&\quad+\de^{(1-\ve0)(p-1)}\left(\sqrt{\Et_{1,\leq|\al|+2}(\t,u)}+\sqrt{\Et_{2,\leq|\al|+2}(\t,u)}\right).
 	\end{split}
\end{equation*}

For $I'_3$,
\begin{equation*}
 	\|I'_3\|_{L^2(\Si_\t^u)}\les\de^{(1-\ve0)p}\|R_i^{\leq\al+1}\mu\|_{L^2(\Si_\t^u)}+\de^{(1-\ve0)p}\|\Lies_{R_i}^{\leq\al-1}{}^{(R_j)}\pis\|_{L^2(\Si_\t^u)}+\|\Lies_{R_i}^{\leq\al}{}^{(R_j)}\pis_L\|_{L^2(\Si_\t^u)}.
\end{equation*}

Combining the $L^2$ estimates for all $I'_i$ ($1\le i\le 3$), we obtain that from the induction hypothesis and
the estimates in Part 1
\begin{equation}\label{L2 estimate of L Ri al+1 mu}
 	\begin{split}
 		\|LR_i^{\al+1}\mu\|_{L^2(\Si_\t^u)}
 		&\les\de^{(1-\ve0)p-[1-(1-\ve0)p]}\|R_i^{\leq\al+1}\mu\|_{L^2(\Si_\t^u)}+\de^{(1-\ve0)p+\f12}\\
 		&\quad+\de^{(1-\ve0)(p-1)}\left(\mu_{\min}^{-\f12}(\t)\sqrt{\Et_{1,\leq|\al|+2}(\t,u)}+\sqrt{\Et_{2,\leq|\al|+2}(\t,u)}\right),
 	\end{split}
\end{equation}
where the second term in the right hand side of \eqref{L2 estimate of L Ri al+1 mu} comes from the third term in the estimate of $I'_3$.

According to Newton-Leibniz formula, one has
\begin{equation*}
 	R_i^{\al+1}\mu(\t)=R_i^{\al+1}\mu(1)+\int_1^\t LR_i^{\al+1}\mu(\t')\d\t'.
\end{equation*}
By taking the $L^2(\Si_\t^u)$ norm on both sides of the above identity and utilizing the
Minkowski inequality, then
\begin{equation}\label{inequality of L2 norm of Ri al+1 mu}
 	\|R_i^{\al+1}\mu\|_{L^2(\Si_\t^u)}\les\de^{\f12}+\int_1^\t\|LR_i^{\al+1}\mu\|_{L^2(\Si_{\t'}^u)}\d\t'.
\end{equation}
Substituting \eqref{L2 estimate of L Ri al+1 mu} into \eqref{inequality of L2 norm of Ri al+1 mu}, by Gronwall inequality, we finally arrive at
\begin{equation*}
 	\|R_i^{\al+1}\mu\|_{L^2(\Si_\t^u)}\les\de^{\f12}+\de^{(1-\ve0)(p-1)}\int_1^\t\left(\mu_{\min}^{-\f12}(\t')\sqrt{\Et_{1,\leq|\al|+2}(\t',u)}+\sqrt{\Et_{2,\leq|\al|+2}(\t',u)}\right)\d\t',		
\end{equation*}
which also gives the analogous estimate of $L^2$ norm for $\Lies_{R_i}^{\al}{}^{(R_j)}\pis_T$.

\vskip 0.2 true cm

\textbf{Part 3. Estimates of $R_i^{\al+1}\Lc^j$, $R_i^{\al+1}\Tc^j$ and $R_i^{\al+1}v_j$}

\vskip 0.1 true cm	

By \eqref{structure equation of Lc^j}, one has
\begin{equation*}
	R_i^{\al+1}\Lc^j=\Lies_{R_i}^{\al}(R_i^A\chic_{AB}\ds^Bx^j)=\sum_{\be_1+\be_2+\be_3+\be_4=\al}\Lies_{R_i}^{\be_1}R_j\cdot\Lies_{R_i}^{\be_2}\chic\cdot\Lies_{R_i}^{\be_3}\gs^{-1}\cdot\Lies_{R_i}^{\be_4}\ds x^j.
\end{equation*}
Thanks to the estimate of $\Lies_{R_i}^{\al}\chic$ in Part 1, together with the induction hypothesis, we arrive at
\begin{equation*}
	\|R_i^{\al+1}\Lc^j\|_{L^2(\Si_\t^u)}\les\de^{(1-\ve0)p+\f12}+\de^{(1-\ve0)(p-1)}\int_1^\t\left(\mu_{\min}^{-\f12}(\t')\sqrt{\Et_{1,\leq|\al|+2}(\t',u)}+\de^{(1-\ve0)p}\sqrt{\Et_{2,\leq|\al|+2}(\t',u)}\right)\d\t'.
\end{equation*}
Then the estimates of $R_i^{\al+1}\Tc^j$ and $R_i^{\al+1}v_j$ follow immediately from \eqref{YHC-1} and \eqref{v_i}.

\vskip 0.2 true cm

\textbf{Part 4. Estimates of the other quantities}

\vskip 0.1 true cm	

The estimates of $\Lies_{R_i}^{\al+1}\ds x^j$ and $\Lies_{R_i}^{\al+1}R_j$ follow directly from \eqref{R_i} and Part 3, while the estimates of the other deformation tensors follow from Lemma \ref{Lemma components of the deformation tensor} and the estimates
in the above parts.

If there are vectorfields $T$ or $\rho L$ in the string of $Z$, we could utilize the structure equations in
Lemma \ref{Lemma structure equations} to get the corresponding $L^2$ bounds as in the end of the proof for
Proposition \ref{Proposition higher order L^infty estimates}.
\end{proof}

\subsection{Top order derivative $L^2$ estimates}\label{Section 6.2}

When we try to close the energy estimate of the top order derivatives (see Section \ref{Section 7} below)
and further complete the proof of bootstrap assumptions
in \eqref{bootstrap assumptions}, it is found that the
top orders of derivatives of $\vp_\ga$, $\chi$ and $\mu$ for the energy estimates are $2N-8$, $2N-9$ and $2N-8$ respectively. However, as shown in Proposition \ref{Proposition non-top order L^2 estimates}, the $L^2$ estimates for the $(2N-9)^{\text{th}}$ order derivatives of $\chi$ and $(2N-8)^{\text{th}}$ order derivatives of $\mu$ can be controlled by the $(2N-7)^{\text{th}}$ order derivatives of $\vp_\ga$. So there
exists one order loss of  derivatives in the corresponding energy inequality. To overcome this difficulty, as in \cite{Ch2} and \cite{M-Y}-\!\cite{Sp},
we need to treat $\chi$ and $\mu$ with the related top order derivatives by
introducing some modified quantities and taking the elliptic estimates.

\vskip 0.2 true cm

\textbf{6.2.1 Estimates on $\nas\chi$}

\vskip 0.1 true cm	

Recalling \eqref{Rc_LALB}, we now define
\begin{equation}\label{mu Rc LL}
	\mu\Rc_{LL}=\mu\gs^{AB}\Rc_{LALB}=-\f12pc^4\vp_0^{p-1}\mu\Des\vp_0+R_0,
\end{equation}
where
\begin{equation}\label{R0}
	R_0=-\f32pc^3\mu\vp_0^{p-1}\ds_Ac\ds^A\vp_0-\f12p(p-1)c^4\mu\vp_0^{p-2}|\ds\vp_0|^2.
\end{equation}
By \eqref{transport equation of chi} and \eqref{YHC-1}, one has
\begin{equation}\label{transport equation of trgs chi}
	L\trgs\chi=(c^{-1}Lc-2\rho^{-1})\trgs\chi+2\rho^{-2}-|\chic|^2-\Rc_{LL},
\end{equation}
then
\begin{equation}\label{transport equation of mu trgs chi}
	\begin{split}
		L(\mu\trgs\chi)=(L\mu+c^{-1}Lc\mu-2\rho^{-1}\mu)\trgs\chi+2\rho^{-2}\mu-\mu|\chic|^2-\mu\Rc_{LL}.
	\end{split}
\end{equation}
Note that the highest order derivative of $\vp_0$ in the right hand side of \eqref{mu Rc LL}
is $\Des\vp_0$. We will replace $\mu\Des\vp_0$ in $\mu\Rc_{LL}$ of \eqref{transport equation of mu trgs chi} with $L(\p\vp)+\lot$.
Indeed, by \eqref{transport equation of Lb vp_ga}, \eqref{H_ga} and \eqref{Hc_ga}, one has
\begin{equation}\label{mu Des vp0}
	\mu\Des\vp_0=L\Lb\vp_0+\trgs\chi T\vp_0-\Hc_0+F_0.
\end{equation}
Hence,
\begin{equation}\label{mu Rc LL replaced}
	\begin{split}
		\mu\Rc_{LL}&=-LE_{\chi}-e_{\chi}-\f12pc^4\vp_0^{p-1}T\vp_0\trgs\chi\\
		           &=-LE_{\chi}-e_{\chi}+cTc\trgs\chi,
	\end{split}
\end{equation}
where
\begin{equation}\label{E chi and e chi}
	\begin{split}
		E_{\chi}&=\f12pc^4\vp_0^{p-1}\Lb\vp_0,\\
		e_{\chi}&=-L(\f12pc^4\vp_0^{p-1})\Lb\vp_0-\f12pc^4\vp_0^{p-1}(\Hc_0-F_0)-R_0.
	\end{split}
\end{equation}
Substituting \eqref{mu Rc LL replaced} into \eqref{transport equation of mu trgs chi} yields
\begin{equation}\label{modified transport equation of mu trgs chi}
	L(\mu\trgs\chi-E_{\chi})=2(L\mu-\rho^{-1}\mu)\trgs\chi+2\rho^{-2}\mu-\mu|\chic|^2+e_{\chi}.
\end{equation}

Define
\begin{equation}\label{higher order E chi}
    E_{\chi}^{\al}=\mu\ds R_i^{\al}\trgs\chi-\ds R_i^{\al}E_{\chi}.
\end{equation}
Then
\begin{equation}\label{L E_chi^0}
 	\begin{split}
 		\Lies_LE_{\chi}^0
 		&=2(\mu^{-1}L\mu-\rho^{-1})E_{\chi}^0+2(\mu^{-1}L\mu-\rho^{-1})\ds E_{\chi}-\mu\ds(|\chic|^2)+e_{\chi}^0,
 	\end{split}
\end{equation}
where
\begin{equation}\label{e_chi^0}
	\begin{split}
		e_{\chi}^0
		&=\ds e_{\chi}+(\ds L\mu-2\rho^{-1}\ds\mu)\trgs\chi-\ds\mu(L\trgs\chi+|\chic|^2-2\rho^{-2}).
	\end{split}
\end{equation}
By the induction argument on \eqref{modified transport equation of mu trgs chi}, we have
\begin{equation}\label{higher order modified transport equation of E chi}
	\begin{split}
		\Lies_LE_{\chi}^{\al}
		&=2(\mu^{-1}L\mu-\rho^{-1})E_{\chi}^{\al}+2(\mu^{-1}L\mu-\rho^{-1})\ds R_i^{\al}E_{\chi}-\mu\ds R_i^{\al}(|\chic|^2)+e_{\chi}^{\al},
	\end{split}
\end{equation}
where
\begin{equation}\label{higher order e chi}
	\begin{split}
		e_{\chi}^{\al}
		&=\Lies_{R_i}^{\al}\big[\ds e_{\chi}+(\ds L\mu-2\rho^{-1}\ds\mu)\trgs\chi-\ds\mu(L\trgs\chi+|\chic|^2-2\rho^{-2})\big]\\
		&\quad+\sum_{\be_1+\be_2=\al-1}\Lies_{R_i}^{\be_1}\big[{}^{(R_i)}\pis_L\nas E_{\chi}^{\be_2}+\nas{}^{(R_i)}\pis_LE_{\chi}^{\be_2}\big]\\
		&\quad+\sum_{\be_1+\be_2=\al-1}\Lies_{R_i}^{\be_1}\big\{(R_iL\mu-{}^{(R_i)}\pis_L^A\ds_A\mu-2\rho^{-1}R_i\mu)\ds R_i^{\be_2}\trgs\chi-R_i\mu\big[\ds R_i^{\be_2}(|\chic|^2)+\ds LR_i^{\be_2}\trgs\chi\big]\big\}.
	\end{split}
\end{equation}
Note that for any one-form $\xi$, one has
\begin{equation}\label{weighted transport equation of 1-form}
	\begin{split}
		L(|\xi|^2)
		&=\Lies_L(\gs^{AB}\xi_A\xi_B)\\
		&=-2\chic^{AB}\xi_A\xi_B-2\rho^{-1}|\xi|^2+2\Lies_L\xi\cdot\xi.
	\end{split}
\end{equation}
By taking $\xi=E_{\chi}^{\al}$ in \eqref{weighted transport equation of 1-form} and
utilizing \eqref{higher order modified transport equation of E chi}, then
\begin{equation*}
	\begin{split}
		L(|E_{\chi}^{\al}|^2)
		&=-2\chic^{AB}(E_{\chi}^{\al})_A(E_{\chi}^{\al})_B-2\rho^{-1}|E_{\chi}^{\al}|^2\\
		&\quad+\big[2(\mu^{-1}L\mu-\rho^{-1})E_{\chi}^{\al}+2(\mu^{-1}L\mu-\rho^{-1})\ds R_i^{\al}E_{\chi}-\mu\ds R_i^{\al}(|\chic|^2)+e_{\chi}^{\al}\big]\cdot E_{\chi}^{\al}.
	\end{split}
\end{equation*}
For $(\mu^{-1}L\mu-\rho^{-1})|E_{\chi}^{\al}|^2$, if $\mu\geq\f{1}{10}$, it can be bounded by $\de^{-[1-(1-\ve0)p]}|E_{\chi}^{\al}|^2$;
if $\mu<\f{1}{10}$, according to \eqref{key estimate of L mu}, the sign of $\mu^{-1}L\mu$ is negative so that
this term can be ignored. Hence,
\begin{equation}\label{transport inequality of E chi}
	L(|E_{\chi}^{\al}|)\les\de^{-[1-(1-\ve0)p]}|E_{\chi}^{\al}|+|\mu^{-1}L\mu-\rho^{-1}||\ds R_i^{\al}E_{\chi}|+|\mu\ds R_i^{\al}(|\chic|^2)|+|e_{\chi}^{\al}|.
\end{equation}
Then by applying Newton-Leibniz formula and taking the $L^2(\Si_{\t}^u)$ norm on both sides of \eqref{transport inequality of E chi}, we obtain from Gronwall inequality and  Minkowski inequality that
\begin{equation}\label{preliminary L^2 estimate of E chi}
\begin{split}
\|E_{\chi}^{\al}\|_{L^2(\Si_{\t}^u)}
&\les\de^{(1-\ve0)p-\f12}+\int_1^\t\big[\|\mu\ds R_i^{\al}(|\chic|^2)\|_{L^2(\Si_{\t'}^u)}+\|e_{\chi}^{\al}\|_{L^2(\Si_{\t'}^u)}\\
&\quad+\|\mu^{-1}L\mu-\rho^{-1}\|_{L^\infty(\Si_{\t'}^u)}\|\ds R_i^{\al}E_{\chi}\|_{L^2(\Si_{\t'}^u)}\big]\d\t'.\\
\end{split}
\end{equation}

Next, we estimate the terms in the integrand of \eqref{preliminary L^2 estimate of E chi} one by one.

\vskip 0.2 true cm
\textbf{(1-a) Estimate of $\mu\ds R_i^{\al}(|\chic|^2)$}
\vskip 0.1 true cm

According to \eqref{elliptic equation of chi}, we have
\begin{equation}\label{elliptic equation of chic}
	(\divgs\chic)_A=\ds_A\trgs\chi+\H_A,
\end{equation}
where
\begin{equation}\label{H}
	\H_A=c^{-1}\ds^Bc\chi_{AB}-c^{-1}\ds_Ac\trgs\chi.
\end{equation}
By commuting $\Lies_{R_i}^{\al}$ with $\divgs$, it follows from Lemma \ref{Lemma commutators} that
\begin{equation*}
	\begin{split}
		&\quad\Lies_{R_i}^{\al}(\divgs\chic)_A\\
		&=\Lies_{R_i}^{\al}(\gs^{BC}\nas_C\chic_{AB})\\
		&=(\divgs\Lies_{R_i}^{\al}\chic)_A+\sum_{\be_1+\be_2=\al-1}\Lies_{R_i}^{\be_1}\big[\gs^{BC}(\check{\nas}_C{}^{(R_i)}\pis_A^D)\Lies_{R_i}^{\be_2}\chic_{BD}\\
        &\quad+\gs^{BC}(\check{\nas}_C{}^{(R_i)}\pis_B^D)\Lies_{R_i}^{\be_2}\chic_{AD}+{}^{(R_i)}\pis^{BC}\nas_C\Lies_{R_i}^{\be_2}\chic_{AB}\big].
	\end{split}
\end{equation*}
Then \eqref{elliptic equation of chic} implies
\begin{equation}\label{higher order elliptic equation of chic}
	(\divgs\Lies_{R_i}^{\al}\chic)_A=\ds_AR_i^{\al}\trgs\chi+(\H^{\al})_A,
\end{equation}
where
\begin{equation}\label{higher order H}
\begin{split}	(\H^{\al})_A&=\Lies_{R_i}^{\al}\H_A-\sum_{\be_1+\be_2=\al-1}\Lies_{R_i}^{\be_1}\big[\gs^{BC}(\check{\nas}_C{}^{(R_i)}\pis_A^D)\Lies_{R_i}^{\be_2}\chic_{BD}
+\gs^{BC}(\check{\nas}_C{}^{(R_i)}\pis_B^D)\Lies_{R_i}^{\be_2}\chic_{AD}\\
&\quad+{}^{(R_i)}\pis^{BC}\nas_C\Lies_{R_i}^{\be_2}\chic_{AB}\big]
\end{split}
\end{equation}
with
\begin{equation}\label{L^2 estimate of higher order H}
	\|\H^{\al}\|_{L^2(\Si_\t^u)}\les\de^{(1-\ve0)(p-1)}\|R_i^{\leq\al+1}\vp_0\|_{L^2(\Si_\t^u)}+\de^{(1-\ve0)p}\|\Lies_{R_i}^{\leq\al}\chic\|_{L^2(\Si_\t^u)}+\de^{(1-\ve0)p}\|\Lies_{R_i}^{\leq\al}{}^{(R_j)}\pis\|_{L^2(\Si_\t^u)}.
\end{equation}
It follows from the elliptic estimate \eqref{elliptic estimate 1} and \eqref{estimate of Gaussian curvature}, \eqref{higher order elliptic equation of chic} with $\chic=(\chi-\f12\trgs\chi\gs)+\f12\trgs\chic\gs$ that
\begin{equation}\label{application of elliptic estimate 1}
	\begin{split}
		\|\mu\nas\Lies_{R_i}^{\al}\chic\|_{L^2(\Si_\t^u)}
		&\les\|\mu\divgs\Lies_{R_i}^{\al}\chic\|_{L^2(\Si_\t^u)}+\|\Lies_{R_i}^{\al}\chic\|_{L^2(\Si_\t^u)}+\|\Lies_{R_i}^{\leq\al}{}^{(R_j)}\pis\|_{L^2(\Si_\t^u)}+\|\mu\ds R_i^{\leq\al}\trgs\chi\|_{L^2(\Si_\t^u)}\\
		&\les\|\mu\ds R_i^{\leq\al}\trgs\chi\|_{L^2(\Si_\t^u)}+\|\H^{\al}\|_{L^2(\Si_\t^u)}+\|\Lies_{R_i}^{\al}\chic\|_{L^2(\Si_\t^u)}+\|\Lies_{R_i}^{\leq\al}{}^{(R_j)}\pis\|_{L^2(\Si_\t^u)}.
	\end{split}
\end{equation}
Then by \eqref{L^2 estimate of higher order H} and \eqref{application of elliptic estimate 1}, together with Lemma \ref{Lemma 1 in higher order L^2 estimates} and Proposition \ref{Proposition non-top order L^2 estimates}, we obtain
\begin{equation*}
	\begin{split}
		&\quad\|\mu\ds R_i^{\al}(|\chic|^2)\|_{L^2(\Si_\t^u)}\\
		&=\|\mu\ds\Lies_{R_i}^{\al}(\gs^{AC}\gs^{BD}\chic_{AB}\chic_{CD})\|_{L^2(\Si_\t^u)}\\
		&\les\de^{(1-\ve0)p}\|\mu\nas\Lies_{R_i}^{\al}\chic\|_{L^2(\Si_\t^u)}+\de^{(1-\ve0)p}\|\Lies_{R_i}^{\leq\al}\chic\|_{L^2(\Si_\t^u)}+\de^{(1-\ve0)2p}\|\Lies_{R_i}^{\leq\al}{}^{(R_j)}\pis\|_{L^2(\Si_\t^u)}\\
		&\les\de^{(1-\ve0)p}\left(\|\mu\ds R_i^{\al}\trgs\chi\|_{L^2(\Si_\t^u)}+\|\H^{\al}\|_{L^2(\Si_\t^u)}+\|\Lies_{R_i}^{\al}\chic\|_{L^2(\Si_\t^u)}+\|\Lies_{R_i}^{\leq\al}{}^{(R_j)}\pis\|_{L^2(\Si_\t^u)}\right)\\
		&\quad+\de^{(1-\ve0)p}\|\Lies_{R_i}^{\leq\al}\chic\|_{L^2(\Si_\t^u)}+\de^{(1-\ve0)2p}\|\Lies_{R_i}^{\leq\al}{}^{(R_j)}\pis\|_{L^2(\Si_\t^u)}\\
		&\les\de^{(1-\ve0)p}\|E_{\chi}^{\al}\|_{L^2(\Si_\t^u)}\\
		&\quad+\de^{(1-\ve0)p}\left[\de^{(1-\ve0)p+\f12}+\de^{(1-\ve0)(p-1)}\int_1^\t\left(\mu_{\min}^{-\f12}(\t')\sqrt{\Et_{1,\leq|\al|+2}(\t',u)}+\de^{(1-\ve0)p}\sqrt{\Et_{2,\leq|\al|+2}(\t',u)}\right)\d\t'\right]\\
		&\quad+\de^{(1-\ve0)(2p-1)}\left(\sqrt{\Et_{1,\leq|\al|+2}(\t,u)}+\sqrt{\Et_{2,\leq|\al|+2}(\t,u)}\right).
	\end{split}
\end{equation*}
Hence,
\begin{equation*}
	\begin{split}
		&\quad\int_1^\t\|\mu\ds R_i^{\al}(|\chic|^2)\|_{L^2(\Si_{\t'}^u)}\d\t'\\
		&\les\int_1^\t\de^{(1-\ve0)p}\|E_{\chi}^{\al}\|_{L^2(\Si_{\t'}^u)}\d\t'\\
		&\quad+\de^{(1-\ve0)p+\f32}+\de^{(1-\ve0)(2p-1)}\int_1^\t\left(\mu_{\min}^{-\f12}(\t')\sqrt{\Et_{1,\leq|\al|+2}(\t',u)}+\sqrt{\Et_{2,\leq|\al|+2}(\t',u)}\right)\d\t'.
	\end{split}
\end{equation*}

\vskip 0.2 true cm

\textbf{(1-b) Estimate of $e_{\chi}^{\al}$}

\vskip 0.1 true cm

Substituting \eqref{transport equation of mu} and \eqref{transport equation of trgs chi} into \eqref{higher order e chi},
one keeps in mind that there are two important eliminations as follows
\begin{equation*}
	\begin{split}
		&\quad(\ds L\mu-2\rho^{-1}\ds\mu)\trgs\chi-\ds\mu(L\trgs\chi+|\chic|^2-2\rho^{-2})\\
		&=\big[\mu\ds(c^{-1}Lc)-\ds(cTc)\big]\trgs\chi+\ds\mu\Rc_{LL}
	\end{split}
\end{equation*}
and
\begin{equation*}
	\begin{split}
		&\quad(R_iL\mu-{}^{(R_i)}\pis_L^A\ds_A\mu-2\rho^{-1}R_i\mu)\ds R_i^{\be_2}\trgs\chi-R_i\mu\big[\ds R_i^{\be_2}(|\chic|^2)+\ds LR_i^{\be_2}\trgs\chi\big]\\
		&=\big[\mu R_i(c^{-1}Lc)-R_i(cTc)-{}^{(R_i)}\pis_L^A\ds_A\mu\big]\ds R_i^{\be_2}\trgs\chi+R_i\mu\ds R_i^{\be_2}\Rc_{LL}+\lot.
	\end{split}
\end{equation*}
Then by \eqref{E chi and e chi}, \eqref{higher order E chi} and Proposition \ref{Proposition non-top order L^2 estimates}, one has
\begin{equation*}
	\begin{split}
		&\quad\|e_{\chi}^{\al}\|_{L^2(\Si_\t^u)}\\
		&\les\de^{(1-\ve0)p}\|E_{\chi}^{\al}\|_{L^2(\Si_\t^u)}\\
		&\quad+\de^{(1-\ve0)(p-1)-1}\|LR_i^{\al+1}\vp_0\|_{L^2(\Si_\t^u)}+\de^{(1-\ve0)(p-1)-[1-(1-\ve0)p]}\sqrt{\Et_{2,\leq|\al|+2}(\t,u)}\\
		&\quad+\de^{(1-\ve0)p-1}\left[\de^{(1-\ve0)p+\f12}+\de^{(1-\ve0)(p-1)}\int_1^\t\left(\mu_{\min}^{-\f12}(\t')\sqrt{\Et_{1,\leq|\al|+2}(\t',u)}+\de^{(1-\ve0)p}\sqrt{\Et_{2,\leq|\al|+2}(\t',u)}\right)\d\t'\right]\\
		&\quad+\de^{(1-\ve0)p-[1-(1-\ve0)p]}\left[\de^{\f12}+\de^{(1-\ve0)(p-1)}\int_1^\t\left(\mu_{\min}^{-\f12}(\t')\sqrt{\Et_{1,\leq|\al|+2}(\t',u)}+\sqrt{\Et_{2,\leq|\al|+2}(\t',u)}\right)\d\t'\right].
	\end{split}
\end{equation*}
Hence,
\begin{equation*}
	\begin{split}
		&\quad\int_1^\t\|e_{\chi}^{\al}\|_{L^2(\Si_{\t'}^u)}\d\t'\\
		&\les\int_1^\t\de^{(1-\ve0)p}\|E_{\chi}^{\al}\|_{L^2(\Si_{\t'}^u)}\d\t'\\
		&\quad+\de^{(1-\ve0)p+\f12}+\de^{(1-\ve0)(\f{p}{2}-1)-\f12}\sqrt{\int_0^u\Ft_{1,\leq|\al|+2}(\t,u')\d u'}\\
		&\quad+\de^{(1-\ve0)(p-1)-[1-(1-\ve0)p]}\int_1^\t\left(\mu_{\min}^{-\f12}(\t')\de^{1-(1-\ve0)p}\sqrt{\Et_{1,\leq|\al|+2}(\t',u)}+\sqrt{\Et_{2,\leq|\al|+2}(\t',u)}\right)\d\t'.
	\end{split}
\end{equation*}

\vskip 0.2 true cm

\textbf{(1-c) Estimate of $(\mu^{-1}L\mu-\rho^{-1})\ds R_i^{\al}E_{\chi}$}

\vskip 0.1 true cm

Thanks to \eqref{integrability of mu}, one has
\begin{equation*}
	\begin{split}
		&\quad\int_1^\t\|\mu^{-1}L\mu-\rho^{-1}\|_{L^\infty(\Si_{\t'}^u)}\|\ds R_i^{\al}E_{\chi}\|_{L^2(\Si_{\t'}^u)}\d\t'\\
		&\les\de^{-[1-(1-\ve0)p]}\int_1^{\tu}\|\ds R_i^{\al}E_{\chi}\|_{L^2(\Si_{\t'}^u)}\d\t'+\de^{-[1-(1-\ve0)p]}\int_{\tu}^{\t}\mu_{\min}^{-1}(\t')\|\ds R_i^{\al}E_{\chi}\|_{L^2(\Si_{\t'}^u)}\d\t'\\
		&\les\de^{(1-\ve0)2p+\f12}+\de^{(1-\ve0)3p-\f12}\int_{\tu}^{\t}\mu_{\min}^{-1}(\t')\d\t'+\de^{(1-\ve0)(p-1)}\mu_{\min}^{-b_{|\al|+2}}\left(\sqrt{\Et^b_{1,\leq|\al|+2}(\t,u)}+\sqrt{\Et^b_{2,\leq|\al|+2}(\t,u)}\right).
	\end{split}
\end{equation*}

Finally, substituting the estimates in \textbf{(1-a)}, \textbf{(1-b)} and \textbf{(1-c)} into \eqref{preliminary L^2 estimate of E chi}, we obtain from Gronwall inequality and \eqref{higher order E chi} that
\begin{equation*}
	\begin{split}
		&\quad\|\mu\nas\Lies_{R_i}^{\al}\chic\|_{L^2(\Si_\t^u)}+\|\mu\ds R_i^{\al}\trgs\chi\|_{L^2(\Si_\t^u)}\\
		&\les\de^{(1-\ve0)p-\f12}+\de^{(1-\ve0)3p-\f12}\int_{\tu}^{\t}\mu_{\min}^{-1}(\t')\d\t'\\
		&\quad+\de^{(1-\ve0)(\f{p}{2}-1)-\f12}\mu_{\min}^{-b_{|\al|+2}}(\t)\sqrt{\int_0^u\Ft^b_{1,\leq|\al|+2}(\t,u')\d u'}\\
		&\quad+\de^{(1-\ve0)(p-1)}\mu_{\min}^{-b_{|\al|+2}}(\t)\left(\sqrt{\Et^b_{1,\leq|\al|+2}(\t,u)}+\sqrt{\Et^b_{2,\leq|\al|+2}(\t,u)}\right).
	\end{split}
\end{equation*}
Moreover, if there is $T$ or $\rho L$ in $Z^{\al}$, by utilizing Lemma \ref{Lemma structure equations} and \eqref{[X,Y]} as in Proposition \ref{Proposition non-top order L^2 estimates}, we obtain
\begin{equation*}\label{top order L^2 estimate of chi}
	\begin{split}
		&\quad\de^{l+s[1-(1-\ve0)p]}\|\mu\nas\Lies_Z^{\al}\chic\|_{L^2(\Si_\t^u)}+\de^{l+s[1-(1-\ve0)p]}\|\mu\ds Z^{\al}\trgs\chi\|_{L^2(\Si_\t^u)}\\
		&\les\de^{(1-\ve0)p-\f12}+\de^{(1-\ve0)3p-\f12}\int_{\tu}^{\t}\mu_{\min}^{-1}(\t')\d\t'\\
		&\quad+\de^{(1-\ve0)(\f{p}{2}-1)-\f12}\mu_{\min}^{-b_{|\al|+2}}(\t)\sqrt{\int_0^u\Ft^b_{1,\leq|\al|+2}(\t,u')\d u'}\\
		&\quad+\de^{(1-\ve0)(p-1)}\mu_{\min}^{-b_{|\al|+2}}(\t)\left(\sqrt{\Et^b_{1,\leq|\al|+2}(\t,u)}+\sqrt{\Et^b_{2,\leq|\al|+2}(\t,u)}\right).
	\end{split}
\end{equation*}

\vskip 0.2 true cm

\textbf{6.2.2 Estimates on $\nas^2\mu$}

\vskip 0.1 true cm	

Similarly to the treatment for $\trgs\chi$, we use the transport equation \eqref{transport equation of mu} to estimate $\Des\mu$.
By Lemma \ref{Lemma commutators}, one has
\begin{equation}\label{transport equation of Des mu}
	L\Des\mu=(c^{-1}Lc-2\rho^{-1})\Des\mu-2\chic^{AB}\nas_{AB}^2\mu-\ds_A\trgs\chic\ds^A\mu+\mu L\Des\ln c-cT\Des c+\J,
\end{equation}
where
\begin{equation}\label{J}
	\begin{split}
		\J&=-2c\big[\ds_A(c^{-2}\mu)\chic^{AB}-\f12\ds^B(c^{-2}\mu)\trgs\chic\big]\ds_Bc-2c^{-1}\mu\chic^{AB}\nas_{AB}^2c-2\rho^{-1}c^{-1}\mu\Des c-2\ds_Ac\ds^ATc-\Des cTc\\
		&\quad+2\mu\chic^{AB}\nas_{AB}^2\ln c+2\rho^{-1}\mu\Des\ln c+2\ds_A(c^{-1}Lc)\ds^A\mu-2\H_A\ds^A\mu.
	\end{split}
\end{equation}
Hence,
\begin{equation}\label{transport equation of mu Des mu}
	\begin{split}
		L(\mu\Des\mu)
		&=L\mu\Des\mu+\mu L\Des\mu\\
		&=(L\mu+c^{-1}Lc\mu-2\rho^{-1}\mu)\uwave{\Des\mu}-2\mu\chic^{AB}\uwave{\nas_{AB}^2\mu}-\mu\ds_A\trgs\chic\ds^A\mu\\
		&\quad+\underline{L(\mu^2\Des\ln c)}-2\mu L\mu\Des\ln c-c\mu\boxed{T\Des c}+\mu\J.
	\end{split}
\end{equation}
The term with underline can be removed to the left hand side of \eqref{transport equation of mu Des mu}, while the terms with wavy line will be treated by the elliptic estimate and Gronwall inequality. The strategy to treat the boxed term in \eqref{transport equation of mu Des mu} which contains the third order derivative of the solution is to transfer it into such a form $L(\p^{\leq2}\vp)+\lot$.
Indeed, \eqref{mu Des vp0} gives that
\begin{equation}\label{mu Des c}
	\begin{split}
		\mu\Des c
		&=\mu\nas^A(-\frac{1}{2}pc^3\vp_0^{p-1}\ds_A\vp_0)\\
		&=-\frac{1}{2}pc^3\vp_0^{p-1}L\Lb\vp_0-\frac{1}{2}pc^3\vp_0^{p-1}T\vp_0\trgs\chi+\frac{1}{2}pc^3\vp_0^{p-1}(\Hc_0-F_0)+\mu\ds^A(-\frac{1}{2}pc^3\vp_0^{p-1})\ds_A\vp_0.
	\end{split}
\end{equation}
Combining \eqref{mu Des c} and \eqref{transport equation of trgs chi along T} yields
\begin{equation}\label{T Des c}
	\begin{split}
		-c\mu T\Des c
		&=-cT(\mu\Des c)+cT\mu\Des c\\
		&=\underline{L(\frac{1}{2}pc^4\vp_0^{p-1}T\Lb\vp_0)}-cTc\uwave{\Des\mu}+\K,		
	\end{split}
\end{equation}
where
\begin{equation}\label{K}
	\begin{split}
		\K&=-L(\frac{1}{2}pc^4\vp_0^{p-1})T\Lb\vp_0-\frac{1}{2}pc^4\vp_0^{p-1}{}^{(T)}\pis_L\cdot\ds\Lb\vp_0+\frac{1}{2}pc^4\vp_0^{p-1}(T\vp_0)\mathcal{I}\\
		&\quad+cT(\frac{1}{2}pc^3\vp_0^{p-1})L\Lb\vp_0+cT(\frac{1}{2}pc^3\vp_0^{p-1}T\vp_0)\trgs\chi\\
		&\quad-cT\big[\frac{1}{2}pc^3\vp_0^{p-1}(\Hc_0-F_0)+\mu\ds(-\frac{1}{2}pc^3\vp_0^{p-1})\cdot\ds\vp_0\big]+cT\mu\Des c.
	\end{split}
\end{equation}
Substituting \eqref{T Des c} into \eqref{transport equation of mu Des mu}, we arrive at
\begin{equation}\label{modified transport equation of mu Des mu}
	L(\mu\Des\mu-E_{\mu})=2(L\mu-\rho^{-1}\mu)\Des\mu-2\mu\chic^{AB}\nas_{AB}^2\mu-\mu\ds_A\trgs\chic\ds^A\mu+e_{\mu},
\end{equation}
where
\begin{equation}\label{E mu and e mu}
	\begin{split}
		E_{\mu}&=\frac{1}{2}pc^4\vp_0^{p-1}T\Lb\vp_0+\mu^2\Des\ln c,\\
		e_{\mu}&=\K+\mu\J-2\mu L\mu\Des\ln c.
	\end{split}
\end{equation}
Note that $e_{\mu}$ is composed of lower order derivative terms such as $\p^{\leq 2}\vp$, $\p^{\leq 1}\mu$ and $\check\chi$.

Define
\begin{equation}\label{higher order E mu}
	E_{\mu}^{\al}=\mu\Zu^{\al}\Des\mu-\Zu^{\al}E_{\mu}
\end{equation}
with $\Zu\in\{T,R_1,R_2,R_3\}$. Then
\begin{equation}\label{L E_mu^0}
	\begin{split}
		LE_{\mu}^0
		&=L(\mu\Des\mu-E_{\mu})\\
		&=2(\mu^{-1}L\mu-\rho^{-1})E_{\mu}^0-2\mu\chic^{AB}\nas_{AB}^2\mu-\mu\ds_A\trgs\chic\ds^A\mu+e_{\mu}^0,
	\end{split}
\end{equation}
where
\begin{equation}\label{e_mu^0}
	\begin{split}
		e_{\mu}^0
		&=e_{\mu}+2(\mu^{-1}L\mu-\rho^{-1})E_{\mu}.
	\end{split}
\end{equation}
By the induction argument on \eqref{modified transport equation of mu Des mu}, we have
\begin{equation}\label{higher order modified transport equation of E mu}
	\begin{split}
		LE_{\mu}^{\al}=2(\mu^{-1}L\mu-\rho^{-1})E_{\mu}^{\al}-2\mu\chic^{AB}\Lies_{\Zu}^{\al}\nas_{AB}^2\mu-\mu\ds_A\Zu^{\al}\trgs\chic\ds^A\mu+e_{\mu}^{\al},
	\end{split}
\end{equation}
where
\begin{equation}\label{higher order e mu}
	\begin{split}
		e_{\mu}^{\al}
		&=\Zu^{\al}e_{\mu}+2(\mu^{-1}L\mu-\rho^{-1})\Zu^{\al}E_{\mu}-2\sum_{\be_1+\be_2=\al-1}\Zu^{\be_1}\big[\Lies_{\Zu}(\mu\chic^{AB})\Lies_{\Zu}^{\be_2}\nas_{AB}^2\mu\big]\\
		&\quad+\sum_{\be_1+\be_2=\al-1}\Zu^{\be_1}[L,\Zu]E_{\mu}^{\be_2}+\sum_{\be_1+\be_2=\al-1}\Zu^{\be_1}\big\{\big[\Zu L\mu-{}^{(\Zu)}\pis_L^A\ds_A\mu-2\Zu(\rho^{-1}\mu)\big]\Zu^{\be_2}\Des\mu\big\}\\
		&\quad-\sum_{\be_1+\be_2=\al-1}\Zu^{\be_1}\big[(\Zu\mu L\Zu^{\be_2}\Des\mu)+\ds_A\Zu^{\be_2}\trgs\chic\Lies_{\Zu}(\mu\ds^A\mu)\big].
	\end{split}
\end{equation}

Note that
\begin{equation}\label{YHC-2}
	\begin{split}
		L\big(|E_{\mu}^{\al}|^2\big)=2E_{\mu}^{\al}\big[2(\mu^{-1}L\mu-\rho^{-1})E_{\mu}^{\al}
-2\mu\chic^{AB}\Lies_{\Zu}^{\al}\nas_{AB}^2\mu-\mu\ds_A\Zu^{\al}\trgs\chic\ds^A\mu+e_{\mu}^{\al}\big].
	\end{split}
\end{equation}
As in \eqref{transport inequality of E chi} and \eqref{preliminary L^2 estimate of E chi},
we have
\begin{equation}\label{preliminary L2 estimate of E mu}
	\begin{split}
		\de^l\|E_{\mu}^{\al}\|_{L^2(\Si_\t^u)}
		\les\de^{(1-\ve0)p-\f32}+\de^l\int_1^\t\left(\de^{(1-\ve0)p}\|\mu\Lies_{\Zu}^{\al}\nas^2\mu\|_{L^2(\Si_{\t'}^u)}+\|e_{\mu}^{\al}\|_{L^2(\Si_{\t'}^u)}+\|\mu\ds\Zu^{\al}\trgs\chic\|_{L^2(\Si_{\t'}^u)}\right)\d\t'.
	\end{split}
\end{equation}

Next, we estimate the terms in the integrand of \eqref{preliminary L2 estimate of E mu} one by one.

\vskip 0.2 true cm

\textbf{(2-a) Estimate of $\mu\Lies_{\Zu}^{\al}\nas^2\mu$}

\vskip 0.1 true cm

With the help of Lemma \ref{Lemma commutators}, we have from \eqref{estimate of Gaussian curvature} and the elliptic estimate \eqref{elliptic estimate 2} that
\begin{equation*}
	\begin{split}
		&\quad\de^l\|\mu\Lies_{\Zu}^{\al}\nas^2\mu\|_{L^2(\Si_{\t}^u)}\\
		&\le\de^l\|\mu\nas^2(\Zu^{\al}\mu)\|_{L^2(\Si_{\t}^u)}+\de^l\|\mu[\Lies_{\Zu}^{\al},\nas^2]\mu\|_{L^2(\Si_{\t}^u)}\\
		&\les\de^l\|\mu\Des(\Zu^{\al}\mu)\|_{L^2(\Si_{\t}^u)}+\de^l\|\ds\Zu^{\al}\mu\|_{L^2(\Si_{\t}^u)}+\de^l\|\mu[\Lies_{\Zu}^{\al},\nas^2]\mu\|_{L^2(\Si_{\t}^u)}\\
	    &\les\de^l\|E_{\mu}^{\al}\|_{L^2(\Si_{\t}^u)}+\de^{(1-\ve0)(p-1)-1}\left(\sqrt{\Et_{1,\leq|\al|+2}(\t,u)}+\sqrt{\Et_{2,\leq|\al|+2}(\t,u)}\right)\\
	    &\quad+\de^{(1-\ve0)p-1}\left[\de^{(1-\ve0)p+\f12}+\de^{(1-\ve0)(p-1)}\int_1^\t\left(\mu_{\min}^{-\f12}(\t')\sqrt{\Et_{1,\leq|\al|+2}(\t',u)}+\de^{(1-\ve0)p}\sqrt{\Et_{2,\leq|\al|+2}(\t',u)}\right)\d\t'\right]\\
	    &\quad+\de^{\f12}+\de^{(1-\ve0)(p-1)}\int_1^\t\left(\mu_{\min}^{-\f12}(\t')\sqrt{\Et_{1,\leq|\al|+2}(\t',u)}+\sqrt{\Et_{2,\leq|\al|+2}(\t',u)}\right)\d\t',
    \end{split}
\end{equation*}
where $\al'$ means that the number of $T$ appearing in $\Lies_{\Zu}^{\leq\al'}$ is at most $l-1$ and $|\al'|\leq|\al|$.

Similarly to the treatment for $\chi$, utilizing Lemma \ref{Lemma 3 in higher order L^2 estimates} again, we arrive at
\begin{equation*}
	\begin{split}
		&\quad\de^l\int_1^\t\de^{(1-\ve0)p}\|\mu\Lies_{\Zu}^{\al}\nas^2\mu\|_{L^2(\Si_{\t'}^u)}\d\t'\\
		&\les\int_1^\t\de^l\|E_{\mu}^{\al}\|_{L^2(\Si_{\t'}^u)}\d\t'+\de^{(1-\ve0)2p+\f12}+\de^{\f32}\\
		&\quad+\de^{(1-\ve0)(p-1)}\mu_{\min}^{-b_{|\al|+2}}(\t)\left(\sqrt{\Et^b_{1,\leq|\al|+2}(\t,u)}+\sqrt{\Et^b_{2,\leq|\al|+2}(\t,u)}\right).
	\end{split}
\end{equation*}

\vskip 0.2 true cm

\textbf{(2-b) Estimate of $e_{\mu}^{\al}$}

\vskip 0.1 true cm

In view of \eqref{higher order e mu}, we deal with $2(\mu^{-1}L\mu-\rho^{-1})\Zu^{\al}E_{\mu}$, $\Zu^{\al}e_{\mu}$ and the other
left terms one by one.

For $2(\mu^{-1}L\mu-\rho^{-1})\Zu^{\al}E_{\mu}$, one has from Lemma \ref{Lemma 3 in higher order L^2 estimates} and Proposition \ref{Proposition non-top order L^2 estimates} that
\begin{equation*}
	\begin{split}
		&\quad\int_1^\t\|\mu^{-1}L\mu-\rho^{-1}\|_{L^\infty(\Si_{\t'}^u)}\cdot\de^l\|\Zu^{\al}E_{\mu}\|_{L^2(\Si_{\t'}^u)}\d\t'\\
		&\les\de^{-[1-(1-\ve0)p]}\int_1^\t\mu_{\min}^{-1}(\t')\left[\de^{(1-\ve0)(p-1)-1}\left(\sqrt{\Et_{1,\leq|\al|+2}(\t',u)}+\sqrt{\Et_{2,\leq|\al|+2}(\t',u)}\right)\right.\\
		&\left.\quad+\de^{(1-\ve0)p-1}\|\Lies_{\Zu}^{\leq\al-1}{}^{(R_j)}\pis_T\|_{L^2(\Si_{\t'}^u)}\right]\d\t'\\
		&\les\de^{(1-\ve0)2p-\f12}+\de^{(1-\ve0)3p-\f32}\int_{\tu}^{\t}\mu_{\min}^{-1}(\t')\d\t'\\
		&\quad+\de^{(1-\ve0)(p-1)-1}\mu_{\min}^{-b_{|\al|+2}}(\t)\left(\sqrt{\Et^b_{1,\leq|\al|+2}(\t,u)}+\sqrt{\Et^b_{2,\leq|\al|+2}(\t,u)}\right).
	\end{split}
\end{equation*}

For $\Zu^{\al}e_{\mu}$, substituting \eqref{K} and \eqref{J} into \eqref{E mu and e mu} yields
\begin{equation*}
	e_{\mu}=-L(\f12pc^4\vp_0^{p-1})T\Lb\vp_0+cT(\f12pc^3\vp_0^{p-1}T\vp_0)\trgs\chi+cT\mu\Des c-\f12pc^4\vp_0^{p-1}{}^{(T)}\pis_L^A\ds_A\Lb\vp_0+\text{better}\ \text{terms}.
\end{equation*}
Here and below ``$\text{better}\ \text{terms}$'' stands for the terms with either lower order derivatives or higher smallness orders of $\de$,
which can be neglected in the related estimates. Then
\begin{equation*}
	\begin{split}
		&\quad\int_1^\t\de^l\|\Zu^{\al}e_{\mu}\|_{L^2(\Si_{\t'}^u)}\d\t'\\
		&\les\de^{(1-\ve0)2p-\f12}+\de^{(1-\ve0)(p-1)-1}\mu_{\min}^{-b_{|\al|+2}}(\t)\left(\sqrt{\Et^b_{1,\leq|\al|+2}(\t,u)}+\sqrt{\Et^b_{2,\leq|\al|+2}(\t,u)}\right).
	\end{split}
\end{equation*}

For the other left terms in \eqref{higher order e mu}, with the help of \eqref{transport equation of mu} and \eqref{transport equation of Des mu}, one has
\begin{equation}\label{YHC-7}
	\begin{split}
		&\quad\int_1^\t\de^l\|\text{the}\ \text{other left}\ \text{terms}\|_{L^2(\Si_{\t'}^u)}\d\t'\\
		&\les\int_1^\t\de^{(1-\ve0)p}\cdot\de^l\|E_{\mu}^{\al}\|_{L^2(\Si_{\t'}^u)}\d\t'+\text{better}\ \text{terms},
	\end{split}
\end{equation}
where the first term on the right hand side of \eqref{YHC-7}
comes from the contribution of $\dse\sum_{\be_1+\be_2=\al-1}\Zu^{\be_1}[L,\Zu]E_{\mu}^{\be_2}$
in \eqref{higher order e mu}.

Therefore,
\begin{equation*}
	\begin{split}
		&\quad\int_1^\t\de^l\|e_{\mu}^{\al}\|_{L^2(\Si_{\t'}^u)}\d\t'\\
		&\les\int_1^\t\de^{(1-\ve0)p}\cdot\de^l\|E_{\mu}^{\al}\|_{L^2(\Si_{\t'}^u)}\d\t'\\
		&\quad+\de^{(1-\ve0)2p-\f12}+\de^{(1-\ve0)3p-\f32}\int_{\tu}^{\t}\mu_{\min}^{-1}(\t')\d\t'\\
		&\quad+\de^{(1-\ve0)(p-1)-1}\mu_{\min}^{-b_{|\al|+2}}(\t)\left(\sqrt{\Et^b_{1,\leq|\al|+2}(\t,u)}+\sqrt{\Et^b_{2,\leq|\al|+2}(\t,u)}\right).
	\end{split}
\end{equation*}

\vskip 0.2 true cm

\textbf{(2-c) Estimate of $\mu\ds\Zu^{\al}\trgs\chic$}

\vskip 0.1 true cm

By \eqref{top order L^2 estimate of chi}, one has
\begin{equation*}
	\begin{split}
		&\quad\int_1^\t\de^l\|\mu\ds\Zu^{\al}\trgs\chic\|_{L^2(\Si_{\t'}^u)}\d\t'\\
		&\les\de^{1-(1-\ve0)p}\left[\de^{(1-\ve0)p-\f12}+\de^{(1-\ve0)3p-\f12}\int_{\tu}^{\t}\mu_{\min}^{-1}(\t')\d\t'\right.\\
		&\quad+\de^{(1-\ve0)(\f{p}{2}-1)-\f12}\mu_{\min}^{-b_{|\al|+2}}(\t)\sqrt{\int_0^u\Ft^b_{1,\leq|\al|+2}(\t,u')\d u'}\\
		&\left.\quad+\de^{(1-\ve0)(p-1)}\mu_{\min}^{-b_{|\al|+2}}(\t)\left(\sqrt{\Et^b_{1,\leq|\al|+2}(\t,u)}+\sqrt{\Et^b_{2,\leq|\al|+2}(\t,u)}\right)\right].
	\end{split}
\end{equation*}

Finally, substituting the estimates in \textbf{(2-a)}, \textbf{(2-b)} and \textbf{(2-c)} into \eqref{preliminary L2 estimate of E mu}, we obtain from Gronwall inequality and \eqref{higher order E mu} that
\begin{equation*}
	\begin{split}
		&\quad\de^l\|\mu\Zu^{\al}\Des\mu\|_{L^2(\Si_\t^u)}+\de^l\|\mu\Lies_{\Zu}^{\al}\nas^2\mu\|_{L^2(\Si_\t^u)}\\
		&\les\de^{(1-\ve0)p-\f32}+\de^{(1-\ve0)3p-\f32}\int_{\tu}^{\t}\mu_{\min}^{-1}(\t')\d\t'\\
		&\quad+\de^{(1-\ve0)(-\f{p}{2}-1)+\f12}\mu_{\min}^{-b_{|\al|+2}}(\t)\sqrt{\int_0^u\Ft^b_{1,\leq|\al|+2}(\t,u')\d u'}\\
		&\quad+\de^{(1-\ve0)(p-1)-1}\mu_{\min}^{-b_{|\al|+2}}(\t)\left(\sqrt{\Et^b_{1,\leq|\al|+2}(\t,u)}+\sqrt{\Et^b_{2,\leq|\al|+2}(\t,u)}\right).
	\end{split}
\end{equation*}
Moreover, if there is $\rho L$ in $Z^{\al}$, by utilizing Lemma \ref{Lemma structure equations} and \eqref{[X,Y]} as in Proposition \ref{Proposition non-top order L^2 estimates}, we arrive at
\begin{equation}\label{top order L^2 estimate of mu}
	\begin{split}
		&\quad\de^{l+s[1-(1-\ve0)p]}\|\mu Z^{\al}\Des\mu\|_{L^2(\Si_\t^u)}+\de^{l+s[1-(1-\ve0)p]}\|\mu\Lies_Z^{\al}\nas^2\mu\|_{L^2(\Si_\t^u)}\\
		&\les\de^{(1-\ve0)p-\f32}+\de^{(1-\ve0)3p-\f32}\int_{\tu}^{\t}\mu_{\min}^{-1}(\t')\d\t'\\
		&\quad+\de^{(1-\ve0)(-\f{p}{2}-1)+\f12}\mu_{\min}^{-b_{|\al|+2}}(\t)\sqrt{\int_0^u\Ft^b_{1,\leq|\al|+2}(\t,u')\d u'}\\
		&\quad+\de^{(1-\ve0)(p-1)-1}\mu_{\min}^{-b_{|\al|+2}}(\t)\left(\sqrt{\Et^b_{1,\leq|\al|+2}(\t,u)}+\sqrt{\Et^b_{2,\leq|\al|+2}(\t,u)}\right).
	\end{split}
\end{equation}

\section{Error estimates}\label{Section 7}

In this section, based on the higher order derivative $L^2$ estimates for related quantities in Section \ref{Section 6}, we are ready to deal with the last two error terms of \eqref{energy inequality}, and hence the energy estimates on \eqref{covariant wave equations} can be completed.

For convenience, we set
\begin{equation}\label{M}
	\begin{split}	
		\M_k^{\al}&=\sum_{j=0}^{|\al|-1}\big(Z_{|\al|+1}+{}^{(Z_{|\al|+1})}\la\big)\cdots\big(Z_{j+2}+{}^{(Z_{j+2})}\la\big){}^{(Z_{j+1})}\N_k^{j}+{}^{(Z_{|\al|+1})}\N_k^{|\al|},\ k=1,2,3,\\
		\M_0^{\al}&=\big(Z_{|\al|+1}+{}^{(Z_{|\al|+1})}\la\big)\cdots\big(Z_1+{}^{(Z_1)}\la\big)\Phi_{\ga}^0
	\end{split}
\end{equation}
and use $\Mc_1^{\al}$ to stand for the summation in $\M_1^{\al}$ excluding the top order derivative terms.

\subsection{Error estimates on non-top order derivative terms in \eqref{higher order commuted covariant wave equation}}\label{Section 7.1}

\vskip 0.2 true cm

\textbf{$\bullet$ Estimate of $\Mc_1^{\al}$}

\vskip 0.1 true cm

Utilizing \eqref{N1 pi Psi rho L}, \eqref{N1 pi Psi T} and \eqref{N1 pi Psi Ri} with Propositions \ref{Proposition higher order L^infty estimates}, \ref{Proposition non-top order L^2 estimates}, one has
\begin{equation*}
	\begin{split}
		&\quad\de^{l+s[1-(1-\ve0)p]}\|\Mc_1^{\al}\|_{L^2(\Si_\t^u)}\\
		&\les\de^{\f32-\ve0-[1-(1-\ve0)p]}\big(1+\de^{(1-\ve0)p-[1-(1-\ve0)p]}\big)\\
		&\quad+\big(1+\de^{(1-\ve0)p-[1-(1-\ve0)p]}\big)\mu_{\min}^{-b_{|\al|+2}}(\t)\sqrt{\Et^b_{1,\leq|\al|+2}(\t,u)}\\
&\quad +\de^{(1-\ve0)p-[1-(1-\ve0)p]}\mu_{\min}^{-b_{|\al|+2}}(\t)\sqrt{\Et^b_{2,\leq|\al|+2}(\t,u)}
	\end{split}
\end{equation*}
and hence,
\begin{equation}\label{error estimate of Mc_1}
	\begin{split}
		&\quad\de^{2l+2s[1-(1-\ve0)p]}\int_{D^{\t,u}}\Mc_1^{\al}\big(\de^{1-(1-\ve0)p}L\Psi_{\ga}^{|\al|+1}+\de\Lb\Psi_{\ga}^{|\al|+1}\big)\\
		&\les\de^{2-2\ve0}\big(\de^2+\de^{(1-\ve0)4p}\big)+\mu_{\min}^{-b_{|\al|+2}}(\t)\de^{-1}\int_0^u\de^{1-(1-\ve0)p}\Ft^b_{1,\leq|\al|+2}(\t,u')\d u'\\
		&\quad+\mu_{\min}^{-2b_{|\al|+2}}(\t)\f{\de^{2-(1-\ve0)p}+\de^{(1-\ve0)3p}}{2b_{|\al|+2}-1}\de^{1-(1-\ve0)p}\Et^b_{1,\leq|\al|+2}(\t,u)\\
		&\quad+\mu_{\min}^{-2b_{|\al|+2}}(\t)\big(\de^{2-(1-\ve0)p}+\de^{(1-\ve0)3p}\big)\de^{-[1-(1-\ve0)p]}\int_1^\t\de^{1-(1-\ve0)p}\Et^b_{1,\leq|\al|+2}(\t',u)\d\t'\\
		&\quad+\mu_{\min}^{-2b_{|\al|+2}}(\t)\f{1}{2b_{|\al|+2}-1}\de\Et^b_{2,\leq|\al|+2}(\t,u)+\mu_{\min}^{-2b_{|\al|+2}}(\t)\de^{-[1-(1-\ve0)p]}\int_1^\t\de\Et^b_{2,\leq|\al|+2}(\t',u)\d\t'.
	\end{split}
\end{equation}

\vskip 0.2 true cm

\textbf{$\bullet$ Estimate of $\M_2^{\al}$}

\vskip 0.1 true cm

It follows from \eqref{N2 pi Psi rho L}, \eqref{N2 pi Psi T} and \eqref{N2 pi Psi Ri},
Proposition \ref{Proposition higher order L^infty estimates} and \ref{Proposition non-top order L^2 estimates} that
\begin{equation*}
	\begin{split}
		&\quad\de^{l+s[1-(1-\ve0)p]}\|\M_2^{\al}\|_{L^2(\Si_\t^u)}\\
		&\les\de^{\f32-\ve0-2[1-(1-\ve0)p]}+\de^{l+s[1-(1-\ve0)p]}\|\ds\Psi_{\ga}^{|\al|+1}\|_{L^2(\Si_\t^u)}
+\de^{-[1-(1-\ve0)p]}\de^{l+s[1-(1-\ve0)p]}\|L\Psi_{\ga}^{|\al|+1}\|_{L^2(\Si_\t^u)}\\
		&\quad+\de^{(1-\ve0)p-[1-(1-\ve0)p]}\mu_{\min}^{-b_{|\al|+2}}(\t)\left(\sqrt{\Et^b_{1,\leq|\al|+2}(\t,u)}
+\sqrt{\Et^b_{2,\leq|\al|+2}(\t,u)}\right)
	\end{split}
\end{equation*}
and hence,
\begin{equation}\label{error estimate of M_2}
	\begin{split}
		&\quad\de^{2l+2s[1-(1-\ve0)p]}\int_{D^{\t,u}}\M_2^{\al}\big(\de^{1-(1-\ve0)p}L\Psi_{\ga}^{|\al|+1}+\de\Lb\Psi_{\ga}^{|\al|+1}\big)\\
		&\les\de^{2-2\ve0+(1-\ve0)2p}+\mu_{\min}^{-2b_{|\al|+2}}(\t)\de^{1+[1-(1-\ve0)p]}\Kt^b_{\leq|\al|+2}(\t,u)+\mu_{\min}^{-2b_{|\al|+2}}(\t)\de^{-1}\int_0^u\de^{1-(1-\ve0)p}\Ft^b_{1,\leq|\al|+2}(\t,u')\d u'\\
		&\quad+\mu_{\min}^{-2b_{|\al|+2}}(\t)\f{\de^{(1-\ve0)3p}}{2b_{|\al|+2}-1}\de^{1-(1-\ve0)p}\Et^b_{1,\leq|\al|+2}(\t,u)+\de^{(1-\ve0)3p-[1-(1-\ve0)p]}\int_1^\t\de^{1-(1-\ve0)p}\Et^b_{1,\leq|\al|+2}(\t',u)\d\t'\\
		&\quad+\mu_{\min}^{-2b_{|\al|+2}}(\t)\f{1}{2b_{|\al|+2}-1}\de\Et^b_{2,\leq|\al|+2}(\t,u)+\mu_{\min}^{-2b_{|\al|+2}}(\t)\de^{-[1-(1-\ve0)p]}\int_1^\t\de\Et^b_{2,\leq|\al|+2}(\t',u)\d\t'.
	\end{split}
\end{equation}

\vskip 0.2 true cm

\textbf{$\bullet$ Estimate of $\M_3^{\al}$}

\vskip 0.1 true cm

By \eqref{N3 pi Psi rho L}, \eqref{N3 pi Psi T} and \eqref{N3 pi Psi Ri},
Proposition \ref{Proposition higher order L^infty estimates} and \ref{Proposition non-top order L^2 estimates}, we have
\begin{equation*}
	\begin{split}
		&\quad\de^{l+s[1-(1-\ve0)p]}\|\M_3^{\al}\|_{L^2(\Si_\t^u)}\\
		&\les\de^{\f32-\ve0-[1-(1-\ve0)p]}+\de^{(1-\ve0)p-[1-(1-\ve0)p]}\mu_{\min}^{-b_{|\al|+2}}(\t)\left(\sqrt{\Et^b_{1,\leq|\al|+2}(\t,u)}+\sqrt{\Et^b_{2,\leq|\al|+2}(\t,u)}\right)
	\end{split}
\end{equation*}
and hence,
\begin{equation}\label{error estimate of M_3}
	\begin{split}
		&\quad\de^{2l+2s[1-(1-\ve0)p]}\int_{D^{\t,u}}\M_3^{\al}\big(\de^{1-(1-\ve0)p}L\Psi_{\ga}^{|\al|+1}+\de\Lb\Psi_{\ga}^{|\al|+1}\big)\\
		&\les\de^{4-2\ve0}+\mu_{\min}^{-2b_{|\al|+2}}(\t)\de^{-1}\int_0^u\de^{1-(1-\ve0)p}\Ft^b_{1,\leq|\al|+2}(\t,u')\d u'\\
		&\quad+\mu_{\min}^{-2b_{|\al|+2}}(\t)\f{\de^{(1-\ve0)3p}}{2b_{|\al|+2}-1}\de^{1-(1-\ve0)p}\Et^b_{1,\leq|\al|+2}(\t,u)+\de^{(1-\ve0)3p-[1-(1-\ve0)p]}\int_1^\t\de^{1-(1-\ve0)p}\Et^b_{1,\leq|\al|+2}(\t',u)\d\t'\\
		&\quad+\mu_{\min}^{-2b_{|\al|+2}}(\t)\f{1}{2b_{|\al|+2}-1}\de\Et^b_{2,\leq|\al|+2}(\t,u)+\mu_{\min}^{-2b_{|\al|+2}}(\t)\de^{-[1-(1-\ve0)p]}\int_1^\t\de\Et^b_{2,\leq|\al|+2}(\t',u)\d\t'.
	\end{split}
\end{equation}

\vskip 0.2 true cm

\textbf{$\bullet$ Estimate of $\M_0^{\al}$}

\vskip 0.1 true cm

Applying \eqref{covariant wave equations}, \eqref{F_ga}, Proposition \ref{Proposition higher order L^infty estimates}
and \ref{Proposition non-top order L^2 estimates}, one has
\begin{equation*}
	\begin{split}
		&\quad\de^{l+s[1-(1-\ve0)p]}\|\M_0^{\al}\|_{L^2(\Si_\t^u)}\\
		&\les\de^{\f32-\ve0+(1-\ve0)p-2[1-(1-\ve0)p]}+\de^{(1-\ve0)p-1}\de^{l+s[1-(1-\ve0)p]}\|L\Psi_{\ga}^{|\al|+1}\|_{L^2(\Si_\t^u)}\\
		&\quad+\de^{(1-\ve0)p-[1-(1-\ve0)p]}\mu_{\min}^{-b_{|\al|+2}}(\t)\left(\sqrt{\Et^b_{1,\leq|\al|+2}(\t,u)}+\de^{(1-\ve0)p}\sqrt{\Et^b_{2,\leq|\al|+2}(\t,u)}\right)
	\end{split}
\end{equation*}
and hence,
\begin{equation}\label{error estimate of M_0}
	\begin{split}
		&\quad\de^{2l+2s[1-(1-\ve0)p]}\int_{D^{\t,u}}\M_0^{\al}\big(\de^{1-(1-\ve0)p}L\Psi_{\ga}^{|\al|+1}+\de\Lb\Psi_{\ga}^{|\al|+1}\big)\\
		&\les\de^{2-2\ve0+(1-\ve0)2p}+\mu_{\min}^{-2b_{|\al|+2}}(\t)\de^{-1}\int_0^u\de^{1-(1-\ve0)p}\Ft^b_{1,\leq|\al|+2}(\t,u')\d u'\\
		&\quad+\mu_{\min}^{-2b_{|\al|+2}}(\t)\f{\de^{(1-\ve0)3p}}{2b_{|\al|+2}-1}\de^{1-(1-\ve0)p}\Et^b_{1,\leq|\al|+2}(\t,u)+\de^{(1-\ve0)3p-[1-(1-\ve0)p]}\int_1^\t\de^{1-(1-\ve0)p}\Et^b_{1,\leq|\al|+2}(\t',u)\d\t'\\
		&\quad+\mu_{\min}^{-2b_{|\al|+2}}(\t)\f{1}{2b_{|\al|+2}-1}\de\Et^b_{2,\leq|\al|+2}(\t,u)+\mu_{\min}^{-2b_{|\al|+2}}(\t)\de^{-[1-(1-\ve0)p]}\int_1^\t\de\Et^b_{2,\leq|\al|+2}(\t',u)\d\t'.
	\end{split}
\end{equation}

\subsection{Error estimates on top order derivative terms in \eqref{higher order commuted covariant wave equation}}\label{Section 7.2}

Note that for the most difficult term ${}^{(Z)}\mathscr{N}_1^{0}$, the number of the top order derivatives is $|\al|$,
which means that there will be some terms containing the $(|\al|+1)^{\text{th}}$ order derivatives of the deformation tensors.
In this case, $\Et_{i,\leq|\al|+3}$ will appear in the right hand side of \eqref{energy inequality} if one only adopts
Proposition \ref{Proposition non-top order L^2 estimates}. This leads to that the direct estimate on ${}^{(Z)}\mathscr{N}_1^{0}$
can not be absorbed by the left hand
side of \eqref{energy inequality}. To overcome this difficulty, we will carefully examine the expression of ${}^{(Z)}\mathscr{N}_1^{j}$
and apply the estimates in Section \ref{Section 6} to deal with the top order derivatives of $\chi$ and $\mu$. In fact,
by substituting ${}^{(Z)}\mathscr{N}_1^{j}$ into \eqref{higher order commuted covariant wave equation}, the resulting terms $R_i^A(\ds_AZ^{\al}\trgs\chi)T\Psi_{\ga}^0$ from \eqref{principal term of N1 Ri} and $Z^{\al}\Des\mu T\Psi_{\ga}^0$
from \eqref{principal term of N1 T} need to be treated especially.

\vskip 0.2 true cm

\textbf{(1) Treatment on $\int_{D^{\t,u}}R_i^{\al+1}\trgs\chic T\vp_{\ga}\big(\de^{1-(1-\ve0)p}LR_i^{\al+1}\vp_{\ga}+\de\Lb R_i^{\al+1}\vp_{\ga}\big)$}

\vskip 0.1 true cm
Let
\begin{equation*}\label{YHC-3}
\begin{split}
&\int_{D^{\t,u}}R_i^{\al+1}\trgs\chic T\vp_{\ga}\big(\de^{1-(1-\ve0)p}LR_i^{\al+1}\vp_{\ga}+\de\Lb R_i^{\al+1}\vp_{\ga}\big)
=A_1+A_2.
\end{split}
\end{equation*}

At first, we treat $A_1$. Integrating by parts yields
\begin{equation*}
\begin{split}
A_1&=\de^{1-(1-\ve0)p}\int_{D^{\t,u}}R_i^{\al+1}\trgs\chic T\vp_{\ga}\cdot LR_i^{\al+1}\vp_{\ga}\\
&=\de^{1-(1-\ve0)p}\int_1^\t\int_0^u\int_{S_{\t',u'}}(L+\trgs\chi)(R_i^{\al+1}\trgs\chic T\vp_\ga R_i^{\al+1}\vp_\ga)\\
&\quad-\de^{1-(1-\ve0)p}\int_1^\t\int_0^u\int_{S_{\t',u'}}(L+\trgs\chi)(R_i^{\al+1}\trgs\chic)T\vp_\ga R_i^{\al+1}\vp_\ga\\
&\quad-\de^{1-(1-\ve0)p}\int_1^\t\int_0^u\int_{S_{\t',u'}}R_i^{\al+1}\trgs\chic LT\vp_\ga R_i^{\al+1}\vp_\ga\\
&=I_{\chi}^1+I_{\chi}^2+I_{\chi}^3.
\end{split}
\end{equation*}

\vskip 0.2 true cm

\textbf{$\bullet$ Estimate of $I_{\chi}^1$}

\vskip 0.1 true cm

It follows from the integration by parts that
\begin{equation*}
	\begin{split}
		I_{\chi}^1
		&=\de^{1-(1-\ve0)p}\int_1^\t\f{\p}{\p\t'}\int_{\Si_{\t'}^u}R_i^{\al+1}\trgs\chic T\vp_\ga R_i^{\al+1}\vp_\ga\\
		&=-\de^{1-(1-\ve0)p}\int_{\Si_\t^u}R_i^{\al}\trgs\chic T\vp_\ga R_i^{\al+2}\vp_\ga\\
		&\quad-\de^{1-(1-\ve0)p}\int_{\Si_\t^u}R_i^{\al}\trgs\chic\big(R_iT\vp_\ga+\f12T\vp_\ga\trgs{}^{(R_i)}\pis\big)R_i^{\al+1}\vp_\ga\\
		&\quad-\de^{1-(1-\ve0)p}\int_{\Si_1^u}R_i^{\al+1}\trgs\chic T\vp_\ga R_i^{\al+1}\vp_\ga\\
		&=I_{\chi}^{11}+I_{\chi}^{12}+I_{\chi}^{13}.
	\end{split}
\end{equation*}

For $I_{\chi}^{11}$, by Proposition \ref{Proposition non-top order L^2 estimates} and Lemma \ref{Lemma 3 in higher order L^2 estimates},
we arrive at
\begin{equation}\label{error estimate of I_chi^11}
	\begin{split}
		|I_{\chi}^{11}|
		&\les\mu_{\min}^{-2b_{|\al|+2}}(\t)\ep\de^{1-(1-\ve0)p}\Et^b_{1,\leq|\al|+2}(\t,u)+\mu_{\min}^{-1}(\t)\f{\de^{(1-\ve0)p}}{\ep}\de^{2-2\ve0}\\
		&\quad+\mu_{\min}^{-2b_{|\al|+2}}(\t)\f{1}{b_{|\al|+2}-\f12}\de^{1-(1-\ve0)p}\Et^b_{1,\leq|\al|+2}(\t,u)\\
		&\quad+\mu_{\min}^{-2b_{|\al|+2}}(\t)\ep\de^{1-(1-\ve0)p}\Et^b_{1,\leq|\al|+2}(\t,u)+\mu_{\min}^{-2b_{|\al|+2}}(\t)\f{1}{\ep}\de^{-[1-(1-\ve0)p]}\int_1^\t\de^{1-(1-\ve0)p}\Et^b_{1,\leq|\al|+2}(\t',u)\d\t'\\
		&\quad+\mu_{\min}^{-2b_{|\al|+2}}(\t)\f{1}{b_{|\al|+2}-1}\de^{1-(1-\ve0)p}\Et^b_{1,\leq|\al|+2}(\t,u)+\mu_{\min}^{-2b_{|\al|+2}}(\t)\f{\de^{(1-\ve0)p}}{b_{|\al|+2}-1}\de\Et^b_{2,\leq|\al|+2}(\t,u)\\
		&\quad+\mu_{\min}^{-2b_{|\al|+2}}(\t)\ep\de^{1-(1-\ve0)p}\Et^b_{1,\leq|\al|+2}(\t,u)+\mu_{\min}^{-2b_{|\al|+2}}(\t)\f{\de^{(1-\ve0)p}}{\ep}\de^{-[1-(1-\ve0)p]}\int_1^\t\de\Et^b_{2,\leq|\al|+2}(\t',u)\d\t',
	\end{split}
\end{equation}
where $\ep>0$ is a small constant arising from the inequality $ab\les\ep a^2+\f{1}{\ep}b^2$.

For $I_{\chi}^{12}$, it is a lower order derivative term compared with $I_{\chi}^{11}$, which can be estimated similarly.

For $I_{\chi}^{13}$, due to $\t=1$, one has
\begin{equation}\label{error estimate of I_chi^13}
	\begin{split}
		|I_{\chi}^{13}|
		&\les\de^{1-(1-\ve0)p}\de^{-\ve0}\|R_i^{\al+1}\trgs\chic\|_{L^2(\Si_1^u)}\|R_i^{\al+1}\vp_\ga\|_{L^2(\Si_1^u)}\\
		&\les\de^{1-(1-\ve0)p}\de^{-\ve0}\cdot\de^{(1-\ve0)p}\de^{\f12}\cdot\de^{1-\ve0}\de^{\f12}\\
		&=\de^{3-2\ve0}.
	\end{split}
\end{equation}

\vskip 0.2 true cm

\textbf{$\bullet$ Estimate of $I_{\chi}^2$}

\vskip 0.1 true cm

By integration by parts, we have that
\begin{equation*}
	\begin{split}
		I_{\chi}^2
		&=\de^{1-(1-\ve0)p}\int_1^\t\int_0^u\int_{S_{\t',u'}}(L+\trgs\chi)(R_i^{\al}\trgs\chic)T\vp_{\ga}R_i^{\al+2}\vp_{\ga}\\
		&\quad+\de^{1-(1-\ve0)p}\int_1^\t\int_0^u\int_{S_{\t',u'}}(L+\trgs\chi)(R_i^{\al}\trgs\chic)\big(R_iT\vp_{\ga}+\f12T\vp_{\ga}\trgs{}^{(R_i)}\pis\big)R_i^{\al+1}\vp_{\ga}\\
		&\quad-\de^{1-(1-\ve0)p}\int_1^\t\int_0^u\int_{S_{\t',u'}}{}^{(R_i)}\pis_L^A\ds_AR_i^{\al}\trgs\chic T\vp_{\ga}R_i^{\al+1}\vp_{\ga}\\
		&\quad+\de^{1-(1-\ve0)p}\int_1^\t\int_0^u\int_{S_{\t',u'}}R_i\trgs\chi R_i^{\al}\trgs\chic T\vp_{\ga}R_i^{\al+1}\vp_{\ga}\\
		&=I_{\chi}^{21}+I_{\chi}^{22}+I_{\chi}^{23}+I_{\chi}^{24}.
	\end{split}
\end{equation*}

For $I_{\chi}^{21}$, by Proposition \ref{Proposition non-top order L^2 estimates}, Lemma \ref{Lemma 3 in higher order L^2 estimates}, \eqref{[X,Y]} and \eqref{transport equation of trgs chi}, one has
\begin{equation}\label{error estimate of I_chi^21}
	\begin{split}
		|I_{\chi}^{21}|
		&\les\mu_{\min}^{-2b_{|\al|+2}}(\t)\f{1}{b_{|\al|+2}-\f12}\de^{1-(1-\ve0)p}\Et^b_{1,\leq|\al|+2}(\t,u)+\f{\de^{1+[1-(1-\ve0)p]}}{b_{|\al|+2}-\f12}\de^{2-2\ve0}\\
		&\quad+\mu_{\min}^{-2b_{|\al|+2}}(\t)\f{1}{b_{|\al|+2}-\f12}\de^{1-(1-\ve0)p}\Et^b_{1,\leq|\al|+2}(\t,u)+\f{\de^{(1-\ve0)3p}}{b_{|\al|+2}-\f12}\de^{2-2\ve0}\\
		&\quad+\de^{2-2\ve0}+\mu_{\min}^{-2b_{|\al|+2}}(\t)\de\int_1^\t\de^{1-(1-\ve0)p}\Et^b_{1,\leq|\al|+2}(\t',u)\d\t'\\
		&\quad+\de^{2-2\ve0}+\mu_{\min}^{-2b_{|\al|+2}}(\t)\de^{(1-\ve0)3p-[1-(1-\ve0)p]}\int_1^\t\de^{1-(1-\ve0)p}\Et^b_{1,\leq|\al|+2}(\t',u)\d\t'\\
		&\quad+\mu_{\min}^{-2b_{|\al|+2}}(\t)\f{1}{2b_{|\al|+2}}\de^{1-(1-\ve0)p}\Et^b_{1,\leq|\al|+2}(\t,u)\\
		&\quad+\mu_{\min}^{-2b_{|\al|+2}}(\t)\de^{-[1-(1-\ve0)p]}\int_1^\t\de^{1-(1-\ve0)p}\Et^b_{1,\leq|\al|+2}(\t',u)\d\t'\\
		&\quad+\mu_{\min}^{-2b_{|\al|+2}}(\t)\f{\de^{(1-\ve0)\f12p}}{2b_{|\al|+2}-\f12}\de\Et^b_{2,\leq|\al|+2}(\t,u)\\
		&\quad+\mu_{\min}^{-2b_{|\al|+2}}(\t)\de^{(1-\ve0)\f12p-[1-(1-\ve0)p]}\int_1^\t\de\Et^b_{2,\leq|\al|+2}(\t',u)\d\t'.
	\end{split}
\end{equation}
Note that  $I_{\chi}^{22}$ is a lower order derivative term compared with $I_{\chi}^{21}$,
$I_{\chi}^{23}$ is a better term with higher smallness order of $\de$ compared with $I_{\chi}^3$, and $I_{\chi}^{24}$ is a
lower order derivative term compared with $I_{\chi}^{23}$. Then these terms can be treated conveniently.

\vskip 0.2 true cm

\textbf{$\bullet$ Estimate of $I_{\chi}^3$}

\vskip 0.1 true cm

For $I_{\chi}^3$, by \eqref{top order L^2 estimate of chi} and Lemma \ref{Lemma 3 in higher order L^2 estimates}, one has
\begin{equation}\label{error estimate of I_chi^3}
    \begin{split}
|I_{\chi}^3|
&\les\mu_{\min}^{-2b_{|\al|+2}}(\t)\f{1}{b_{|\al|+2}+1}\de^{1-(1-\ve0)p}\Et^b_{1,\leq|\al|+2}(\t,u)
+\mu_{\min}^{-2}(\t)\f{\de^{(1-\ve0)p}}{b_{|\al|+2}+1}\de^{2-2\ve0}\\
&\quad+\de^{2-2\ve0}
+\mu_{\min}^{-2b_{|\al|+2}}(\t)\de^{(1-\ve0)p-[1-(1-\ve0)p]}\int_1^\t\de^{1-(1-\ve0)p}\Et^b_{1,\leq|\al|+2}(\t',u)\d\t'\\
&\quad+\mu_{\min}^{-2b_{|\al|+2}}(\t)\f{1}{b_{|\al|+2}+1}\de\Et^b_{2,\leq|\al|+2}(\t,u)+\mu_{\min}^{-2}(\t)\f{1}{b_{|\al|+2}+1}\de^{2-2\ve0}\\
&\quad+\de^{2-2\ve0}+\mu_{\min}^{-2b_{|\al|+2}}(\t)\de^{-[1-(1-\ve0)p]}\int_1^\t\de\Et^b_{2,\leq|\al|+2}(\t',u)\d\t'\\
&\quad+\mu_{\min}^{-2b_{|\al|+2}}(\t)\de^{-1}\int_0^u\de^{1-(1-\ve0)p}\Ft^b_{1,\leq|\al|+2}(\t,u')\d u'\\
&\quad+\mu_{\min}^{-2b_{|\al|+2}}(\t)\f{\de^{(1-\ve0)p}}{2b_{|\al|+2}}\de^{1-(1-\ve0)p}\Et^b_{1,\leq|\al|+2}(\t,u)\\
&\quad +\mu_{\min}^{-2b_{|\al|+2}}(\t)\de^{(1-\ve0)p-[1-(1-\ve0)p]}\int_1^\t\de^{1-(1-\ve0)p}\Et^b_{1,\leq|\al|+2}(\t',u)\d\t'\\
&\quad+\mu_{\min}^{-2b_{|\al|+2}}(\t)\f{1}{2b_{|\al|+2}}\de\Et^b_{2,\leq|\al|+2}(\t,u)
+\mu_{\min}^{-2b_{|\al|+2}}(\t)\de^{-[1-(1-\ve0)p]}\int_1^\t\de\Et^b_{2,\leq|\al|+2}(\t',u)\d\t'.
	\end{split}
\end{equation}

\vskip 0.2 true cm

Secondly, we start to treat $A_2$.
Since $\|\Lb R_i^{\al+1}\vp_{\ga}\|_{L^2(\Si_\t^u)}\les\mu_{\min}^{-b_{|\al|+2}}(\t)\sqrt{\Et^b_{2,\leq|\al|+2}(\t,u)}$ and $|T\vp_{\ga}|\les\de^{-\ve0}$, $A_2$ is a better term with higher smallness order of $\de$ compared with $I_{\chi}^3$, which can be analogously estimated.

\vskip 0.2 true cm

\textbf{(2) Treatment on $\de^{2l}\int_{D^{\t,u}}R_i^{\be}T^{l-1}\Des\mu T\vp_{\ga}\big(\de^{1-(1-\ve0)p}LR_i^{\al+1}\vp_{\ga}+\de\Lb R_i^{\al+1}\vp_{\ga}\big),\ |\be|+l=|\al|+1$}

\vskip 0.1 true cm
Let
\begin{equation*}\label{YHC-3}
\begin{split}
&\de^{2l}\int_{D^{\t,u}}R_i^{\be}T^{l-1}\Des\mu T\vp_{\ga}\big(\de^{1-(1-\ve0)p}LR_i^{\al+1}\vp_{\ga}+\de\Lb R_i^{\al+1}\vp_{\ga}\big)=B_1+B_2.
\end{split}
\end{equation*}

At first, we treat the troublesome term $B_1$. By integration by parts, one has
\begin{equation*}
	\begin{split}
		B_1&=\de^{2l+1-(1-\ve0)p}\int_{D^{\t,u}}R_i^{\be}T^{l-1}\Des\mu T\vp_{\ga}\cdot LR_i^{\be}T^l\vp_{\ga}\\
		&=\de^{2l+1-(1-\ve0)p}\int_1^\t\int_0^u\int_{S_{\t',u}}(L+\trgs\chi)(R_i^{\be}T^{l-1}\Des\mu T\vp_{\ga}R_i^{\be}T^l\vp_{\ga})\\
		&\quad-\de^{2l+1-(1-\ve0)p}\int_1^\t\int_0^u\int_{S_{\t',u}}(L+\trgs\chi)(R_i^{\be}T^{l-1}\Des\mu)T\vp_{\ga}R_i^{\be}T^l\vp_{\ga}\\
		&\quad-\de^{2l+1-(1-\ve0)p}\int_1^\t\int_0^u\int_{S_{\t',u}}R_i^{\be}T^{l-1}\Des\mu LT\vp_{\ga}R_i^{\be}T^l\vp_{\ga}\\
		&=I_{\mu}^1+I_{\mu}^2+I_{\mu}^3.
	\end{split}
\end{equation*}

\vskip 0.1 true cm

\textbf{$\bullet$ Estimate of $I_{\mu}^1$}

\vskip 0.1 true cm
It follows from the integration by parts that
\begin{equation*}
	\begin{split}
		I_{\mu}^1
		&=\de^{2l+1-(1-\ve0)p}\int_1^\t\f{\p}{\p\t'}\int_{\Si_{\t'}^u}R_i^{\be}T^{l-1}\Des\mu T\vp_{\ga}R_i^{\be}T^l\vp_{\ga}\\
		&=-\de^{2l+1-(1-\ve0)p}\int_{\Si_\t^u}R_i^{\be-1}T^{l-1}\Des\mu T\vp_{\ga}R_i^{\be+1}T^l\vp_{\ga}\\
		&\quad-\de^{2l+1-(1-\ve0)p}\int_{\Si_\t^u}R_i^{\be-1}T^{l-1}\Des\mu\big(R_iT\vp_{\ga}+\f12T\vp_{\ga}\trgs{}^{(R_i)}\pis\big)R_i^{\be}T^l\vp_{\ga}\\
		&\quad-\de^{2l+1-(1-\ve0)p}\int_{\Si_1^u}R_i^{\be}T^{l-1}\Des\mu T\vp_{\ga}R_i^{\be}T^l\vp_{\ga}\\
		&=I_{\mu}^{11}+I_{\mu}^{12}+I_{\mu}^{13}.
	\end{split}
\end{equation*}

For $I_{\mu}^{11}$, by \eqref{top order L^2 estimate of chi} and Lemma \ref{Lemma 3 in higher order L^2 estimates}, we have
\begin{equation}\label{error estimate of I_mu^11}
	\begin{split}
		|I_{\mu}^{11}|
		&\les\mu_{\min}^{-1}(\t)\de^{2-2\ve0}+\mu_{\min}^{-2b_{|\al|+2}}(\t)\de^{1+[1-(1-\ve0)p]}\de^{1-(1-\ve0)p}\Et^b_{1,\leq|\al|+2}(\t,u)\\
		&\quad+\mu_{\min}^{-2b_{|\al|+2}}(\t)\f{\de}{b_{|\al|+2}-\f12}\de^{1-(1-\ve0)p}\Et^b_{1,\leq|\al|+2}(\t,u)\\
		&\quad+\mu_{\min}^{-2b_{|\al|+2}}(\t)\de^{1-[1-(1-\ve0)p]}\int_1^\t\de^{1-(1-\ve0)p}\Et^b_{1,\leq|\al|+2}(\t',u)\d\t'
+\mu_{\min}^{-2b_{|\al|+2}}(\t)\de\de^{1-(1-\ve0)p}\Et^b_{1,\leq|\al|+2}(\t,u)\\
		&\quad+\mu_{\min}^{-2b_{|\al|+2}}(\t)\f{\de^{\f12+\f12[1-(1-\ve0)p]}}{b_{|\al|+2}-1}\de\Et^b_{2,\leq|\al|+2}(\t,u)
+\mu_{\min}^{-2b_{|\al|+2}}(\t)\f{\de^{\f12+\f12[1-(1-\ve0)p]}}{b_{|\al|+2}-1}\de^{1-(1-\ve0)p}\Et^b_{1,\leq|\al|+2}(\t,u)\\
		&\quad+\mu_{\min}^{-2b_{|\al|+2}}(\t)\de^{\f12}\int_1^\t\de\Et^b_{2,\leq|\al|+2}(\t',u)\d\t'
+\mu_{\min}^{-2b_{|\al|+2}}(\t)\de^{\f12}\de^{1-(1-\ve0)p}\Et^b_{1,\leq|\al|+2}(\t,u).
	\end{split}
\end{equation}

Note that $I_{\mu}^{12}$ is a lower order derivative term compared with $I_{\mu}^{11}$, which can be treated similarly.

For $I_{\mu}^{13}$, due to $\t=1$, one has
\begin{equation}\label{error estimate of I_mu^13}
	\begin{split}
		|I_{\mu}^{13}|
		&\les\de^{1-\ve0+1-(1-\ve0)p}\|\de^{l-1}R_i^{\be}T^{l-1}\Des\mu\|_{L^2(\Si_1^u)}\|\de^lR_i^{\be}T^l\vp_\ga\|_{L^2(\Si_1^u)}\\
		&\les\de^{1-\ve0+1-(1-\ve0)p}\cdot\de^{\f12}\cdot\de^{1-\ve0}\de^{\f12}\\
		&=\de^{3-2\ve0+1-(1-\ve0)p}.
	\end{split}
\end{equation}

\vskip 0.1 true cm

\textbf{$\bullet$ Estimate of $I_{\mu}^2$}

\vskip 0.1 true cm

It follows from the integration by parts that
\begin{equation*}
	\begin{split}
		I_{\mu}^2
		&=\de^{2l+1-(1-\ve0)p}\int_1^\t\int_0^u\int_{S_{\t',u'}}(L+\trgs\chi)(R_i^{\be-1}T^{l-1}\Des\mu)T\vp_{\ga}R_i^{\be+1}T^l\vp_{\ga}\\
		&\quad+\de^{2l+1-(1-\ve0)p}\int_1^\t\int_0^u\int_{S_{\t',u'}}(L+\trgs\chi)(R_i^{\be-1}T^{l-1}\Des\mu)\big(R_iT\vp_{\ga}+\f12T\vp_{\ga}\trgs{}^{(R_i)}\pis\big)R_i^{\be}T^l\vp_{\ga}\\
		&\quad-\de^{2l+1-(1-\ve0)p}\int_1^\t\int_0^u\int_{S_{\t',u'}}{}^{(R_i)}\pis_L^A\ds_AR_i^{\be-1}T^{l-1}\Des\mu T\vp_{\ga}R_i^{\be}T^l\vp_{\ga}\\
		&\quad+\de^{2l+1-(1-\ve0)p}\int_1^\t\int_0^u\int_{S_{\t',u'}}R_i\trgs\chi R_i^{\be-1}T^{l-1}\Des\mu T\vp_{\ga}R_i^{\be}T^l\vp_{\ga}\\
		&=I_{\mu}^{21}+I_{\mu}^{22}+I_{\mu}^{23}+I_{\mu}^{24}.
	\end{split}
\end{equation*}

For $I_{\mu}^{21}$, by Proposition \ref{Proposition non-top order L^2 estimates}, Lemma \ref{Lemma 3 in higher order L^2 estimates}, \eqref{[X,Y]} and \eqref{transport equation of Des mu}, the resulting estimate is the same as $I_{\chi}^{21}$.

In addition, note that $I_{\mu}^{22}$ is a lower order derivative term compared with $I_{\mu}^{21}$,
$I_{\mu}^{23}$ is a better term with higher smallness order of $\de$ compared with $I_{\mu}^3$, and $I_{\mu}^{24}$ is a lower order derivative term compared with $I_{\mu}^{23}$.
Then these terms can be estimated easily.

\vskip 0.2 true cm

\textbf{$\bullet$ Estimate of $I_{\mu}^3$}

\vskip 0.1 true cm

For $I_{\mu}^3$, by \eqref{top order L^2 estimate of mu} and Lemma \ref{Lemma 3 in higher order L^2 estimates},
the resulting estimate is the same as $I_{\chi}^3$.

\vskip 0.2 true cm

Secondly, we treat $B_2$.
Since $\|\Lb R_i^{\al+1}\vp_{\ga}\|_{L^2(\Si_\t^u)}\les\mu_{\min}^{-b_{|\al|+2}}(\t)\sqrt{\Et^b_{2,\leq|\al|+2}(\t,u)}$ and $|T\vp_{\ga}|\les\de^{-\ve0}$, $B_2$ is a better term with higher smallness order of $\de$ compared with $I_{\mu}^3$, which can be estimated easily.

All in all, combining \eqref{error estimate of Mc_1}-\eqref{error estimate of M_0} and \eqref{error estimate of I_chi^11}-\eqref{error estimate of I_mu^13}, together with Gronwall inequality (choosing $b_{k}$ sufficiently large), we finally arrive at
\begin{equation}\label{top order energy inequality}
	\begin{split}
	    &\quad\de^{1-(1-\ve0)p}\Et^b_{1,\leq|\al|+2}(\t,u)+\de^{1-(1-\ve0)p}\Ft^b_{1,\leq|\al|+2}(\t,u)\\
	    &\quad+\de\Et^b_{2,\leq|\al|+2}(\t,u)+\de\Ft^b_{2,\leq|\al|+2}(\t,u)+\Kt^b_{\leq|\al|+2}(\t,u)\\
	    &\les\de^{2-2\ve0}.
	\end{split}	
\end{equation}

\section{Shock formation}\label{Section 8}

\subsection{Descent scheme}\label{Section 8.1}

Note that the modified energies and fluxes in \eqref{higher order energy and flux} go to zero
when $\mu_{\min}\rightarrow 0$, which could not be utilized directly to close the bootstrap assumptions \eqref{bootstrap assumptions}
by Sobolev embedding theorem. As in \cite{Ch2} and \cite{M-Y}-\cite{Sp}, by the descent scheme, when the orders of derivatives decrease,
the powers of $\mu_{\min}$ needed in \eqref{higher order energy and flux}  also decrease.
Therefore, through some finite steps, the related power of $\mu_{\min}$ could
become zero. This implies that the standard energy inequality without any $\mu_{\min}$-weights can be
eventually obtained.

Indeed, by \eqref{top order L^2 estimate of chi}, \eqref{top order L^2 estimate of mu} and
Lemma \ref{Lemma 3 in higher order L^2 estimates}, we have
\begin{equation}\label{next-to-top order error estimate 1}
	\begin{split}
		&\quad\de^{2l+2s[1-(1-\ve0)p]}\int_{D^{\t,u}}T\vp_{\ga}Z^{\al}\trgs\chic\big(\de^{1-(1-\ve0)p}LZ^{\al}\vp_{\ga}+\de\Lb Z^{\al}\vp_{\ga}\big)\\
		&\les\de^{4-2\ve0}+\mu_{\min}^{-2b_{|\al|+1}}(\t)\de^{2-2\ve0}+\mu_{\min}^{-2b_{|\al|+1}}(\t)\de^{-1}\int_0^u\de^{1-(1-\ve0)p}\Ft^b_{1,\leq|\al|+1}(\t,u')\d u'\\
		&\quad+\mu_{\min}^{-2b_{|\al|+1}}(\t)\f{1}{2b_{|\al|+1}-1}\de\Et^b_{2,\leq|\al|+1}(\t,u)+\mu_{\min}^{-2b_{|\al|+1}}(\t)\de^{-[1-(1-\ve0)p]}\int_1^\t\de\Et^b_{2,\leq|\al|+1}(\t',u)\d\t'
	\end{split}
\end{equation}
and
\begin{equation}\label{next-to-top order error estimate 2}
	\begin{split}
		&\quad\de^{2l+2s[1-(1-\ve0)p]}\int_{D^{\t,u}}T\vp_{\ga}Z^{\al-1}\Des\mu\big(\de^{1-(1-\ve0)p}LZ^{\al}\vp_{\ga}+\de\Lb Z^{\al}\vp_{\ga}\big)\\
		&\les\de^{2-2\ve0+2[1-(1-\ve0)p]}+\mu_{\min}^{-2b_{|\al|+1}}(\t)\de^{2-2\ve0}+\mu_{\min}^{-2b_{|\al|+1}}(\t)\de^{-1}\int_0^u\de^{1-(1-\ve0)p}\Ft^b_{1,\leq|\al|+1}(\t,u')\d u'\\
		&\quad+\mu_{\min}^{-2b_{|\al|+1}}(\t)\f{1}{2b_{|\al|+1}-1}\de\Et^b_{2,\leq|\al|+1}(\t,u)+\mu_{\min}^{-2b_{|\al|+1}}(\t)\de^{-[1-(1-\ve0)p]}\int_1^\t\de\Et^b_{2,\leq|\al|+1}(\t',u)\d\t',
	\end{split}
\end{equation}
where the key point is to choose $b_{|\al|+1}=b_{|\al|+2}-1$ in \eqref{next-to-top order error estimate 1} and
\eqref{next-to-top order error estimate 2}.

Collecting \eqref{next-to-top order error estimate 1} and \eqref{next-to-top order error estimate 2}, together with Gronwall inequality, the next-to-top order energy inequality can be obtained as follows
\begin{equation}\label{next-to-top order energy inequality}
	\begin{split}
		&\quad\de^{1-(1-\ve0)p}\Et^b_{1,\leq|\al|+1}(\t,u)+\de^{1-(1-\ve0)p}\Ft^b_{1,\leq|\al|+1}(\t,u)\\
		&\quad+\de\Et^b_{2,\leq|\al|+1}(\t,u)+\de\Ft^b_{2,\leq|\al|+1}(\t,u)+\Kt^b_{\leq|\al|+1}(\t,u)\\
		&\les\de^{2-2\ve0}.
	\end{split}	
\end{equation}

Due to $b_{|\al|+2-n}=b_{|\al|+2}-n$, then there exists an $n_0\in\mathbb{N}$ such that
\begin{equation*}
	\Et^b_{i,\leq|\al|+2-n_0}(\t,u)=\sup_{1\leq\t'\leq\t}\Et_{i,\leq|\al|+2-n_0}(\t',u),\ i=1,2.
\end{equation*}
In this case, we have
\begin{equation*}
	\de^{1-(1-\ve0)p}\Et^b_{1,\leq 2N-8-n_0}(\t,u)+\de\Et^b_{2,\leq 2N-8-n_0}(\t,u)\les\de^{2-2\ve0}.
\end{equation*}
Together with the following Sobolev embedding formula on $S_{\t,u}$
\begin{equation}\label{Sobolev embedding on S_tu}
	\|f\|_{L^{\infty}(S_{\t,u})}\les\frac{1}{\t}\sum_{|\be|\leq 2}\|R_i^{\be}f\|_{L^2(S_{\t,u})},
\end{equation}
one has  that for $|\al|\leq 2N-11-n_0$,
\begin{equation}\label{close bootstrap assumptions}
	\begin{split}
		\de^{l+s[1-(1-\ve0)p]}|Z^{\al}\vp_{\ga}|
		&\les\sum_{|\be|\leq 2}\de^l\|R_i^{\be}Z^{\al}\vp_{\ga}\|_{L^2(S_{\t,u})}\\
		&\les\de^{\f12}\left(\sqrt{\Et^b_{1,\leq 2N-8-n_0}}+\sqrt{\Et^b_{2,\leq 2N-8-n_0}}\right)\\
		&\les\de^{\f12}\sqrt{\de^{1-2\ve0}}\\
		&\les\de^{1-\ve0},
	\end{split}
\end{equation}
which closes the bootstrap assumptions \eqref{bootstrap assumptions} by choosing $N$ such that $2N-11-n_0\geq N$.

\subsection{Shock formation}\label{Section 8.2}

According to \eqref{transport equation of mu}, \eqref{transport inequality of rho Lb vp_ga} and Newton-Leibniz formula,
we have
\begin{equation*}
 	\begin{split}
 		&\quad L\mu(\t,u,\vt)\\
 		&=\f12pc^4\big[\f{1}{\t}\de^{1-\ve0}\phi_1+O\big(\de^{1-\ve0+(1-\ve0)p}\big)\big]^{p-1}\big[-\f{1}{\t}\de^{-\ve0}\p_s\phi_1+O\big(\de^{-\ve0+(1-\ve0)p}\big)\big]+O\big(\de^{(1-\ve0)p-[1-(1-\ve0)p]}\big)\\
 		&=-\f12p\phi_1^{p-1}\p_s\phi_1\f{1}{\t^p}\de^{-[1-(1-\ve0)p]}+O\big(\de^{(1-\ve0)p-[1-(1-\ve0)p]}\big).
 	\end{split}
\end{equation*}
Hence,
\begin{equation*}
 	\begin{split}
 		\mu(\t,u,\vt)
 		&=\mu(1,u,\vt)+\int_1^{\t}\f{\tau^pL\mu(\tau,u,\vt)}{\tau^p}\d\tau\\
 		&=1-\f{p}{2}\de^{-[1-(1-\ve0)p]}\phi_1^{p-1}\p_s\phi_1\f{1}{p-1}\left(1-\f{1}{\t^{p-1}}\right)+O\big(\de^{(1-\ve0)p}\big).
 	\end{split}
\end{equation*}
Therefore, for sufficiently small $\de>0$, $\t$ can not be greater than $t^*$, otherwise $\mu$ would
be negative in terms of \eqref{shock-formation assumption of data}. In other words, for \eqref{main equation}
with \eqref{initial data},
under assumption \eqref{shock-formation assumption of data},
the shock is formed  before $t^*$.

\begin{remark}\label{second order derivatives blow up}
Near the blow-up time of \eqref{main equation} with \eqref{initial data}, i.e., $\mu\rightarrow0$, then $\mu<\f{1}{10}$ holds.
By \eqref{key estimate of L mu}, one has
\begin{equation*}
 	\begin{split}
 		-cTc&=L\mu-\mu c^{-1}Lc\\
 		    &\les-\de^{-[1-(1-\ve0)p]}+O\big(\de^{(1-\ve0)p-[1-(1-\ve0)p]}\big)\\
 		    &\les-\de^{-[1-(1-\ve0)p]}.
 	\end{split}
\end{equation*}
This derives
\begin{equation*}
 	c^{-1}\mu|\Tr\vp_0|=|T\vp_0|=\left|\f{-cTc}{\f12pc^4\vp_0^{p-1}}\right|\gtrsim\de^{-[1-(1-\ve0)p]}.
\end{equation*}	
It follows from $g(\Tr,\Tr)=1$ that
\begin{equation*}
 	\de^{-[1-(1-\ve0)p]}\mu^{-1}\les|\Tr^i\p_i\vp_0|\les\left(\sum_{i=1}^3(\Tr^i)^2\right)^{\f12}\left(\sum_{i=1}^3(\p_i\vp_0)^2\right)^{\f12}=|\na_x\vp_0|.
\end{equation*}
Therefore, for the solution $\phi$ of \eqref{main equation} with \eqref{initial data},
\begin{equation*}
 	|\p^2\phi|\rightarrow+\infty\ \text{as}\ \mu\rightarrow0.
\end{equation*}
\end{remark}

\section*{Appendix}

\appendix

\section{The existence of short pulse initial data}\label{Section A}

In this section, we give the existence of short pulse initial data in \eqref{initial data}
which satisfy the outgoing constraint condition \eqref{outgoing constraint condition}. Indeed, for any fixed $\phi_0\in C_0^{\infty}\big((-1,0)\times\mathbb{S}^2\big)$, motivated by Appendix in \cite{DLY}, we choose $\phi_1$ as
\begin{equation}\label{phi_1}
	\phi_1=-\p_s\phi_0-\phi_0\de-\f{1}{2(p+1)}(-\p_s\phi_0)^{p+1}\de^{(1-\ve0)p}.
\end{equation}
It is pointed out that although the selection of $\phi_1$ depends on $\de$, the $C^\infty$-norm of $\phi_1$ is actually independent of $\de$.

Due to $\p_t^2\phi=c^2\De\phi=c^2(\p_r^2\phi+\f{2}{r}\p_r\phi+\f{1}{r^2}\Des\phi)$
from \eqref{main equation} and $r=1+s\de$, one then has
\begin{equation}\label{k=2}
	\begin{split}
		&\quad(\p_t+\p_r)^2\phi(1,x)\\
		&=2(\p_s\phi_1+\p_s^2\phi_0)\de^{-\ve0}+2\p_s\phi_0\de^{1-\ve0}-\phi_1^p\p_s^2\phi_0\de^{(1-\ve0)p-\ve0}+O\big(\de^{2-\ve0-2\max\{0,1-(1-\ve0)p\}}\big)\\
		&=2\p_s\big[\phi_1+\p_s\phi_0+\phi_0\de+\f{1}{2(p+1)}(-\p_s\phi_0)^{p+1}\de^{(1-\ve0)p}\big]\de^{-\ve0}+O\big(\de^{2-\ve0-2\max\{0,1-(1-\ve0)p\}}\big)\\
		&=O\big(\de^{2-\ve0-2\max\{0,1-(1-\ve0)p\}}\big)
	\end{split}
\end{equation}
and
\begin{equation}\label{k=1}
	\begin{split}
		&\quad(\p_t+\p_r)\phi(1,x)\\		
		&=(\phi_1+\p_s\phi_0)\de^{1-\ve0}\\
		&=O\big(\de^{\min\{1,(1-\ve0)p\}}\big)\de^{1-\ve0}\\
		&=O\big(\de^{2-\ve0-\max\{0,1-(1-\ve0)p\}}\big).
	\end{split}
\end{equation}


\begin{thebibliography}{99}
	
\bibitem{Ali1} S. Alinhac, \textit{Blowup of small data solutions for a class of quasilinear wave equations in two space dimensions. II}, Acta Math. 182, no. 1, 1-23 (1999)
	
	
	\bibitem{Ali2} S. Alinhac, \textit{The null condition for quasilinear wave equations in two space dimensions. II}, Amer. J. Math. 123, no. 6, 1071-1101 (2001)
	


\bibitem{Alt1} D.Alterman, J. Rauch, \textit{Nonlinear geometric optics for short pulses},
 J. Differential Equations 178, no. 2, 437-465  (2002)


\bibitem{Alt2} D.Alterman, J. Rauch, \textit{Diffractive nonlinear geometric optics for short pulses},
SIAM J. Math. Anal. 34, no. 6, 1477-1502  (2003)

	
	\bibitem{Ch1} D. Christodoulou, \textit{The formation of shocks in 3-dimensional fluids}, Monographs in Mathematics,
European Mathematical Soc., 2007.
	
	
	\bibitem{Ch2} D. Christodoulou, \textit{The formation of black holes in general relativity}, Monographs in Mathematics,
European Mathematical Soc., 2009.
	
	
	\bibitem{Ding1} Ding Bingbing, Witt Ingo, Yin Huicheng, \textit{The small data solutions of general 3-D quasilinear wave equations. II,} J. Differential Equations 261, no. 2, 1429-1471 (2016)
	
	
	\bibitem{Ding2} Ding Bingbing, Xin Zhouping, Yin Huicheng, \textit{Global smooth large solutions to the 2D isentropic and irrotational Chaplygin gases}, arXiv:2204.08181, 18 Apr 2022, Preprint (2022)
	
	
	\bibitem{Ding3} Ding Bingbing, Xin Zhouping, Yin Huicheng, \textit{Global smooth large data solutions of 4-D quasilinear wave equations}, Preprint (2022)
	
	
\bibitem{DLY} Ding Bingbing, Lu Yu, Yin Huicheng, \textit{On the critical exponent $p_c$ of the 3D quasilinear wave equation $-\big(1+(\partial_t\phi)^p\big)\partial_t^2\phi+\Delta\phi=0$ with short pulse initial data. I, global existence}, Preprint (2022)
	
	
	\bibitem{H} L. H\"ormander, \textit{Lectures on nonlinear hyperbolic equations}, Mathematiques \& Applications, Vol. 26, Springer-Verlag, Heidelberg, 1997.
	
\bibitem{Hunter}	J.K. Hunter, A. Majda, R. Rosales, \textit{Resonantly interacting, weakly nonlinear hyperbolic waves. II.
Several space variables,} Stud. Appl. Math. 75, no. 3, 187-226  (1986)



	\bibitem{K-P} S. Klainerman, G. Ponce,\textit{ Global small amplitude solutions to nonlinear evolution equations,}
Comm. Pure Appl. Math. 36, no. 1, 133-141 (1983)
	
	
	\bibitem{K-R} S. Klainerman, I. Rodnianski, \textit{On the formation of trapped surfaces},
	Acta Math. 208, no. 2, 211-333 (2012)
	
	
\bibitem{Majda}	A. Majda, R. Rosales, \textit{Resonantly interacting weakly nonlinear hyperbolic waves. I.
A single space variable.} Stud. Appl. Math. 71, no. 2, 149-179  (1984)
	
	\bibitem{M-Y} Miao Shuang, Yu Pin, \textit{On the formation of shocks for quasilinear wave equations},
Invent. Math. 207, no. 2, 697-831 (2017)
	
	
	\bibitem{Sp} J. Speck, \textit{Shock formation in small-data solutions to 3D quasilinear wave equations},
Mathematical Surveys and Monographs, 214. Amer. Math. Soc., Providence, RI, 2016.
	
	
	
\end{thebibliography}
\end{document}